\documentclass[a4paper, british]{amsart}

%
%
\usepackage{libertine}
\usepackage[libertine]{newtxmath}

%
%
%
%

\usepackage{babel}
\usepackage{enumitem}
\usepackage{hyperref}
\usepackage[utf8]{inputenc}
\usepackage{newunicodechar}
\usepackage{mathtools}
\usepackage{varioref}
\usepackage[arrow,curve,matrix]{xy}

\usepackage{colortbl}
\usepackage{graphicx}
\usepackage{tikz}

\usepackage{mathrsfs}

%
%

\definecolor{linkred}{rgb}{0.7,0.2,0.2}
\definecolor{linkblue}{rgb}{0,0.2,0.6}

\setcounter{tocdepth}{1}

\numberwithin{figure}{section}

\usepackage[hyperpageref]{backref}


\sloppy

\setdescription{labelindent=\parindent, leftmargin=2\parindent}
\setitemize[1]{labelindent=\parindent, leftmargin=2\parindent}
\setenumerate[1]{labelindent=0cm, leftmargin=*, widest=iiii}

%
%

\newunicodechar{א}{\ensuremath{\aleph}}
\newunicodechar{α}{\ensuremath{\alpha}}
\newunicodechar{β}{\ensuremath{\beta}}
\newunicodechar{χ}{\ensuremath{\chi}}
\newunicodechar{δ}{\ensuremath{\delta}}
\newunicodechar{ε}{\ensuremath{\varepsilon}}
\newunicodechar{Δ}{\ensuremath{\Delta}}
\newunicodechar{η}{\ensuremath{\eta}}
\newunicodechar{γ}{\ensuremath{\gamma}}
\newunicodechar{Γ}{\ensuremath{\Gamma}}
\newunicodechar{ι}{\ensuremath{\iota}}
\newunicodechar{κ}{\ensuremath{\kappa}}
\newunicodechar{λ}{\ensuremath{\lambda}}
\newunicodechar{Λ}{\ensuremath{\Lambda}}
\newunicodechar{ν}{\ensuremath{\nu}}
\newunicodechar{μ}{\ensuremath{\mu}}
\newunicodechar{ω}{\ensuremath{\omega}}
\newunicodechar{Ω}{\ensuremath{\Omega}}
\newunicodechar{π}{\ensuremath{\pi}}
\newunicodechar{Π}{\ensuremath{\Pi}}
\newunicodechar{φ}{\ensuremath{\phi}}
\newunicodechar{Φ}{\ensuremath{\Phi}}
\newunicodechar{ψ}{\ensuremath{\psi}}
\newunicodechar{Ψ}{\ensuremath{\Psi}}
\newunicodechar{ρ}{\ensuremath{\rho}}
\newunicodechar{σ}{\ensuremath{\sigma}}
\newunicodechar{Σ}{\ensuremath{\Sigma}}
\newunicodechar{τ}{\ensuremath{\tau}}
\newunicodechar{θ}{\ensuremath{\theta}}
\newunicodechar{Θ}{\ensuremath{\Theta}}
\newunicodechar{ξ}{\ensuremath{\xi}}
\newunicodechar{Ξ}{\ensuremath{\Xi}}
\newunicodechar{ζ}{\ensuremath{\zeta}}

\newunicodechar{ℓ}{\ensuremath{\ell}}
\newunicodechar{ï}{\"{\i}}

\newunicodechar{𝔸}{\ensuremath{\bA}}
\newunicodechar{𝔹}{\ensuremath{\bB}}
\newunicodechar{ℂ}{\ensuremath{\bC}}
\newunicodechar{𝔻}{\ensuremath{\bD}}
\newunicodechar{𝔼}{\ensuremath{\bE}}
\newunicodechar{𝔽}{\ensuremath{\bF}}
\newunicodechar{𝔾}{\ensuremath{\bG}}
\newunicodechar{ℕ}{\ensuremath{\bN}}
\newunicodechar{ℙ}{\ensuremath{\bP}}
\newunicodechar{ℚ}{\ensuremath{\bQ}}
\newunicodechar{ℝ}{\ensuremath{\bR}}
\newunicodechar{𝕏}{\ensuremath{\bX}}
\newunicodechar{ℤ}{\ensuremath{\bZ}}
\newunicodechar{𝒜}{\ensuremath{\sA}}
\newunicodechar{ℬ}{\ensuremath{\sB}}
\newunicodechar{𝒞}{\ensuremath{\sC}}
\newunicodechar{𝒟}{\ensuremath{\sD}}
\newunicodechar{ℰ}{\ensuremath{\sE}}
\newunicodechar{ℱ}{\ensuremath{\sF}}
\newunicodechar{𝒢}{\ensuremath{\sG}}
\newunicodechar{ℋ}{\ensuremath{\sH}}
\newunicodechar{𝒥}{\ensuremath{\sJ}}
\newunicodechar{ℒ}{\ensuremath{\sL}}
\newunicodechar{ℳ}{\ensuremath{\sM}}
\newunicodechar{𝒪}{\ensuremath{\sO}}
\newunicodechar{𝒬}{\ensuremath{\sQ}}
\newunicodechar{𝒮}{\ensuremath{\sS}}
\newunicodechar{𝒯}{\ensuremath{\sT}}
\newunicodechar{𝒲}{\ensuremath{\sW}}

\newunicodechar{∂}{\ensuremath{\partial}}
\newunicodechar{∇}{\ensuremath{\nabla}}

\newunicodechar{↺}{\ensuremath{\circlearrowleft}}
\newunicodechar{∞}{\ensuremath{\infty}}
\newunicodechar{⊕}{\ensuremath{\oplus}}
\newunicodechar{⊗}{\ensuremath{\otimes}}
\newunicodechar{•}{\ensuremath{\bullet}}
\newunicodechar{Λ}{\ensuremath{\wedge}}
\newunicodechar{↪}{\ensuremath{\into}}
\newunicodechar{→}{\ensuremath{\to}}
\newunicodechar{↦}{\ensuremath{\mapsto}}
\newunicodechar{⨯}{\ensuremath{\times}}
\newunicodechar{∪}{\ensuremath{\cup}}
\newunicodechar{∩}{\ensuremath{\cap}}
\newunicodechar{⊋}{\ensuremath{\supsetneq}}
\newunicodechar{⊇}{\ensuremath{\supseteq}}
\newunicodechar{⊃}{\ensuremath{\supset}}
\newunicodechar{⊊}{\ensuremath{\subsetneq}}
\newunicodechar{⊆}{\ensuremath{\subseteq}}
\newunicodechar{⊂}{\ensuremath{\subset}}
\newunicodechar{⊄}{\ensuremath{\not \subset}}
\newunicodechar{≥}{\ensuremath{\geq}}
\newunicodechar{≠}{\ensuremath{\neq}}
\newunicodechar{≫}{\ensuremath{\gg}}
\newunicodechar{≪}{\ensuremath{\ll}}

\newunicodechar{≤}{\ensuremath{\leq}}
\newunicodechar{∈}{\ensuremath{\in}}
\newunicodechar{∉}{\ensuremath{\not \in}}
\newunicodechar{∖}{\ensuremath{\setminus}}
\newunicodechar{◦}{\ensuremath{\circ}}
\newunicodechar{°}{\ensuremath{^\circ}}
\newunicodechar{…}{\ifmmode\mathellipsis\else\textellipsis\fi}
\newunicodechar{·}{\ensuremath{\cdot}}
\newunicodechar{⋯}{\ensuremath{\cdots}}
\newunicodechar{∅}{\ensuremath{\emptyset}}
\newunicodechar{⇒}{\ensuremath{\Rightarrow}}

\newunicodechar{⁰}{\ensuremath{^0}}
\newunicodechar{¹}{\ensuremath{^1}}
\newunicodechar{²}{\ensuremath{^2}}
\newunicodechar{³}{\ensuremath{^3}}
\newunicodechar{⁴}{\ensuremath{^4}}
\newunicodechar{⁵}{\ensuremath{^5}}
\newunicodechar{⁶}{\ensuremath{^6}}
\newunicodechar{⁷}{\ensuremath{^7}}
\newunicodechar{⁸}{\ensuremath{^8}}
\newunicodechar{⁹}{\ensuremath{^9}}
\newunicodechar{ⁱ}{\ensuremath{^i}}
\newunicodechar{⁺}{\ensuremath{^+}}

\newunicodechar{⌈}{\ensuremath{\lceil}}
\newunicodechar{⌉}{\ensuremath{\rceil}}
\newunicodechar{⌊}{\ensuremath{\lfloor}}
\newunicodechar{⌋}{\ensuremath{\rfloor}}

\newunicodechar{≅}{\ensuremath{\cong}}
\newunicodechar{⇔}{\ensuremath{\Leftrightarrow}}
\newunicodechar{∃}{\ensuremath{\exists}}
\newunicodechar{±}{\ensuremath{\pm}}

%
%

\DeclareFontFamily{OMS}{rsfs}{\skewchar\font'60}
\DeclareFontShape{OMS}{rsfs}{m}{n}{<-5>rsfs5 <5-7>rsfs7 <7->rsfs10 }{}
\DeclareSymbolFont{rsfs}{OMS}{rsfs}{m}{n}
\DeclareSymbolFontAlphabet{\scr}{rsfs}
\DeclareSymbolFontAlphabet{\scr}{rsfs}

\DeclareFontFamily{U}{mathx}{\hyphenchar\font45}
\DeclareFontShape{U}{mathx}{m}{n}{
      <5> <6> <7> <8> <9> <10>
      <10.95> <12> <14.4> <17.28> <20.74> <24.88>
      mathx10
      }{}
\DeclareSymbolFont{mathx}{U}{mathx}{m}{n}
\DeclareFontSubstitution{U}{mathx}{m}{n}
\DeclareMathAccent{\wcheck}{0}{mathx}{"71}

%
%

\DeclareMathOperator{\Aut}{Aut}

\DeclareMathOperator{\Id}{Id}

\DeclareMathOperator{\img}{img}
\DeclareMathOperator{\Pic}{Pic}

\DeclareMathOperator{\red}{red}
\DeclareMathOperator{\reg}{reg}

\DeclareMathOperator{\sing}{sing}

\DeclareMathOperator{\Sym}{Sym}
\DeclareMathOperator{\supp}{supp}

\newcommand{\sA}{\scr{A}}
\newcommand{\sB}{\scr{B}}
\newcommand{\sC}{\scr{C}}
\newcommand{\sD}{\scr{D}}
\newcommand{\sE}{\scr{E}}
\newcommand{\sF}{\scr{F}}
\newcommand{\sG}{\scr{G}}
\newcommand{\sH}{\scr{H}}

\newcommand{\sJ}{\scr{J}}

\newcommand{\sL}{\scr{L}}
\newcommand{\sM}{\scr{M}}

\newcommand{\sO}{\scr{O}}

\newcommand{\sQ}{\scr{Q}}

\newcommand{\sS}{\scr{S}}
\newcommand{\sT}{\scr{T}}

\newcommand{\sW}{\scr{W}}


\newcommand{\cC}{\mathcal C}

\newcommand{\bA}{\mathbb{A}}
\newcommand{\bB}{\mathbb{B}}
\newcommand{\bC}{\mathbb{C}}
\newcommand{\bD}{\mathbb{D}}
\newcommand{\bE}{\mathbb{E}}
\newcommand{\bF}{\mathbb{F}}
\newcommand{\bG}{\mathbb{G}}

\newcommand{\bN}{\mathbb{N}}

\newcommand{\bP}{\mathbb{P}}
\newcommand{\bQ}{\mathbb{Q}}
\newcommand{\bR}{\mathbb{R}}

\newcommand{\bX}{\mathbb{X}}

\newcommand{\bZ}{\mathbb{Z}}


\theoremstyle{plain}
\newtheorem{thm}{Theorem}[section]

\newtheorem{cor}[thm]{Corollary}
\newtheorem{defn}[thm]{Definition}
\newtheorem{fact}[thm]{Fact}
\newtheorem{lem}[thm]{Lemma}

\newtheorem{prop}[thm]{Proposition}

\theoremstyle{remark}

\newtheorem{asswlog}[thm]{Assumption w.l.o.g.}
\newtheorem{claim}[thm]{Claim}
\newtheorem{c-n-d}[thm]{Claim and Definition}

\newtheorem{construction}[thm]{Construction}

\newtheorem{example}[thm]{Example}
\newtheorem{explanation}[thm]{Explanation}
\newtheorem{notation}[thm]{Notation}
\newtheorem{obs}[thm]{Observation}
\newtheorem{rem}[thm]{Remark}

\newtheorem*{rem-nonumber}{Remark}
\newtheorem{setting}[thm]{Setting}
\newtheorem{warning}[thm]{Warning}

\numberwithin{equation}{thm}

\setlist[enumerate]{label=(\thethm.\arabic*), before={\setcounter{enumi}{\value{equation}}}, after={\setcounter{equation}{\value{enumi}}}}

\newcommand{\into}{\hookrightarrow}

\newcommand{\wtilde}{\widetilde}
\newcommand{\what}{\widehat}

%
%

\hyphenation{com-po-nents}
\hyphenation{pos-i-tive}
\hyphenation{Theo-rem}
\hyphenation{Vojta}

%
%

\newcommand\CounterStep{\addtocounter{thm}{1}\setcounter{equation}{0}}

\newcommand{\factor}[2]{\left. \raise 2pt\hbox{$#1$} \right/\hskip -2pt\raise -2pt\hbox{$#2$}}
%
%

\newcommand{\Publication}[1]{}

%
%
\newcommand{\subversionInfo}{}
\newcommand{\svnid}[1]{}
\newcommand{\approvals}[2][Approval]{}
\renewcommand{\phi}{\varphi}
\graphicspath{{../}}

\usepackage{cancel}
\usepackage{tikz-cd}
\tikzset{commutative diagrams/arrow style=tikz}
\selectlanguage{british}

\author{Stefan Kebekus}
\address{Stefan Kebekus, Mathematisches Institut, Albert-Ludwigs-Universität Freiburg, Ernst-Zermelo-Straße 1, 79104 Freiburg im Breisgau, Germany}
\email{\href{mailto:stefan.kebekus@math.uni-freiburg.de}{stefan.kebekus@math.uni-freiburg.de}}
\urladdr{\url{https://cplx.vm.uni-freiburg.de}}

\author{Erwan Rousseau}
\address{Erwan Rousseau, Univ Brest, CNRS UMR 6205, Laboratoire de Mathematiques de Bretagne Atlantique, F-29200 Brest, France}
\email{\href{mailto:erwan.rousseau@univ-brest.fr}{erwan.rousseau@univ-brest.fr}}
\urladdr{\href{http://eroussea.perso.math.cnrs.fr}{http://eroussea.perso.math.cnrs.fr}}

\thanks{This work started during the visit of Erwan Rousseau to the Freiburg
Institute for Advanced Studies, supported by the European Unions Horizon 2020
research and innovation program under the Marie Sklodowska-Curie grant agreement
No 75434.  Rousseau thanks the Institute for providing an excellent working
environment.}

\keywords{$\cC$-pairs, Albanese variety}

\subjclass[2020]{32C99, 32H99, 32A22}

\title{The Albanese of a $\cC$-pair}
\date{\today}

\makeindex

\makeatletter
\hypersetup{
  pdfauthor={\authors},
  pdftitle={\@title},
  pdfsubject={\@subjclass},
  pdfkeywords={\@keywords},
  pdfstartview={Fit},
  pdfpagelayout={TwoColumnRight},
  pdfpagemode={UseOutlines},
  bookmarks,
  colorlinks,
  linkcolor=linkblue,
  citecolor=linkred,
  urlcolor=linkred
}
\makeatother

\DeclareMathOperator{\alb}{alb}
\DeclareMathOperator{\Alb}{Alb}
\DeclareMathOperator{\Branch}{Branch}
\DeclareMathOperator{\diff}{d}
\DeclareMathOperator{\Div}{Div}
\DeclareMathOperator{\Fix}{Fix}
\DeclareMathOperator{\mult}{mult}

\DeclareMathOperator{\walb}{walb}

\hyphenation{dif-fer-en-tials}
\hyphenation{lem-ma}
\hyphenation{mani-fold}
\hyphenation{multi-plicities}
\hyphenation{semi-abelian}
\hyphenation{semi-toric}
\hyphenation{semi-torus}

\theoremstyle{plain}

\theoremstyle{remark}
\newtheorem{choice}[thm]{Choice}
\newtheorem{conj}[thm]{Conjecture}

\newtheorem{leansum}[thm]{Summary}
\newtheorem{reminder}[thm]{Reminder}

\begin{document}

\approvals[Approval for Abstract]{Erwan & yes \\ Stefan & yes}
\begin{abstract}
\selectlanguage{british}

Written with a view towards applications in hyperbolicity, rational points, and
entire curves, this paper addresses the problem of constructing Albanese maps
within Campana's theory of $\cC$-pairs (or ``geometric orbifolds'').  It
introduces $\cC$-semitoric pairs as analogues of the (semi)tori used in the
classic Albanese theory and follows Serre by defining the Albanese of a
$\cC$-pair as the universal map to a $\cC$-semitoric pairs.  The paper shows
that the Albanese exists in relevant cases, gives sharp existence criteria, and
conjectures that a ``weak Albanese'' exists unconditionally.

\end{abstract}

\maketitle
\tableofcontents

%
%
\svnid{$Id: 01-intro.tex 949 2024-11-11 12:46:09Z kebekus $}
\selectlanguage{british}

\section{Introduction}
\subversionInfo

\approvals{Erwan & yes \\ Stefan & yes}

This paper constructs Albanese maps for $\cC$-pairs, with the goal to provide
tools for the study rational points and entire curves in algebraic varieties and
complex manifolds.  To illustrate our motivation, consider the following two
classic theorems of Faltings and Bloch-Ochiai/Kawamata.

\begin{thm}[Faltings' theorem on density of rational points, \cite{MR1109353}]\label{thm:1-1}%
  Let $X$ be a projective manifold defined over a number field $k$.  If the
  dimension of its Albanese variety satisfies $\dim \Alb(X) > \dim X$, then its
  rational points are not potentially dense.  In other words: If $k ⊆ k'$ is any
  finite field extension, then $k'$-rational points are not Zariski dense in
  $X$.  \qed
\end{thm}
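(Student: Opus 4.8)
The plan is to reduce the assertion to Faltings' solution of the Mordell--Lang conjecture for subvarieties of abelian varieties, applied to the image of the Albanese map. Write $A := \Alb(X)$ and let $\alb \colon X \to A$ be the Albanese morphism. After replacing $k$ by a finite extension I may assume that $X$ carries a rational point and that $A$ and $\alb$ are both defined over $k$; this is harmless, because the statement concerns all finite extensions at once, and non-density of $X(k')$ for every $k' \supseteq k$ implies non-density for an arbitrary finite extension $L$ by passing to the compositum $L \cdot k$ and using $X(L) \subseteq X(L \cdot k)$. Let $V := \alb(X) \subseteq A$ be the reduced, irreducible image. The dimension hypothesis enters here: since $\dim V \le \dim X < \dim A$, the image $V$ is a \emph{proper} closed subvariety of $A$.

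The key geometric input I would isolate is that $V$ is contained in no translate of a proper abelian subvariety of $A$. To see this, normalise the base point so that $\alb(x_0) = 0$. If $V \subseteq c + B$ for an abelian subvariety $B \subsetneq A$, then $0 \in V$ forces $c \in B$, hence $V \subseteq B$; the corestriction $\beta \colon X \to B$ then factors through $\alb$ by a homomorphism $\phi \colon A \to B$ with $\iota \circ \phi \circ \alb = \alb$, where $\iota \colon B \hookrightarrow A$. Uniqueness in the universal property of the Albanese yields $\iota \circ \phi = \Id_A$, which is absurd since $\iota \circ \phi$ has image in $B \subsetneq A$. In particular $V$ is itself not a translate of an abelian subvariety.

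Now I would apply Faltings' theorem to $V \subseteq A$: for every finite extension $k'$, the set $V(k')$ lies in a finite union $\bigcup_i (B_i + a_i)$ of translates of abelian subvarieties $B_i \subseteq A$, each satisfying $B_i + a_i \subseteq V$. By the previous paragraph, none of these cosets can equal $V$ (that would exhibit $V$ as a translate of an abelian subvariety), so each $B_i + a_i$ is a proper closed subvariety of $V$, and therefore each preimage $\alb^{-1}(B_i + a_i)$ is a proper closed subvariety of $X$. Since every $x \in X(k')$ satisfies $\alb(x) \in V(k')$, we get
\[
  X(k') \subseteq \alb^{-1}\bigl( V(k') \bigr) \subseteq \bigcup_i \alb^{-1}(B_i + a_i),
\]
a finite union of proper closed subvarieties. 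Hence $X(k')$ is not Zariski dense, for every finite extension $k'$, which is exactly the claim.

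The genuinely hard part is, of course, Faltings' theorem itself, which supplies the coset structure of $V(k')$ and is the deep Diophantine ingredient; everything around it is formal. The one point that still requires care is the generation lemma of the second paragraph, namely that the Albanese image meets no proper sub-coset of $A$: this is precisely what excludes the degenerate possibility $B_i + a_i = V$ and thereby guarantees that the pullbacks $\alb^{-1}(B_i + a_i)$ are proper subvarieties of $X$ rather than all of $X$.
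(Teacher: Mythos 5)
There is no proof in the paper to compare against: Theorem~\ref{thm:1-1} is stated as a quotation of Faltings' result, with the q.e.d.\ sign recording that it is cited rather than proved, and it serves purely as motivation for the $\cC$-pair theory developed afterwards. Judged on its own, your argument is correct and is the standard deduction. The pivotal point is exactly the one you isolate: by uniqueness in the universal property, the image $V = \alb(X)$ lies in no translate of a proper abelian subvariety of $A = \Alb(X)$ (this is the classical counterpart of what the paper proves for its own Albanese in Proposition~\ref{prop:4-10}), and since $\dim V < \dim A$, this forces every coset $B_i + a_i \subseteq V$ produced by Faltings' theorem to be a proper closed subvariety of $V$, hence every preimage $\alb^{-1}(B_i + a_i)$ to be a proper closed subvariety of $X$. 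The reduction to a field of definition carrying a rational point, via the compositum trick, is also sound.

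One caveat concerns attribution rather than correctness. The statement you invoke --- that $V(k')$ is contained in a finite union of translates of abelian subvarieties, each contained in $V$ --- is the full Mordell--Lang theorem for subvarieties of abelian varieties, which Faltings proved in his 1994 paper ``The general case of S.~Lang's conjecture''; the 1991 Annals paper \cite{MR1109353} cited in the statement of the theorem proves finiteness of $V(k')$ only under the hypothesis that $V$ contains no translate of a positive-dimensional abelian subvariety. Your proof is complete once it cites the 1994 result; alternatively one can keep \cite{MR1109353} and supplement it with an argument controlling the locus swept out by translates of abelian subvarieties, e.g.\ via Kawamata's structure theorem \cite{Kawa80}, at the cost of a less formal reduction.
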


\begin{thm}[\protect{Bloch-Ochiai's theorem on entire curves, \cite[Thm.~2]{Kawa80}}]\label{thm:1-2}%
  Let $X$ be a complex projective manifold such that its Albanese variety
  satisfies $\dim \Alb(X) > \dim X$, then entire curves on $X$ are not Zariski
  dense.  \qed
\end{thm}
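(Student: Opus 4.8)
The plan is to transport the curve to the Albanese variety, invoke the Bloch--Ochiai structure theorem for entire curves in abelian varieties, and then conclude from the universal property of the Albanese. Write $a \colon X \to A := \Alb(X)$ for the Albanese morphism, and let $f \colon \bC \to X$ be an arbitrary entire curve; the goal is to show that $f(\bC)$ is not Zariski dense.

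First I would pass to the composed entire curve $g := a \circ f \colon \bC \to A$. The heart of the argument is the \emph{structure theorem for entire curves in abelian varieties}: the Zariski closure $\overline{g(\bC)}$ is a translate $c + B$ of an abelian subvariety $B \subseteq A$. Granting this, the rest is formal. Because $g(\bC) \subseteq a(X)$ and $a(X)$ is closed of dimension $\dim a(X) \leq \dim X < \dim A$, the translate $c + B = \overline{g(\bC)}$ is \emph{proper}, i.e.\ $B \subsetneq A$. If $f(\bC)$ were Zariski dense in $X$, then $a\bigl(f(\bC)\bigr)$ would be dense in $a(X)$, forcing $a(X) = \overline{g(\bC)} = c + B$. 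But the defining property of the Albanese is that the differences $a(x) - a(x')$, for $x, x' \in X$, generate $A$ as a group; in particular $a(X)$ is contained in no proper translate of an abelian subvariety. Since $B \subsetneq A$, this contradicts $a(X) = c + B$, and hence $f(\bC)$ is not dense.

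The genuine difficulty --- the step I expect to be the main obstacle --- is the structure theorem for $g$, the analytic input taken from \cite{Kawa80}. After dividing $A$ by the stabilizer of $Y := \overline{g(\bC)}$ and translating, one reduces to the case where $Y$ generates $A$ and has trivial stabilizer; the task then becomes to show $\dim A = 0$. By Ueno's theorem such a $Y$ is of general type, so it carries an abundance of global (logarithmic) jet differentials vanishing along an ample divisor. The remaining, and decisive, work is Nevanlinna-theoretic: combining the tautological inequality with the Lemma on the Logarithmic Derivative, one shows that these jet differentials must vanish identically along the canonical lift of $g$ to the relevant jet space, which produces algebraic relations confining $g(\bC)$ to a proper subvariety of $Y$ and contradicts $\overline{g(\bC)} = Y$ unless $\dim A = 0$. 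Arranging the jet spaces so that the positivity coming from general type feeds into the Second Main Theorem is the technical crux; by comparison the Albanese reduction above is routine.
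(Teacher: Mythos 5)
Your proposal is correct, and it follows essentially the same route as the paper: the paper offers no proof of its own for this statement --- it is quoted as a classical result with a reference to \cite[Thm.~2]{Kawa80} --- and your argument (compose with $\alb: X \to \Alb(X)$, invoke the structure theorem that the Zariski closure of an entire curve in an abelian variety is a translate of an abelian subvariety, then rule out $a(X) = c + B$ with $B \subsetneq \Alb(X)$ because the Albanese image generates $\Alb(X)$ as a group) is exactly the standard Kawamata argument behind that citation. You also correctly identify that the genuine content lies in the structure theorem, whose proof via Ueno's fibration, general type subvarieties of abelian varieties, and jet-differential/Nevanlinna techniques you sketch accurately and appropriately attribute to the cited source.
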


Aiming to generalize these results, Campana has formulated a series of
far-reaching conjecture that generalize Lang's conjectures and relate potential
density of rational points and existence of entire curves to the notion of
``specialness'' of his theory of $\cC$-pairs.

\begin{conj}[\protect{Specialness and density of rational points, \cite[Conj.~13.21]{MR2831280}}]\label{conj:1-3}%
  Let $X$ be a projective manifold defined over a number field $k$.  Then, $X$
  is special if and only if its rational points are potentially dense.
\end{conj}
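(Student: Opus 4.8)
The statement is Campana's conjecture \cite[Conj.~13.21]{MR2831280}, a central open problem that simultaneously generalises Lang's conjectures and the Bombieri--Lang philosophy; for that reason I do not expect to produce a genuine proof, only to describe the shape any proof would have to take and to locate where the real difficulty lies. The natural first step is to split the biconditional into its two implications, since they have completely different flavours, and to handle each by reducing it to the known structure theory of $\cC$-pairs.

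For the direction ``$X$ not special $\Rightarrow$ rational points not potentially dense'', the plan is to use that non-specialness of $X$ produces, essentially by definition, a dominant rational map from $X$ onto a positive-dimensional $\cC$-pair $(Y,D)$ of general type (a ``Bogomolov sheaf''). I would then push the rational points of $X$ forward to $(Y,D)$ and invoke an orbifold version of the Mordell--Lang/Bombieri--Lang conjecture to conclude that the image — and hence $X$ itself — cannot have a Zariski-dense set of $k'$-points over any finite extension $k'$. The obstacle here is immediate and decisive: the required finiteness statement for rational points on $\cC$-pairs of general type is exactly the orbifold Lang--Vojta conjecture, which is wide open beyond curves and a few special cases.

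For the converse, ``$X$ special $\Rightarrow$ rational points potentially dense'', the plan is to exploit the conjectural description of special manifolds as iterated fibrations whose fibres and orbifold bases are either rationally connected or have trivial orbifold canonical class, i.e. are of the ``$\cC$-semiabelian'' type that the present paper develops. On each such building block potential density is known or expected (rationally connected varieties, abelian and semiabelian varieties), and I would try to propagate density up the tower of fibrations. The hard part will be twofold: the structure theorem for special pairs is itself only conjectural, and, even granting it, potential density is not known to be stable under fibrations — lifting dense rational points from base and fibre to the total space is a notoriously unsolved problem.

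The honest summary is that the main obstacle is that the statement is genuinely open: both implications reduce to major unproved conjectures — orbifold Lang--Vojta on one side, and a special-structure theorem together with fibration-stability of potential density on the other. Accordingly, the role of the present paper is not to prove this conjecture but to supply one of the geometric tools that any such program would require, namely a well-behaved Albanese construction in the category of $\cC$-pairs; the conjecture is reproduced here only as motivation for that construction.
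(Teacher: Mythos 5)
Your assessment is exactly right: the statement is Campana's conjecture, reproduced in the paper purely as motivation (together with Conjecture~\ref{conj:1-4}) for developing an Albanese theory of $\cC$-pairs, and the paper offers no proof of it. Your description of the two implications and of where they reduce to open problems (orbifold Lang--Vojta on one side, the special-structure theorem and fibration-stability of potential density on the other) is a fair account of the state of the art, and recognising that no proof should be attempted is the correct response here.
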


\begin{conj}[\protect{Specialness and $\cC$-entire curves, \cite[Conj.~13.17]{MR2831280}}]\label{conj:1-4}%
  Let $X$ be a complex projective manifold.  Then, $X$ is special if and only if
  $X$ admits a Zariski dense entire curve.
\end{conj}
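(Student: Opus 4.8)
The first thing to stress is that Conjecture~\ref{conj:1-4} is one of Campana's central conjectures and is open in general; it contains, as a special case, the Green-Griffiths-Lang conjecture for varieties of general type. What follows is therefore not a complete proof but a strategy organised around the $\cC$-Albanese, indicating which parts the machinery of this paper can settle unconditionally and where the genuine obstruction lies.

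I would treat the two implications separately. For the converse direction, \emph{a Zariski-dense entire curve forces $X$ to be special}, I would argue by contraposition using Campana's core map. If $X$ is not special, the core fibration $c\colon X \dashrightarrow C$ has a base carrying an orbifold structure $(C,\Delta)$ of general type with $\dim C > 0$. Composing a Zariski-dense entire curve $f\colon \bC \to X$ with $c$ yields a Zariski-dense $\cC$-entire curve in $(C,\Delta)$, so it suffices to show that a $\cC$-pair of general type admits no such curve. The plan is to reduce this to the Albanese: after passing to a suitable model one maps $(C,\Delta)$ to its $\cC$-Albanese, a $\cC$-semitoric pair, and applies a $\cC$-pair refinement of Bloch-Ochiai (Theorem~\ref{thm:1-2}) showing that $\cC$-entire curves into a $\cC$-semitorus have degenerate image. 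When the $\cC$-Albanese is ``large'' this already yields the contradiction; the residual general-type locus is where Green-Griffiths-Lang must be invoked.

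For the forward direction, \emph{special implies the existence of a Zariski-dense entire curve}, I would induct on dimension along the core, whose fibres are again special, using the conjectural structure of special manifolds as towers built from rationally connected pieces and from fibrations with non-general-type orbifold base. At each stage the $\cC$-Albanese provides the transport mechanism: a special $\cC$-pair should have $\cC$-Albanese a $\cC$-semitorus on which dense entire curves are manifestly available, and such curves must then be lifted and deformed across the fibration. The key technical inputs are the functoriality and universality of the $\cC$-Albanese established in this paper, which guarantee that the transport respects the $\cC$-structure, that is, the prescribed multiplicities along the boundary.

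The main obstacle is unavoidable: both directions ultimately rest on the orbifold Green-Griffiths-Lang principle---that $\cC$-pairs of general type carry no Zariski-dense $\cC$-entire curve---which is unknown even in the classical case. The realistic target is therefore conditional: granting Green-Griffiths-Lang for orbifolds of general type, the $\cC$-Albanese constructed here reduces Conjecture~\ref{conj:1-4} to the structure of the core map, while it settles the conjecture unconditionally in the $\cC$-semitoric, and more generally the large-$\cC$-Albanese, case through the Bloch-Ochiai argument above.
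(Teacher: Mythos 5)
This statement is not a theorem of the paper and has no proof there: it is Campana's open conjecture, quoted verbatim from \cite[Conj.~13.17]{MR2831280} as motivation for constructing the $\cC$-Albanese. You correctly recognised this, and your refusal to present a complete argument is the right call --- any purported unconditional proof would have to be wrong, since the conjecture contains the Green--Griffiths--Lang conjecture for varieties of general type as a special case.

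Your conditional strategy is, moreover, consistent with the programme the paper itself announces. Section~\ref{sec:1-1-4} states that the forthcoming paper \cite{orbiAlb3} develops a Nevanlinna theory for $\cC$-pairs and proves a $\cC$-pair version of Bloch--Ochiai (generalising Theorem~\ref{thm:1-2}): if $q^+_{\Alb}(X,D) > \dim X$, then every $\cC$-entire curve in $(X,D)$ is algebraically degenerate, which establishes Conjecture~\ref{conj:1-4} for \emph{some} non-special varieties. That is precisely the ``large $\cC$-Albanese'' case you isolate as the unconditionally tractable one, and your reduction via the core map to a general-type orbifold base is the standard contrapositive route in Campana's theory. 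One caution on your forward direction: the claim that a special pair's $\cC$-Albanese carries ``manifestly available'' dense entire curves which can then be lifted across the core fibration is far more speculative than your text suggests --- lifting entire curves along fibrations is a major open problem in its own right, and nothing in this paper's functoriality results provides such a lifting mechanism. So even granting orbifold Green--Griffiths--Lang, the forward implication does not reduce cleanly to the machinery established here.
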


Given that the Albanese appears prominently in Theorems~\ref{thm:1-1} and
\ref{thm:1-2}, we expect that an ``Albanese for $\cC$-pairs'' might play an
important role in future progress towards Conjectures~\ref{conj:1-3} and
\ref{conj:1-4}.  Section~\vref{sec:1-1-4} announces first results in this
direction.

\subsection{Main results}
\approvals{Erwan & yes \\ Stefan & yes}

The Albanese of a projective manifold $X$ is characterized by universal
properties that can be formulated in a number of ways, relating to the geometry
or topology of $X$.  Our presentation follows Serre's classic paper
\cite{SCC_1958-1959__4__A10_0}\footnote{See also the presentation in
\cite[Appendix~A]{MR2372739}.}, where the Albanese of a projective manifold $X$
is an Abelian variety $\Alb(X)$ together with a morphism $\alb : X → \Alb(X)$
such that any other morphism from $X$ to an Abelian variety factors via $\alb$.
More generally, we recall in Section~\ref{sec:4} that the Albanese of a
logarithmic pair $(X,D)$ is a semitoric variety $A° ⊂ A$, together with a
quasi-algebraic morphism $\alb : X ∖ D → A°$ such that any other quasi-algebraic
morphism from $X ∖ D$ to a semitoric variety factors via $\alb$.

\subsubsection{$\cC$-semitoric pairs}
\approvals{Erwan & yes \\ Stefan & yes}

For $\cC$-pairs $(X,D)$, we argue that the natural analogues of compact tori and
semitoric varieties are ``$\cC$-semitoric pairs'', that is, quotients of tori
and semitoric varieties, with their natural structure as a quotient $\cC$-pair.
Section~\ref{sec:9} introduces $\cC$-semitoric pairs and discusses their main
properties.  The following non-trivial result suggests that $\cC$-semitoric
pairs are a geometrically meaningful concept.

\begin{thm}[Precise statement in Theorem~\ref{thm:9-4}]
  Quasi-algebraic $\cC$-morphisms between $\cC$-semitoric pairs come from group
  morphisms.  \qed
\end{thm}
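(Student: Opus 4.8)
The plan is to exploit the defining structure of $\cC$-semitoric pairs as quotients and to reduce the assertion, via covers, to the classical rigidity of morphisms between semiabelian varieties. Write the two given pairs as $(S, \Delta_S)$ and $(S', \Delta_{S'})$ together with their quotient presentations $q : T \to S$ and $q' : T' \to S'$, where $T, T'$ are semitori, $G, G'$ are the finite groups with $S = T/G$ and $S' = T'/G'$, and the boundary divisors $\Delta_S, \Delta_{S'}$ are precisely the natural $\cC$-structures that turn $q$ and $q'$ into $\cC$-covers (Section~\ref{sec:9}). Recall that the universal cover of a semitorus of dimension $n$ is $\bC^n$; composing the covering $\bC^n \to T$ with $q$ realizes $\bC^n$ as the orbifold-universal cover of $(S, \Delta_S)$, with deck group an extension $\Gamma$ of $G$ by the period lattice $\Lambda = \pi_1(T)$, and similarly for $(S', \Delta_{S'})$.

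First I would establish the lifting step, which is the heart of the matter and which I expect to be the main obstacle. Given a quasi-algebraic $\cC$-morphism $f : (S, \Delta_S) \to (S', \Delta_{S'})$, the composition $T \xrightarrow{q} S \xrightarrow{f} S'$ is a $\cC$-morphism out of the pair $(T, 0)$ with empty boundary. Since $T$ is smooth and carries the trivial $\cC$-structure, the ramification inequalities built into the notion of a $\cC$-morphism force this composition to lift through the $\cC$-cover $q'$; passing to universal covers then produces a holomorphic map $F : \bC^n \to \bC^{n'}$ sitting in a commutative square over $f$ and equivariant for an induced homomorphism of deck groups $\Gamma \to \Gamma'$. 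The delicate point is that the lift must genuinely exist: the monodromy of $f$ on the open parts has to respect the orbifold fundamental groups, and this is exactly where one must use the multiplicity conditions defining a $\cC$-morphism rather than a mere morphism of the underlying varieties.

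With $F$ in hand, the remainder is classical rigidity together with descent. The lattice $\Lambda \subset \Gamma$ acts on $\bC^n$ by translations, and $F$ is $\Lambda$-equivariant over $\Gamma'$; quasi-algebraicity of $f$ guarantees that $F$ has controlled growth, so a Liouville-type argument applied to its derivatives, which descend to bounded holomorphic functions along the compact directions and hence are constant, shows that $F$ is affine. Its linear part must then carry $\Lambda$ into $\Lambda'$ and, more precisely, intertwine the two deck-group actions, so that $F$ descends to a morphism $\tilde f : T \to T'$ that is a group homomorphism composed with a translation and is equivariant with respect to the induced homomorphism $G \to G'$. Descending $\tilde f$ along $q$ and $q'$ recovers $f$ and exhibits it as coming from a group morphism, as claimed.
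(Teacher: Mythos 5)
Your proof hinges on the lifting step, and that step is not a fixable technicality: the statement you assert there is false, and the conclusion you draw from it is strictly stronger than Theorem~\ref{thm:9-4} and also false. Concretely, let $T' = E$ be an elliptic curve, let $G' \cong \bZ/2$ act on $E$ \emph{freely} by translation by a point of order two, and let $S' = E/G'$, so that $\Delta_{S'} = 0$ and $(S',0)$ is a $\cC$-semitoric pair. Take the source pair to be $(S',0)$ as well, with its tautological presentation $T = S'$, $G = \{1\}$, and let $f = \mathrm{id}$. Then $f \circ q = \mathrm{id}_{S'}$, and a lift through $q' \colon E \to S'$ would be a section of a connected étale double cover --- impossible. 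No ramification condition can ``force'' this lift, because the obstruction is the monodromy of $\pi_1(T)$, which is untouched by $\cC$-multiplicity data. This is exactly why the theorem asserts the factorization only after a finite étale cover $\psi^\circ \colon B^\circ \to A^\circ_1$ in Diagram~\eqref{eq:9-4-1}. Your final descent claim fails in the same example: here the lift from the universal cover is $F = \mathrm{id}_{\bC}$, whose linear part carries $\Lambda = \pi_1(S')$ \emph{outside} $\Lambda' = \pi_1(E)$, so $F$ descends only on the étale cover $E \to S'$, never to a morphism $T \to T'$.

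Retreating to the universal cover does not close the gap when $q'$ is branched, either: lifting $\bC^n \to S'$ across a branch divisor of multiplicity $m$ requires the pulled-back multiplicities to be \emph{divisible} by $m$, whereas a $\cC$-morphism only guarantees the inequality $\geq m$ (locally, $t \mapsto t^3$ is a legitimate $\cC$-morphism into a boundary point of multiplicity $2$, and it admits no local lift through $z \mapsto z^2$, even after étale base change). In the cases where divisibility does hold --- e.g.\ for maps of elliptic curves to $\bigl(\bP^1, \tfrac12(p_1+p_2+p_3+p_4)\bigr)$ it is forced by Riemann--Hurwitz --- this is a consequence of \emph{global} geometry, and proving it in general is essentially the theorem itself, so invoking it is circular. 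The paper sidesteps lifting entirely: it normalizes the fibre product $A^\circ_1 \times_{X^\circ_2} A^\circ_2$, compactifies the finite projection $\beta \colon C \to A_1$ by the analytic Zariski main theorem, and applies the Albanese-of-a-cover machinery to the log pair $(A_1, \Delta_{A_1})$; the dimension count $\dim \Alb(A_1,\Delta_{A_1},\beta)^\circ \leq h^0\bigl(C, \beta^*\Omega^1_{A_1}(\log \Delta_{A_1})\bigr) = \dim A_1$ makes $\psi^\circ$ finite, hence étale by Corollary~\ref{cor:3-19}, and the universal property in Definition~\ref{def:5-2} factors both projections through $B^\circ := \Alb(A_1,\Delta_{A_1},\beta)^\circ$. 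That is where compactness, quasi-algebraicity and the group structure genuinely enter. (A lesser issue: your Liouville argument for affineness of $F$ is calibrated to compact tori; for semitori one needs the quasi-algebraic rigidity of Proposition~\ref{prop:3-18}, proved via Lemma~\ref{lem:2-5} --- but this is minor compared to the lifting gap.)
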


Following Serre, we define the Albanese of a $\cC$-pair $(X,D)$ as a universal,
quasi-algebraic $\cC$-morphism from $(X,D)$ to a $\cC$-semitoric pair.

\subsubsection{The Albanese irregularity}
\approvals{Erwan & yes \\ Stefan & yes}

Given a $\cC$-pair $(X,D)$, it turns out that the existence of an Albanese is
tied to an invariant of independent interest, the ``Albanese irregularity''
\[
  q⁺_{\Alb}(X,D) ∈ ℕ ∪ \{ ∞ \}.
\]
The Albanese irregularity is bounded from above by the augmented irregularity
$q⁺(X,D)$, introduced in \cite[Sect.~6.1]{orbiAlb1}, which measures the
dimension of the space of adapted differentials on suitable high covers.  The
Albanese irregularity differs from the augmented irregularity in that it
considers only those adapted differentials that are induced by morphisms to
semitoric varieties.  Sections~\ref{sec:5}--\ref{sec:8} define and discuss the
Albanese irregularity and the associated ``Albanese of a cover'' in great
detail.  As one of our major results, we will prove near the end of this paper
that special pairs have bounded Albanese irregularity.

\begin{thm}[Precise statement in Corollary~\ref{cor:8-2} and Remark~\ref{rem:8-4}]
  If $(X,D)$ is special in the sense of Campana, then $q⁺_{\Alb}(X,D) ≤ \dim X$.
  \qed
\end{thm}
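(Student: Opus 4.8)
The plan is to argue by contradiction, reducing the assertion to a structural dichotomy for $\cC$-subpairs of semiabelian varieties. Recall from the construction of the ``Albanese of a cover'' that $q^+_{\Alb}(X,D)$ is the supremum, taken over the admissible covers $\gamma : \wtilde X \to X$ that enter its definition, of the dimensions $\dim A_\gamma$ of the semitoric targets $A_\gamma$ of the associated Albanese $\cC$-morphisms $\alpha_\gamma : (\wtilde X, \wtilde D) \to A_\gamma$. Since $\dim \wtilde X = \dim X$ for every finite cover, it therefore suffices to prove the cover-wise bound $\dim A_\gamma \leq \dim X$, and then to pass to the supremum.

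So first I would fix one such cover, suppress the index, and write $\wtilde X, \wtilde D, A, \alpha$. By the universal property characterising the Albanese, the image $\alpha(\wtilde X)$ generates $A$ as a group. Let $V := \overline{\alpha(\wtilde X)}$ be its Zariski closure, endowed with the $\cC$-structure induced from $(\wtilde X, \wtilde D)$; then $\dim V \leq \dim \wtilde X = \dim X$, and $V$ still generates $A$. The entire argument now hinges on recognising $V$ as a very special kind of $\cC$-subpair of the semiabelian variety $A$.

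For this I would invoke two inputs. The first is the cover-compatibility of Campana's specialness for the covers used in defining the augmented irregularity, which lets me conclude that $(\wtilde X, \wtilde D)$ is again special, and hence that its dominant image $V$, carrying the induced $\cC$-structure, is a special $\cC$-pair (a Bogomolov sheaf on the image would pull back to one upstairs). The second is the logarithmic, orbifold form of the Ueno--Kawamata structure theorem: \emph{a positive-dimensional $\cC$-subpair of a semiabelian variety is special if and only if it is a translate of a semiabelian subvariety}. Concretely, if the stabiliser $B := \{\, a \in A : a + V = V \,\}$ were a proper subgroup, the quotient $V/B \hookrightarrow A/B$ would be a positive-dimensional $\cC$-pair of log general type, and $\alpha$ would descend to a dominant $\cC$-morphism onto it, contradicting specialness. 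Combining the two inputs forces $V$ to be a translate of a semiabelian subvariety $A' \subseteq A$; as $V$ generates $A$ we obtain $A' = A$, so $V = A$ and $\dim A = \dim V \leq \dim X$. Taking the supremum over all admissible covers gives $q^+_{\Alb}(X,D) \leq \dim X$.

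The main obstacle is the interplay with the covers: one must verify that specialness genuinely descends to \emph{every} cover entering the definition of $q^+_{\Alb}(X,D)$, and that the orbifold divisor carried by the image $V$ is precisely the one for which the dichotomy applies, so that ``special image'' is read off the correct $\cC$-structure. Establishing the Ueno--Kawamata alternative inside the $\cC$-category --- that a non-translate generating subvariety really yields a $\cC$-morphism onto a general-type $\cC$-pair --- is the technical heart of the matter; once $V$ is known to be a translate, the concluding group-theoretic step is immediate.
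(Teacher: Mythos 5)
Your reduction to a cover-wise dominance statement, and your second input (the Ueno--Kawamata dichotomy: quotient the ambient semitorus by the stabiliser of the image and get a positive-dimensional piece of log general type) are both present in the paper's own proof of Theorem~\ref{thm:8-1}; that part of the skeleton matches. The proposal breaks at your \emph{first} input: specialness does \textbf{not} pass to the covers over which $q^+_{\Alb}(X,D)$ is defined. In Setting~\ref{set:5-1}, a cover is an arbitrary finite surjective morphism $\gamma : \what{X} \twoheadrightarrow X$, and the boundary it carries is only $\what{D} = \bigl(\gamma^* \lfloor D\rfloor\bigr)_{\red}$ --- there is no orbifold-\'etaleness assumption. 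Concretely: the $\cC$-pair $\bigl(\bP^1, \tfrac12\{0\}+\tfrac12\{1\}+\tfrac12\{2\}+\tfrac12\{\infty\}\bigr)$ is special (it is $\cC$-semitoric), its round-down is zero, so \emph{every} cover of it carries the empty boundary divisor; a double cover of $\bP^1$ branched at six further points is a genus-$2$ curve, and $(\what{X}, 0)$ is of general type, hence not special. So the claim that $(\what{X},\what{D})$ is again special is false, and with it the claim that its Albanese image is special. Your argument is only valid for the trivial cover, i.e.\ it proves $\dim \Alb(X,\lfloor D\rfloor)^{\circ} \leq \dim X$, which does not bound the \emph{augmented} invariant --- the whole content of the statement is the uniform bound over all covers.

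The paper's proof is built precisely around this failure, and it runs in the opposite direction. Assuming the Albanese morphism of some cover is \emph{not} dominant, it passes to the Galois closure, applies Kawamata's theorem on the quotient of $\Alb(X,D,\gamma)^{\circ}$ by the stabiliser of the image closure (Construction~\ref{cons:8-10} and Observation~\ref{obs:8-11}; this is your dichotomy), and then --- the step your proposal has no substitute for --- \emph{descends} the resulting $p$-form information to the original pair: the pulled-back forms are adapted by Property~\ref{il:5-2-1} of Definition~\ref{def:5-2} and Remark~\ref{rem:5-3}, one saturates inside $\Omega^{[p]}_{(X,D,\gamma)}$, takes $\cC$-symmetric powers, and forms the $G$-invariant push-forward
\[
  \bigl(\gamma_* \scr{L}^{[\otimes i]}\bigr)^G \;\subseteq\; \Sym^{[i]}_{\cC} \Omega^{[p]}_{(X,D,\Id_X)},
\]
which yields a rank-one Bogomolov sheaf on $(X,D)$ \emph{itself}, contradicting specialness of $(X,D)$ rather than of the cover. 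If you want to repair your argument, you must prove this descent of the general-type witness from $\what{X}$ down to the $\cC$-pair $(X,D)$; that is Steps 2--4 of the paper's proof of Theorem~\ref{thm:8-1}, and it is where the actual work (and the entire adapted-differential machinery) lies.
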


In spite of the notion's obvious importance, we do not fully understand the
geometric meaning of the (potentially strict) inequality $q⁺_{\Alb}(X,D) ≤
q⁺(X,D)$.  Section~\ref{sec:11-2} gathers a number of open questions.

\subsubsection{The Albanese of a $\cC$-pair}
\approvals{Erwan & yes \\ Stefan & yes}

With all preparations in place, the main result of our paper is now formulated
as follows.

\begin{thm}[Precise statement in Theorem~\ref{thm:10-2} and Proposition~\ref{prop:10-5}]
  Let $(X,D)$ be a nc $\cC$-pair, where $X$ is a compact Kähler manifold.  Then,
  the following statements are equivalent.
  \begin{enumerate}
    \item An Albanese of the $\cC$-pair $(X,D)$ exists.

    \item The Albanese irregularity is finite, $q⁺_{\Alb}(X,D) < ∞$.  \qed
  \end{enumerate}
\end{thm}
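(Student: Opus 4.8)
The plan is to prove the two implications separately, treating $(1) \Rightarrow (2)$ as a formal consequence of the universal property and reserving the real work for the construction in $(2) \Rightarrow (1)$. For $(1) \Rightarrow (2)$, suppose an Albanese $\alb : (X,D) \to (A^\circ, A)$ exists. By definition every quasi-algebraic $\cC$-morphism from $(X,D)$ to a $\cC$-semitoric pair factors through $\alb$; in particular, every adapted differential that is induced by a morphism to a semitoric variety is a pullback along $\alb$ of a differential on $A$. Since $A^\circ$ is a finite-dimensional semitoric variety (a $\cC$-semitoric pair being a finite quotient of one), the space of such differentials has dimension at most $\dim A^\circ$, and it is in fact realised by $\alb$ itself. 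Hence $q^{+}_{\Alb}(X,D) = \dim A^\circ < \infty$.

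For the converse $(2) \Rightarrow (1)$, I would build the Albanese from the \emph{Albanese of a cover} developed in Sections~\ref{sec:5}--\ref{sec:8}. Recall that $q^{+}_{\Alb}(X,D)$ is computed as a supremum, taken over the directed system of adapted covers $\gamma : (Y,\Delta) \to (X,D)$, of the dimension of the largest semitoric quotient of the logarithmic Albanese $\Alb(Y,\Delta)$ through which the relevant $\cC$-morphisms factor. The hypothesis $q^{+}_{\Alb}(X,D) < \infty$ is then exactly the statement that this supremum is attained: there is a single cover $\gamma_0 : (Y_0,\Delta_0) \to (X,D)$, which I may take to be Galois with group $G$, whose associated semitoric variety $A_0^\circ$ already realises the full irregularity, so that no further cover contributes new directions. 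The Kähler hypothesis on $X$ enters here, guaranteeing that the logarithmic Albanese of each cover is well behaved and that its dimension is governed by $h^0$ of logarithmic one-forms.

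With such a maximal Galois cover in hand, I would descend. The group $G$ acts on the semitoric variety $A_0^\circ$ compatibly with the logarithmic Albanese morphism $Y_0 \setminus \Delta_0 \to A_0^\circ$, and I form the quotient $A^\circ := A_0^\circ/G$ with its natural structure as a $\cC$-semitoric pair, as explained in Section~\ref{sec:9}. The equivariant morphism then descends to a quasi-algebraic $\cC$-morphism $\alb : (X,D) \to (A^\circ, A)$, and the verification that this is a bona fide $\cC$-morphism is controlled by Theorem~\ref{thm:9-4}, which says that the morphisms in play are of group-theoretic origin.

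The crux of the argument, and the step I expect to be the main obstacle, is the universal property. Given an arbitrary quasi-algebraic $\cC$-morphism $f : (X,D) \to (B^\circ, B)$ to a $\cC$-semitoric pair, I must produce a unique factorization through $\alb$. The strategy is to pull $f$ back to $(Y_0,\Delta_0)$, where it becomes an honest logarithmic morphism to a semitoric variety and hence factors through $\Alb(Y_0,\Delta_0)$ by the classical universal property; by the maximality of $\gamma_0$ this factorization already lands in $A_0^\circ$, and Theorem~\ref{thm:9-4} forces it to be $G$-equivariant, so that it descends to the desired factorization through $\alb$. The delicate point is that a \emph{single} cover $\gamma_0$ must serve simultaneously for all test morphisms $f$: one has to know that enlarging $\gamma_0$ to accommodate a given $f$ cannot increase the irregularity, which is precisely what finiteness of $q^{+}_{\Alb}(X,D)$ secures. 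Making this simultaneity, together with the attendant descent and uniqueness of factorizations, uniform across all $f$ is where the technical heart of the proof lies.
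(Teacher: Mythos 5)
Your overall architecture for the implication $(2)\Rightarrow(1)$ — pick a Galois cover realising the (attained) supremum, take the Galois quotient of its Albanese as the candidate, and verify the universal property by pulling test morphisms back to covers — is indeed the paper's strategy for Theorem~\ref{thm:10-2}. But two essential ingredients are missing, and without them the argument fails. First, a test morphism $f^\circ : (X^\circ,D^\circ) \to (B^\circ,\Delta^\circ_B)$ does \emph{not} become a morphism to a semitoric variety after pull-back to $Y_0$: the target is still the quotient $A^\circ/G'$, and lifting to the semitorus $A^\circ$ forces you to pass to a component of the normalized fibre product $A \times_B X$, i.e.\ to a \emph{new} cover $\rho$ that is in general not dominated by $\gamma_0$. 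One must then replace $\gamma_0$ by a common refinement and know that this replacement changes neither the quotient pair nor the candidate morphism. Dimension-maximality of $\gamma_0$ only yields, via Corollary~\ref{cor:3-19}, that the comparison morphism $B^\circ_{\delta} \to B^\circ_{\gamma_0}$ between the two candidates is \emph{finite}; it could still be a multi-sheeted covering, and then no factorization through $B^\circ_{\gamma_0}$ exists. The paper closes this gap by a second minimization in Choice~\ref{choice:10-10} — among covers of maximal Albanese dimension it also minimizes the number of connected components of the typical fibre of $a^\circ_{\delta}$ — which, combined with étaleness and the branch-locus statement of Proposition~\ref{prop:5-10}, gives connected fibres, hence a biholomorphism (Claim~\ref{claim:10-12}) and then equality of the orbifold boundaries (Claim~\ref{claim:10-13}). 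Your ``enlarging $\gamma_0$ cannot increase the irregularity'' does not substitute for this. Second, uniqueness of the factorization — half of the universal property — is not argued at all; in the paper it needs its own construction (Step~4b), lifting both candidate factorizations to morphisms of semitori via Theorem~\ref{thm:9-4}, passing to a further common cover, and using the fact that the image of the Albanese morphism generates the Albanese as a group (Proposition~\ref{prop:5-4}) to align the two lifts by a single Galois element.

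The direction $(1)\Rightarrow(2)$ is also not the formality you make it out to be. The quantity $q^+_{\Alb}(X,D)$ is a supremum of $\dim \Alb(X,D,\gamma)^\circ$ over covers $\gamma$, and the Albanese of a cover receives its morphism from $\what{X}^\circ$, \emph{not} from $X^\circ$; the universal property of a hypothetical Albanese of $(X,D)$ says nothing directly about morphisms from covers, so the assertion that ``every adapted differential induced by a morphism to a semitoric variety is a pullback along $\alb$'' does not apply to the objects that compute $q^+_{\Alb}$. To bound $\dim \Alb(X,D,\gamma)^\circ$ by $\dim A^\circ$, the paper (Proposition~\ref{prop:10-5}) must pass to the Galois closure (Lemma~\ref{lem:5-5}), descend the cover's Albanese morphism to a $\cC$-morphism $(X^\circ,D^\circ) \to \Alb^\circ(X,D,\gamma)/G$ (Proposition~\ref{prop:5-10}), factor it through the hypothetical Albanese, lift the resulting morphism of $\cC$-semitoric pairs back to a quasi-algebraic morphism of semitori $\Phi^\circ$ using Theorem~\ref{thm:9-4}, and finally invoke generation of $\Alb(X,D,\gamma)^\circ$ by the image of its Albanese morphism to see that $\Phi^\circ$ would have to be surjective — impossible when $\dim A^\circ < \dim \Alb(X,D,\gamma)^\circ$. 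The lifting step and the generation argument are the actual content here; without them your dimension count is a restatement of the conclusion rather than a proof.
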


We speculate that if $q⁺_{\Alb}(X,D) = ∞$, it might still make sense to define
an Albanese, either with a weaker universal property, or in the broader setup of
ind-varieties.  We refer to Section~\ref{sec:11-1} for a discussion.

\subsubsection{Preview: Pairs with high irregularity}
\approvals{Erwan & yes \\ Stefan & yes}
\label{sec:1-1-4}

In the forthcoming paper \cite{orbiAlb3}, we develop the beginnings of a
Nevanlinna theory for $\cC$-pairs, with the goal to study hyperbolicity
properties of pairs with high irregularity.  A first application generalizes the
classic Bloch-Ochiai Theorem~\ref{thm:1-2} to the setting of $\cC$-pairs: If
$q⁺_{\Alb}(X, D) > \dim X$, then every $\cC$-entire curve $(ℂ,0) → (X,D)$ is
algebraically degenerate.  This theorem explicitly includes the case where
$q⁺_{\Alb}(X, D) = ∞$.  It establishes Conjecture~\ref{conj:1-4} for some
non-special varieties.

\subsection{Acknowledgements}
\approvals{Erwan & yes \\ Stefan & yes}

We would like to thank Oliver Bräunling, Lukas Braun, Michel Brion, Frédéric
Campana, Johan Commelin, Andreas Demleitner, Constantin Podelski and Wolfgang
Soergel for long discussions.  Pedro Núñez pointed us to several mistakes in
early versions of the paper.  Jörg Winkelmann patiently answered our questions
throughout the work on this project.

The work on this paper was completed while Stefan Kebekus visited Zsolt
Patakfalvi at the EPFL and Erwan Rousseau at the
\foreignlanguage{french}{Université de Bretagne Occidentale}.  He would like to
thank Patakfalvi and Rousseau for hospitality and for many discussions.


\phantomsection\addcontentsline{toc}{part}{Preparation}

%
%
\svnid{$Id: 02-notation.tex 905 2024-09-27 08:02:49Z kebekus $}
\selectlanguage{british}

\section{Notation and standard facts}
\subversionInfo

\subsection{Global conventions}
\approvals{Erwan & yes \\ Stefan & yes}

This paper works in the category of complex analytic spaces, though all the
material in this paper will work in the complex-algebraic setting, often with
less involved definitions and proofs.  With very few exceptions, we follow the
notation of the standard reference texts \cite{CAS, DemaillyBook, MR3156076}.
An \emph{analytic variety} is a reduced, irreducible complex space.  For
clarity, we refer to holomorphic maps between analytic varieties as
\emph{morphisms} and reserve the word \emph{map} for meromorphic mappings.

We use the language of $\cC$-pairs, as surveyed in \cite{orbiAlb1}, and freely
refer to definitions and results from \cite{orbiAlb1} throughout the present
text.  The reader might wish to keep a hardcopy within reach.

\subsection{Quasi-algebraic morphisms}
\approvals{Erwan & yes \\ Stefan & yes}

Let $X$ and $Y$ be normal analytic varieties.  In contrast to the algebraic
setting, it is generally \emph{not} possible to extend a morphism between
Zariski open subsets to a meromorphic map between $X$ and $Y$: the exponential
map does not extend to a meromorphic map $ℙ¹ \dasharrow ℙ¹$.  Morphisms that do
extend meromorphically will be of special interest.  Following \cite{MR3156076},
we refer to them as \emph{quasi-algebraic}.

\begin{defn}[Quasi-algebraic morphism]\label{def:2-1}%
  Let $(X,D_X)$ and $(Y,D_Y)$ be pairs where $X$ and $Y$ are compact.  A
  morphism between the open parts, $X° → Y°$, is called \emph{quasi-algebraic
  with respect to the compactifications $X$ and $Y$} if it extends to a
  meromorphic map $X \dasharrow Y$.
\end{defn}

\begin{notation}[Quasi-algebraic morphisms to $ℂ$ and $ℂ^*$]
  Recall that $ℂ$ and $ℂ^*$ admit a unique normal compactification to $ℙ¹$.  If
  $(X,D_X)$ is a pair where $X$ is compact, it is therefore meaningful to say
  that a morphism $X° → ℂ$ or $X° → ℂ^*$ is quasi-algebraic.  Analogously, it
  makes sense to say that a function in $𝒪_X(X°)$ or in $𝒪^*_X(X°)$ is
  quasi-algebraic.
\end{notation}

\begin{defn}[Family of quasi-algebraic morphisms]\label{def:2-3}%
  In the setting of Definition~\ref{def:2-1}, let $Z$ be any normal analytic
  variety.  A \emph{family of quasi-algebraic morphisms over $Z$} is a morphism
  $X° ⨯ Z → Y°$ that extends to a meromorphic map $X⨯ Z \dasharrow Y$.
\end{defn}

For lack of an adequate reference, we include proofs of the following elementary
facts.

\begin{lem}[Elementary properties]\label{lem:2-4} %
  Let $(X,D_X)$, $(Y,D_Y)$ and $(Z,D_Z)$ be pairs, where $X$, $Y$ and $Z$ are
  compact.  Assume that a sequence of morphism is given,
  \[
    \begin{tikzcd}[column sep=2cm]
      X° \arrow[r, "α°"'] \arrow[rr, "γ°", bend left=15] & Y° \arrow[r, "β°"'] & Z°,
    \end{tikzcd}
  \]
  where $α°$ is quasi-algebraic.  Then, the following holds.
  \begin{enumerate}
    \item\label{il:2-4-1} If $β°$ is quasi-algebraic, then $γ°$ is
    quasi-algebraic.

    \item\label{il:2-4-2} If $α°$ is dominant and $γ°$ is quasi-algebraic, then
    $β°$ is quasi-algebraic.
  \end{enumerate}
\end{lem}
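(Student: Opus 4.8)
The plan is to argue entirely with the graphs of the meromorphic extensions and to extract analyticity from Remmert's proper mapping theorem, which applies because $X$, $Y$ and $Z$ are compact. Recall that $\alpha^\circ$ is quasi-algebraic exactly if the closure $\overline{\Gamma_{\alpha^\circ}}$ of its graph in the compact product $X\times Y$ is analytic and projects bimeromorphically to $X$; in both statements the real content is thus the analyticity of such a closure. Let me write $\Gamma_\alpha\subseteq X\times Y$ and (depending on the part) $\Gamma_\beta\subseteq Y\times Z$ or $\Gamma_\gamma\subseteq X\times Z$ for the graphs of the meromorphic extensions that are assumed to exist, and let $p_{XY},p_{XZ},p_{YZ},p_X,p_Y$ denote the projections from (subsets of) $X\times Y\times Z$. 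Since $\alpha^\circ$, $\beta^\circ$, $\gamma^\circ$ are honest morphisms on the open parts, each of these graphs restricts over the relevant open set to the genuine graph of the corresponding morphism, e.g.\ $\Gamma_\alpha\cap(X^\circ\times Y)=\Gamma_{\alpha^\circ}$; I will use this repeatedly in the fibre computations below.

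For~\ref{il:2-4-1} I would form, inside the compact space $X\times Y\times Z$, the analytic set $W:=p_{XY}^{-1}(\Gamma_\alpha)\cap p_{YZ}^{-1}(\Gamma_\beta)$. A direct check over $X^\circ$ shows $W\cap(X^\circ\times Y\times Z)$ to be exactly the graph of $(\alpha^\circ,\gamma^\circ)$: for $x\in X^\circ$ one has $y=\alpha^\circ(x)\in Y^\circ$, and since $\Gamma_\beta$ is the graph of $\beta^\circ$ over $Y^\circ$ this forces $z=\beta^\circ(y)=\gamma^\circ(x)$. The projection $p_{XZ}\colon W\to X\times Z$ is proper, so by Remmert's theorem its image $A:=p_{XZ}(W)$ is analytic, and the computation just made gives $A\cap(X^\circ\times Z)=\Gamma_{\gamma^\circ}$. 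Then $\overline{\Gamma_{\gamma^\circ}}$ is the unique irreducible component of $A$ meeting $X^\circ\times Z$ in the dense subset $\Gamma_{\gamma^\circ}$; it is analytic, proper and bijective over $X^\circ$, hence bimeromorphic onto $X$, and is therefore the graph of a meromorphic extension of $\gamma^\circ$.

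For~\ref{il:2-4-2} I would instead form the analytic fibre product $V:=p_{XY}^{-1}(\Gamma_\alpha)\cap p_{XZ}^{-1}(\Gamma_\gamma)$ and take the unique irreducible component $V_0$ dominating $X$; over $X^\circ$ it is the graph of $(\alpha^\circ,\gamma^\circ)$, so $p_X\colon V_0\to X$ is bimeromorphic. By Remmert, $B_0:=p_{YZ}(V_0)$ is an irreducible analytic subset of $Y\times Z$, and since $\alpha^\circ$ is dominant its image under $p_Y$ contains the dense set $\alpha^\circ(X^\circ)$, so $p_Y\colon B_0\to Y$ is surjective. Here is the crucial point where dominance and the relation $\gamma^\circ=\beta^\circ\circ\alpha^\circ$ combine: for a general $y=\alpha^\circ(x)\in Y^\circ$, every point $x'$ of the (possibly positive-dimensional) fibre $\alpha^{-1}(y)$ lying in $X^\circ$ satisfies $\gamma^\circ(x')=\beta^\circ(\alpha^\circ(x'))=\beta^\circ(y)$, so the $Z$-coordinate is pinned to the single value $\beta^\circ(y)$. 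Hence the general fibre of $p_Y\colon B_0\to Y$ is one point, $p_Y$ is bimeromorphic, and comparing dimensions gives $B_0=\overline{\Gamma_{\beta^\circ}}$, the graph of a meromorphic extension of $\beta^\circ$.

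The main obstacle is not analyticity — Remmert's theorem disposes of that once compactness is in place — but \emph{single-valuedness} of the extension: the projected image is a priori only a correspondence, and one must show its fibres over the base are generically reduced to a point. In~\ref{il:2-4-1} this is automatic because $\alpha^\circ$ is a morphism on $X^\circ$, but in~\ref{il:2-4-2} it is precisely the step that consumes the dominance hypothesis: one has to rule out that a contracting fibre of $\alpha$ spreads out the $Z$-coordinate, and this is prevented exactly by the factorization $\gamma^\circ=\beta^\circ\circ\alpha^\circ$, which keeps $\gamma^\circ$ constant along $\alpha$-fibres. The routine points still to be verified carefully are that the graphs really do restrict to honest graphs over the open parts (so that the fibre computations are valid) and that the dominating component has the expected dimension, upgrading the inclusion $\overline{\Gamma_{\beta^\circ}}\subseteq B_0$ to an equality.
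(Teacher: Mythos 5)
Your proof is correct and takes essentially the same route as the paper's: the paper replaces $X$ and $Y$ by bimeromorphic models on which the meromorphic extensions of $\alpha^\circ$ and $\gamma^\circ$ become morphisms and then applies Remmert's proper mapping theorem to the image of $\alpha\times\gamma$ in $Y\times Z$, using dominance and the factorization $\gamma^\circ=\beta^\circ\circ\alpha^\circ$ to see that this image is bimeromorphic to $Y$ --- which is precisely your $B_0=p_{YZ}(V_0)$, since your component $V_0\subseteq X\times Y\times Z$ is exactly such a model. The only differences are cosmetic: you work with graph intersections in the triple product instead of resolving first, and you also write out part~(1), which the paper treats as too elementary to prove.
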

\begin{proof}
  Only \ref{il:2-4-2} will be shown.  Replacing $X$ and $Y$ by suitable
  bimeromorphic models, we may assume that there exists a commutative diagram as
  follows,
  \[
    \begin{tikzcd}[column sep=2cm]
      X \arrow[r, two heads, "α\text{, surjective}"'] \arrow[rr, "γ", bend left=15] & Y \arrow[r, dotted, "∃?\:β"'] & Z \\
      X° \arrow[rr, "γ°"', bend right=15] \arrow[r, "α°\text{, dominant}"] \arrow[u, hook] & Y° \arrow[r, "β°"] \arrow[u, hook] & Z°.  \arrow[u, hook]
    \end{tikzcd}
  \]
  The image $Γ ⊂ Y⨯Z$ of the product morphism $α⨯γ : X → Y⨯Z$ is analytic by the
  proper mapping theorem.  Commutativity of the diagram guarantees that $Γ$ is
  bimeromorphic to $Y$, and hence the graph of the desired meromorphic map $β :
  Y \dasharrow Z$.
\end{proof}

Quasi-algebraic morphisms to $ℂ^*$ enjoy the following strong rigidity property.

\begin{lem}[Families of quasi-algebraic morphisms to $ℂ^*$]\label{lem:2-5}%
  Let $(X,D_X)$ be a pair where $X$ is compact, let $Z$ be any normal analytic
  variety and let $φ° : X°⨯Z → ℂ^*$ be a family of quasi-algebraic morphisms
  over $Z$.  Then, there exist functions $f° ∈ 𝒪^*_X(X°)$ and $g ∈ 𝒪^*_Z(Z)$
  such that the equality $φ°(x,z) = f°(x)·g(z)$ holds for every $(x,z) ∈ X°⨯ Z$.
\end{lem}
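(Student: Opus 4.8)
The plan is to prove the factorisation by showing that the divisor of $φ°(·,z)$, regarded as a meromorphic function on the compactification $X$, does not depend on the parameter $z$. Granting this, any two parameter values yield functions on $X$ with the same divisor, hence differing by a multiplicative constant, and the product decomposition drops out.

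First I would fix notation: write $D := X ∖ X°$, with (codimension-one) irreducible components $D_1, …, D_k$, and let $Φ : X ⨯ Z \dasharrow ℙ¹$ be the meromorphic extension of $φ°$ guaranteed by Definition~\ref{def:2-3}. Over the dense open set $X° ⨯ Z$ the map $Φ$ is the genuine $ℂ^*$-valued morphism $φ°$, so every fibre $X ⨯ \{z\}$ meets the open locus where $Φ$ is a morphism (namely in $X° ⨯ \{z\}$) and is therefore not contained in the indeterminacy locus of $Φ$. Restriction then produces, for each $z$, a meromorphic function $Φ_z := Φ|_{X ⨯ \{z\}}$ on $X$ extending the quasi-algebraic morphism $φ°_z := φ°(·,z) : X° → ℂ^*$. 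Since $φ°_z$ is finite and nowhere vanishing on $X°$, the divisor $\operatorname{div}(Φ_z)$ is supported on $D$.

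The heart of the argument is a rigidity statement for these divisors, and this is the step I expect to require the most care. Because $Φ$ is $ℂ^*$-valued over $X° ⨯ Z$, its divisor on $X ⨯ Z$ is supported on $D ⨯ Z$, whose codimension-one irreducible components (using that $Z$ is irreducible) are precisely the $D_i ⨯ Z$. Hence $\operatorname{div}(Φ) = \sum_i n_i (D_i ⨯ Z)$ for suitable integers $n_i$; that is, $\operatorname{div}(Φ)$ is pulled back under the projection $X ⨯ Z → X$ from the single divisor $E := \sum_i n_i D_i$. Each fibre $X ⨯ \{z\}$ meets every $D_i ⨯ Z$ properly, in the copy of $D_i$, so restriction commutes with the formation of divisors and I obtain $\operatorname{div}(Φ_z) = E$ for all $z$ — the divisor does not move with $z$. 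Verifying this proper-intersection and restriction bookkeeping cleanly is the main obstacle; everything else is formal.

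To finish, I would fix a base point $z_0 ∈ Z$ and set $f° := φ°_{z_0}$, which lies in $𝒪^*_X(X°)$ and is quasi-algebraic since it extends to $Φ_{z_0}$. For an arbitrary $z$, the meromorphic function $Φ_z / Φ_{z_0}$ on the compact connected normal variety $X$ has divisor $E - E = 0$, hence is holomorphic and nowhere zero, hence equal to a constant $g(z) ∈ ℂ^*$. This gives $φ°(x,z) = f°(x)·g(z)$ on $X° ⨯ Z$. Evaluating at any $x_0 ∈ X°$ shows $g(z) = φ°(x_0,z)/f°(x_0)$, exhibiting $g$ as the restriction of $φ°$ to $\{x_0\} ⨯ Z → ℂ^*$ up to the nonzero constant $f°(x_0)$; in particular $g ∈ 𝒪^*_Z(Z)$, which completes the factorisation.
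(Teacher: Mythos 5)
Your proposal is correct and follows essentially the same route as the paper: extend meromorphically, observe that the divisor of the extension is supported on $(X\setminus X°)\times Z$ and is hence of product form (independent of the parameter), and then conclude by comparing with the slice at a base point $z_0$ and noting the ratio is a nowhere-vanishing constant on each fibre. The only cosmetic difference is that you run the ratio argument fibre by fibre and obtain holomorphy of $g$ by evaluating at a point $x_0 \in X°$, whereas the paper forms the global quotient $G = \varphi/F$ on $X \times Z$ and descends it to the normal variety $Z$ using compactness of the fibres of the projection.
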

\begin{proof}
  Extend $φ°$ to a meromorphic map $φ : X⨯Z → ℙ¹$ and view $φ$ as a meromorphic
  function.  Choosing a point $z_0 ∈ Z$, we would like to compare $φ$ to the
  meromorphic function $F(x,z) := φ(x,z_0)$.  For that, consider the associated
  principal divisors, $\operatorname{div} φ$ and $\operatorname{div} F$ in
  $\Div(X⨯ Z)$.  Both divisors are supported on $(X∖X°)⨯Z$ and are hence of
  product form.  Their restrictions to $X⨯\{z_0\}$ agree.  It follows that the
  two divisors are equal, so that $G := φ/F$ is a holomorphic function on $X ⨯
  Z$ without zeros or poles.  The function $G$ is constant on the (compact!)
  fibres of the projection map $X⨯Z → Z$ and hence descends to a function $g ∈
  𝒪^*_Z(Z)$ on the normal space $Z$.  To conclude, set $f°(x) := φ°(x,z_0)$.
\end{proof}

%
%
\svnid{$Id: 03-semitori.tex 942 2024-10-10 16:13:56Z kebekus $}
\selectlanguage{british}

\section{Semitoric varieties, quasi-algebraic morphisms and groups}
\subversionInfo
\approvals{Erwan & yes \\ Stefan & yes}
\label{sec:3}

The Albanese of a compact Kähler manifold is a compact complex torus.  We will
recall in Section~\ref{sec:4} that the Albanese of a logarithmic pair is a more
complicated object: a semitorus together with a preferred bimeromorphic
equivalence class of a compactification.  For the reader's convenience, we
recall the relevant notions and prove a number of elementary statements that are
not readily found in the literature.

We follow conventions and the language of the textbook \cite{MR3156076} and
refer the reader to \cite[Sect.~4 and 5]{MR3156076} for details, proofs and
references to the original literature.

\begin{defn}[\protect{Semitorus, presentation, \cite[Def.~5.1.5 and Sect.~5.1.5]{MR3156076}}]\label{def:3-1}%
  A \emph{semitorus} is a connected commutative complex Lie group $A°$ that
  admits a surjective Lie group morphism $π° : A° \twoheadrightarrow T$, where
  $T$ is a compact complex torus and $\ker π° ≅ (ℂ^*)^{⨯•}$.  Lie group
  morphisms of this form are called \emph{presentations} of the semitorus $A°$.
\end{defn}

\begin{rem}
  Semitori also appear under the name \emph{quasi-tori} in the literature,
  \cite[p.~119]{KobayashiGrundlehren}.  Presentations are not unique.  A given
  semitorus might allow two different presentations whose associated compact
  complex tori are hugely different.
\end{rem}

\subsection{Semitoric varieties}
\approvals{Erwan & yes \\ Stefan & yes}

Semitoric varieties are the analytic analogues of Abelian varieties, complex
tori and toric varieties.  The following definition is taken almost verbatim
from \cite{MR3156076}.

\begin{defn}[\protect{Semitoric variety, \cite[Def.~5.3.3]{MR3156076}}]\label{def:3-3}%
  A \emph{semitoric variety} is a compact analytic variety $A$ together with a
  holomorphic action $א° ↺ A$ of a semitorus $א°$ such that the following holds.
  \begin{enumerate}
    \item\label{il:3-3-1} There is a dense open orbit $A° ⊂ A$ on which $א°$
    acts freely.

    \item\label{il:3-3-2} There exists a presentation $π° : א°
    \twoheadrightarrow T$ with the following properties.
    \begin{itemize}
      \item Using \ref{il:3-3-1} to identify $A°$ with $א°$, the morphism $π°$
      extends to an $א°$-equivariant morphism $π : A \twoheadrightarrow T$.

      \item For every point $t ∈ T$ the fibre $A_t = π^{-1}(t)$ is isomorphic to
      a smooth toric variety.  In other words, $A_t$ admits the structure of a
      smooth algebraic variety such that the action of $\ker π°$ on $A_t$ is
      algebraic.
    \end{itemize}
  \end{enumerate}
\end{defn}

\begin{explanation}[First item of \ref{il:3-3-2}]
  Extendability of $π°$ in the first item of \ref{il:3-3-2} is independent of
  the identification.  The extension is unique if it exists.
\end{explanation}

\begin{explanation}[Second item of \ref{il:3-3-2}]
  Spelled out in detail, the second item of \ref{il:3-3-2} requires that $A_t$
  admits the structure of a smooth algebraic variety such that the action of
  $\ker π°$ on $A_t$ is algebraic.  Item~\ref{il:3-3-2} immediately implies that
  $A$ is smooth and $A ∖ A°$ is an snc divisor.
\end{explanation}

\begin{warning}
  An identification of $A°$ with a Lie group is not part of the data that
  defines a semitoric variety.
\end{warning}

\begin{notation}[Semitoric compactifications]%
  For brevity of notation, we will frequently write ``Let $A°$ be a semitorus,
  and let $A° ⊂ A$ be a semitoric compactification…'' to say that $A$ is a
  compactification where the action on $A°$ on itself by left multiplication
  extends to $A$ in a way that makes $A$ with the action $A° ↺ A$ a semitoric
  variety.
\end{notation}

\begin{notation}[Semitoric varieties]%
  For brevity of notation, we will frequently write ``Let $A° ⊂ A$ be a
  semitoric variety…'' to say that we consider a compact analytic variety $A$
  and a dense open subset $A° ⊂ A$ where $A°$ is biholomorphic to a semitorus
  $א°$ that acts holomorphically on $A$ such that
  \begin{itemize}
    \item the subset $A° ⊂ A$ is an orbit on which $א°$ acts freely, and

    \item the analytic variety $A$ together with the action of $א°$ is a
    semitoric variety in the sense of Definition~\ref{def:3-3}.
 \end{itemize}
\end{notation}

\begin{notation}[Semitoric varieties as logarithmic pairs]
  Given a semitoric variety $A° ⊂ A$, we will often consider the associated
  logarithmic pair $(A, Δ)$ and write $Ω^p_A (\log Δ)$, with the implicit
  understanding that $Δ := A ∖ A°$ is the difference divisor.  If there is more
  than one semitoric variety involved in the discussion, we write $(A, Δ_A)$ for
  clarity.
\end{notation}

\begin{notation}[Quasi-algebraic morphisms]
  Given two semitoric varieties, $A° ⊂ A$ and $B° ⊂ B$, we follow
  Definition~\ref{def:2-1} and say that a morphism $A° → B°$ is quasi-algebraic
  if it extends to a meromorphic map $A \dasharrow B$.  Along similar lines, if
  $(X,D)$ is any pair where $X$ is compact, it makes sense to say that morphisms
  between the open parts, $A° → X°$ and $X° → A°$, are quasi-algebraic.
\end{notation}

\subsection{Elementary properties}
\approvals{Erwan & yes \\ Stefan & yes}
\label{sec:3-2}

For later reference, we state several facts about quasi-algebraic morphisms
between semitoric varieties.  The proofs are tedious but mostly elementary, and
left to the reader.  In fancy words, Facts~\ref{fact:3-11}--\ref{fact:3-13} can
be seen to give an equivalence of categories between presentations of semitori
and bimeromorphic equivalence classes of semitoric compactifications.

\begin{fact}[Uniqueness of presentation]\label{fact:3-11}%
  The presentation of $\:א°$ in Definition~\ref{def:3-3} is unique.  More
  precisely, there exists a unique presentation $π° : א° = A° \twoheadrightarrow
  T$ that extends to an $א°$-equivariant fibre bundle $π : A \twoheadrightarrow
  T$.  \qed
\end{fact}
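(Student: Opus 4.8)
The plan is to prove that the extending presentation is already determined by the compact variety $A$ alone, without reference to any group-theoretic data. Concretely, I would take two $א°$-equivariant fibre bundles $π_1 : A \twoheadrightarrow T_1$ and $π_2 : A \twoheadrightarrow T_2$, each extending a presentation of $A°$ in the sense of Definition~\ref{def:3-3}, and show that they differ by a unique isomorphism $T_1 \cong T_2$ compatible with the two projections. Since the existence of at least one such presentation is part of the defining data of a semitoric variety, only this rigidity remains to be shown, and its decisive input is the structure of the fibres.

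First I would establish a rigidity statement: any morphism from a fibre of $π_1$ to a compact complex torus is constant. Indeed, by Definition~\ref{def:3-3} every fibre $F = π_1^{-1}(t)$ is a compact smooth toric variety, and smooth complete toric varieties are simply connected. Hence the restriction $π_2|_F : F → T_2$ lifts through the universal covering $ℂ^{\dim T_2} \twoheadrightarrow T_2$ to a holomorphic map $F → ℂ^{\dim T_2}$, which must be constant because $F$ is compact. Consequently $π_2$ is constant on every fibre of $π_1$.

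Next I would descend the map. Because $π_1$ is a fibre bundle with connected fibres over the smooth base $T_1$ and $π_2$ is constant along these fibres, $π_2$ factors as $π_2 = h ◦ π_1$ for a holomorphic map $h : T_1 → T_2$; in a local product chart $U ⨯ F$ the composite depends only on the $U$-coordinate, and these local descents glue. By symmetry one obtains $h' : T_2 → T_1$ with $π_1 = h' ◦ π_2$, and surjectivity of the two projections forces $h' ◦ h = \Id_{T_1}$ and $h ◦ h' = \Id_{T_2}$, so $h$ is an isomorphism. Restricting to $A°$ yields $π°_2 = h ◦ π°_1$; since the $π°_i$ are surjective group homomorphisms, the identity $h(π°_1(a)·π°_1(b)) = π°_2(a·b)$ shows that $h$ is a group isomorphism. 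Thus the two presentations coincide under $h$, and in particular $\ker π°_1 = \ker π°_2$.

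The one step carrying genuine content is the rigidity statement; everything downstream is formal diagram-chasing. I expect the only points requiring care to be the simple connectivity of the possibly non-projective smooth complete toric fibres, which I would quote from the standard theory of toric varieties rather than reprove, and the holomorphicity of the descended map $h$, for which the local product structure of the fibre bundle is exactly what is needed. Arguing directly with the fibres in this way has the further advantage of avoiding any appeal to an Albanese variety of $A$, which may be unavailable because $A$ is not assumed to be Kähler.
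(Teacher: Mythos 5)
The paper offers no proof of Fact~3.11 to compare against: Section~3.2 explicitly declares the proofs of Facts~3.11--3.13 ``tedious but mostly elementary, and left to the reader.'' Judged on its own terms, the core of your argument is sound, and it is in the same spirit as the paper's proof of the implication (1)$\Rightarrow$(2) of Fact~3.13, where the analogous rigidity is deduced from the fact that compact tori contain no rational curves (toric fibres being rationally connected) rather than from simple connectivity. Your individual steps are correct: smooth complete toric varieties are simply connected, so any holomorphic map from a fibre to a compact torus lifts to the universal cover and is constant; the descent through the locally trivial bundle $\pi_1$ is valid; and the computation upgrading $h$ to a group isomorphism via surjectivity of $\pi_1^\circ$ is exactly right.

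There is, however, a scope gap. Your symmetric argument needs the fibres of \emph{both} $\pi_1$ and $\pi_2$ to be toric, and you justify this by ``by Definition~3.3 every fibre of $\pi_1$ is a compact smooth toric variety.'' But Definition~3.3 only asserts the \emph{existence} of one presentation whose extension has toric fibres; Fact~3.11 claims uniqueness among \emph{all} presentations that extend to an $\aleph^\circ$-equivariant fibre bundle, with no hypothesis on the fibres of the competitor --- and the paper uses it in this stronger form (the proof of Corollary~3.21 invokes ``the unique presentation that extends to an $A^\circ$-equivariant morphism''). For an arbitrary competitor $\pi_2$ your rigidity applies in one direction only: $\pi_2$ is constant on the toric fibres of $\pi_1$, giving $\pi_2 = h \circ \pi_1$ with $h : T_1 \to T_2$ a surjective group homomorphism, but no inverse $h'$ is available and the symmetry collapses. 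The missing step is injectivity of $h$, which needs a separate argument: writing $k_i := \dim \ker \pi_i^\circ$, the relation $\pi_2^\circ = h \circ \pi_1^\circ$ gives $\ker \pi_1^\circ \subseteq \ker \pi_2^\circ$ and, using surjectivity of $\pi_1^\circ$, an isomorphism $\ker h \cong \ker \pi_2^\circ / \ker \pi_1^\circ$. Holomorphic group homomorphisms between copies of $(\mathbb{C}^*)^\bullet$ are algebraic (since $\Hom(\mathbb{C}^*,\mathbb{C}^*) \cong \mathbb{Z}$), so this quotient is isomorphic to $(\mathbb{C}^*)^{k_2-k_1}$; on the other hand $\ker h$ is a closed, hence compact, subgroup of the torus $T_1$. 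Therefore $k_2 = k_1$, the kernels coincide, and $h$ is an isomorphism. With this supplement --- plus the one-line observation, which your existence claim tacitly uses, that the equivariant extension provided by Definition~3.3 is automatically locally trivial because $\pi^\circ : \aleph^\circ \to T$ admits local holomorphic sections --- your argument does prove the Fact in its stated generality.
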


\begin{fact}[\protect{Existence for given presentation, \cite[Thm.~5.1.35]{MR3156076}}]\label{fact:3-12}%
  Let $A°$ be a semitorus and let $π° : A° \twoheadrightarrow T$ be a
  presentation.  If $F$ is any smooth toric variety compactifying $F° :=
  (π°)^{-1}(0_T)$, then there exists a semitoric variety $A° ⊂ A$ with
  associated morphism $π : A → T$ where $π^{-1} (0_T)$ is isomorphic to $F$ as
  an $F°$-space.  \qed
\end{fact}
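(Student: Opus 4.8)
The plan is to build $A$ as the fibre bundle over $T$ associated to the presentation $π°$ and the chosen toric compactification $F$. Write $F° = \ker π° ≅ (ℂ^*)^{⨯•}$ for the central fibre, and observe that $π°$ exhibits the semitorus $A°$ as a principal $F°$-bundle over $T$, with $F°$ acting on $A°$ by translation. Since $F°$ is precisely the open orbit of the smooth toric variety $F$, acting on $F$ by the toric action that extends its action on itself, I would form the associated bundle
\[
  A := A° ⨯_{F°} F = \factor{(A° ⨯ F)}{F°},
\]
where $t ∈ F°$ acts on $A° ⨯ F$ by $t·(a,f) = (a·t^{-1},\, t·f)$.

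I would then verify the required structure in order. First, that $A$ is a compact smooth analytic variety: local analytic triviality of the principal bundle $π° : A° → T$ transfers to $A$, which thereby becomes a locally trivial bundle over the compact base $T$ with compact smooth fibre $F$; hence $A$ is compact and smooth. Second, that the assignment $[a,f] ↦ π°(a)$ is well defined, because $π°(a·t^{-1}) = π°(a)$ for $t ∈ F°$, and furnishes the desired extension $π : A → T$ as the bundle projection. Third, that left translation $b·(a,f) := (b·a,\, f)$ commutes with the $F°$-action --- left multiplication by $A°$ commutes with right multiplication by $F°$ --- and so descends to a holomorphic action $A° ↺ A$ which restricts to ordinary left multiplication on the open part.

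It then remains to match Definition~\ref{def:3-3}. The open subset $A° = A° ⨯_{F°} F° ⊂ A$, identified with $A°$ via $[a,f°] ↦ a·f°$, is a single orbit on which $A°$ acts freely, yielding \ref{il:3-3-1}; the projection $π$ is $א°$-equivariant and each of its fibres is a copy of the smooth toric variety $F$, yielding \ref{il:3-3-2}. Finally, for the central fibre I would note that $π°(a) = 0_T$ forces $a ∈ F°$, whence every element of $π^{-1}(0_T)$ has the form $[e,f]$; thus $f ↦ [e,f]$ is an isomorphism $F → π^{-1}(0_T)$, and under it the residual action of $s ∈ F°$ reads $s·[e,f] = [e,\, s·f]$, i.e.\ the toric action. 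This gives $π^{-1}(0_T) ≅ F$ as an $F°$-space, as required.

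I expect the only genuine obstacle to be the analytic bookkeeping rather than the group theory: establishing local analytic triviality of the principal $F°$-bundle $π° : A° → T$ and confirming that the associated-bundle quotient is a bona fide analytic variety carrying a holomorphic $A°$-action. Once the bundle is constructed in the analytic category, the verifications of equivariance, freeness, and the central-fibre identification are the routine manipulations indicated above.
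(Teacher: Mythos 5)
Your construction is correct: the paper itself gives no proof of this Fact (it is quoted from Noguchi--Winkelmann, \cite[Thm.~5.1.35]{MR3156076}), and the associated-bundle construction $A := A° ⨯_{F°} F$ you describe is exactly the standard argument behind that reference. The one point you flag as remaining bookkeeping --- holomorphic local triviality of the principal $F°$-bundle $π° : A° \twoheadrightarrow T$ --- is indeed classical (a quotient of a complex Lie group by a closed complex subgroup is a holomorphic principal bundle, e.g.\ via exponentiating a complement of $\ker \diff π°$ in the Lie algebra), and with it all of your verifications of Definition~\ref{def:3-3} and of the central-fibre identification go through.
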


A group morphism between the open parts of semitoric varieties is
quasi-algebraic if and only if it respects the associated presentations.  In
particular, we find that the bimeromorphic equivalence class of a semitoric
compactification is uniquely determined by the presentation.

\begin{fact}[Quasi-algebraic group morphisms and presentations]\label{fact:3-13}%
  Let $A°$ and $B°$ be semitori, and let $A° ⊂ A$ and $B° ⊂ B$ be semitoric
  compactifications, with associated morphisms $π_A : A \twoheadrightarrow T_A$
  and $π_B: B \twoheadrightarrow T_B$.  If $σ° : A° → B°$ is any holomorphic
  group morphism, then the following two statements are equivalent.
  \begin{enumerate}
    \item\label{il:3-13-1} The morphism $σ°$ is quasi-algebraic.

    \item\label{il:3-13-2} There exists a holomorphic group morphism $τ: T_A →
    T_B$, where $τ◦π_A = π_B◦σ°$.  \qed
  \end{enumerate}
\end{fact}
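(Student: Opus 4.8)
The plan is to prove the two implications separately, throughout using the structural description of semitoric compactifications from Fact~\ref{fact:3-11}: the presentation $π_A°$ extends to an $A°$-equivariant toric fibre bundle $π_A : A \twoheadrightarrow T_A$ whose fibres are smooth compact toric varieties compactifying the algebraic torus $K_A := \ker π_A° ≅ (ℂ^*)^{⨯•}$, and likewise for $B$. Equivalently, $A$ is the bundle $A° ×_{K_A} \bar F_A$ associated to the principal $K_A$-bundle $π_A° : A° → T_A$ and the toric fibre $\bar F_A$. The conceptual content will be concentrated in two standard geometric facts — that a compact complex torus contains no rational curves, and that a group morphism of algebraic tori is given by Laurent monomials and hence extends to a rational map of toric compactifications.

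For the implication \ref{il:3-13-1}~$\Rightarrow$~\ref{il:3-13-2}, I would first note that the extension $π_B : B \twoheadrightarrow T_B$ is a morphism, hence quasi-algebraic, so that Lemma~\ref{lem:2-4}, item~\ref{il:2-4-1}, shows the composite $π_B ◦ σ° : A° → T_B$ to be quasi-algebraic; let $ψ : A \dasharrow T_B$ denote its meromorphic extension. The crucial observation is that $ψ$ is constant along the fibres of $π_A$: a general fibre $π_A^{-1}(t)$ is a smooth compact toric variety, hence rational, while $T_B$ carries no rational curves, so the restriction of $ψ$ to $π_A^{-1}(t)$ is constant. Since $π_B ◦ σ°$ is a group morphism, constancy on one open fibre forces its restriction to $K_A$ to be trivial, and therefore $π_B ◦ σ°$ factors as $τ ◦ π_A°$ for a unique holomorphic map $τ : T_A → T_B$. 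Surjectivity of $π_A°$ then makes $τ$ a group morphism, which is exactly \ref{il:3-13-2}.

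For the converse \ref{il:3-13-2}~$\Rightarrow$~\ref{il:3-13-1}, I would exploit compatibility with the presentations. The relation $τ ◦ π_A° = π_B° ◦ σ°$ forces $σ°(K_A) ⊆ K_B := \ker π_B°$, so $σ°$ restricts to a group morphism $φ : K_A → K_B$ of algebraic tori, given by Laurent monomials, which extends to a $K_A$-equivariant meromorphic map $\barφ : \bar F_A \dasharrow \bar F_B$ between the toric fibres. Using the associated-bundle descriptions $A = A° ×_{K_A} \bar F_A$ and $B = B° ×_{K_B} \bar F_B$ from Fact~\ref{fact:3-11}, and the $φ$-equivariance $σ°(k·a) = φ(k)·σ°(a)$ for $k ∈ K_A$, I would define the extension by $[a,f] \mapsto [σ°(a), \barφ(f)]$ and verify that equivariance makes this independent of the chosen representative. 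This produces the desired meromorphic map $σ : A \dasharrow B$, so that $σ°$ is quasi-algebraic.

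I expect the main obstacle to lie in the second implication, namely in making the associated-bundle gluing rigorous. One must confirm that the fibrewise monomial extensions $\barφ$ patch across $T_A$ in a way compatible with the (possibly non-trivial) twisting of the bundle recorded by Fact~\ref{fact:3-11}, and that the resulting map is genuinely meromorphic rather than merely fibrewise defined. The indeterminacy loci cause no trouble, since they are contained in the boundaries $A ∖ A°$ and $B ∖ B°$, but the well-definedness check on the associated bundle — and the verification that $\barφ$ is indeed $K_A$-equivariant for the action of $K_A$ on $\bar F_B$ through $φ$ — is where the real bookkeeping sits.
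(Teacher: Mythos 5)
Your proposal is correct. For the implication \ref{il:3-13-1}~$\Rightarrow$~\ref{il:3-13-2} --- the only direction the paper actually proves --- your argument is essentially the paper's own: quasi-algebraicity of $\pi_B^\circ \circ \sigma^\circ$ via Lemma~\ref{lem:2-4}, absence of rational curves in the compact torus $T_B$ to contract the rational toric fibres of $\pi_A$, and the group structure to convert constancy on a fibre into triviality on $\ker \pi_A^\circ$ and hence into a factorization through $\pi_A^\circ$. The only difference is cosmetic: the paper concludes that $\tau$ is a group morphism because it sends $0_{T_A}$ to $0_{T_B}$, citing \cite[Def.~5.1.36]{MR3156076}, whereas you deduce the same from surjectivity of $\pi_A^\circ$, which is if anything more self-contained. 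For the converse \ref{il:3-13-2}~$\Rightarrow$~\ref{il:3-13-1} the paper offers no argument at all (the facts of Section~\ref{sec:3-2} are declared ``tedious but mostly elementary, and left to the reader''), so your associated-bundle construction is added value rather than a deviation, and it is sound: the relation $\tau \circ \pi_A^\circ = \pi_B^\circ \circ \sigma^\circ$ does force $\sigma^\circ(K_A) \subseteq K_B$; a holomorphic group morphism $(\mathbb{C}^*)^{\times n} \to (\mathbb{C}^*)^{\times m}$ is automatically monomial and hence extends to a meromorphic map between any complete toric compactifications; and the $\varphi$-equivariance you single out as the main bookkeeping point is precisely what makes the fibrewise extension well defined on $A \cong A^\circ \times_{K_A} \bar{F}_A$ and glue, via local trivializations of the bundle from Fact~\ref{fact:3-11}, to a global meromorphic map $A \dasharrow B$.
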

\begin{proof}[Proof of the implication \ref{il:3-13-1} $⇒$ \ref{il:3-13-2}]
  Lemma~\ref{lem:2-4} guarantees that the composed map $π°_B◦σ°$ is
  quasi-algebraic.  Since the compact complex torus $B$ does not contain
  rational curves, we find that the quasi-algebraic morphism $π°_B◦σ°$ factors
  via the $(ℂ^*)^{⨯•}$-fibre bundle $π°_A$.  We obtain a morphism $f_T : T_A →
  T_B$ and a commutative diagram as follows,
  \begin{equation}\label{eq:3-13-3}
    \begin{tikzcd}[column sep=2cm, row sep=large]
      A° \ar[d, two heads, "π°_A"'] \ar[r, "σ°"] & B° \arrow[d, two heads, "π°_B"] \\
      T_A \ar[r, "τ"'] & T_B.
    \end{tikzcd}
  \end{equation}
  The morphism $τ$ maps $0_{T_A}$ to $0_{T_B}$ and is hence a group morphism,
  \cite[Def.~5.1.36]{MR3156076}.
\end{proof}

If the equivalent conditions of Fact~\ref{fact:3-13} hold, there is a little
more that we can say: $σ°$ extends to a morphism between $A$ and $B$ if and only
its restriction to the central fibre extends to a morphism.

\begin{fact}[Morphisms and bimeromorphic maps]\label{fact:3-14}%
  In the setting of Fact~\ref{fact:3-13}, assume that $σ°$ is quasi-algebraic,
  with associated meromorphic map $σ : A \dasharrow B$.  Then, the following two
  statements are equivalent.
  \begin{enumerate}
    \item The meromorphic map $σ$ is a morphism.

    \item The meromorphic map $σ|_{π_A^{-1}(0_{T_A})} : π_A^{-1}(0_{T_A})
    \dasharrow π_B^{-1}(0_{T_B})$ is a morphism.  \qed
  \end{enumerate}
\end{fact}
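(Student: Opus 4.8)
The implication from the first statement to the second is immediate: the diagram~\eqref{eq:3-13-3} gives $\tau(0_{T_A}) = 0_{T_B}$, so a holomorphic $\sigma$ carries the central fibre $\pi_A^{-1}(0_{T_A})$ into $\pi_B^{-1}(0_{T_B})$, and its restriction there is the asserted morphism. The plan is therefore to concentrate on the converse, and the two ingredients will be the equivariance of $\sigma$ together with a bundle trivialization adapted to the group action.

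First I would record equivariance. Write $\Phi_g$ for the automorphism of $A$ given by the action of $g ∈ A°$ and $\Psi_g$ for the automorphism of $B$ given by the action of $\sigma°(g) ∈ B°$. Since $\sigma°$ is a group morphism, $\sigma° \circ \Phi_g = \Psi_g \circ \sigma°$ holds on $A°$, and as both sides are meromorphic maps $A \dasharrow B$ agreeing on the dense set $A°$, the equality $\sigma \circ \Phi_g = \Psi_g \circ \sigma$ holds as meromorphic maps. Because $\Phi_g$ and $\Psi_g$ are biholomorphic, the locus of holomorphy of $\sigma$ is $A°$-invariant. The action induced on the base $T_A$ via $\pi_A°$ is by translations, hence transitive, so the translates $\Phi_g(\pi_A^{-1}(U))$ of any one tube $\pi_A^{-1}(U)$ over an open $U ∋ 0_{T_A}$ cover $A$. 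Consequently $\sigma$ is a morphism on all of $A$ as soon as it is a morphism on a single such $\pi_A^{-1}(U)$.

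It then remains to analyze $\sigma$ over $U$. Choosing a local holomorphic section $s : U → A°$ of $\pi_A°$ with $s(0_{T_A}) = e$ gives a trivialization $(t,x) \mapsto s(t)\cdot x$ of $\pi_A^{-1}(U)$ with fibre $F_A := \pi_A^{-1}(0_{T_A})$; choosing an analogous section $s_B$ of $\pi_B°$ over a neighbourhood of $0_{T_B}$ containing $\tau(U)$ trivializes the target with fibre $F_B := \pi_B^{-1}(0_{T_B})$. Using equivariance and writing $\sigma°(s(t)) = s_B(\tau(t))\cdot h(t)$ with $h(t) := s_B(\tau(t))^{-1}\cdot \sigma°(s(t)) ∈ \ker \pi_B°$, one computes that in these coordinates $\sigma$ takes the explicit form
\[
  (t,x) \;\longmapsto\; \bigl(\tau(t),\; h(t)\cdot \psi(x)\bigr),
\]
where $\psi := \sigma|_{F_A}$ is the restriction to the central fibre (with $h(0_{T_A}) = e$, so that $\psi$ is literally this restriction), and the dot is the holomorphic action of the torus $\ker \pi_B° ≅ (\bC^*)^{\times\bullet}$ on the toric fibre $F_B$. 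Since $\tau$, $h$ and the torus action are all holomorphic, this map is holomorphic at $(t,x)$ exactly when $\psi$ is holomorphic at $x$; equivalently, its indeterminacy locus over $U$ is the product $U × \operatorname{indet}(\psi)$. Feeding this back into the previous paragraph: if $\sigma|_{F_A}$ is a morphism then $\psi$ is everywhere holomorphic, so $\sigma$ is a morphism on $\pi_A^{-1}(U)$, and hence on all of $A$.

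I expect the main obstacle to be precisely the derivation of this normal form and the verification that the indeterminacy is genuinely a product $U × \operatorname{indet}(\psi)$. This is the crucial point, because for a general meromorphic map the restriction to a subvariety may be holomorphic at a point where the ambient map is indeterminate — for instance $[x:y]$ on $\bC^2$ restricted to a coordinate axis — so the naive restriction argument fails. It is the equivariant product structure, forced by $\sigma°$ being a \emph{group} morphism over a \emph{group} morphism $\tau$ of the base tori, that rules this out and makes the central fibre a faithful detector of the indeterminacy of $\sigma$.
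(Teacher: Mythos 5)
There is no proof in the paper to compare yours against: Fact~3.14 belongs to the block of statements in Section~3.2 whose proofs the authors declare ``tedious but mostly elementary'' and leave to the reader, so your proposal fills a genuine gap rather than paralleling or diverging from a printed argument. Judged on its own merits, your proof is correct. The forward implication is indeed immediate once the identity $\pi_B^\circ\circ\sigma^\circ=\tau\circ\pi_A^\circ$ is extended to the compactifications by density, so that $\sigma$ maps $\pi_A^{-1}(0_{T_A})$ into $\pi_B^{-1}(0_{T_B})$. For the converse, your three ingredients are all sound, though two of them rest on standard facts you should cite explicitly: (a) the equivariance $\sigma\circ\Phi_g=\Psi_{\sigma^\circ(g)}\circ\sigma$ holds as an identity of \emph{meromorphic} maps by the identity principle (two meromorphic maps agreeing on a dense open subset have equal graphs, since the graph is the closure of its restriction over any dense open set), whence the holomorphy locus of $\sigma$ is invariant under an action that is transitive over $T_A$; (b) local holomorphic sections of $\pi_A^\circ$ exist because $\pi_A^\circ:A^\circ\to T_A$ is a holomorphic principal $(\mathbb{C}^*)^{\times\bullet}$-bundle (alternatively, invoke the fibre-bundle clause of Fact~3.11); (c) the normal form $(t,x)\mapsto\bigl(\tau(t),\,h(t)\cdot\psi(x)\bigr)$ is verified by the group-morphism property on the dense open set $U\times(F_A\cap A^\circ)$, where $\sigma$ coincides with $\sigma^\circ$. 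On point (c) I would reorder the logic slightly for cleanliness: rather than asserting the normal form for $\sigma$ as a meromorphic map and then reading off its indeterminacy, define $\Theta(t,x):=\bigl(\tau(t),\,h(t)\cdot\psi(x)\bigr)$, note that under hypothesis~(2) it is holomorphic, check that it agrees with $\sigma$ on $U\times(F_A\cap A^\circ)$, and conclude $\sigma=\Theta$ over the tube by the identity principle again; this avoids having to evaluate the equivariance identity at points where $\sigma$ is a priori indeterminate. Your closing remark is exactly the right diagnosis of why the fact is not trivial: for a general meromorphic map, restriction to a subvariety can be holomorphic at points of ambient indeterminacy (your example $[x:y]$ on $\mathbb{C}^2$), and it is the equivariant product structure, forced by $\sigma^\circ$ being a group morphism over $\tau$, that makes the central fibre a faithful detector of indeterminacy.
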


On semitoric varieties, a differential form is logarithmic if and only if it is
invariant.

\begin{prop}[Invariant differentials and logarithmic differentials]\label{prop:3-15} %
  In the setting of Definition~\ref{def:3-3}, the following statements hold for
  every number $p ∈ ℕ$.
  \begin{enumerate}
    \item\label{il:3-15-1} The locally free sheaf $Ω^p_A(\log Δ)$ is free.

    \item\label{il:3-15-2} Every $A°$-invariant differential form $τ° ∈ H⁰\bigl(
    A°,\, Ω^p_{A°}\bigr)$ extends to a logarithmic form $τ ∈ H⁰\bigl(A,\,
    Ω^p_A(\log Δ) \bigr)$.

    \item\label{il:3-15-3} Every logarithmic form in $H⁰\bigl(A,\, Ω^p_A(\log Δ)
    \bigr)$ is $A°$-invariant.
  \end{enumerate}
\end{prop}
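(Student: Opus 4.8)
The plan is to deduce all three assertions from a single structural statement: that the fundamental vector fields of the $א°$-action trivialise the logarithmic tangent sheaf $T_A(-\log Δ)$, that is, the dual of $Ω^1_A(\log Δ)$, or equivalently the sheaf of vector fields tangent to $Δ$. Write $L := \operatorname{Lie}(א°)$ for the Lie algebra, of dimension $n = \dim A$. Each $ξ ∈ L$ induces a holomorphic vector field $X_ξ$ on $A$, and since $א°$ acts on $A$ with $A°$ as an orbit, the action preserves the boundary $Δ = A ∖ A°$; its infinitesimal generators are therefore tangent to $Δ$, so the $X_ξ$ are sections of $T_A(-\log Δ)$. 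This yields a morphism of locally free sheaves of the same rank $n$,
\[
  L ⊗_{ℂ} 𝒪_A \longrightarrow T_A(-\log Δ), \qquad ξ ↦ X_ξ,
\]
and I claim it is an isomorphism. Granting this, Item~\ref{il:3-15-1} follows by dualising and passing to exterior powers, $Ω^p_A(\log Δ) ≅ \Lambda^p L^* ⊗ 𝒪_A$. The dual coframe consists of global logarithmic $1$-forms $ω_1, …, ω_n$ whose restrictions to $A°$ are dual to the invariant frame $X_ξ|_{A°}$ and hence are exactly the invariant $1$-forms; as every invariant $p$-form is a wedge of invariant $1$-forms and wedges of logarithmic forms are logarithmic, this gives Item~\ref{il:3-15-2}. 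Finally, compactness and connectedness of $A$ give $H⁰(A, 𝒪_A) = ℂ$, so every global section of the free sheaf $Ω^p_A(\log Δ)$ has constant coefficients in the invariant frame and is therefore invariant, which is Item~\ref{il:3-15-3}.

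To prove the claim, it suffices to check that the $X_ξ$ form a frame along $Δ$, as they manifestly do on the free orbit $A°$. This I would verify in a chart adapted to the fibre bundle $π : A → T$ of Fact~\ref{fact:3-11}. The presentation exhibits $π° : A° → T$ as a principal bundle under $K := \ker π° ≅ (ℂ^*)^{⨯•}$, whence $A$ is the associated bundle with fibre the smooth toric compactification $F = π^{-1}(0_T)$ of $K$, and $א°$ acts by left multiplication on the $A°$-factor. Over a polydisc $V ⊂ T$ I would trivialise $π^{-1}(V) ≅ V ⨯ F$ by a local section of the principal bundle, obtaining a frame of $T_A(-\log Δ)$ given by the base fields $∂_{t_i}$ and the toric logarithmic fields $z_j ∂_{z_j}$ on $F$. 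A short cocycle computation shows that in this trivialisation a group element acts by $(t,f) ↦ (s+t,\, c(a,t)·f)$ with $c(a,t) ∈ K$, so the fundamental fields $X_η$ for $η ∈ \operatorname{Lie}(K)$ are vertical and restrict on each fibre to the standard toric frame $z_j ∂_{z_j}$, while the $X_ξ$ lifting a basis of $\operatorname{Lie}(T)$ have the triangular shape $∂_{t_i} + (\text{vertical correction})$, the correction being itself a $K$-fundamental field. The transition matrix to the local frame is thus block-triangular with invertible diagonal blocks, so the $X_ξ$ remain a frame along $Δ$.

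The main obstacle is precisely this boundary verification: on $A°$ everything reduces to the elementary representation theory of a commutative group, and the content lies in controlling how the fundamental vector fields degenerate towards $Δ$. The decisive inputs are the toric base case — that the fields $z_j ∂_{z_j}$ frame $T_F(-\log ∂F)$, dually that the $\tfrac{dz_j}{z_j}$ frame $Ω^1_F(\log ∂F)$ — together with Fact~\ref{fact:3-11}, which guarantees that this toric picture is uniform over $T$ and that the corrections coming from the non-triviality of the extension $K → A° → T$ are vertical, hence absorbed into the invertible toric block. I would expect the write-up to spend most of its effort making the associated-bundle description and the triangular shape of the action precise.
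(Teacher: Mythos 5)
Your proposal is correct, and it takes a more self-contained route than the paper. The paper disposes of Item~\ref{il:3-15-1} by citing \cite[Cor.~5.4.5]{MR3156076} and then proves Item~\ref{il:3-15-2} exactly along your lines: invariant vector fields on $A°$ extend to sections of $\sT_A(-\log Δ)$ because the action extends to $A$ and stabilises $Δ$, and one concludes by dualising — freeness of $Ω^1_A(\log Δ)$ (plus compactness of $A$, which forces the determinant of the resulting map $L ⊗ 𝒪_A → \sT_A(-\log Δ)$ to be a nonzero constant) is what makes the fundamental fields a global frame. Item~\ref{il:3-15-3} is then a dimension count, which is the same argument as your ``constant coefficients in the invariant frame'' step, phrased differently. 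What you do differently is refuse the citation: you prove the frame statement $L ⊗ 𝒪_A \cong \sT_A(-\log Δ)$ directly by the associated-bundle trivialisation over $T$ and the block-triangular shape of the fundamental fields (toric fields $z_j ∂_{z_j}$ vertically, lifts $∂_{t_i} + (\text{vertical correction})$ horizontally), thereby reproving the Noguchi--Winkelmann freeness result rather than invoking it. Your cocycle computation is only sketched, but the sketch is sound: $K$ acts fibrewise by the standard toric action, the corrections from the extension $K → A° → T$ are $K$-fundamental and hence absorbed in the invertible toric block, and the toric base case is standard. The trade-off is clear: the paper's proof is three lines because the boundary analysis is outsourced to the literature; yours is longer but self-contained and makes the underlying structure — that the quasi-algebraic compactification is precisely one in which the invariant coframe extends as a frame of $Ω^1_A(\log Δ)$ — completely explicit.
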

\begin{proof}
  Item~\ref{il:3-15-1} is \cite[Cor.~5.4.5]{MR3156076}.  For
  Item~\ref{il:3-15-2}, observe that every $A°$-invariant differential form $τ°
  ∈ H⁰\bigl( A°,\, Ω^p_{A°}\bigr)$ can be written as a sum of wedge products of
  1-differentials.  To prove Item~\ref{il:3-15-2}, it will therefore suffice to
  consider the case $p=1$.  The group $A°$ acts on itself by left
  multiplication.  By assumption, this actions extends to an action of $A°$ on
  $A$ that stabilizes $Δ$.  The $A°$-invariant vector fields on $A°$ that are
  induced by this action will therefore extend to sections of $𝒯_A(-\log Δ)$.
  Using \ref{il:3-15-1}, the case $p=1$ of Item~\ref{il:3-15-2} now follows by
  taking duals.

  Item~\ref{il:3-15-3} follows from \ref{il:3-15-1} and \ref{il:3-15-2}, given
  that the dimensions of the spaces $H⁰\bigl( A°,\, Ω^p_{A°} \bigr)^{A°}$ and
  $H⁰\bigl(A, Ω^p_A(\log Δ) \bigr)$ agree.
\end{proof}

\begin{rem}[Pull-back of logarithmic differentials I]\label{rem:3-16}%
  Given a semitoric variety $A° ⊂ A$ and a nc log pair $(X,D)$, we are often
  interested in quasi-algebraic morphisms $a° : X° → A°$.  Given that $X$ and
  $A$ are smooth and that $a$ is holomorphic away from a small subset of $X$,
  there exists a pull-back morphism for logarithmic differentials
  \[
    \diff a : H⁰\bigl( A, Ω¹_A(\log Δ) \bigr) → H⁰\bigl( X,\,
    Ω¹_X(\log D)\bigr)
  \]
  that restricts on $X°$ to the standard pull-back $\diff a°$.
\end{rem}

\begin{rem}[Pull-back of logarithmic differentials II]\label{rem:3-17}%
  Generalizing Remark~\ref{rem:3-16}, given a semitoric variety $A° ⊂ A$, a log
  pair $(X,D)$ that is not necessarily nc, and a quasi-algebraic morphism $a° :
  X° → A°$, there exists a pull-back morphism for logarithmic differentials
  \[
    \diff a : H⁰\bigl( A, Ω¹_A(\log Δ) \bigr) → H⁰\bigl( X,\,
    Ω^{[1]}_X(\log D)\bigr)
  \]
  that restricts on $X°_{\reg}$ to the standard pull-back $\diff a°$.
\end{rem}

\subsection{Quasi-algebraic morphisms}
\approvals{Erwan & yes \\ Stefan & yes}

In contrast to the algebraic setting, a morphism between semitori need not be a
group morphism, even if it respects the neutral elements of the group structure.
For an example, consider the morphism $ℂ^* → ℂ^*$, $t ↦ \exp(t-1)$.  The
situation improves for quasi-algebraic morphisms of semitoric varieties.

\begin{prop}[Quasi-algebraic morphisms and group morphisms]\label{prop:3-18} %
  Let $A°$ and $B°$ be semitori, and let $A° ⊂ A$ and $B° ⊂ B$ be semitoric
  compactifications.  Let $f° : A° → B°$ be any quasi-algebraic morphism of
  analytic varieties.  If $f°(0_{A°}) = 0_{B°}$, then $f°$ is a morphism of
  complex Lie groups.
\end{prop}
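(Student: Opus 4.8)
The plan is to reduce the statement to the linearity of a lift to universal covers, and then to extract that linearity from the fact that a quasi-algebraic morphism pulls logarithmic one-forms back to logarithmic one-forms. Write the semitori as quotients of their Lie algebras via the exponential homomorphisms $\exp_A : \bC^n \twoheadrightarrow A°$ and $\exp_B : \bC^m \twoheadrightarrow B°$, where $n = \dim A°$ and $m = \dim B°$; these are the universal covering maps. Since $\bC^n$ is simply connected, the morphism $f° ∘ \exp_A : \bC^n → B°$ lifts uniquely to a holomorphic map $\tilde f : \bC^n → \bC^m$ with $\tilde f(0) = 0$ and $\exp_B ∘ \tilde f = f° ∘ \exp_A$. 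As $\exp_A$ is a surjective homomorphism and $\exp_B$ is a homomorphism, $f°$ is a morphism of Lie groups as soon as $\tilde f$ is additive, and an additive holomorphic map fixing the origin is precisely a $\bC$-linear one. The whole proof thus reduces to showing that $\tilde f$ is linear, equivalently that its Jacobian is constant.

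The linearity will come from pulling back invariant one-forms. By Proposition~\ref{prop:3-15}, the space $H⁰\bigl(B, Ω¹_B(\log Δ_B)\bigr)$ consists precisely of the $B°$-invariant holomorphic one-forms on $B°$, and likewise for $A$; under $\exp_B$ these invariant forms pull back to the constant-coefficient forms on $\bC^m$, so a basis $ω_1, …, ω_m$ of $H⁰\bigl(B, Ω¹_B(\log Δ_B)\bigr)$ satisfies $\exp_B^* (ω_j|_{B°}) = \diff w_j$ for suitable linear coordinates $w_1, …, w_m$ on $\bC^m$. The key point is now that $f°$ is quasi-algebraic, so Remark~\ref{rem:3-16} supplies a pull-back $\diff f : H⁰\bigl(B, Ω¹_B(\log Δ_B)\bigr) → H⁰\bigl(A, Ω¹_A(\log Δ_A)\bigr)$ that agrees with the naive pull-back $\diff f°$ over $A°$. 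Consequently each $(f°)^*(ω_j|_{B°})$ extends to a logarithmic form on $A$ and is therefore, again by Proposition~\ref{prop:3-15}, an $A°$-invariant — hence constant-coefficient — one-form.

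I would then transport this to the universal cover. Using $\exp_B ∘ \tilde f = f° ∘ \exp_A$ and contravariance of pull-back, one computes
\[
  \tilde f^*(\diff w_j) = \tilde f^*\bigl(\exp_B^* (ω_j|_{B°})\bigr) = \exp_A^*\bigl( (f°)^* (ω_j|_{B°}) \bigr),
\]
and the right-hand side is the $\exp_A$-pull-back of a constant-coefficient form, hence itself constant-coefficient. Writing this out as $\sum_i (\partial \tilde f_j/\partial z_i)\,\diff z_i$ shows that all partial derivatives $\partial \tilde f_j/\partial z_i$ are constant, for every $j$. Therefore $\tilde f$ is affine; since $\tilde f(0) = 0$ it is linear, and the reduction from the first paragraph then finishes the argument.

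The main obstacle — and the only place where the hypothesis enters — is the claim that $(f°)^*$ sends invariant one-forms to invariant one-forms. This fails for a general morphism: for $f° : \bC^* → \bC^*$, $t ↦ \exp(t - 1)$, the pull-back of the invariant form $\diff w / w$ is $\diff t$, which is neither logarithmic on $\bP^1$ nor invariant on $\bC^*$. Quasi-algebraicity is exactly what excludes this, through the existence of the logarithmic pull-back in Remark~\ref{rem:3-16}. I would therefore take some care to check that Remark~\ref{rem:3-16} genuinely applies here: that $(A, Δ_A)$ is an nc pair — true, since a semitoric variety is smooth with $Δ_A$ an snc divisor — and that the meromorphic extension $f : A \dasharrow B$ is holomorphic away from a subset of codimension at least two, so that the pull-back of sections is well defined.
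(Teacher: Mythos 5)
Your proof is correct, but it takes a genuinely different route from the paper's. The paper argues by rigidity, in the style of the theorem of the cube: Fact~\ref{fact:3-13} first produces a group morphism $f_T : T_A → T_B$ between the tori of the two presentations; the defect morphism $ξ°(x,y) := f°(x)+f°(y)-f°(x+y)$ is then shown to be quasi-algebraic and, by the diagram over $f_T$, to take values in the fibre $(π°_B)^{-1}(0_{T_B}) ≅ (ℂ^*)^{⨯•}$; finally, the splitting Lemma~\ref{lem:2-5} for families of quasi-algebraic morphisms to $ℂ^*$ writes $ξ°$ in product form, and the normalisation $ξ°(a, 0_{A°}) = ξ°(0_{A°}, a) = 0_{B°}$ forces $ξ°$ to be constant, equal to $0_{B°}$. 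You instead argue infinitesimally: lift $f°$ to the universal covers and show the lift is linear, because quasi-algebraicity (via Remark~\ref{rem:3-16}) makes pull-backs of logarithmic one-forms logarithmic, while Proposition~\ref{prop:3-15} identifies logarithmic with invariant forms on both source and target, so the Jacobian of the lift is constant in the invariant coframes. The paper's argument stays entirely at the level of the groups, at the cost of needing the two-variable rigidity lemma, the presentation structure, and the quasi-algebraicity of $ξ°$ (quoted from the literature); your argument needs none of these and makes the role of the hypothesis maximally transparent (your $\exp(t-1)$ example shows exactly what quasi-algebraicity excludes), at the price of leaning on Remark~\ref{rem:3-16} — which the paper asserts without proof — together with the two standard facts you rightly flag at the end: that $(A, Δ_A)$ is a smooth pair with snc boundary, and that the indeterminacy locus of the meromorphic extension has codimension at least two. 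Both of those are indeed fine, so your proof goes through as written.
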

\begin{proof}
  In order to prepare for the proof, consider the associated presentations $π°_A
  : A° \twoheadrightarrow T_A$ and $π°_B : B° \twoheadrightarrow T_B$.
  Fact~\ref{fact:3-13} equips us with a group morphism $f_T : T_A → T_B$ forming
  a commutative diagram as follows,
  \begin{equation}\label{eq:3-18-1}
    \begin{tikzcd}[column sep=2cm, row sep=large]
      A° \ar[d, two heads, "π°_A"'] \ar[r, "f°"] & B° \arrow[d, two heads, "π°_B"] \\
      T_A \ar[r, "f_T"'] & T_B.
    \end{tikzcd}
  \end{equation}

  We would like to show that $f°$ is a group morphism.  For this, consider the
  auxiliary morphism
  \[
    ξ° : A°⨯A° → B°, \quad (x,y) ↦ f°(x)+f°(y)-f°(x+y).
  \]
  To conclude, we need to show that $ξ° \equiv 0_{B°}$ or equivalently that $ξ°$
  is constant.  The assumption that $f°$ is quasi-algebraic and
  \cite[Prop.~5.3.5]{MR3156076} together guarantee that $ξ°$ extends to a
  meromorphic map $ξ : A⨯A \dasharrow B$ and is hence quasi-algebraic.  The
  following property follows from the assumption that $f°(0_{A°}) = 0_{B°}$.
  \begin{equation}\label{eq:3-18-2}
    \forall a ∈ A° : ξ°(a, 0_{A°}) = ξ°(0_{A°},a) = 0_{B°}
  \end{equation}
  There is more that we can say.  If $(x,y) ∈ A°⨯A°$ is any pair of points, then
  \begin{align*}
    (π°_B◦ξ°)(x,y) & = π°_B(f°(x) + f°(y) - f°(x+y)\bigr) & & \text{definition} \\
                   & = (π°_B◦f°)(x) + (π°_B◦f°)(y) - (π°_B◦f°)(x+y) & & π°_B \text{ a grp.~morph.} \\
                   & = (f_T◦π°_A)(x) + (f_T◦π°_A)(y) - (f_T◦π°_A)(x+y) & & \text{Diagram~\eqref{eq:3-18-1}} \\
                   & = 0_{T_B} & & f_T◦π°_A \text{ a grp.~morph.}
  \end{align*}

  In summary, we find that $ξ°$ takes its image in $(π°_B)^{-1}(0_{T_B})$.
  Fixing one identification $(π°_B)^{-1}(0_{T_B}) ≅ (ℂ^*)^{⨯ •}$,
  Lemma~\ref{lem:2-5} allows writing $ξ°$ in product form.  More precisely,
  there exist functions $a_•, b_• ∈ 𝒪^*_A(A°)$ such that
  \[
    ξ°(x,y) = \bigl(a_1(x)·b_1(y), …, a_n(x)·b_n(y)\bigr), \quad \text{for every } (x,y) ∈ A°⨯ A°.
  \]
  Equation~\eqref{eq:3-18-2} will then imply that $ξ°$ is constant.
\end{proof}

\begin{cor}[Quasi-algebraic morphisms between open parts of semitoric varieties]\label{cor:3-19} %
  Let $A° ⊂ A$ and $B° ⊂ B$ be semitoric varieties and let $f° : A° → B°$ be a
  quasi-algebraic morphism of analytic varieties.  Then, the following holds.
  \begin{enumerate}
    \item The fibres of $f°$ are of pure dimension.

    \item Any two non-empty fibres of $f°$ are of the same dimension.

    \item If $f°$ is quasi-finite, then it is finite.

    \item If $f°$ is finite and surjective, then it is étale.  \qed
  \end{enumerate}
\end{cor}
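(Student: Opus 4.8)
The plan is to reduce all four assertions to the case of a group morphism, where the fibres are cosets of a single closed subgroup and everything becomes a matter of homogeneity. The only genuinely analytic input will be needed to control properness in the third item.

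First I would normalise $f°$ by a translation. Since the semitorus $B°$ acts on its compactification $B$, translation $\tau_b$ by any $b ∈ B°$ is a biholomorphism of $B$ and hence quasi-algebraic; composing with $\tau_{-f°(0_{A°})}$ and invoking Lemma~\ref{lem:2-4}.\ref{il:2-4-1}, I obtain a quasi-algebraic morphism $g° := \tau_{-f°(0_{A°})} ◦ f°$ with $g°(0_{A°}) = 0_{B°}$, which by Proposition~\ref{prop:3-18} is a morphism of complex Lie groups. As $f°$ and $g°$ differ by the bijection $\tau_{f°(0_{A°})}$ on the target, they have literally the same fibres, so it suffices to prove all four statements for $g°$. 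The fibres of $g°$ are precisely the cosets of the closed Lie subgroup $K := \ker g°$; each is a translate of the smooth group $K$, hence of pure dimension $\dim K$, and all non-empty fibres share this dimension. This disposes of the first two items simultaneously.

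For the third item, quasi-finiteness forces $\dim K = 0$, so $K$ is finite. The point I must establish is that the image $C° := g°(A°)$ is closed in $B°$, i.e. a sub-semitorus. Here quasi-algebraicity enters through Fact~\ref{fact:3-13}, which supplies a group morphism $g_T : T_A → T_B$ with $g_T ◦ π°_A = π°_B ◦ g°$. Consequently the image of $C°$ in $T_B$ equals the subtorus $g_T(T_A)$, which is closed as the image of a morphism of compact complex tori, while the part of $C°$ lying over $0_{T_B}$ is $g°(\ker π°_A)$, the image of $(ℂ^*)^{⨯\bullet}$ under a holomorphic (hence monomial, hence algebraic) group morphism into $\ker π°_B$, and is therefore a closed subtorus. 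An extension of a closed subgroup by a closed subgroup is closed, so $C°$ is a sub-semitorus. Then $g° : A° \twoheadrightarrow C°$ is a surjective Lie group morphism with finite kernel, hence a finite-sheeted covering map and in particular proper; post-composing with the closed embedding $C° \into B°$ preserves properness, and a proper morphism with finite fibres is finite.

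For the fourth item, surjectivity gives $C° = B°$, so $\dim A° = \dim B°$ with $K$ finite. The differential of $g°$ at the identity has kernel $\operatorname{Lie}(K) = 0$, and by equality of dimensions it is an isomorphism; left-translation equivariance of a group morphism propagates this to every point, so $g°$ is unramified, and a finite morphism of smooth varieties with everywhere-invertible differential is étale. The main obstacle is the closedness of the image in the third item: items one, two and four are formal consequences of homogeneity once the reduction to a group morphism is made, whereas properness genuinely relies on quasi-algebraicity via the compatibility with presentations.
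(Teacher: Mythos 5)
Your overall route is exactly the one the paper intends: Corollary~\ref{cor:3-19} is stated with a \qed{} immediately after Proposition~\ref{prop:3-18}, and your reduction --- compose with a translation (quasi-algebraic because the $B^\circ$-action extends to $B$), apply Proposition~\ref{prop:3-18} to obtain a Lie group morphism $g^\circ$, and read off items (1), (2) and (4) from the coset structure of the fibres --- is correct and fills in what the authors left to the reader. One cosmetic slip: $\dim K = 0$ alone does not give finiteness of $K$ (the subgroup $\mathbb{Z} \subset \mathbb{C}^*$ is closed and $0$-dimensional but infinite); what you should say is that quasi-finiteness gives $K$ finite directly, since $K$ is itself the fibre over $0_{B^\circ}$.

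The genuine gap is in the closedness-of-image argument for item (3), which you correctly identify as the crux. The part of $C^\circ := g^\circ(A^\circ)$ lying over $0_{T_B}$ is \emph{not} $g^\circ(\ker \pi^\circ_A)$: it is $g^\circ\bigl((\pi^\circ_A)^{-1}(\ker g_T)\bigr)$, and $\ker g_T \subseteq T_A$ can be positive-dimensional and disconnected, so this set is a priori strictly larger than $g^\circ(\ker \pi^\circ_A)$. The discrepancy can be controlled, but only by an argument you do not give: every holomorphic group morphism from a compact complex torus into an algebraic torus is trivial (each coordinate is a nowhere-zero holomorphic function on a compact variety), so modulo the subtorus $L := g^\circ(\ker \pi^\circ_A)$ the compact directions of $(\pi^\circ_A)^{-1}(\ker g_T)$ contribute nothing, and the finite component group of $\ker g_T$ contributes at most finitely many cosets. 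Hence the true fibre is a finite union of cosets of $L$ --- closed, but for a reason absent from your proof. Second, the principle ``an extension of a closed subgroup by a closed subgroup is closed'' is not a formal fact about subgroups of Lie groups; it is true for immersed Lie subgroups $H$ with $H \cap N$ and $HN/N$ closed, but proving it already requires the open mapping theorem for locally compact groups and local sections of $H \to H/(H\cap N)$, so it cannot simply be invoked. A cleaner finish that stays inside the paper's toolkit: set $M := C^\circ \cap \ker\pi^\circ_B$ (closed by the corrected argument above), pass to the semitorus $B^\circ/M$; there $C^\circ/M$ maps bijectively, by a morphism of Lie groups, onto the compact subtorus $g_T(T_A)$, hence is compact, hence closed, and $C^\circ$ is its preimage. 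Alternatively, use Facts~\ref{fact:3-28} and~\ref{fact:3-27} together with Lemma~\ref{lem:2-4} to factor $g^\circ$ through the quotient by its kernel, reducing item (3) to the statement that an injective quasi-algebraic group morphism of semitori is a closed embedding --- which is part of the standard theory of quasi-algebraic subgroups in \cite[Sect.~5.3.4]{MR3156076} and could be cited outright.
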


\begin{cor}[Quasi-algebraic automorphisms]\label{cor:3-20}%
  Let $A° ⊂ A$ be a semitoric variety.  Once we choose an element $0_{A°} ∈ A°$
  to equip $A°$ with the structure of a holomorphic Lie group, the group of
  quasi-algebraic automorphisms of the analytic variety $A°$ decomposes as a
  semidirect product $(\text{translations}) \rtimes (\text{group morphisms})$.
  \qed
\end{cor}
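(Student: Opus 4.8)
The plan is to exhibit the two candidate subgroups inside the automorphism group $G$ of the analytic variety $A°$ and then verify the four defining properties of an internal semidirect product: that both are subgroups, that they intersect trivially, that together they generate $G$, and that one of them is normal. Throughout I write the group law on $A°$ additively, with neutral element $0_{A°}$, and denote by $t_a : x ↦ x+a$ the translation by $a ∈ A°$.

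First I would record two preliminary observations and one sanity check. (i) Each translation $t_a$ is a quasi-algebraic automorphism: by Definition~\ref{def:3-3} the semitorus $A°$ acts holomorphically on the whole of $A$, so left translation by $a$ extends to an automorphism of $A$, in particular to a meromorphic self-map; hence the subgroup $T := \{t_a \mid a ∈ A°\} ⊆ G$ is well defined and $T ≅ A°$. (ii) Let $H := \{f° ∈ G \mid f°(0_{A°}) = 0_{A°}\}$ be the stabiliser of the origin. By Proposition~\ref{prop:3-18}, every element of $H$ is quasi-algebraic and fixes $0_{A°}$, hence is automatically a morphism of complex Lie groups; thus $H$ is exactly the group of quasi-algebraic \emph{group} automorphisms, the ``group morphisms'' of the statement. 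The sanity check is that $G$ is genuinely a group: if $f° ∈ G$ extends to the meromorphic map $f : A \dasharrow A$, then $f$ is bimeromorphic, since it restricts to an isomorphism on the dense open set $A°$; therefore its inverse extends meromorphically and $(f°)^{-1}$ is again quasi-algebraic.

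Now come the three algebraic steps. The intersection $T ∩ H = \{\Id\}$ is immediate, because $t_a(0_{A°}) = a$ equals $0_{A°}$ only for $a = 0_{A°}$. For $G = T·H$, given $f° ∈ G$ I set $a := f°(0_{A°})$ and $g° := t_{-a}◦f°$; then $g°$ is quasi-algebraic by Lemma~\ref{lem:2-4}\ref{il:2-4-1}, it is an automorphism, and it fixes $0_{A°}$, so $g° ∈ H$ and $f° = t_a◦g°$. For normality of $T$, I would compute, for $φ ∈ H$ and $a ∈ A°$, that $φ◦t_a◦φ^{-1} = t_{φ(a)} ∈ T$, using that $φ$ is a group automorphism; combined with the fact that $T$ is abelian (as $A°$ is commutative) and hence normalises itself, and with $G = T·H$, this yields $T \triangleleft G$. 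Assembling the four properties identifies $G$ with the internal semidirect product $T \rtimes H = (\text{translations}) \rtimes (\text{group morphisms})$.

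I expect the only genuinely non-formal inputs to be the two preliminary observations: that translations extend to the compactification $A$ (so that they are quasi-algebraic at all, which is exactly where the semitoric structure enters), and that Proposition~\ref{prop:3-18} upgrades an origin-fixing quasi-algebraic automorphism to a group automorphism. Once these are in place, the remainder is the standard splitting argument for the exact sequence $1 → H → G → A° → 1$ given by evaluation at the origin, and no further geometric work is required.
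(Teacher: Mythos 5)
Your proof is correct and is essentially the argument the paper leaves implicit: Corollary~\ref{cor:3-20} is stated without proof precisely because it follows from Proposition~\ref{prop:3-18} exactly as you argue --- translations are quasi-algebraic since the semitoric action of $A°$ on $A$ extends left multiplication, origin-fixing quasi-algebraic automorphisms are Lie group automorphisms by Proposition~\ref{prop:3-18}, and the rest is the routine internal-semidirect-product check (trivial intersection, $G = T·H$, normality of $T$) that you carry out. One small slip in your closing remark: the relevant extension is $1 → T → G → H → 1$, with the translation subgroup $T ≅ A°$ normal and the splitting given by the inclusion $H ⊂ G$, not $1 → H → G → A° → 1$ --- evaluation at the origin is not a group homomorphism and $H$ is not normal in $G$ (conjugating $φ ∈ H$ by $t_a$ gives $t_{a - φ(a)} ◦ φ$) --- but your three displayed steps never use that remark, so the proof stands as written.
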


\begin{cor}[Semitoric compactification with additional symmetry]\label{cor:3-21}
  Let $A° ⊂ A_1$ be a semitoric variety, and let $G ⊂ \Aut(A°)$ be a finite
  group of quasi-algebraic automorphisms.  Then, there exists a semitoric
  variety $A° ⊂ A_2$, such that the following holds.
  \begin{itemize}
    \item The analytic varieties $A_1$ and $A_2$ are bimeromorphic.

    \item The $G$-action on $A°$ extends equivariantly to $A_2$.
  \end{itemize}
\end{cor}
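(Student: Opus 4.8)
The plan is to reduce the statement to a purely combinatorial (toric) problem and then feed the solution back into the structure theory of Section~\ref{sec:3}. After fixing a neutral element so that $A°$ becomes a group, Corollary~\ref{cor:3-20} writes every $g ∈ G$ as $g = t_g ◦ φ_g$, where $t_g$ is a translation and $φ_g$ is a holomorphic group automorphism of $A°$. Translations extend to \emph{any} semitoric compactification, since the left-multiplication action of $A°$ on itself extends by the very definition of a semitoric variety; the entire difficulty therefore concentrates on the linear parts $φ_g$. Writing $φ_g = t_g^{-1} ◦ g$ and using Lemma~\ref{lem:2-4}, each $φ_g$ is again quasi-algebraic, and $g ↦ φ_g$ is a homomorphism whose image $H$ is a finite group of quasi-algebraic group automorphisms of $A°$.

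Next I would record the action of $H$ on the combinatorial data. Fix the presentation $π°_A : A° \twoheadrightarrow T_A$ of Fact~\ref{fact:3-11}. By Fact~\ref{fact:3-13}, every $h ∈ H$ respects this presentation and in particular preserves $\ker π°_A ≅ (ℂ^*)^{⨯ n}$. Restriction thus gives a holomorphic group automorphism of $(ℂ^*)^{⨯ n}$, equivalently an element of $\mathrm{GL}(N) = \mathrm{GL}_n(ℤ)$, where $N$ is the cocharacter lattice of $\ker π°_A$. In this way $H$ acts linearly on $N$, and the compactifications of $A°$ with presentation $π°_A$ correspond, via Fact~\ref{fact:3-12}, to the choice of a smooth complete fan in $N_ℝ$.

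The heart of the matter is then to produce a fan $Σ$ in $N_ℝ$ that is smooth, complete, and invariant under $H ⊂ \mathrm{GL}(N)$. Granting $Σ$, let $F$ be the associated toric variety and use Fact~\ref{fact:3-12} to build a semitoric variety $A° ⊂ A_2$ with central fibre $F$. Since $A_2$ shares the presentation $π°_A$ with $A_1$, the two are bimeromorphic (this is the equivalence of categories recorded before Fact~\ref{fact:3-11}), giving the first bullet. For the second, each $φ_g$ respects $π°_A$ and is therefore quasi-algebraic with respect to $A_2$ by Fact~\ref{fact:3-13}; Fact~\ref{fact:3-14} then reduces its extendability across $A_2$ to the extendability of its restriction to the central fibre, which is precisely the toric automorphism attached to $φ_g|_N ∈ \mathrm{GL}(N)$. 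That automorphism extends to $F$ exactly because $Σ$ is $H$-invariant, and the same applies to $φ_g^{-1}$, so each $φ_g$ extends to an automorphism of $A_2$. Combined with the translations $t_g$, every $g = t_g ◦ φ_g$ extends; and since a meromorphic map has at most one extension to a morphism, these extensions compose as in $G$, so the $G$-action extends to $A_2$.

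The step I expect to be the main obstacle is the toric input of the third paragraph: the existence of a smooth complete $H$-invariant fan. I would obtain it in two stages. First, starting from any complete fan in $N_ℝ$, the common refinement of its finitely many $H$-translates is again complete and is now $H$-invariant. Second, this refined fan is in general singular, and the issue is to subdivide it to a smooth fan without destroying $H$-invariance: the standard toric resolution by star subdivisions can be carried out $H$-equivariantly by subdividing whole $H$-orbits of cones simultaneously, a process that terminates because $H$ is finite. The one place where genuine care is required is checking that this equivariant procedure preserves completeness and actually reaches smoothness.
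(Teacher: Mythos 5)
Your proposal is correct and follows essentially the same route as the paper's proof: reduce via Corollary~\ref{cor:3-20} to a finite group of quasi-algebraic group automorphisms, use Fact~\ref{fact:3-13} to obtain the induced action on the central fibre $(ℂ^*)^{⨯•}$ and its cocharacter lattice, choose a $G$-invariant smooth complete fan, and conclude with Facts~\ref{fact:3-12}, \ref{fact:3-13} and \ref{fact:3-14}. The only difference is one of detail: you spell out the splitting into translations and linear parts and the two-stage equivariant fan construction, which the paper compresses into a ``without loss of generality'' and a footnote.
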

\begin{proof}
  As before, write $π° : A° \twoheadrightarrow T$ for the unique presentation
  that extends to an $A°$-equivariant morphism $π : A_1 \twoheadrightarrow T$.
  Corollary~\ref{cor:3-20} allows assuming without loss of generality that $G$
  is a finite group of quasi-algebraic group morphisms.  Fact~\ref{fact:3-13}
  will then guarantee that $G$ acts by group morphisms on $T$ in a way that
  makes the morphism $π°$ equivariant.  In particular, the $G$-action fixes the
  point $0_T$ and stabilizes the fibre $F° := (π°)^{-1}(0_T) ≅ (ℂ^*)^{⨯ •}$.
  Toric geometry will then allow choosing\footnote{Since $G$ is finite, every
  fan can be refined to become stable under the action of $G$ on $N_ℝ$.} a
  $G$-equivariant toric compactification $F° ⊂ F$, and Fact~\ref{fact:3-12}
  presents us with a semitoric compactification $A° ⊂ A_2$, fibred over $T$ with
  typical fibre $F$.  Fact~\ref{fact:3-13} ensures that $A_1$ and $A_2$ are
  bimeromorphic, and Fact~\ref{fact:3-14} asserts that the $G$-action on $A°$
  extends equivariantly to $A_2$.
\end{proof}

\subsection{Quasi-algebraic subgroups}
\approvals{Erwan & yes \\ Stefan & yes}

In analogy to the notion of a quasi-algebraic morphism, a quasi-algebraic
subgroup of a semitorus is a subgroup that extends to an analytic set in a
preferred compactification.  A full discussion of this notion is found in
\cite[Sect.~5.3.4]{MR3156076}.

\begin{defn}[\protect{Quasi-algebraic subgroup, \cite[Def.~5.3.14]{MR3156076}}]\label{def:3-22}%
  Let $A°$ be a semitorus, and let $A° ⊂ A$ be a semitoric compactification.  An
  analytic subgroup $H° ⊂ A°$ is called \emph{quasi-algebraic for the semitoric
  compactification $A° ⊂ A$} if the topological closure of $H°$ in $A$ is an
  analytic subset.
\end{defn}

\subsubsection{Elementary properties}
\approvals{Erwan & yes \\ Stefan & yes}

We state two facts about quasi-algebraic subgroups for later reference.  The
elementary proofs are left to the reader.

\begin{fact}[\protect{Quasi-algebraic subgroups are semitori, \cite[Prop.~5.3.13]{MR3156076}}]\label{fact:3-23}%
  In the setting of Definition~\ref{def:3-22}, quasi-algebraic subgroups are
  again semitori.  \qed
\end{fact}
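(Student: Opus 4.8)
The plan is to construct a presentation of $H°$ by restricting the presentation $π° : A° \twoheadrightarrow T$ of the ambient semitorus and analysing its image and kernel separately. Throughout I read ``analytic subgroup'' in Definition~\ref{def:3-22} as a subgroup that is at the same time an analytic subset of $A°$, hence closed in $A°$; this reading is essential, since it excludes dense one-parameter subgroups (Kronecker lines), which are immersed Lie subgroups with analytic closure but are manifestly not semitori. We may assume $H°$ connected, passing to the identity component if necessary, and it is automatically commutative. Write $K° := \ker π° ≅ (ℂ^*)^{⨯ n}$, let $π : A \twoheadrightarrow T$ be the equivariant fibre-bundle extension provided by Fact~\ref{fact:3-11}, set $F := π^{-1}(0_T)$, and let $\what H ⊂ A$ be the topological closure of $H°$, which is a compact analytic subset by the quasi-algebraicity hypothesis. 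Since $A°$ is open, $\what H ∩ A° = H°$.

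For the kernel, I would observe that $H° ∩ K°$ is a closed connected complex-analytic subgroup of the algebraic torus $K° ≅ (ℂ^*)^{⨯ n}$; such a subgroup is cut out by finitely many monomial characters and is therefore itself an algebraic subtorus, $H° ∩ K° ≅ (ℂ^*)^{⨯ m}$. For the image, first note that $π(\what H) = \overline{π°(H°)}$ is a compact analytic subgroup of $T$, hence a subtorus $T' ⊆ T$, in which $S := π°(H°)$ is dense. The remaining point is surjectivity of $π°|_{H°} : H° → T'$, i.e. that $S = T'$. Here I would run a fibre-dimension count for $π|_{\what H} : \what H → T'$: the ``good locus'' $\{\, t ∈ T' : \what H ∩ π^{-1}(t) ∩ A° ≠ ∅ \,\}$ equals $π(\what H ∩ A°) = S$, and over $S$ the fibres are mutual translates, of constant dimension $m = \dim(H° ∩ K°)$ because $H° ∩ K°$ is dense in the central fibre $\what H ∩ F$. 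Upper semicontinuity of fibre dimension together with the density of $S$ then forces the generic fibre dimension to equal $m$, whence $\dim_ℂ T' = \dim_ℂ \what H - m = \dim_ℂ S$; as $S$ is dense in $T'$ with equal dimension, $S = T'$. Combining the two steps exhibits $H°$ as an extension $1 → (ℂ^*)^{⨯ m} → H° → T' → 1$ with $T'$ a compact complex torus, which is exactly a presentation in the sense of Definition~\ref{def:3-1}; hence $H°$ is a semitorus.

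The hard part is the rigidity underlying both steps: ruling out the dense, ``irrational'' subgroups that would spoil the conclusion. This is precisely where the hypotheses do the work --- the requirement that $H°$ be an honest analytic subset makes $H° ∩ K°$ closed in $K°$, and quasi-algebraicity makes $\what H$ a compact analytic set, so that $π(\what H)$ is a genuine subtorus and the fibre-dimension bookkeeping is legitimate. I expect the two technical points that most need care to be (i) the classification of closed connected analytic subgroups of $(ℂ^*)^{⨯ n}$ as algebraic subtori, and (ii) the claim that $H° ∩ K°$ is dense in the central fibre $\what H ∩ F$, which is what makes the dimension count over $S$ clean; both are elementary once set up but carry the whole weight of the argument.
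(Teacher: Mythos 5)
The image half of your argument is essentially sound (modulo a repair noted below), but the kernel step contains a genuine error, and it sits exactly where the whole weight of the statement lies. The claim that a closed connected complex-analytic subgroup of $K° ≅ (ℂ^*)^{⨯ n}$ ``is cut out by finitely many monomial characters and is therefore itself an algebraic subtorus'' is false. Concretely, the image of the map $ℂ → (ℂ^*)^{⨯ 2}$, $z ↦ (e^z, e^{iz})$, is an injective, proper holomorphic immersion, so its image is a \emph{closed} complex submanifold of $(ℂ^*)^{⨯ 2}$ and in particular a closed analytic subgroup; yet it is isomorphic to $(ℂ,+)$ and is not an algebraic subtorus. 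This is precisely the phenomenon that the Warning immediately following Fact~\ref{fact:3-23} points out, with reference to \cite[Ex.~5.1.44]{MR3156076}: closed analytic subgroups of semitori --- and $(ℂ^*)^{⨯ n}$ is a semitorus --- need not be semitori. Closedness in $A°$ does exclude Kronecker lines, as you say, but it does not exclude these ``irrational-slope'' copies of $ℂ$; only quasi-algebraicity does, and your kernel step never uses that hypothesis (you use only that $H° ∩ K°$ is closed in $K°$).

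The repair must feed quasi-algebraicity into the kernel analysis. This is possible: since $\what{H} ∩ A° = H°$ and $F ∩ A° = K°$, one has $H° ∩ K° = (\what{H} ∩ F) ∩ A°$, and the closure of $H° ∩ K°$ in $F$ equals the union of those irreducible components of the compact analytic set $\what{H} ∩ F$ that meet $A°$; hence $H° ∩ K°$ is quasi-algebraic in $K°$ for the toric compactification $F$. One then needs a genuinely non-elementary input --- a Chow/GAGA-type theorem on the complete toric variety $F$ --- to conclude that this closure is algebraic, so that $H° ∩ K°$ is a Zariski-closed subgroup of the algebraic torus $(ℂ^*)^{⨯ n}$ and its identity component is a subtorus $(ℂ^*)^{⨯ m}$. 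A second, but repairable, inaccuracy: $H° ∩ K°$ need \emph{not} be dense in $\what{H} ∩ F$, because the latter may have components lying entirely in the boundary $A ∖ A°$, so fibres of $π|_{\what{H}}$ over points of $S$ can have dimension strictly larger than $m$. Your dimension count should instead use only that these fibres have dimension $≥ m$; upper semicontinuity then gives this bound over all of $T' = \overline{S}$, hence $\dim T' ≤ \dim H° - m$, while the rank theorem applied to $π°|_{H°}$ gives $\dim T' ≥ \dim H° - m$ and produces open pieces of $S$ of full dimension, after which your openness argument for $S = T'$ goes through. (For comparison: the paper does not prove Fact~\ref{fact:3-23} at all, but cites \cite[Prop.~5.3.13]{MR3156076}; the counterexample above shows why the ``elementary'' shortcut you propose cannot replace that citation.)
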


\begin{warning}[Analytic subgroups need not be semitori]
  Despite claims to the contrary in the literature,
  cf.~\cite[Lem.~3.8.18]{KobayashiGrundlehren}, closed analytic subgroups of
  semitori need not be semitori in general.  See \cite[Ex.~5.1.44]{MR3156076}
  and the references there for an example.
\end{warning}

The following fact implies that the notion of ``quasi-algebraic subgroup''
depends only on the bimeromorphic equivalence class of a semitoric
compactification.

\begin{fact}[Dependence on choice of compactification]\label{fact:3-25}%
  Let $A°$ be a semitorus, and let $A° ⊂ A_1$ and $A° ⊂ A_2$ be two
  bimeromorphic semitoric compactifications.  Then, a subgroup $H° ⊂ A°$ is
  quasi-algebraic for the semitoric compactification $A° ⊂ A_1$ if and only if
  it is quasi-algebraic for the semitoric compactification $A° ⊂ A_2$.  \qed
\end{fact}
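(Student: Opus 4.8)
The plan is to compare the two compactifications through a common dominating model, playing the proper mapping theorem against the analyticity of strict transforms. Since the statement is symmetric in $A_1$ and $A_2$, it suffices to prove one implication, say that quasi-algebraicity for $A° ⊂ A_1$ forces quasi-algebraicity for $A° ⊂ A_2$. First I would let $φ : A_1 \dasharrow A_2$ be the bimeromorphic map extending the identity on the common open part $A°$, and pass to its graph $Γ ⊆ A_1 ⨯ A_2$. Being the graph of a meromorphic map between compact varieties, $Γ$ is a compact analytic variety, the two projections $q_1 : Γ → A_1$ and $q_2 : Γ → A_2$ are proper bimeromorphic morphisms, and both are biholomorphic over $A°$, with $q_1^{-1}(A°) = q_2^{-1}(A°)$ the diagonal copy of $A°$. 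I write $A°_Γ$ for this copy, $H°_Γ ⊆ A°_Γ$ for the corresponding copy of $H°$, and let $\overline{H}_i$ and $\overline{H}_Γ$ denote the closures of $H°$ in $A_i$ and of $H°_Γ$ in $Γ$, respectively.

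The first and main step is to lift analyticity upward: assuming $\overline{H}_1$ is analytic, I claim $\overline{H}_Γ$ is analytic. The preimage $q_1^{-1}(\overline{H}_1)$ is an analytic subset of $Γ$; let $W_1, …, W_m$ be its irreducible components. Since $q_1$ restricts to an isomorphism over $A°$, one has $q_1^{-1}(\overline{H}_1) ∩ A°_Γ = q_1^{-1}(\overline{H}_1 ∩ A°)$, and $\overline{H}_1 ∩ A°$ is the closure of $H°$ inside $A°$; taking closures in $Γ$ therefore recovers $\overline{H}_Γ$. Intersecting with the finite decomposition and using that $A°_Γ$ is open gives $\overline{H}_Γ = \bigcup_j \overline{W_j ∩ A°_Γ}$. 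For each $j$, either $W_j ∩ A°_Γ = ∅$, or $W_j ∩ A°_Γ$ is a nonempty open, hence dense, subset of the irreducible $W_j$, so its closure is all of $W_j$. Thus $\overline{H}_Γ$ is precisely the union of those components $W_j$ not contained in $q_1^{-1}(A_1 ∖ A°)$, i.e. the strict transform of $\overline{H}_1$, and a union of irreducible components of an analytic set is analytic.

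The second step descends again. Since $q_2$ is proper, the proper mapping theorem shows that $q_2(\overline{H}_Γ)$ is an analytic subset of $A_2$, and it remains to identify it with $\overline{H}_2$. On one hand $q_2(\overline{H}_Γ)$ is closed and contains $q_2(H°_Γ) = H°$, hence contains $\overline{H}_2$; on the other hand, by continuity, $q_2(\overline{H}_Γ) = q_2(\overline{H°_Γ}) ⊆ \overline{q_2(H°_Γ)} = \overline{H}_2$. Therefore $q_2(\overline{H}_Γ) = \overline{H}_2$ is analytic, which is exactly quasi-algebraicity for $A° ⊂ A_2$. Exchanging the roles of $A_1$ and $A_2$ yields the reverse implication and completes the proof.

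I expect the descending step to be routine once the proper mapping theorem is invoked, and the genuine content to lie in the ascending step. The subtlety there is that merely knowing $\overline{H}_Γ ⊆ q_1^{-1}(\overline{H}_1)$ with the right-hand side analytic does \emph{not} by itself make $\overline{H}_Γ$ analytic; one really needs the identification of $\overline{H}_Γ$ as a full union of irreducible components, that is, as a strict transform. I note that the argument is phrased entirely in terms of topological closures and so never uses that $H°$ is closed in $A°$; and that one could alternatively replace $Γ$ by a common semitoric refinement obtained from a fan subdivision via Facts~\ref{fact:3-12}--\ref{fact:3-14}, but the graph is more economical and avoids the toric bookkeeping.
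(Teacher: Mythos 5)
The paper offers no proof of Fact~\ref{fact:3-25}: it is stated with its proof omitted, the surrounding text explicitly leaving the ``elementary proofs'' of this subsection to the reader. So there is no argument of the paper to compare yours against; your graph-plus-strict-transform argument is a legitimate way to fill this gap, and it is correct in substance and in structure (the descending step via the proper mapping theorem is indeed routine, and the ascending step is indeed where the content lies). One justification, however, needs tightening, and it sits exactly at the point you identify as the crux. You argue that $W_j \cap A^\circ_\Gamma$, being ``a nonempty open, hence dense'' subset of the irreducible $W_j$, has closure equal to $W_j$. For the Euclidean topology this implication is false in general: a small ball in $\mathbb{C}$ is a nonempty open subset of an irreducible variety and is not dense. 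What saves the step is that $A^\circ_\Gamma$ is not merely open but the complement of the \emph{analytic} set $q_1^{-1}(A_1 \setminus A^\circ)$ in $\Gamma$, so that $W_j \cap A^\circ_\Gamma$ is the complement in $W_j$ of a proper analytic subset of the irreducible $W_j$, and proper analytic subsets of irreducible complex spaces are nowhere dense. With that one-line correction the ascending step, and hence the whole proof, is complete. Two smaller remarks: the identification $q_1^{-1}(A^\circ)=q_2^{-1}(A^\circ)=\Delta(A^\circ)$, which you assert, does require that the identity of $A^\circ$ extends meromorphically in \emph{both} directions (so that $\Gamma$, read backwards, is also the graph of the inverse extension); this is the reading of ``bimeromorphic semitoric compactifications'' that the paper intends, cf.\ the discussion around Fact~\ref{fact:3-13}. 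And your closing observation is right and worth keeping: the argument never uses that $H^\circ$ is a subgroup, so it proves the stronger statement that for an arbitrary subset $S \subseteq A^\circ$, analyticity of the closure of $S$ in $A_1$ is equivalent to analyticity of its closure in $A_2$.
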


\subsubsection{Lattice structure}
\approvals{Erwan & yes \\ Stefan & yes}

As usual in algebra, quasi-algebraic subgroups form a complete lattice.  We
refrain from going into any details here and state the only fact that will be
relevant for us later.

\begin{fact}[Existence of a smallest group]\label{fact:3-26}%
  In the setting of Definition~\ref{def:3-22}, the intersection of arbitrarily
  many quasi-algebraic subgroups is quasi-algebraic.  In particular, given any
  subset $I ⊂ A°$, there exists a unique smallest quasi-algebraic subgroup that
  contains $I$.  \qed
\end{fact}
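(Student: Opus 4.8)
The plan is to translate the problem into linear algebra on the Lie algebra of $A°$, where arbitrary intersections become transparent, and then to transport the result back to $A$ using the extended group action. Fix a presentation $π° : A° \twoheadrightarrow T$ and write $A° = V/Γ$, where $V$ is the Lie algebra of $A°$ (a finite-dimensional complex vector space, realised as the universal cover) and $Γ = π_1(A°)$ is the period subgroup, a discrete subgroup of $V$. Following \cite[Sect.~5.3.4]{MR3156076}, one describes the connected quasi-algebraic subgroups of $A°$ for the compactification $A° ⊂ A$ as precisely those arising from the \emph{$Γ$-rational} complex subspaces $W ⊆ V$: those $W$ for which $W ∩ Γ$ is a lattice in $W$, so that $W/(W ∩ Γ)$ is closed in $A°$, and for which $W$ is defined over the rational structure carried by $Γ$. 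The substance of this dictionary is that $Γ$-rationality is exactly the condition guaranteeing that the topological closure in $A$ of the subgroup $W/(W ∩ Γ)$ is an analytic subset; the Warning above shows that abandoning rationality produces closed analytic subgroups whose closures fail to be analytic, hence are not quasi-algebraic.

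Granting this correspondence, the finite-intersection case is immediate linear algebra. If $H_1°$ and $H_2°$ correspond to $Γ$-rational subspaces $W_1$ and $W_2$, then $W_1 ∩ W_2$ is again $Γ$-rational, since an intersection of subspaces defined over the rational structure of $Γ$ is defined over it, and $(W_1 ∩ W_2) ∩ Γ = (W_1 ∩ Γ) ∩ (W_2 ∩ Γ)$ is again a lattice of the intersection. Thus the identity component of $H_1° ∩ H_2°$ is quasi-algebraic. For an arbitrary family $\{H_i°\}_{i ∈ I}$ with associated subspaces $\{W_i\}$, finite-dimensionality of $V$ forces the descending chain condition: the intersection $\bigcap_i W_i$ already equals a finite subintersection $W_{i_1} ∩ ⋯ ∩ W_{i_k}$, which is $Γ$-rational by the finite case. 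Hence the identity component of $\bigcap_i H_i°$ coincides with that of a finite subintersection of the $H_i°$ and is quasi-algebraic.

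It remains to pass from identity components to the literal subgroups. The intersection $K° := \bigcap_i H_i°$ is a closed subgroup of $A°$ whose identity component $K°_0$ has just been shown to be quasi-algebraic; the structure theory of semitori bounds the number of components of $K°$, since $K°/K°_0$ embeds as a discrete subgroup of the quotient $A°/K°_0$, forcing the index $[K° : K°_0]$ to be finite. Because the $A°$-action on itself extends to a holomorphic action on $A$ by Definition~\ref{def:3-3}, left translation by any $a ∈ A°$ is an automorphism of $A$ and carries analytic subsets to analytic subsets. Writing $K°$ as the finite union of cosets $a·K°_0$ and taking closures gives $\overline{K°} = \bigcup_a a·\overline{K°_0}$, a finite union of analytic sets and hence analytic; thus $K°$ is quasi-algebraic. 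Applying this to the family of all quasi-algebraic subgroups containing a prescribed subset $I ⊆ A°$, which is non-empty as it contains $A°$ itself, produces the unique smallest quasi-algebraic subgroup containing $I$.

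The main obstacle is the correspondence underpinning the first paragraph, namely the verification that quasi-algebraicity of a connected subgroup is equivalent to $Γ$-rationality of the associated subspace—equivalently, that rationality is exactly the condition under which the closure in the compactification $A$ is analytic. Once this is in hand, which is the content of \cite[Sect.~5.3.4]{MR3156076} together with Fact~\ref{fact:3-23}, the remainder is finite-dimensional linear algebra and the elementary coset bookkeeping above.
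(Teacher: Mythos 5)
You are proving a statement the paper itself leaves unproved (it is one of the ``elementary proofs left to the reader''), so there is no proof of record to compare with; judged on its own terms, your argument has a fatal gap at its foundation. The dictionary in your first paragraph --- a connected subgroup of $A° = V/\Gamma$ is quasi-algebraic for $A° ⊂ A$ if and only if its Lie algebra $W$ is $\Gamma$-rational --- is false, and therefore cannot be ``the content of'' \cite[Sect.~5.3.4]{MR3156076}. Counterexample: let $E = \bC/\Lambda$ with $\Lambda = 2\pi i \bZ + \log(q)\bZ$, $0<|q|<1$, so that the Tate uniformization $u : \bC^* \to E$, $e^{\zeta} \mapsto [\zeta]$, is a surjective holomorphic group morphism with kernel $q^{\bZ}$. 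In the semitorus $A° = \bC^* \times E$ with semitoric compactification $A = \bP^1 \times E$, let $H°$ be the graph of $u$. This is a closed connected subgroup, isomorphic to $\bC^*$ (hence a semitorus, so neither Fact~\ref{fact:3-23} nor the Warning detects anything), and its Lie algebra is the diagonal $W = \bC\cdot(1,1)$, which is the complex span of the period $(2\pi i, 2\pi i) \in \Gamma = 2\pi i\bZ \times \Lambda$ --- as $\Gamma$-rational as can be under any reading of your definition. Yet $H°$ is not quasi-algebraic: for every $[z_0] \in E$ the points $\bigl(e^{z_0+n\log q}, [z_0]\bigr) = (q^n e^{z_0}, [z_0])$ lie on $H°$ and converge to $(0,[z_0])$, so $\overline{H°}$ contains $\{0,\infty\}\times E$; near a point of $\{0\}\times E$ the closure consists of $\{0\}\times E$ together with infinitely many pairwise disjoint strands $\{(q^m e^{\varepsilon}, [\varepsilon])\}$, $m \gg 0$, violating local finiteness of irreducible components of an analytic set. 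The moral is that quasi-algebraicity is \emph{not} a property of the pair $(W,\Gamma)$: it depends on the presentation $\pi° : A° \to T$ (here $H° \cap \ker \pi° = q^{\bZ}$ is an infinite, non-algebraic subgroup of $\bC^*$, which is exactly what the correct criterion detects), and this dependence is the reason the paper fusses over presentations in Facts~\ref{fact:3-11}, \ref{fact:3-13} and \ref{fact:3-25}. A second, independent error: in your last step you deduce $[K°:K°_0]<\infty$ from ``$K°/K°_0$ embeds as a discrete subgroup of $A°/K°_0$''; discreteness does not imply finiteness in a non-compact group ($\bZ ⊂ \bC^*$).

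The repair is to work in $A$ rather than in the Lie algebra, after which no structure theory of subgroups is needed at all. Each quasi-algebraic $H_i°$ is an analytic subset of $A°$, hence closed there, so $H_i° = \overline{H_i°} \cap A°$ and therefore $\bigcap_i H_i° = Z \cap A°$, where $Z := \bigcap_i \overline{H_i°}$. An arbitrary intersection of analytic subsets is again analytic --- locally the ideal sum $\sum_i \sI_i$ equals a finite subsum by the strong Noether property of coherent sheaves --- so $Z$ is a compact analytic subset of $A$ with finitely many irreducible components. The closure of $Z \cap A°$ in $A$ is the union of those irreducible components of $Z$ that are not contained in $A \setminus A°$, hence is analytic; thus $\bigcap_i H_i°$ is quasi-algebraic, and finiteness of its component group (which your coset argument required) comes for free. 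The smallest quasi-algebraic subgroup containing a subset $I$ is then the intersection of the family of all quasi-algebraic subgroups containing $I$, a family that is nonempty because $A°$ itself is quasi-algebraic.
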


\subsubsection{Quotients}
\approvals{Erwan & yes \\ Stefan & yes}

Semitoric varieties are stable under quotients by quasi-algebraic groups, in the
following sense.

\begin{fact}[\protect{Existence of a quotients, \cite[Thm.~5.3.13]{MR3156076}}]\label{fact:3-27}%
  Let $A°$ be a semitorus, and let $A° ⊂ A$ be a semitoric compactification.  If
  $H° ⊂ A°$ is a quasi-algebraic subgroup, then the quotient $Q° := A°/H°$ is a
  semitorus and there exists a semitoric compactification $Q° ⊂ Q$ that renders
  the quotient morphism $q° : A° \twoheadrightarrow Q°$ quasi-algebraic.
  \qed
\end{fact}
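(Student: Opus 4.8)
The plan is to build the presentation of $Q°$ by hand and then invoke Facts~\ref{fact:3-12} and~\ref{fact:3-13} for the compactification and the quasi-algebraicity of $q°$. Write $π°_A : A° \twoheadrightarrow T_A$ for the presentation provided by Fact~\ref{fact:3-11} and set $L° := \ker π°_A ≅ (ℂ^*)^{⨯•}$. The first and decisive step is to show that the image $S := π°_A(H°) ⊆ T_A$ is a \emph{closed} subtorus and not merely a dense subgroup. By Fact~\ref{fact:3-23}, $H°$ is itself a semitorus, so it carries a presentation $π°_H : H° \twoheadrightarrow T_H$, and the inclusion $H° \hookrightarrow A°$ is a quasi-algebraic group morphism because its closure in $A$ is analytic. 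Applying the implication \ref{il:3-13-1}$⇒$\ref{il:3-13-2} of Fact~\ref{fact:3-13} to this inclusion yields a group morphism $τ_H : T_H → T_A$ with $τ_H ∘ π°_H = π°_A|_{H°}$. Since $π°_H$ is surjective, $S = τ_H(T_H)$ is the image of a compact torus under a group morphism, hence a closed subtorus of $T_A$. This is the step I expect to be the genuine obstacle: without quasi-algebraicity one cannot exclude ``irrational winding'' subgroups whose image in $T_A$ is dense but not closed, and it is precisely Facts~\ref{fact:3-23} and~\ref{fact:3-13} that rule this out.

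With $S$ in hand, set $T_Q := T_A/S$ and let $τ : T_A \twoheadrightarrow T_Q$ be the quotient of compact tori. The composite $τ ∘ π°_A : A° \twoheadrightarrow T_Q$ is a surjective group morphism that kills $H°$, so it descends to a surjective group morphism $π°_Q : Q° \twoheadrightarrow T_Q$. I claim $π°_Q$ is a presentation. Its kernel is $(π°_A)^{-1}(S)/H°$; writing $W° := (π°_A)^{-1}(S)$ and using $S = π°_A(H°)$, every $w ∈ W°$ satisfies $π°_A(w) = π°_A(h)$ for some $h ∈ H°$, whence $w ∈ L°·H°$. Thus $W° = L°·H°$, and the second isomorphism theorem identifies $\ker π°_Q = W°/H° ≅ L°/K°$, where $K° := L° ∩ H°$.

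It remains to see that $L°/K° ≅ (ℂ^*)^{⨯•}$. The subgroup $L°$ is quasi-algebraic, since its closure in $A$ is the central toric fibre $π_A^{-1}(0_{T_A})$; hence by Fact~\ref{fact:3-26} the intersection $K°$ is quasi-algebraic, and by Fact~\ref{fact:3-23} it is a semitorus. Being contained in the Stein group $L° ≅ (ℂ^*)^{⨯•}$, which admits no positive-dimensional compact complex subtorus, $K°$ has trivial compact part and is therefore itself a torus $(ℂ^*)^{⨯•}$. Since the quotient of a torus by a subtorus is again a torus, $\ker π°_Q ≅ L°/K° ≅ (ℂ^*)^{⨯•}$, so $π°_Q$ is a presentation and $Q°$ is a semitorus.

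Finally, choose any smooth toric compactification of the central fibre $(π°_Q)^{-1}(0_{T_Q}) ≅ (ℂ^*)^{⨯•}$; Fact~\ref{fact:3-12} then furnishes a semitoric compactification $Q° ⊂ Q$ with associated morphism $π_Q : Q → T_Q$. By construction $τ ∘ π°_A = π°_Q ∘ q°$, so the implication \ref{il:3-13-2}$⇒$\ref{il:3-13-1} of Fact~\ref{fact:3-13} shows that the quotient morphism $q° : A° \twoheadrightarrow Q°$ is quasi-algebraic with respect to the compactifications $A° ⊂ A$ and $Q° ⊂ Q$, which completes the argument. The only inputs beyond the cited facts are routine statements about complex tori (images of torus morphisms are subtori, quotients by subtori are tori), and the one subtle point — the closedness of $S$ — has been isolated in the first paragraph.
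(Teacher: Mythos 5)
A preliminary remark: the paper does not prove this statement at all --- it is stated as a Fact with a reference to \cite[Thm.~5.3.13]{MR3156076} --- so your argument has to stand on its own. Its architecture is sensible and close to what the reference does: present $Q°$ via $Q° \twoheadrightarrow T_A/π°_A(H°)$, then get the compactification from Fact~\ref{fact:3-12} and quasi-algebraicity of $q°$ from the implication \ref{il:3-13-2}$⇒$\ref{il:3-13-1} of Fact~\ref{fact:3-13}; and you are right that closedness of $S = π°_A(H°)$ is the crux. But the two key steps fail as written. The first gap: to prove closedness of $S$ you apply Fact~\ref{fact:3-13} to the inclusion $H° ↪ A°$, justified by ``its closure in $A$ is analytic''. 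This conflates Definition~\ref{def:3-22} (quasi-algebraic \emph{subgroup}: closure analytic) with Definition~\ref{def:2-1} (quasi-algebraic \emph{morphism}: meromorphic extension between chosen compactifications), and Fact~\ref{fact:3-13} needs the latter: you must first exhibit a semitoric compactification $H° ⊂ H$ together with a meromorphic extension $H \dasharrow A$ of the inclusion; only then do Facts~\ref{fact:3-11} and \ref{fact:3-13} produce a torus morphism out of the presentation attached to that compactification. The closure itself will not serve: for $H° = \{(t²,t³) : t ∈ ℂ^*\} ⊂ (ℂ^*)²$ with $A = ℙ²$, the closure is a cuspidal cubic, singular, hence not a semitoric variety. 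One can normalize the closure, but then one must prove that the normalization \emph{is} a semitoric compactification of $H°$ (i.e.\ that a presentation extends to it as a fibre bundle with smooth toric fibres) --- a statement of the same depth as Fact~\ref{fact:3-23}, contained in \cite[Sect.~5.3.4]{MR3156076} but not among the paper's stated facts. Note that Fact~\ref{fact:3-23} alone cannot substitute: it gives \emph{some} presentation of $H°$, and the paper explicitly warns that presentations are highly non-unique, so there is no a priori compatibility between that presentation and $π°_A$.

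The second gap: your argument that $K° = L° ∩ H°$ is a torus --- ``$L°$ admits no positive-dimensional compact complex subtorus, hence $K°$ has trivial compact part'' --- is a non sequitur, because the torus part of a semitorus is a \emph{quotient}, not a subgroup. There exist semitori that contain no positive-dimensional compact subgroup whatsoever and yet have nontrivial torus part: the extension $1 → ℂ^* → K° → E → 1$ attached to a non-torsion degree-zero line bundle on an elliptic curve $E$ is a Cousin group with exactly this property (a compact subtorus of $K°$ would split the extension after an isogeny and force the bundle to be torsion). So the inference pattern you use is invalid, even though the conclusion is true here. A correct argument: a closed complex subgroup of the Stein group $L° ≅ (ℂ^*)^{⨯•}$ is itself Stein; a Stein connected commutative complex Lie group is isomorphic to $ℂ^a ⨯ (ℂ^*)^b$ (Matsushima--Morimoto), and one checks from Definition~\ref{def:3-1} that such a group is a semitorus only when $a = 0$; combined with Fact~\ref{fact:3-23} this gives $K° ≅ (ℂ^*)^{⨯•}$. (Alternatively: quasi-algebraic subgroups of $L°$, read in the toric compactification $π_A^{-1}(0_{T_A})$, are algebraic subgroups of an algebraic torus.) With these two repairs, your remaining steps --- the second isomorphism theorem, Fact~\ref{fact:3-12}, and the final application of Fact~\ref{fact:3-13} --- do go through.
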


\subsubsection{Examples}
\approvals{Erwan & yes \\ Stefan & yes}

Throughout this paper, quasi-algebraic subgroups appear as kernels of
quasi-algebraic group morphisms and as fixed point sets of quasi-algebraic group
action.  We recall the relevant facts.

\begin{fact}[Kernels of quasi-algebraic group morphisms]\label{fact:3-28}%
  Let $A°$ and $B°$ be a semitori, and let $A° ⊂ A$ and $B° ⊂ B$ be semitoric
  compactifications.  If $α° : A° → B°$ is any quasi-algebraic morphism of
  complex Lie groups, then $\ker(α°) ⊂ A°$ is quasi-algebraic for the semitoric
  compactification $A° ⊂ A$.  \qed
\end{fact}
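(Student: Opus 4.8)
The plan is to realise the closure of $\ker(α°)$ directly as a union of irreducible components of the image of an auxiliary analytic set under a proper projection, in the same spirit as the proper-mapping argument already used in Lemma~\ref{lem:2-4}. Since $α°$ is quasi-algebraic, it extends to a meromorphic map $α : A \dasharrow B$ whose graph $Γ ⊂ A⨯B$ is a compact analytic set. Because $α$ restricts on $A°$ to the honest morphism $α°$, the indeterminacy locus of $α$ is contained in $A ∖ A°$, and $Γ ∩ (A°⨯B)$ is exactly the graph of $α°$.

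First I would form the analytic set $W := Γ ∩ \bigl( A ⨯ \{0_{B°}\} \bigr) ⊂ A⨯B$, which is compact, and push it forward under the first projection $\mathrm{pr}_A : A⨯B → A$. As $B$ is compact, this projection is proper, so $\mathrm{pr}_A(W) ⊂ A$ is an analytic subset by the proper mapping theorem. This is the only nontrivial input, and it is exactly the technique already in use in the proof of Lemma~\ref{lem:2-4}.

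The second step identifies the relevant part of $\mathrm{pr}_A(W)$. Over $A°$ the map $α$ is holomorphic and equals $α°$, so $\mathrm{pr}_A(W) ∩ A° = (α°)^{-1}(0_{B°}) = \ker(α°)$. Since $\ker(α°)$ is a closed subgroup of $A°$, its closure $\overline{\ker(α°)}$ in $A$ meets $A°$ in precisely $\ker(α°)$; hence $\overline{\ker(α°)}$ and the analytic set $\mathrm{pr}_A(W)$ agree over $A°$ and can differ only along the boundary $A ∖ A°$.

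The main—and genuinely delicate—point is to upgrade ``$\mathrm{pr}_A(W)$ is analytic and agrees with $\overline{\ker(α°)}$ over $A°$'' to ``$\overline{\ker(α°)}$ is itself analytic'', since $\mathrm{pr}_A(W)$ might a priori carry spurious irreducible components lying entirely inside $A ∖ A°$. To handle this I would decompose $\mathrm{pr}_A(W)$ into its finitely many irreducible components and note that each component either is contained in $A ∖ A°$ or meets $A°$ in a dense open subset. Taking the closure of $\mathrm{pr}_A(W) ∩ A° = \ker(α°)$ then recovers exactly the union of those components that meet $A°$; this union is analytic and equals $\overline{\ker(α°)}$. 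Thus $\overline{\ker(α°)}$ is analytic, and $\ker(α°)$ is quasi-algebraic by Definition~\ref{def:3-22}. This bookkeeping of components is routine and is where I expect the only real care to be needed.
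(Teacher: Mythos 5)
Your proof is correct, but there is nothing in the paper to compare it with: Fact~\ref{fact:3-28} is stated with a \qed{} and no argument at all --- it belongs to the block of statements about quasi-algebraic subgroups whose ``tedious but mostly elementary'' proofs the paper leaves to the reader, with the general theory delegated to \cite[Sect.~5.3.4]{MR3156076}. Your argument fills that gap soundly, and it does so with exactly the tool the paper itself uses in Lemma~\ref{lem:2-4}: the graph $\Gamma \subset A \times B$ of the meromorphic extension is compact analytic, $W = \Gamma \cap \bigl(A \times \{0_{B^\circ}\}\bigr)$ is compact analytic, its image under the proper projection $\operatorname{pr}_A$ is analytic by Remmert's proper mapping theorem, and since a compact analytic set has only finitely many irreducible components, discarding the components contained in $A \setminus A^\circ$ exhibits $\overline{\ker(\alpha^\circ)}$ as the union of the remaining ones, which is analytic. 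The one assertion you make without justification --- that $\Gamma \cap (A^\circ \times B)$ is precisely the graph of $\alpha^\circ$, equivalently that the indeterminacy locus of the extension is confined to $A \setminus A^\circ$ --- is indeed correct, but it is the step I would ask you to spell out: $A$ is smooth (hence normal), $\Gamma$ is irreducible, and every irreducible component of the open subset $\Gamma \cap (A^\circ \times B)$ of $\Gamma$ has nonempty interior in $\Gamma$; such a component therefore cannot lie over any proper analytic subset of $A$, so it meets the locus where $\Gamma$ coincides with the graph of $\alpha^\circ$ in a dense set and is consequently contained in that graph. Two closing observations: the group structure of $\ker(\alpha^\circ)$ enters your argument only through its closedness in $A^\circ$, so you have in fact proved the stronger statement that the closure in $A$ of \emph{every} fibre of a quasi-algebraic morphism is analytic; and the remaining half of Definition~\ref{def:3-22}, namely that $\ker(\alpha^\circ)$ is an analytic subgroup of $A^\circ$, is immediate since it is a fibre of a holomorphic map.
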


\begin{prop}[Fixed points of quasi-algebraic groups actions]\label{prop:3-29}%
  Let $A°$ be a semitorus, and let $A° ⊂ A$ be a semitoric compactification.
  Let $G ⊂ \Aut(A°)$ be a finite group that acts on $A°$ by quasi-algebraic
  automorphisms.  If
  \[
    X ⊂ \{ \vec{a} ∈ A° \::\: \text{isotropy $G_{\vec{a}}$ is not trivial}\}
  \]
  is any irreducible complex subspace, then $X$ is contained in the translate of
  a proper quasi-algebraic subgroup of $A°$.
\end{prop}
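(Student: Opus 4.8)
The plan is to reduce the assertion to a single group element and then to realise the relevant fixed‑point locus explicitly as a translate of a kernel. First I would fix a base point $0_{A°} ∈ A°$, turning $A°$ into a complex Lie group. Since $G$ is finite, the isotropy locus is the finite union
\[
  \{ \vec a ∈ A° : G_{\vec a} ≠ \{e\}\} = \bigcup_{g ∈ G ∖ \{e\}} \Fix(g),
\]
where each $\Fix(g)$ is an analytic subset of $A°$ (it is cut out by the equation $g(\vec a) = \vec a$). Because $X$ is irreducible and contained in a \emph{finite} union of analytic sets, it must lie in a single member: there is a nontrivial $g$ with $X ⊆ \Fix(g)$. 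It therefore suffices to show that a nonempty $\Fix(g)$, for $g ≠ e$, is contained in a translate of a proper quasi-algebraic subgroup.

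Next I would invoke Corollary~\ref{cor:3-20} to write the quasi-algebraic automorphism $g$ in the form $g(\vec a) = \phi(\vec a) + b$, where $b ∈ A°$ and $\phi$ is a quasi-algebraic group automorphism of $A°$. As $A°$ is commutative, the assignment $\psi(\vec a) := \phi(\vec a) - \vec a$ defines a group endomorphism $\psi : A° → A°$. A point is fixed by $g$ exactly when $\phi(\vec a) + b = \vec a$, that is, when $\psi(\vec a) = -b$; hence $\Fix(g) = \psi^{-1}(-b)$. Choosing any $a_0 ∈ \Fix(g)$, the standard coset description of a nonempty fibre of a homomorphism gives $\Fix(g) = a_0 + \ker(\psi)$, a translate of $\ker(\psi)$.

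Two things remain, and the first is where I expect the only genuine work: I must check that $\psi = \phi - \Id$ is quasi-algebraic, so that Fact~\ref{fact:3-28} can be applied to conclude that $\ker(\psi)$ is a quasi-algebraic subgroup. I would verify this via the presentation criterion of Fact~\ref{fact:3-13}: quasi-algebraicity of $\phi$ supplies a group morphism $\tau_\phi : T_A → T_A$ with $\tau_\phi ◦ \pi_A = \pi_A ◦ \phi$, and since $\pi_A$ is itself a group morphism one computes $\pi_A ◦ \psi = (\tau_\phi - \Id) ◦ \pi_A$; as $\tau_\phi - \Id$ is again a group morphism $T_A → T_A$, Fact~\ref{fact:3-13} certifies that $\psi$ is quasi-algebraic. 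The second point is properness of $\ker(\psi)$: because $g$ is nontrivial yet fixes $a_0$, it cannot be a pure translation (a nonzero translation has empty fixed locus), so $\phi ≠ \Id$ and thus $\psi$ does not vanish identically, whence $\ker(\psi)$ is a proper subgroup of $A°$. Putting this together yields $X ⊆ \Fix(g) = a_0 + \ker(\psi)$ with $\ker(\psi)$ a proper quasi-algebraic subgroup, as required. The main subtlety is keeping the difference of group morphisms inside the quasi-algebraic world, which the presentation criterion of Fact~\ref{fact:3-13} resolves cleanly.
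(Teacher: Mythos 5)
Your proposal is correct and follows essentially the same route as the paper's own proof: reduce by irreducibility of $X$ to a single non-trivial element $g$ fixing $X$, write $g$ as a quasi-algebraic group morphism composed with a translation (Proposition~\ref{prop:3-18}/Corollary~\ref{cor:3-20}), identify $\Fix(g)$ as a translate of $\ker(\phi-\Id_{A^\circ})$, and invoke Fact~\ref{fact:3-28}. You are in fact slightly more careful than the paper on two points it leaves implicit, namely the quasi-algebraicity of $\phi-\Id_{A^\circ}$ (which you settle via Fact~\ref{fact:3-13}) and the properness of the resulting kernel.
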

\begin{proof}
  We assume that the group $G$ is non-trivial, or else there is nothing to
  prove.  Since $G$ is finite, there will be an element $g ∈ G ∖ \{e\}$ that
  fixes $X$ pointwise.  Shrinking $G$ and enlarging $X$, we may therefore assume
  without loss of generality that $G$ is cyclic, $G = \langle g \rangle$, and
  that $X$ is a component of $\Fix(G)$.

  Recall from Proposition~\ref{prop:3-18} that the action of $g$ on $A°$ is of
  the form
  \[
    g : A° → A°, \quad \vec{a} ↦ φ°(\vec{a}) - \vec{a}_0
  \]
  where $φ° : A° → A°$ is a quasi-algebraic group morphism and $\vec{a}_0 ∈ A°$
  is a constant.  It follows that $\vec{a} ∈ \Fix(g)$ if and only if $\bigl(φ° -
  \Id_{A°}\bigr)(\vec{a}) = \vec{a}_0$.  If $\vec{x} ∈ X$ is any element, this
  implies that
  \[
    \Fix(G) = \ker(φ°-\Id_{A°}) + \vec{x}.
  \]
  But by Fact~\ref{fact:3-28}, the components of $\ker(φ°-\Id_{A°})$ are
  translates of quasi-algebraic subgroups.
\end{proof}

%
%
\svnid{$Id: 04-Albanese.tex 915 2024-09-30 08:51:09Z kebekus $}
\selectlanguage{british}

\section{The Albanese of a logarithmic pair}
\label{sec:4}
\subversionInfo
\approvals{Erwan & yes\\ Stefan & yes}

To prepare for the slightly involved constructions later in this paper, we
recall a number of facts about the Albanese for logarithmic pairs.  For lack of
references, we include a full discussion the Albanese construction in the
singular Kähler case.  We refer the reader to \cite{SCC_1958-1959__4__A10_0} and
\cite[Appendix~A]{MR2372739} for general results in the algebraic setting, and
to \cite[Sect.~4.5]{MR3156076} for a construction of the Albanese for
logarithmic pairs $(X,D)$ in case where $X$ is a compact Kähler manifold and $D$
a reduced divisor that does not necessarily have snc support.

\begin{setting}\label{set:4-1}%
  Let $(X,D)$ be a log pair where $X$ is compact.  In line with
  \cite[Notation~2.14]{orbiAlb1}, denote the open part by $X° := X∖D$.
\end{setting}

\begin{defn}[The Albanese for compact log pairs]\label{def:4-2}%
  Assume Setting~\ref{set:4-1}.  An Albanese of $(X,D)$ is a semitoric variety
  $\Alb(X,D)° ⊂ \Alb(X,D)$ together with a quasi-algebraic morphism
  \[
    \alb(X,D)° : X° → \Alb(X,D)°
  \]
  that satisfies the following universal property.  If $A° ⊂ A$ is any semitoric
  variety and
  \[
    a° : X° → A°
  \]
  is any quasi-algebraic morphism, then there exists a unique morphism $b°$ that
  makes the following diagram commute,
  \begin{equation}\label{eq:4-2-1}
    \begin{tikzcd}[column sep=2cm]
      X° \arrow[r, "\alb(X{,}D)°"'] \arrow[rr, "a°", bend left=15] & \Alb(X,D)°
      \arrow[r, "∃!b°"'] & A°.
    \end{tikzcd}
  \end{equation}
  The morphism $b°$ is quasi-algebraic.
\end{defn}

\begin{rem}[Quasi-Albanese]
  The Albanese of an snc logarithmic pair also appears under the name
  ``quasi-Albanese'' in the literature, cf.~\cite{Fujino15}.
\end{rem}

\begin{notation}[Empty boundary]
  If the divisor $D$ in Definition~\ref{def:4-2} is zero, we will often drop it
  from the notation and write $\alb(X)° : X° → \Alb(X)° = \Alb(X)$ for brevity.
\end{notation}

\begin{rem}[Compactification and presentation of $\Alb(X,D)°$]\label{rem:4-5}%
  In the setting of Definition~\ref{def:4-2}, recall from Facts~\ref{fact:3-11}
  and \ref{fact:3-13} that the semitoric compactification $\Alb(X,D)° ⊂
  \Alb(X,D)$ defines a unique presentation of the semitorus $\Alb(X,D)°$.  If
  $(X,D)$ is snc, the construction presented in Section~\ref{sec:4-2} will show
  that this presentation equals the natural morphism $\Alb(X,D)°
  \twoheadrightarrow \Alb(X)$ induced by the universal property.
\end{rem}

\begin{explanation}
  The reader coming from algebraic geometry might wonder why
  Definition~\ref{def:4-2} is so complicated.  The reason is this: if $V°$ is a
  smooth, quasi-projective variety and if $V° ⊂ V_1$ and $V° ⊂ V_2$ are two
  projective compactifications, then $V_1$ and $V_2$ are birational and there
  exists a third compactification that dominates both.

  This is no longer true in complex geometry, where two compactifications need
  not necessarily be bimeromorphic, and where the bimeromorphic equivalence
  class of a particular compactification is often part of the data.  Along these
  lines, the Albanese is not just the semitorus $\Alb(X,D)°$, but the semitorus
  together with a bimeromorphic equivalence class of a compactification
  $\Alb(X,D)$.  The word ``quasi-algebraic'' that appears all over
  Definition~\ref{def:4-2} ensures that all morphisms respect the classes of the
  compactifications.
\end{explanation}

\subsection{Uniqueness}
\approvals{Erwan & yes \\ Stefan & yes}

The universal property of the Albanese guarantees that $\Alb(X,D)°$ and
$\alb(X,D)°$ are unique up to unique isomorphism.  The left-invariant
compactification $\Alb(X,D)$ is bimeromorphically unique.  Following the
classics, we abuse notation and refer to any Albanese as ``the Albanese'', with
associated semitoric \emph{Albanese variety} $\Alb(X,D)° ⊂ \Alb(X,D)$ and
\emph{Albanese morphism} $\alb(X,D)°$.  Once we fix a point $0_{\Alb(X,D)°} ∈
\Alb(X,D)°$ to equip $\Alb(X,D)°$ with the structure of a Lie group,
Fact~\vref{fact:3-25} allows talking about subgroups of $\Alb(X,D)°$ that are
quasi-algebraic for the semitoric compactification $\Alb(X,D)° ⊂ \Alb(X,D)$.

\subsection{Existence}
\label{sec:4-2}
\approvals{Erwan & yes \\ Stefan & yes}

The existence of an Albanese is well-known for snc pairs, but hardly discussed
in the literature for arbitrary Kähler pairs.  We briefly recall the arguments
in the snc setting, use resolutions of singularities to construct a candidate
for the Albanese in general and prove that this candidate satisfies the
properties spelled out in Definition~\ref{def:4-2} above.

\begin{prop}[Existence of the Albanese of a Kähler log pair]\label{prop:4-7}%
  In Setting~\ref{set:4-1}, assume that $X$ is Kähler.  Then, an Albanese of
  $(X,D)$ exists.
\end{prop}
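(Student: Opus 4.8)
The plan is to first treat the snc case, where the construction is classical, and then reduce the general case to it by means of a log resolution.

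Suppose first that $(X,D)$ is snc. Following the logarithmic Albanese construction, I set $V := H^0\bigl(X, Ω^1_X(\log D)\bigr)$ and build the semitorus $\Alb(X,D)° := V^*/Λ$, where $Λ$ is the period lattice, that is, the image of $H_1(X°, \mathbb{Z})$ under integration of logarithmic $1$-forms; the Albanese morphism sends $x$ to the functional $γ \mapsto \int_{x_0}^x γ$. The inclusion $H^0(X,Ω^1_X) \hookrightarrow V$ dualizes to the presentation $\Alb(X,D)° \twoheadrightarrow \Alb(X)$ anticipated in Remark~\ref{rem:4-5}, and Fact~\ref{fact:3-12} supplies a semitoric compactification. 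To verify the universal property, given a quasi-algebraic $a° : X° → A°$ to a semitoric variety $A° ⊂ A$, I use Remark~\ref{rem:3-16} to pull back logarithmic $1$-forms and Proposition~\ref{prop:3-15} to identify $H^0\bigl(A, Ω^1_A(\log Δ)\bigr)$ with the invariant forms; the resulting linear map of cotangent spaces dualizes to a map of universal covers that, after matching base points, descends to a group morphism $b°$ of semitori, which Fact~\ref{fact:3-13} shows to be quasi-algebraic.

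For general $D$, I choose a log resolution $π : \widetilde{X} → X$ that is an isomorphism over $X°$ and for which $\widetilde{D} := π^{-1}(D)_{\mathrm{red}}$ has snc support; since $X$ is a compact Kähler manifold, so is $\widetilde{X}$. By construction the restriction of $π$ identifies $\widetilde{X}° = \widetilde{X} \setminus \widetilde{D}$ with $X°$. I define the candidate Albanese of $(X,D)$ to be the snc Albanese $\Alb(\widetilde{X}, \widetilde{D})° ⊂ \Alb(\widetilde{X}, \widetilde{D})$, with Albanese morphism obtained by transporting $\alb(\widetilde{X}, \widetilde{D})°$ through the isomorphism $X° ≅ \widetilde{X}°$. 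Two things then remain to be checked. First, quasi-algebraicity: the morphism $\alb(\widetilde{X}, \widetilde{D})°$ extends to a meromorphic map $\widetilde{X} \dasharrow \Alb(\widetilde{X}, \widetilde{D})$, and composing with the inverse of the proper bimeromorphic morphism $π$ yields a meromorphic extension $X \dasharrow \Alb(\widetilde{X}, \widetilde{D})$, so the candidate morphism is quasi-algebraic with respect to $X$. Second, the universal property of Definition~\ref{def:4-2}: given any quasi-algebraic $a° : X° → A°$, its meromorphic extension $X \dasharrow A$ composes with $π$ to show that the transported morphism $\widetilde{X}° → A°$ is quasi-algebraic with respect to $\widetilde{X}$; the snc universal property then produces a unique quasi-algebraic $b°$ with $a° = b° ◦ \alb(X,D)°$, and uniqueness descends as well. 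In particular the candidate is independent of the chosen resolution.

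The main obstacle is the descent of quasi-algebraicity — ensuring that meromorphic maps compose correctly with the bimeromorphic morphism $π$ in both directions — together with the care needed to match base points so that the factoring morphism in the snc case is a genuine group morphism; both are precisely where the hypotheses that $X$ is compact Kähler and that $π$ is proper bimeromorphic are used.
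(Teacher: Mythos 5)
Your snc construction is essentially the paper's own and is fine. The gap is in the reduction step: you choose a log resolution $\pi : \widetilde{X} \to X$ ``that is an isomorphism over $X^\circ$'', and everything afterwards rests on the identification $\widetilde{X}^\circ \cong X^\circ$. But in Setting~\ref{set:4-1} the space $X$ is merely a compact analytic variety: its singular locus need not be contained in $\supp D$, so $X^\circ$ itself may be singular, and then \emph{no} resolution is an isomorphism over $X^\circ$. This is not a marginal case; it is precisely the case the proposition is meant to cover, since the snc/smooth situation is classical. Concretely, Example~\ref{ex:4-20} --- $X$ the projective cone over an elliptic curve $E$, $D = 0$, so $X^\circ = X$ --- defeats your construction twice over. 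First, your candidate gives the wrong answer: $\Alb(\widetilde{X})^\circ = E$, whereas the Albanese of $(X,0)$ is a point because $X$ is rationally connected. Second, the ``transported'' Albanese morphism does not even exist: $\alb(\widetilde{X})^\circ : \widetilde{X} \to E$ restricts to an isomorphism on the exceptional fibre of $\pi$ over the vertex (a copy of $E$), hence is not constant on $\pi$-fibres and does not descend to $X$ at all.

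What is missing is the descent mechanism that forms the core of the paper's Construction~\ref{cons:4-8}. Write $\widetilde{A}^\circ := \Alb(\widetilde{X}, \widetilde{D})^\circ$. Among all quasi-algebraic subgroups $B^\circ \subseteq \widetilde{A}^\circ$ for which the composition of $\alb(\widetilde{X},\widetilde{D})^\circ$ with the quotient $\widetilde{A}^\circ \twoheadrightarrow \widetilde{A}^\circ/B^\circ$ is constant on $\pi^\circ$-fibres, one takes the smallest one, $H^\circ$ (this uses Fact~\ref{fact:3-26}; Fact~\ref{fact:3-27} provides a semitoric compactification of the quotient). The Albanese of $(X,D)$ is $\widetilde{A}^\circ/H^\circ$: only after this quotient does the morphism factor through $X^\circ$, and minimality of $H^\circ$ is exactly what yields the universal property --- any quasi-algebraic $a^\circ : X^\circ \to A^\circ$ pulls back to $\widetilde{X}^\circ$, factors through $\widetilde{A}^\circ$ by the snc case, and the resulting group morphism is constant on $\pi^\circ$-fibres, hence kills $H^\circ$. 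Your argument is correct when $X$ is smooth (then one may indeed take $\pi$ to be an isomorphism over $X^\circ$, and $H^\circ$ is trivial; compare Proposition~\ref{prop:4-14}, which extends this to $X^\circ$ with rational singularities), but as written it proves the proposition only under that additional hypothesis.
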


We begin the proof by recalling the classic construction for snc pairs.  For
singular pairs, Construction~\ref{cons:4-8} will show how to build an Albanese
using a resolution of singularities.  We conclude the proof of
Proposition~\ref{prop:4-7} {} \vpageref*{pf:4-7}, showing that
Construction~\ref{cons:4-8} does indeed satisfy the necessary universal
property.

\begin{proof}[Proof of Proposition~\ref{prop:4-7} is $(X,D)$ is snc]
  If the pair $(X,D)$ of Setting~\ref{set:4-1} is snc, choose a point $x ∈ X°$
  and consider the group morphism
  \[
    i : π_1(X°,x) → H⁰\bigl( X,\, Ω¹_X(\log D) \bigr)^*
  \]
  obtained by path integration.  Set
  \[
    \Alb(X,D)° := \factor{H⁰\bigl( X,\, Ω¹_X(\log D) \bigr)^*}{\img(i)}
  \]
  and define $\alb(X,D)°$ by path integration.  Hodge theory guarantees that
  $\Alb(X,D)°$ is a semitorus.  It admits a presentation as a principal
  $(ℂ^*)^{⨯•}$-bundle over $\Alb(X)$, and hence by Fact~\vref{fact:3-12} an
  equivariant compactification $\Alb(X,D)$ as a $(ℙ¹)^{⨯•}$-bundle over
  $\Alb(X)$.  A local computation shows that $\alb(X,D)°$ is quasi-algebraic for
  this compactification.  More precisely, it extends to a meromorphic map $X
  \dasharrow \Alb(X,D)$ that is holomorphic on the big open subset $X ∖ (\supp
  D)_{\sing}$.  We refer the reader to \cite[Sect.~4]{MR3156076} for details and
  proofs.
\end{proof}

\begin{construction}[Construction of the Albanese of a log pair]\label{cons:4-8}%
  Assume the setting of Proposition~\ref{prop:4-7}.  For the reader's
  convenience, we subdivided the construction into relatively independent steps.

  \subsubsection*{Step~1 in Construction~\ref*{cons:4-8}, Resolution of singularities}
  
  Choose a log-resolution $π : \wtilde{X} \twoheadrightarrow X$, consider the
  reduced snc divisor $\wtilde{D} := \supp π^{-1}(D)$ on $\wtilde{X}$ and write
  $\wtilde{X}° := \wtilde{X} ∖ \wtilde{D}$.  The proof of
  Proposition~\ref{prop:4-7} in the snc case provides us with an Albanese of
  $(\wtilde{X}, \wtilde{D})$ that we briefly denote as
  \begin{equation}\label{eq:4-8-1}
    \begin{tikzcd}[row sep=1cm]
      \wtilde{X} \ar[d, two heads, "π\text{, log resolution}"'] \ar[r, phantom, "⊇"] & \wtilde{X}° \ar[d, two heads, "π°"'] \ar[rrr, "\wtilde{a}° \::=\: \alb(\wtilde{X}{,} \wtilde{D})°", "\text{quasi-algebraic}"'] &&& \underbrace{\Alb(\wtilde{X}, \wtilde{D})°}_{=: \wtilde{A}°} \ar[r, phantom, "⊆"] & \underbrace{\Alb(\wtilde{X}, \wtilde{D})}_{=: \wtilde{A}} \\
      X \ar[r, phantom, "⊇"] & X°.
    \end{tikzcd}
  \end{equation}

  \subsubsection*{Step~2 in Construction~\ref*{cons:4-8}, Quotients by subgroups of $\wtilde{A}°$}

  Choose an element $0_{\wtilde{A}°} ∈ \wtilde{A}°$ in order to equip
  $\wtilde{A}°$ with the structure of a Lie group.  If $H° ⊆ \wtilde{A}°$ is any
  quasi-algebraic subgroup, recall from Fact~\ref{fact:3-27} that the quotient
  \[
    A°_{H°} := \factor{\wtilde{A}°}{H°}
  \]
  is a semitorus and there exists a semitoric compactification $A°_{H°} ⊂
  A_{H°}$ that renders the quotient morphism $q°_{H°} : \wtilde{A}°
  \twoheadrightarrow A°_{H°}$ quasi-algebraic.  If the composed map
  \[
    q°_{H°} ◦ \wtilde{a}° : \wtilde{X}° → A_H°
  \]
  is constant on $π°$-fibers, then it factors via $π°$, and we obtain an
  extension of Diagram~\eqref{eq:4-8-1} as follows,
  \begin{equation}\label{eq:4-8-2}
    \begin{tikzcd}[row sep=1cm]
      \wtilde{X} \ar[d, two heads, "π\text{, log resolution}"'] \ar[r, phantom, "⊇"] & \wtilde{X}° \ar[d, two heads, "π°"'] \ar[rrrr, "\wtilde{a}°\text{, quasi-algebraic}"] &&&& \wtilde{A}° \ar[d, two heads, "q°_{H°}"] \ar[r, phantom, "⊆"] & \wtilde{A} \ar[d, dashed, two heads, "q_{H°}"] \\
      X \ar[r, phantom, "⊇"] & X° \ar[rrrr, "a°_{H°}"'] &&&& A°_{H°} \ar[r, phantom, "⊆"] & A_{H°}
    \end{tikzcd}
  \end{equation}
  The quotient carries a natural structure of a Lie group that makes $q°_{H°}$ a
  group morphism.  Lemma~\ref{lem:2-4} guarantees that $a°_{H°}$ is again
  quasi-algebraic.

  \subsubsection*{Step~3 in Construction~\ref*{cons:4-8}, Identifying a suitable subgroup of $A°$}

  Aiming to construct an Albanese for $(X,D)$ using the construction of Step~2,
  we need to find a quasi-algebraic subgroup $H° ⊆ A°$ to which Step~2 can be
  applied.  To this end, consider the set of all subgroups that satisfy the
  assumptions of Step~2,
  \[
    \mathcal H° := \{ B° ⊆ \wtilde{A}° \text{ quasi-algebraic} \::\: q°_{B°} ◦ \wtilde{a}° \text{ is constant on $π°$-fibers}\}.
  \]
  Take $H°$ as the infimum of $\mathcal H°$ in the complete lattice of all
  quasi-algebraic subgroups $\wtilde{A}°$.  In other words, define
  \[
    H° := \bigcap_{B° ∈ \mathcal H°} B°
  \]
  and recall from Fact~\ref{fact:3-26} that $H°$ is indeed a quasi-algebraic
  subgroup.  With this choice, observe that $q°_{H°} ◦ \wtilde{a}°$ is again
  constant on $π°$-fibers, so that $H°$ is in fact the minimal element of
  $\mathcal H°$.  Step~2 equips us with a semitoric compactification $A°_{H°} ⊂
  A_{H°}$ and a diagram of Form~\eqref{eq:4-8-2}.  Write $\Alb(X,D)° ⊂
  \Alb(X,D)$ for $A°_{H°} ⊆ A_{H°}$ and denote the quasi-algebraic morphism
  $a°_{H°}$ by
  \[
    \alb(X,D)° : X° → \Alb(X,D)°.
  \]
  Construction~\ref{cons:4-8} ends here.
\end{construction}

\begin{proof}[Proof of Proposition~\ref{prop:4-7}]\label{pf:4-7}
  \CounterStep{}It remains to show that the varieties and morphism of
  Construction~\ref{cons:4-8} satisfy the conditions spelled out in
  Definition~\ref{def:4-2} above.  To this end, assume that $A° ⊂ A$ is a
  semitoric variety and $a° : X° → A°$ is a quasi-algebraic morphism.
  Item~\ref{il:2-4-1} of Lemma~\ref{lem:2-4} guarantees that $a°◦π° :
  \wtilde{X}° → A°$ is quasi-algebraic.  The universal property of the Albanese
  $\Alb(\wtilde{X}, \wtilde{D})°$ of the snc pair $(X,D)$ thus gives us a unique
  quasi-algebraic morphism $\wtilde{b}°$ of Lie groups that makes the following
  diagram commute,
  \[
    \begin{tikzcd}[row sep=1cm, column sep=1.8cm]
      \wtilde{X}° \ar[rr, "\alb(\wtilde{X}{,} \wtilde{D})°"] \ar[d, two heads, "π°"'] && \Alb(\wtilde{X}, \wtilde{D})° \ar[d, two heads, "q°_{H°}"] \ar[rd, bend left=5, "∃!\wtilde{b}°"] \\
      X° \ar[rr, "\alb(X{,} D)°"] \ar[rrr, bend right=10, "a°"'] && \Alb(X,D)° \ar[r, dotted, "\text{want: }b°"] & A°.
    \end{tikzcd}
  \]
  Consider the element $\wtilde{b}°(0_{\Alb(\wtilde{X}, \wtilde{D})°}) ∈ A°$ to
  equip $A°$ with the structure of a Lie group that makes $\wtilde{b}°$ a group
  morphism.  Since the composed map
  \[
    \wtilde{b}° ◦ \alb(\wtilde{X}{,} \wtilde{D})° = \alb(X, D)° ◦ π°
  \]
  is constant on $π°$-fibres, the choice of $H°$ in Step~3 of
  Construction~\ref{cons:4-8} immediately guarantees that
  \[
    \ker q°_{H°} = H° ⊆ \ker \wtilde{b}°.
  \]
  It follows that there is a unique Lie group morphism $b° : \Alb(X,D)° → A°$
  that makes the diagram commute.  Item~\ref{il:2-4-2} of Lemma~\ref{lem:2-4}
  guarantees that $b°$ is quasi-algebraic, as desired.  The fact that
  $\wtilde{b}°$ is unique as a morphism of varieties implies that $b°$ is unique
  as a morphism of varieties.
\end{proof}

\subsection{Additional properties}
\approvals{Erwan & yes \\ Stefan & yes}

The Albanese has numerous properties that we will use in the sequel.  While all
of those necessarily follow from the universal property that determines the
Albanese uniquely, we find it often easier to use the concrete construction of
the Albanese in \ref{cons:4-8}, which quickly reduces us to the snc setting
where all results are known and readily citable.

\begin{prop}[Image of $\alb$ generates $\Alb$]\label{prop:4-10} %
  In Setting~\ref{set:4-1}, assume that $X$ is Kähler.  Let $x ∈ X°$ be any
  point and use
  \[
    0_{\Alb(X,D)°(x)} := \alb(X,D)°(x) ∈ \Alb(X,D)°
  \]
  to equip $\Alb(X,D)°$ with the structure of a Lie group.  Then, the image of
  $\alb(X,D)°$ generates $\Alb(X,D)°$ as an Abelian group.
\end{prop}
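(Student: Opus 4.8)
The plan is to normalise so that the base point maps to the identity and then prove that the \emph{abstract} subgroup generated by the image is already everything. Write $A° := \Alb(X,D)°$, set $a° := \alb(X,D)°$, and put $S := a°(X°) \subseteq A°$. By the choice of base point, $0_{A°} = a°(x) \in S$, and since $X°$ is a dense open subset of the irreducible space $X$, it is connected, so $S$ is a connected analytic subset through $0_{A°}$. Let $G := \langle S \rangle \subseteq A°$ be the subgroup generated by $S$; the goal is to show $G = A°$.

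The heart of the argument is a quotient construction powered by the universal property, in the spirit of the classical case. First I would let $B° \subseteq A°$ be the smallest quasi-algebraic subgroup containing $S$, which exists by Fact~\ref{fact:3-26}, and suppose for contradiction that $B° \subsetneq A°$. By Fact~\ref{fact:3-27} the quotient $Q° := A°/B°$ is a semitorus admitting a semitoric compactification for which the quotient morphism $q° : A° \twoheadrightarrow Q°$ is quasi-algebraic; moreover $Q° \neq \{0_{Q°}\}$, since $q°$ is surjective onto a positive-dimensional group. As $S \subseteq B° = \ker q°$, the composite $q° \circ a° : X° \to Q°$ is the constant morphism with value $0_{Q°}$. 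But then both $q°$ and the constant group morphism $\overline 0 : A° \to Q°$ are morphisms $b°$ satisfying $b° \circ a° = q° \circ a°$, so the uniqueness clause in the universal property of Definition~\ref{def:4-2} forces $q° = \overline 0$, contradicting the surjectivity of $q°$. Hence $B° = A°$.

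It remains to upgrade this to genuine generation, i.e.\ to pass from ``the smallest quasi-algebraic subgroup containing $S$ is $A°$'' to ``$G = A°$'' — this is the step I expect to be the main obstacle, since a connected subgroup generated by an analytic set can a priori be dense without being closed. To handle it, I would consider the iterated sum maps $\sigma_n : (X°)^n \to A°$, $(x_1,\dots,x_n) \mapsto \sum_i a°(x_i)$, which are quasi-algebraic by Lemma~\ref{lem:2-4} together with \cite[Prop.~5.3.5]{MR3156076}, and their images $S^{(n)} := \sigma_n\bigl((X°)^n\bigr)$. Because $0_{A°} \in S$, these form an increasing chain of irreducible analytic images of dimension at most $\dim A°$, so the dimensions stabilise at some $N$. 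For $n \geq N$ the nested irreducible sets $S^{(N)} \subseteq S^{(n)}$ of equal dimension have equal closures; consequently $\overline{S^{(N)}}$ is stable under translation by each $s \in S$ and hence by all of $G$, and being irreducible and containing $0_{A°}$ it equals $\overline{G}$, a closed subgroup whose closure in the compactification is analytic — that is, a quasi-algebraic subgroup containing $S$. It therefore contains the minimal such subgroup $B° = A°$ from the previous paragraph, forcing $\dim S^{(N)} = \dim A°$. A full-dimensional quasi-algebraic image contains a non-empty Euclidean-open subset $U \subseteq A°$ (at a point of maximal rank $\sigma_N$ is a submersion, hence open), so $G \supseteq U - U$ contains an open neighbourhood of $0_{A°}$. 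A subgroup with non-empty interior in the connected group $A°$ is all of $A°$, whence $G = A°$, as claimed.
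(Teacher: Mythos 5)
Your overall architecture is genuinely different from the paper's. The paper proves Proposition~\ref{prop:4-10} in three lines: the snc case is quoted from \cite[Prop.~4.5.11]{MR3156076}, and the general case follows because Construction~\ref{cons:4-8} exhibits $\Alb(X,D)^\circ$ as a group quotient of $\Alb(\wtilde{X},\wtilde{D})^\circ$, so generation pushes forward along the surjection $q^\circ_{H^\circ}$. You instead work purely from the universal property of Definition~\ref{def:4-2}: your first step (if the smallest quasi-algebraic subgroup $B^\circ$ containing the image were proper, the quotient $A^\circ/B^\circ$ would admit two distinct factorizations $q^\circ$ and $\overline{0}$ of the constant map, contradicting uniqueness) is a clean argument the paper never makes, and your stabilization of the sum maps $\sigma_n$ essentially re-proves the cited Noguchi--Winkelmann result. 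What your route buys is independence from the construction: it applies to anything satisfying Definition~\ref{def:4-2}, with no resolution of singularities and no citation of the snc case. What it costs is that you must carry out the hard analytic content yourself, and that is where there is a gap.

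The gap is the final step: ``A full-dimensional quasi-algebraic image contains a non-empty Euclidean-open subset $U \subseteq A^\circ$ (at a point of maximal rank $\sigma_N$ is a submersion, hence open).'' The parenthetical presupposes that the maximal rank of $\sigma_N$ equals $\dim A^\circ$, and that is \emph{not} a consequence of the closure of the image being full-dimensional: a generic one-parameter subgroup $\bC \to T$ of a two-dimensional compact torus has dense image, hence full-dimensional closure, yet has rank one everywhere and its image contains no open set. So for general holomorphic maps the inference you invoke is false; what rules out this behaviour is precisely quasi-algebraicity, which your justification never uses. To close the gap, pass to the meromorphic extension: let $\Gamma$ be the graph of $\sigma_N : X^{\times N} \dashrightarrow A$, with proper projection $q : \Gamma \to A$ and $B := \Gamma \setminus \Gamma^\circ$ the (proper, closed analytic) part of $\Gamma$ lying over $X^{\times N} \setminus (X^\circ)^N$. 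Your step two shows $q$ is surjective. By Remmert's fibre-dimension theorems, outside a proper analytic subset of $A$ every irreducible component of $q^{-1}(a)$ has dimension exactly $\dim \Gamma - \dim A$, while $q^{-1}(a) \cap B_i$ has dimension at most $\dim B_i - \dim A < \dim \Gamma - \dim A$ for every component $B_i$ of $B$ dominating $A$ (and is empty for the others, for $a$ outside their images). Hence for such $a$ the fibre $q^{-1}(a)$ is not contained in $B$, i.e.\ $a \in q(\Gamma^\circ) = S^{(N)}$; so $S^{(N)}$ contains a dense open subset of $A^\circ$, and your endgame ($G \supseteq U - U$, open subgroup of a connected group) goes through. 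A second, smaller point: Setting~\ref{set:4-1} allows $X$ to be singular, so any rank or submersion argument must anyway be run on the regular locus of $(X^\circ)^N$, or, as above, on the compactified graph.
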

\begin{proof}
  If $(X,D)$ is snc, this is \cite[Prop.~4.5.11]{MR3156076}.  In general,
  consider Diagram~\eqref{eq:4-8-2} of Construction~\ref{cons:4-8}, use that
  $\img \alb(\wtilde{X},\wtilde{D})°$ generates $\Alb(\wtilde{X},\wtilde{D})°$
  and that the quotient map
  \[
    q°_{H°} : \Alb(\wtilde{X},\wtilde{D})° \twoheadrightarrow \Alb(X,D)°
  \]
  is surjective.
\end{proof}

\begin{prop}[Group actions]\label{prop:4-11}%
 In Setting~\ref{set:4-1}, assume that $X$ is Kähler.  Given a finite subgroup
  $G$ of $\Aut(X,D)$, there exists an Albanese $\Alb(X,D)° ⊂ \Alb(X,D)$ where
  $G$ acts on the pair $(\Alb(X,D), Δ_{\Alb(X,D)°})$ in a way that makes the
  morphisms
  \[
    \begin{tikzcd}[column sep=1.5cm]
      X° \ar[r, "\alb(X{,}D)°"] & \Alb(X,D)° \ar[r, hook] & \Alb(X,D)
    \end{tikzcd}
  \]
  equivariant.
\end{prop}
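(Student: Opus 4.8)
The plan is to let $G$ act first on the semitorus $\Alb(X,D)°$ by means of the universal property of the Albanese, and only afterwards to fix a compactification: starting from an arbitrary Albanese compactification, I would invoke Corollary~\ref{cor:3-21} to replace it by a bimeromorphically equivalent one that carries the induced finite group action.

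Concretely, I would first fix any Albanese $\Alb(X,D)° ⊂ \Alb(X,D)_1$, which exists by Proposition~\ref{prop:4-7}. Each $g ∈ G$ is an automorphism of the pair $(X,D)$, hence of $X°$, and extends to the biholomorphism $g : X → X$, so it is quasi-algebraic. The composition $\alb(X,D)° ◦ g : X° → \Alb(X,D)°$ is therefore quasi-algebraic by Item~\ref{il:2-4-1} of Lemma~\ref{lem:2-4}, and the universal property of Definition~\ref{def:4-2} produces a unique quasi-algebraic morphism $ρ(g) : \Alb(X,D)° → \Alb(X,D)°$ with $ρ(g) ◦ \alb(X,D)° = \alb(X,D)° ◦ g$. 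The uniqueness clause, applied to the identity $ρ(gh) ◦ \alb(X,D)° = \alb(X,D)° ◦ (gh) = ρ(g) ◦ ρ(h) ◦ \alb(X,D)°$, forces $ρ(gh) = ρ(g) ◦ ρ(h)$, and likewise $ρ(e) = \Id$. Hence $G' := ρ(G)$ is a finite subgroup of $\Aut(\Alb(X,D)°)$ consisting of quasi-algebraic automorphisms, and by construction $\alb(X,D)°$ is already $G$-equivariant on the open parts.

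Finally I would apply Corollary~\ref{cor:3-21} to the semitoric variety $\Alb(X,D)° ⊂ \Alb(X,D)_1$ and the finite group $G'$. This yields a bimeromorphically equivalent semitoric compactification $\Alb(X,D)° ⊂ \Alb(X,D)_2$ to which the $G'$-action extends equivariantly. Because the Albanese is unique only up to bimeromorphic equivalence of its compactification, $\Alb(X,D)_2$ is again an Albanese; I would rename it $\Alb(X,D)$. The extended action consists of biholomorphisms of $\Alb(X,D)$ restricting to the $G'$-action on the open orbit $\Alb(X,D)°$, so it stabilizes $Δ_{\Alb(X,D)°} = \Alb(X,D) ∖ \Alb(X,D)°$ and renders both the inclusion $\Alb(X,D)° ↪ \Alb(X,D)$ and the morphism $\alb(X,D)°$ equivariant.

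The main obstacle is precisely this last step. The universal property only yields automorphisms of the open part $\Alb(X,D)°$, which are quasi-algebraic but need not extend to morphisms of any fixed compactification; passing to a compactification on which the entire finite group acts is exactly what Corollary~\ref{cor:3-21} provides, resting on the toric fact that any fan can be refined to a $G'$-stable one. I emphasize that no group-morphism property of the $ρ(g)$ is needed here, since Corollary~\ref{cor:3-21} applies to an arbitrary finite group of quasi-algebraic automorphisms.
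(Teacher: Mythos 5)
Your proposal is correct and follows essentially the same route as the paper's own proof: induce the $G$-action on $\Alb(X,D)°$ via the universal property, verify the group law through the uniqueness clause, and then invoke Corollary~\ref{cor:3-21} to obtain a bimeromorphically equivalent, $G$-equivariant semitoric compactification. Your closing observation that Corollary~\ref{cor:3-21} needs only a finite group of quasi-algebraic automorphisms (not Lie group morphisms) is accurate and consistent with how the paper uses it.
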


\begin{rem}[Equivariant presentation of the Albanese]
 In the setting of Proposition~\ref{prop:4-11}, the group $G$ will also act on
  the pair $(X,0)$ and hence on the Albanese $\Alb(X)$.  Continuing
  Remark~\ref{rem:4-5}, we leave it to the reader to check that the presentation
  morphism
  \[
    \begin{tikzcd}[column sep=1.5cm]
      \Alb(X,D) \ar[r] & \Alb(X)
    \end{tikzcd}
  \]
  is likewise $G$-equivariant.
\end{rem}

\begin{proof}[Proof of Proposition~\ref{prop:4-11}]
  The group action on $\Alb(X,D)°$ are of course induced by the universal
  property.  In fact, given any automorphism $g ∈ \Aut(X,D)$, consider the
  diagram
  \[
    \begin{tikzcd}[column sep=2cm]
      X° \ar[d, "g"'] \ar[r, "\alb(X{,}D)°"] & \Alb(X,D)° \arrow[d, "∃!σ(g)"] \\
      X° \ar[r, "\alb(X{,}D)°"'] & \Alb(X,D)°,
    \end{tikzcd}
  \]
  where $σ(g)$ is the quasi-algebraic morphism of semitori given by the
  universal property.  An elementary computation shows that the morphism
  \[
    \Aut(X,D) → \Aut\bigl(\Alb(X,D)°\bigr), \quad g ↦ σ(g)
  \]
  is indeed a group morphism that makes the morphism to $\Alb(X)$ equivariant.
  Corollary~\vref{cor:3-21} allows finding a $G$-equivariant, semitoric
  compactification.
\end{proof}

\subsubsection{Resolution of singularities}
\approvals{Erwan & yes \\ Stefan & yes}

Construction~\ref{cons:4-8} makes it easy to compare the Albanese of a pair with
the Albanese of a resolution of singularities.  To begin, we observe that a
surjection of pairs induces a surjection between the Albanese varieties.

\begin{obs}[Surjective morphisms]\label{obs:4-13}%
  Let $(X,D_X)$ and $(Y,D_Y)$ be two log pairs, where $X$ and $Y$ are compact
  Kähler spaces.  Given a quasi-algebraic surjection $φ° : X° \twoheadrightarrow
  Y°$, the universal property of the Albanese yields a diagram of the form
  \[
    \begin{tikzcd}[row sep=1cm, column sep=1.8cm]
      X° \ar[rr, "\alb(X{,} D_X)°"] \ar[d, two heads, "φ°"'] && \Alb(X, D_X)° \ar[d, "\alb(φ°)"] \\
      Y° \ar[rr, "\alb(Y{,} D_Y)°"'] && \Alb(Y,D_Y)°.
    \end{tikzcd}
  \]
  Choosing a point $x ∈ X°$ and using
  \begin{align*}
    0_{\Alb(X, D_X)°} & := \alb(X{,} D_X)°(x) && ∈ \Alb(X, D_X)° \\
    0_{\Alb(Y, D_Y)°} & := \alb(Y{,} D_Y)°(φ° x) && ∈ \Alb(Y, D_Y)°
  \end{align*}
  to equip $\Alb(X, D_X)°$ and $\Alb(Y, D_Y)°$ with Lie group structures,
  $\alb(φ°)$ becomes a quasi-algebraic Lie group morphism.  The image of
  $\alb(φ°)$ is thus a subgroup that contains the image of $\alb(Y, D_Y)°$ and
  hence generates $\Alb(Y,D_Y)°$ as a group.  It follows that $\alb(φ°)$ is
  surjective.
\end{obs}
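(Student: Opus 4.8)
The plan is to build the morphism $\alb(φ°)$ from the universal property of the Albanese, check that it is a morphism of Lie groups for the indicated base points, and then deduce surjectivity from the fact that the image of $\alb(Y,D_Y)°$ already generates its target. First I would produce $\alb(φ°)$ itself. The composition $\alb(Y,D_Y)° ◦ φ°$ is a morphism $X° → \Alb(Y,D_Y)°$, and since both $φ°$ and $\alb(Y,D_Y)°$ are quasi-algebraic, Item~\ref{il:2-4-1} of Lemma~\ref{lem:2-4} shows that this composition is again quasi-algebraic. Applying the universal property of $\Alb(X,D_X)°$ recorded in Definition~\ref{def:4-2} then yields a unique quasi-algebraic morphism $\alb(φ°) : \Alb(X,D_X)° → \Alb(Y,D_Y)°$ fitting into the displayed commutative square.

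Next I would fix the base points as in the statement, setting $0_{\Alb(X,D_X)°} := \alb(X,D_X)°(x)$ and $0_{\Alb(Y,D_Y)°} := \alb(Y,D_Y)°(φ°x)$. Commutativity of the square then forces $\alb(φ°)$ to send $0_{\Alb(X,D_X)°}$ to $0_{\Alb(Y,D_Y)°}$. Since $\alb(φ°)$ is quasi-algebraic and preserves neutral elements, Proposition~\ref{prop:3-18} guarantees that it is a morphism of complex Lie groups; in particular its image is a subgroup of $\Alb(Y,D_Y)°$.

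Finally I would run the generation argument. Commutativity of the square reads $\alb(φ°) ◦ \alb(X,D_X)° = \alb(Y,D_Y)° ◦ φ°$, and because $φ°$ is surjective the right-hand side has image exactly $\img \alb(Y,D_Y)°$. Hence the subgroup $\img \alb(φ°)$ contains $\img \alb(Y,D_Y)°$, which generates $\Alb(Y,D_Y)°$ as an Abelian group by Proposition~\ref{prop:4-10} (applicable since $Y$ is compact Kähler). A subgroup containing a generating set is the whole group, so $\alb(φ°)$ is surjective. The only substantive input beyond the universal property and the composition rules for quasi-algebraic morphisms is Proposition~\ref{prop:4-10}; I do not expect a real obstacle here, but one must take care that the two base-point choices are genuinely compatible under $φ°$ so that Proposition~\ref{prop:3-18} indeed applies and the diagram commutes on the nose.
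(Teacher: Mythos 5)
Your proposal is correct and follows essentially the same route as the paper, whose justification for this Observation is given inline: the universal property (via Lemma~\ref{lem:2-4} for quasi-algebraicity of the composition) produces $\alb(φ°)$, the compatible base points together with Proposition~\ref{prop:3-18} make it a Lie group morphism, and surjectivity follows because its image is a subgroup containing $\img \alb(Y,D_Y)°$, which generates $\Alb(Y,D_Y)°$ by Proposition~\ref{prop:4-10}. You have merely made explicit the citations the paper leaves implicit, and your closing caution about base-point compatibility is exactly what the paper's choice of $0_{\Alb(Y,D_Y)°} := \alb(Y,D_Y)°(φ°x)$ arranges.
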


\begin{prop}[The Albanese and the Albanese of a log resolution]\label{prop:4-14}
  In Setting~\ref{set:4-1}, assume that $X$ is Kähler.  Let $π : \wtilde{X} → X$
  be a log resolution of the pair $(X,D)$.  Consider the reduced divisor
  $\wtilde{D} := \supp π^{-1}(D)$ and the associated diagram
  \[
    \begin{tikzcd}[row sep=1cm, column sep=1.8cm]
      \wtilde{X}° \ar[rr, "\alb(\wtilde{X}{,} \wtilde{D})°"] \ar[d, two heads, "π°"'] && \Alb(\wtilde{X}, \wtilde{D})° \ar[d, two heads, "\alb(π°)\text{, surj.~by Obs.~\ref{obs:4-13}}"] \\
      X° \ar[rr, "\alb(X{,} D)°"'] && \Alb(X,D)°.
    \end{tikzcd}
  \]
  In particular, observe that
  \begin{equation}\label{eq:4-14-1}
    \dim \Alb(X,D)° ≤ \dim \Alb(\wtilde{X}, \wtilde{D})°.
  \end{equation}
  If $X°$ has only rational singularities, then $\alb(π°)$ is isomorphic and
  Inequality~\eqref{eq:4-14-1} is an equality.
\end{prop}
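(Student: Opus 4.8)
The plan is to reduce the assertion to the triviality of the quasi-algebraic subgroup $H°$ produced in Construction~\ref{cons:4-8}. Recall that there $\alb(π°)$ is realized as the quotient morphism $q°_{H°} : \wtilde{A}° \twoheadrightarrow \wtilde{A}°/H°$, so $\alb(π°)$ is an isomorphism precisely when $H° = \{0_{\wtilde{A}°}\}$. Now $H°$ is the minimal element of the family $\mathcal H°$ of those quasi-algebraic subgroups $B°$ for which $q°_{B°} ◦ \wtilde{a}°$ is constant on $π°$-fibres, and the trivial subgroup is quasi-algebraic. Hence it suffices to prove that $\wtilde{a}° = \alb(\wtilde{X}, \wtilde{D})°$ is \emph{itself} constant on the fibres of $π°$: granting this, $\{0_{\wtilde{A}°}\} ∈ \mathcal H°$, so $H° = \{0_{\wtilde{A}°}\}$, $\alb(π°)$ is an isomorphism, and equality in \eqref{eq:4-14-1} follows immediately.

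To establish constancy on fibres, fix $x ∈ X°$ and set $F := (π°)^{-1}(x) = π^{-1}(x)$. Since $x ∉ D$, the fibre $F$ is disjoint from $\wtilde{D}$, hence is a compact analytic subset of $\wtilde{X}°$; normality of $X$ makes it connected. If $x$ is a smooth point of $X$, then $F$ is a single point and there is nothing to show, so assume $F$ is positive-dimensional. The key structural fact is that $\wtilde{A}°$ is a complex Lie group, so its left-invariant one-forms trivialise $Ω¹_{\wtilde{A}°}$; by Proposition~\ref{prop:3-15} these invariant forms are exactly the global sections of $Ω¹_{\wtilde{A}}(\log Δ)$. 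Because invariant forms frame the cotangent bundle of $\wtilde{A}°$ pointwise, they detect the differential $\diff(\wtilde{a}°|_F)$. Consequently $\wtilde{a}°|_F$ is constant as soon as every invariant one-form $ω$ pulls back to zero on $F_{\reg}$: this forces $\diff(\wtilde{a}°|_F) = 0$ on the dense smooth locus, so $\wtilde{a}°|_F$ is locally constant there, and connectedness of $F$ together with continuity upgrades this to global constancy.

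It therefore remains to verify that $\wtilde{a}°^* ω|_F = 0$ for every invariant one-form $ω$ on $\wtilde{A}°$. By Remark~\ref{rem:3-16} (applied to the snc pair $(\wtilde{X}, \wtilde{D})$) the pull-back $\wtilde{a}°^* ω$ extends to a section of $Ω¹_{\wtilde{X}}(\log \wtilde{D})$; since $F$ avoids $\wtilde{D}$, this section is an honest holomorphic one-form in a neighbourhood of $F$. This is exactly where the hypothesis enters: as $X°$ has only rational singularities, the extension theorem for reflexive differentials over rational singularities (Kebekus--Schnell) identifies $π_* Ω¹_{\wtilde{X}}$ with the reflexive sheaf $Ω^{[1]}_X$ near $x$. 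Thus $\wtilde{a}°^* ω = π^* β$ for some reflexive one-form $β$ defined near $x$, and because $π$ contracts $F$ to the single point $x$ we obtain $π^* β|_F = 0$, hence $\wtilde{a}°^* ω|_F = 0$. This descent step is the heart of the matter and the only place where rational singularities are genuinely used; everything else is formal. Combined with the preceding paragraph, $\wtilde{a}°$ is constant on every $π°$-fibre, completing the proof.
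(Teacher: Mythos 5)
Your overall skeleton is exactly the paper's: reduce to triviality of the group $H°$ from Construction~\ref{cons:4-8} via minimality, show $\wtilde{a}° = \alb(\wtilde{X},\wtilde{D})°$ is constant on $π°$-fibres, and detect constancy with the invariant one-forms that frame $Ω¹_{\wtilde{A}°}$ (Proposition~\ref{prop:3-15}). All of that is fine. The gap is in the step you yourself call ``the heart of the matter''. Writing $\wtilde{a}°^* ω = π^* β$ is only notation: $β$ is a form on the locus where $π$ is an isomorphism, and $\wtilde{a}°^* ω$ is its unique holomorphic \emph{extension} across the positive-dimensional fibres, not the pull-back of a form defined at $x$. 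The chain-rule reasoning ``$π$ contracts $F$, hence the pull-back dies on $TF$'' therefore has nothing to apply to. Worse, the direction of the identification $π_* Ω¹_{\wtilde{X}°} ≅ Ω^{[1]}_{X°}$ that you actually use --- every holomorphic form on $\wtilde{X}°$ is the extension of \emph{some} reflexive form downstairs --- is tautological (push forward and restrict to the isomorphism locus) and holds with no hypothesis on the singularities whatsoever; the nontrivial direction (every reflexive form extends), which is what \cite{KS18} provides under rational singularities, is never used. Consequently your argument, as written, would ``prove'' the same vanishing for the cone over an elliptic curve in Example~\ref{ex:4-20}: there the one-form on $\wtilde{X}$ pulled back from $E$ via the bundle projection is the extension of a reflexive one-form on $X$, yet it restricts to a nonzero form on the exceptional fibre $F ≅ E$ --- and indeed $\alb(π)$ fails to be an isomorphism in that example.

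What is missing is precisely the non-formal input that the paper cites at this point: Namikawa's Lemma~1.2 \cite{MR1819886}, which asserts that if $X°$ has rational singularities then \emph{every} holomorphic one-form on $\wtilde{X}°$ restricts to zero on the smooth locus of every $π°$-fibre. Its proof genuinely uses $R^1π_*𝒪_{\wtilde{X}} = 0$ together with Hodge theory on the compact fibres; it is not a formal consequence of the sheaf identification you quote. Alternatively, one could salvage your route by invoking a \emph{functorial} pull-back for reflexive differentials (in the spirit of h-differentials), under which the pull-back along $F → \wtilde{X}° → X°$ agrees with the pull-back along $F → \{x\} → X°$ and hence vanishes; but for rational singularities that functoriality is itself a theorem lying strictly beyond the statement $π_* Ω¹_{\wtilde{X}°} ≅ Ω^{[1]}_{X°}$. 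With either ingredient substituted for your descent step, the remainder of your argument is correct and coincides with the paper's proof.
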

\begin{proof}
  The assumption that $X°$ has only rational singularities implies that every
  form $σ ∈ H⁰\bigl( \wtilde{X}°,\, Ω¹_{\wtilde X}\bigr)$ vanishes when
  restricted to the smooth locus of any $π°$-fibre, \cite[Lem.~1.2]{MR1819886}.
  This applies in particular to differential forms coming from $\Alb(\wtilde{X},
  \wtilde{D})°$.  Since the cotangent bundle of $\Alb(\wtilde{X}, \wtilde{D})°$
  is free, we find that $\alb(\wtilde{X}, \wtilde{D})°$ maps $π°$-fibres to
  points.  The map $\alb(\wtilde{X}, \wtilde{D})°$ therefore factors via $π°$,
  and the group $H°$ of Construction~\ref{cons:4-8} is therefore trivial, $H° =
  \{0\}$.
\end{proof}

\subsubsection{Description in terms of differentials}
\approvals{Erwan & yes \\ Stefan & yes}

As in the classic case, the Albanese of a singular pair can be described in
terms of differentials, as a Lie group quotient of a dualized space of
one-forms.  The following observation makes this statement precise.

\begin{obs}[Presentation of the Albanese as a Lie group quotient]\label{obs:4-15}%
  In the setting of Proposition~\ref{prop:4-14}, choose a point $\wtilde{x} ∈
  \wtilde{X}$ and use
  \begin{align*}
    0_{\Alb(\wtilde{X}, \wtilde{D})°} & := \alb(\wtilde{X}, \wtilde{D})°(\wtilde{x}) && ∈ \Alb(\wtilde{X}, \wtilde{D})° \\
    0_{\Alb(X, D)°} & := \alb(X, D)°(π° \wtilde{x}) && ∈ \Alb(X, D)°
  \end{align*}
  to equip $\Alb(\wtilde{X}, \wtilde{D})°$ and $\Alb(X, D)°$ with Lie group
  structures that make $\alb(π°)$ a group morphism.  Since $π$ is surjective,
  the push-forward of any torsion free sheaf is torsion free, and we obtain an
  injection
  \begin{equation}\label{il:4-15-1}
    π_* Ω¹_{\wtilde{X}}(\log \wtilde{D}) ↪ Ω^{[1]}_X(\log D),
  \end{equation}
  which presents $\Alb(X, D)°$ as a Lie group quotient,
  \begin{align}
    \label{il:4-15-2} H⁰\bigl( X,\, Ω^{[1]}_X(\log D) \bigr)^* & \twoheadrightarrow H⁰\bigl( \wtilde{X},\, Ω¹_{\wtilde{X}}(\log \wtilde{D}) \bigr)^* && \text{dual of \eqref{il:4-15-1}} \\
    \label{il:4-15-3} & \twoheadrightarrow \Alb(\wtilde{X},\wtilde{D})° && \text{quotient by } π_1(\wtilde{X}°,\wtilde{x}) \\
    \label{il:4-15-4} & \twoheadrightarrow \Alb(X, D)° && \text{quotient by quasi-algebraic.}
  \end{align}
  The pull-back morphism for logarithmic differentials introduced in
  Remark~\vref{rem:3-17},
  \[
    \diff \alb(X,D) : H⁰\Bigl( Ω¹_{\Alb(X,D)}(\log Δ)
    \Bigr) → H⁰\bigl( X,\, Ω^{[1]}_X (\log D)\bigr),
  \]
  is the induced map between dual Lie algebras, hence injective.
  Observation~\ref{obs:4-15} ends here.
\end{obs}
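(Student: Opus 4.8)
The plan is to verify the three assertions packaged into this Observation: the sheaf injection \eqref{il:4-15-1}, the chain of surjections \eqref{il:4-15-2}--\eqref{il:4-15-4} presenting $\Alb(X,D)°$ as a Lie group quotient, and the identification of $\diff\alb(X,D)$ with the induced map of dual Lie algebras. The Lie group structures and the fact that $\alb(π°)$ is a surjective group morphism are already furnished by Observation~\ref{obs:4-13}, so I take these as given.

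First I would establish the sheaf injection \eqref{il:4-15-1}. Let $U ⊆ X$ be the big open subset — the complement of a closed set of codimension at least two — over which $π$ is an isomorphism and the pair $(X,D)$ is snc. Over $U$ the sheaves $π_* Ω¹_{\wtilde X}(\log \wtilde D)$ and $Ω^{[1]}_X(\log D)$ agree. Since $π$ is surjective the former is torsion-free, while the latter, being reflexive, equals $j_*$ of its own restriction to $U$, where $j\colon U ↪ X$. The canonical morphism $\mathcal G → j_*(\mathcal G|_U)$ applied to the torsion-free sheaf $\mathcal G := π_* Ω¹_{\wtilde X}(\log \wtilde D)$ then yields \eqref{il:4-15-1}, its kernel — sections supported on $X ∖ U$ — being zero. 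Passing to global sections gives an injection $ι\colon H⁰\bigl(\wtilde X, Ω¹_{\wtilde X}(\log \wtilde D)\bigr) ↪ H⁰\bigl(X, Ω^{[1]}_X(\log D)\bigr)$ whose dual is \eqref{il:4-15-2}. The surjection \eqref{il:4-15-3} is the presentation of the snc Albanese $\Alb(\wtilde X, \wtilde D)°$ from the snc case of Proposition~\ref{prop:4-7}, namely the quotient of $H⁰(\wtilde X, Ω¹_{\wtilde X}(\log \wtilde D))^*$ by $π_1(\wtilde X°, \wtilde x)$, and \eqref{il:4-15-4} is the group morphism $\alb(π°) = q°_{H°}$ of Construction~\ref{cons:4-8}, a quotient by the quasi-algebraic subgroup $H°$. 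Composing the three displays presents $\Alb(X,D)°$ as a Lie group quotient of $H⁰(X, Ω^{[1]}_X(\log D))^*$.

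The final and most delicate point is that $\diff\alb(X,D)$ is the map dual to this composite, hence injective. Here I would exploit the commutativity $\alb(X,D)° ◦ π° = q°_{H°} ◦ \alb(\wtilde X, \wtilde D)°$ of Diagram~\eqref{eq:4-8-2} together with functoriality of the pull-back of logarithmic differentials. Computing the pull-back of a form $ω ∈ H⁰\bigl(\Alb(X,D), Ω¹_{\Alb(X,D)}(\log Δ)\bigr)$ along the two sides of this identity shows that $\diff\alb(X,D)(ω)$, which a priori is only pulled back over the big open set, extends along the right-hand route to the \emph{global} logarithmic form $\diff\alb(\wtilde X, \wtilde D)\bigl(\diff q°_{H°}(ω)\bigr)$ on $\wtilde X$. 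Since $ι$ of this global form and $\diff\alb(X,D)(ω)$ restrict to the same form over $U$, reflexivity of $Ω^{[1]}_X(\log D)$ forces $\diff\alb(X,D)(ω) = ι\bigl(\diff\alb(\wtilde X, \wtilde D)(\diff q°_{H°}(ω))\bigr)$. Thus $\diff\alb(X,D) = ι ◦ \diff\alb(\wtilde X, \wtilde D) ◦ \diff q°_{H°}$, a composite of three injections: $\diff q°_{H°}$ is injective as the cotangent map of the surjective group morphism $q°_{H°}$, $\diff\alb(\wtilde X, \wtilde D)$ is the standard isomorphism of the snc theory (invariant $=$ logarithmic by Proposition~\ref{prop:3-15}), and $ι$ is injective by construction. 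This composite is precisely the dual of \eqref{il:4-15-2}--\eqref{il:4-15-4}, which identifies $\diff\alb(X,D)$ with the induced map of dual Lie algebras.

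I expect the main obstacle to be the careful handling of reflexive differentials on the singular space $X$. The pull-back $\diff\alb(X,D)$ of Remark~\ref{rem:3-17} is defined only over $X°_{\reg}$ and then extended reflexively, so the functoriality identities above hold a priori only over the big open set $U$ where $π$ is an isomorphism. The point that makes the argument go through — and that avoids invoking any general logarithmic extension theorem on $\wtilde X$ — is that the forms in the image of $\diff\alb(X,D)$ factor through the snc Albanese and hence automatically extend to global logarithmic forms on $\wtilde X$; equalities of the resulting global sections may then be verified over $U$ and propagate to all of $X$ because the target sheaves are reflexive, hence $S_2$.
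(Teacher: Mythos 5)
Your proposal is correct and takes essentially the same route as the paper, which states this Observation in remark style with only inline justifications — torsion-freeness of the push-forward for the injection \eqref{il:4-15-1}, the $π_1$- and $H°$-quotients of Construction~\ref{cons:4-8} for the chain \eqref{il:4-15-2}--\eqref{il:4-15-4}, and Lie-algebra duality for the injectivity of $\diff \alb(X,D)$ — all of which you flesh out faithfully. In particular, your factorization $\diff \alb(X,D) = ι ◦ \diff \alb(\wtilde{X},\wtilde{D}) ◦ \diff q°_{H°}$, verified over the big open set where $π$ is an isomorphism and propagated by reflexivity, is precisely the content of the paper's phrase ``induced map between dual Lie algebras''.
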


\begin{cor}[Dimension of $\Alb$]\label{cor:4-16}%
  In Setting~\ref{set:4-1}, assume that $X$ is Kähler.  Then, the dimension of
  $\Alb(X,D)°$ satisfies the inequality
  \begin{equation}\label{eq:4-16-1}
    \dim \Alb(X,D)° ≤ h⁰\bigl( X,\, Ω^{[1]}_X(\log D) \bigr).
  \end{equation}
  If the pair $(X,D)$ is Du~Bois and if $X°$ has rational singularities, then
  \eqref{eq:4-16-1} is an equality.
\end{cor}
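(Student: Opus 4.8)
The plan is to read off both assertions from the explicit presentation of $\Alb(X,D)°$ recorded in Observation~\ref{obs:4-15}. For the inequality~\eqref{eq:4-16-1}, I would note that the composition of the three surjections \eqref{il:4-15-2}--\eqref{il:4-15-4} exhibits $\Alb(X,D)°$ as a Lie-group quotient of the complex vector space $H⁰\bigl(X, Ω^{[1]}_X(\log D)\bigr)^*$. Any Lie-group quotient of a vector space has dimension at most that of the vector space, so $\dim \Alb(X,D)° ≤ h⁰\bigl(X, Ω^{[1]}_X(\log D)\bigr)$, which is precisely~\eqref{eq:4-16-1}.

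For the equality statement, I would track exactly where dimension is lost along the chain \eqref{il:4-15-2}--\eqref{il:4-15-4}. The middle map~\eqref{il:4-15-3} is the quotient by the lattice $\img(i) ≅ π_1(\wtilde X°)$ and hence preserves dimension, so $\dim \Alb(\wtilde X, \wtilde D)° = h⁰\bigl(\wtilde X, Ω¹_{\wtilde X}(\log \wtilde D)\bigr)$, as in the snc construction underlying Proposition~\ref{prop:4-7}. Dimension can therefore drop only at the first map~\eqref{il:4-15-2}, by the corank $c := h⁰\bigl(X, Ω^{[1]}_X(\log D)\bigr) - h⁰\bigl(\wtilde X, Ω¹_{\wtilde X}(\log \wtilde D)\bigr)$ of the injection~\eqref{il:4-15-1} on global sections, and at the last map~\eqref{il:4-15-4}, by $\dim H°$, where $H°$ is the quasi-algebraic subgroup produced in Construction~\ref{cons:4-8}. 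This yields the bookkeeping identity $\dim \Alb(X,D)° = h⁰\bigl(X, Ω^{[1]}_X(\log D)\bigr) - c - \dim H°$, so that equality in~\eqref{eq:4-16-1} is equivalent to the two vanishings $c = 0$ and $\dim H° = 0$.

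The two hypotheses supply these vanishings separately. The assumption that $X°$ has rational singularities gives $H° = \{0\}$, hence $\dim H° = 0$; this is exactly the content of Proposition~\ref{prop:4-14}. It then remains to show that the Du~Bois hypothesis forces $c = 0$, that is, that every reflexive logarithmic $1$-form on $X$ extends to an honest logarithmic $1$-form on the log resolution $\wtilde X$, so that the injection~\eqref{il:4-15-1} becomes an isomorphism on $H⁰$. This is the single genuinely non-formal input, and the step I expect to be the main obstacle: it is an extension theorem for differential forms across Du~Bois singularities, in the spirit of the Kebekus--Schnell extension theorem, applied here in its logarithmic form to the pair $(X,D)$. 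Granting this, we obtain $c = 0$, and combining with $\dim H° = 0$ gives the asserted equality $\dim \Alb(X,D)° = h⁰\bigl(X, Ω^{[1]}_X(\log D)\bigr)$.
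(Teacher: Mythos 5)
Your reduction is sound and matches the paper's strategy: the inequality follows because Observation~\ref{obs:4-15} exhibits $\Alb(X,D)°$ as a Lie group quotient of $H⁰\bigl(X,\,Ω^{[1]}_X(\log D)\bigr)^*$, equality holds if and only if both the corank $c$ of the injection \eqref{il:4-15-1} on global sections and $\dim H°$ vanish, and the rational-singularities hypothesis on $X°$ gives $H° = \{0\}$ via Proposition~\ref{prop:4-14}, exactly as in the paper.

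The gap is in the step you yourself flag as the main obstacle: the claim that the Du~Bois hypothesis alone forces $c = 0$. The extension theorem that actually exists for Du~Bois pairs --- \cite[Thm.~4.1]{zbMATH06405804}, which is what the paper invokes --- says only that reflexive logarithmic forms on $(X,D)$ extend to sections of $Ω¹_{\wtilde{X}}\bigl(\log \wtilde{D} + \operatorname{Exc} π\bigr)$, i.e.\ with logarithmic poles allowed along the \emph{entire} exceptional locus, including the components lying over $X°$, which are precisely the ones not contained in $\wtilde{D}$. Removing those residual poles is not a Du~Bois statement at all; it is the Kebekus--Schnell extension theorem \cite[Cor.~1.8, Rem.~1.9]{KS18}, whose hypothesis is rational singularities --- note that the result you gesture at, ``Kebekus--Schnell in its logarithmic form'', is a rational-singularities theorem, not a Du~Bois one, so your attribution conflates two different extension theorems. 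This is why the paper factors the injection \eqref{il:4-15-1} as \eqref{eq:4-16-2} and uses the rational-singularities hypothesis \emph{twice}: once through Proposition~\ref{prop:4-14} to get $H° = \{0\}$, and once to show that the first map $a$ of \eqref{eq:4-16-2} is an isomorphism, with the Du~Bois hypothesis handling only the second map $b$. Since the corollary assumes both hypotheses anyway, your argument can be repaired by replacing your single extension claim with this two-step factorization; but as written, the key input does not exist in the form you need it.
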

\begin{proof}
  The inequality follows directly from Observation~\ref{obs:4-15} above.
  Assuming that $(X,D)$ is Du~Bois and that $X°$ has rational singularities, we
  show that the composed surjection \eqref{il:4-15-2}--\eqref{il:4-15-4} has a
  discrete kernel.

  To begin, recall that since $X°$ has rational singularities,
  Proposition~\ref{prop:4-14} asserts that \eqref{il:4-15-4} is an isomorphism.
  Its kernel is hence trivial.  The kernel of \eqref{il:4-15-3} is discrete.  We
  claim that \eqref{il:4-15-2} is likewise isomorphic.  To this end, decompose
  \eqref{il:4-15-1} as
  \begin{equation}\label{eq:4-16-2}
    π_* Ω¹_{\wtilde{X}}(\log \wtilde{D})
    \overset{a}{↪} π_* Ω¹_{\wtilde{X}}(\log \wtilde{D} + \operatorname{Exc} π)
    \overset{b}{↪} Ω^{[1]}_X(\log D).
  \end{equation}
  Recall from \cite[Cor.~1.8, Rem.~1.9]{KS18} that $a$ is isomorphic because
  $X°$ has rational singularities.  Recall from \cite[Thm.~4.1]{zbMATH06405804}
  that $b$ is isomorphic because $(X,D)$ is Du~Bois.
\end{proof}

\begin{rem}[Relation to Minimal Model Theory]
  Recall the classic results that log-canonical pairs are Du~Bois and that the
  space underlying a log-terminal pair has rational singularities.
  Corollary~\ref{cor:4-16} will therefore give an equality if the pair $(X,D)$
  is dlt in the sense of Minimal Model Theory, \cite[Def.~2.37]{KM98}.
\end{rem}

\begin{rem}[Improvements]
  Corollary~\ref{cor:4-16} is probably not optimal.  Using the notion of
  ``weakly rational singularities'' introduced in \cite[Sect.~1.4]{KS18} and the
  extension results of \cite{park2024du, tighe2024holomorphic}, the assumptions
  on rational singularities might be weakened, at the expense of introducing
  technically challenging singularity classes, \cite[Thm.~5.23]{KM98} and
  \cite[Sect.~6.2]{MR3057950}.
\end{rem}

We leave the proof of the following fact to the reader.

\begin{fact}[Image of $\diff a$ and $\ker(b)$]\label{fact:4-19}%
  In the setting of Observation~\ref{obs:4-15}, assume we are given a
  factorization as in Diagram~\eqref{eq:4-2-1}.  Consider the linear subspace
  \[
    W := \img \Bigl( \diff a : H⁰\bigl( A, Ω¹_A(\log Δ_A) \bigr) →
    H⁰\bigl( X,\, Ω^{[1]}_X(\log D) \bigr) \Bigr),
  \]
  write $W^{\perp} ⊆ H⁰\bigl( X,\, Ω^{[1]}_X(\log D) \bigr)^*$ for its
  annihilator and recall from Observation~\ref{obs:4-15} above that there exists
  a natural surjection of Lie groups
  \[
    η : H⁰\bigl( X,\, Ω^{[1]}_X(\log D) \bigr)^* \twoheadrightarrow \Alb(X,D)°.
  \]
  Then, $\ker(b°) = η\bigl( W^{\perp} \bigr)$.  \qed
\end{fact}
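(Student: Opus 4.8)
The plan is to reduce the claim to a computation with the derived (Lie-algebra) maps and then to transport the result back to the group level. Throughout, write $V := H⁰\bigl(X,\,Ω^{[1]}_X(\log D)\bigr)^*$ and recall from Proposition~\ref{prop:3-15} that for a semitoric variety the global logarithmic one-forms are exactly the invariant ones, so that $H⁰\bigl(A,\,Ω¹_A(\log Δ_A)\bigr)$ and $H⁰\bigl(\Alb(X,D),\,Ω¹(\log Δ)\bigr)$ are canonically the dual Lie algebras of $A°$ and of $\Alb(X,D)°$. Since $b°$ is quasi-algebraic, Remark~\ref{rem:3-17} provides a pull-back $\diff b$ for logarithmic differentials, and functoriality of the pull-back applied to the factorization $a° = b°◦\alb(X,D)°$ yields $\diff a = \diff\alb(X,D)◦\diff b$; in particular $W = \diff\alb(X,D)\bigl(\img \diff b\bigr)$.

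First I would record the two maps of Lie algebras induced by $η$ and by $b°$. Write $dη : V → \operatorname{Lie}\bigl(\Alb(X,D)°\bigr)$ and $db° : \operatorname{Lie}\bigl(\Alb(X,D)°\bigr) → \operatorname{Lie}(A°)$ for the derivatives at the neutral elements. By Observation~\ref{obs:4-15} the map $\diff\alb(X,D)$ is precisely the transpose $(dη)^*$, and it is injective, so $dη$ is surjective; likewise $\diff b = (db°)^*$. Because $η$ is a holomorphic homomorphism out of the simply connected vector group $V$, it lifts through the universal covering $\exp : \operatorname{Lie}(\Alb(X,D)°) → \Alb(X,D)°$ to a linear map, which is necessarily $dη$; hence $η = \exp◦dη$.

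With these identifications the computation of $W^{\perp}$ is pure linear algebra. From $\img \diff b = \img (db°)^*$ one gets $(\img \diff b)^{\perp} = \ker db°$, and since $W = (dη)^*\bigl(\img \diff b\bigr)$ the standard transpose identity gives
\[
  W^{\perp} = (dη)^{-1}\bigl((\img \diff b)^{\perp}\bigr) = (dη)^{-1}\bigl(\ker db°\bigr).
\]
As $dη$ is surjective this yields $dη\bigl(W^{\perp}\bigr) = \ker db°$, and therefore
\[
  η\bigl(W^{\perp}\bigr) = \exp\bigl(dη(W^{\perp})\bigr) = \exp\bigl(\ker db°\bigr) = \bigl(\ker b°\bigr)^{\!0},
\]
the identity component of $\ker b°$. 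The inclusion $η(W^{\perp}) ⊆ \ker b°$ is also visible directly: for $v ∈ W^{\perp}$ one has $b°\bigl(η(v)\bigr) = \exp\bigl((db°◦dη)(v)\bigr) = \exp\bigl((\diff a)^* v\bigr) = 0$, since $W^{\perp} = \ker (\diff a)^*$.

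It remains to upgrade the equality $η(W^{\perp}) = (\ker b°)^{0}$ to the full kernel, and this matching of a set-theoretic kernel with a manifestly connected subgroup is the only delicate point — note that without it the statement would be off by the finite group of components of $\ker b°$. Here I would invoke the paper's own conventions: $b°$ is a quasi-algebraic group morphism, so by Fact~\ref{fact:3-28} its kernel is a quasi-algebraic subgroup, and by Fact~\ref{fact:3-23} every quasi-algebraic subgroup is a semitorus, hence connected. Thus $\ker b° = (\ker b°)^{0} = η(W^{\perp})$, as claimed. The main obstacle is therefore not the computation but keeping the group-theoretic and Lie-algebra pictures aligned: justifying $η = \exp◦dη$, tracking the transpose identities correctly, and ensuring that $\ker b°$ is read as the connected quasi-algebraic subgroup so that it coincides with the connected group $η(W^{\perp})$.
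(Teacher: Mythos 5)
Your computation is correct up to and including the identity $\eta(W^{\perp}) = \exp(\ker db^\circ) = (\ker b^\circ)^0$, and since the paper explicitly leaves the proof of this Fact to the reader, your route is surely the intended one: Proposition~\ref{prop:3-15} and Observation~\ref{obs:4-15} to identify logarithmic forms with dual Lie algebras, the lift $\eta = \exp \circ d\eta$ through the universal cover, and the transpose/annihilator algebra. Two points you leave implicit are easily supplied: functoriality $\diff a = \diff\alb(X,D) \circ \diff b$ for the pull-backs of Remark~\ref{rem:3-17}, and the choice $0_{A^\circ} := b^\circ\bigl(0_{\Alb(X,D)^\circ}\bigr)$ together with Proposition~\ref{prop:3-18}, without which $b^\circ$ is not a group morphism and $\ker b^\circ$, $db^\circ$ do not make sense.

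The final step, however, is a genuine gap: $\ker b^\circ$ need not be connected, and your appeal to Fact~\ref{fact:3-28} plus Fact~\ref{fact:3-23} proves too much, since it would show that the kernel of \emph{every} quasi-algebraic group morphism of semitori is connected. That fails already for isogenies: take $X = E$ an elliptic curve, $D = 0$, let $T \subset E$ be a non-trivial finite subgroup, put $A^\circ = A := E/T$, and let $a^\circ \colon E \to E/T$ be the quotient isogeny with compatible base points. Then $b^\circ = a^\circ$ and $\ker b^\circ = T$, while $\diff a$ is an isomorphism onto $H^0\bigl(E,\, \Omega^1_E\bigr)$, so $W^{\perp} = \{0\}$ and $\eta(W^{\perp}) = \{0\} \neq T$. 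The example shows two things at once. First, Fact~\ref{fact:3-23} can only be a statement about \emph{connected} quasi-algebraic subgroups: semitori are connected by Definition~\ref{def:3-1}, whereas $T$ is quasi-algebraic by Fact~\ref{fact:3-28} (or directly from Definition~\ref{def:3-22}) and is certainly not a semitorus; the paper itself reads the fact this way in Section~\ref{sec:8}, where it passes to the ``maximal connected subgroup'' of a quasi-algebraic stabilizer. Second, Fact~\ref{fact:4-19} as printed is itself off by the component group of $\ker b^\circ$; the statement that is true---and that your computation does prove---is $\eta(W^{\perp}) = (\ker b^\circ)^0$. This identity-component version is all the paper ever uses: in the two invocations inside the proof of Proposition~\ref{prop:7-5}, the subgroups $I_V = \eta(V^{\perp})$ are connected and contain the origin, so those arguments run unchanged with kernels replaced by their identity components.
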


\subsection{Examples}
\approvals{Erwan & yes \\ Stefan & yes}

The following example shows that the Inequalities~\eqref{eq:4-14-1} and
\eqref{eq:4-16-1} will generally be strict, even for pairs with no boundary and
with the simplest log-canonical singularities.

\begin{example}[Strict inequalities]\label{ex:4-20}%
  Consider closed immersions $E ⊊ ℙ² ⊊ ℙ³$ where $E$ is an elliptic curve and
  where $ℙ²$ is linearly embedded into $ℙ³$.  Let $X ⊂ ℙ³$ be the projective
  cone over $E$.  Since $X$ is rationally connected, morphisms to semitori will
  necessarily be constant.  It follows that the Albanese of $(X,0)$ will be
  trivial.  Next, let $π : \wtilde{X} → X$ be the resolution of singularities,
  obtained as the blow-up of the unique singular point in $X$.  Since
  $\wtilde{X}$ is a $ℙ¹$-bundle over $E$, its Albanese equals $E$.  The
  following diagram summarizes the situation,
  \[
    \begin{tikzcd}[row sep=1cm]
      \wtilde{X} \ar[rrr, "\alb(\wtilde{X})"] \ar[d, two heads, "π"'] &&& \Alb(\wtilde{X}) \ar[d, two heads, "\alb(π)"] \ar[r, phantom, "="] & E \\
      X \ar[rrr, "\alb(X)"'] &&& \Alb(X) \ar[r, phantom, "="] & \{0\}.
    \end{tikzcd}
  \]
  Inequality~\eqref{eq:4-14-1} is strict in this case.  The inequalities
  \[
    1 = h⁰\bigl( E,\, Ω¹_E \bigr) ≤ h⁰\bigl( \wtilde{X} ,\, Ω¹_{\wtilde{X}} \bigr) = h⁰\bigl( \wtilde{X} ,\, π_* Ω¹_{\wtilde{X}} \bigr) ≤
    h⁰\bigl( X,\, Ω^{[1]}_X \bigr)
  \]
  show that \eqref{eq:4-16-1} is likewise strict.
\end{example}

In case where the underlying space $X$ of a pair $(X,D)$ is smooth, the
following example shows that the Albanese of $(X,D)$ agrees with the Albanese of
a log resolution.  Together with \cite[Rem.~4.5.10]{MR3156076}, this implies
that our construction of Albanese agrees with that of
\cite[Sect.~4.5]{MR3156076}, even though the two constructions might initially
look different.

\begin{example}[Albanese in case where the underlying space is smooth]
  In Setting~\ref{set:4-1}, assume that $X$ is a Kähler manifold and let
  $\Alb(X,D)° ⊂ \Alb(X,D)$ be an Albanese of $(X,D)$, with Albanese morphism
  $\alb(X,D)° : X° → \Alb(X,D)°$.

  If $π : \wtilde{X} \twoheadrightarrow X$, is a log-resolution of the pair
  $(X,D)$ and $\wtilde{D} := \supp π^{-1}(D)$ the reduced preimage divisor, then
  Construction~\ref{cons:4-8} immediately shows that $\Alb(X,D)° ⊂ \Alb(X,D)$ is
  also an Albanese of $(\wtilde{X}, \wtilde{D})$, with Albanese morphism
  $\alb°(\wtilde{X},\wtilde{D}) = \alb(X,D)° ◦ π$.
\end{example}


\phantomsection\addcontentsline{toc}{part}{The Albanese of a cover}

%
%
\svnid{$Id: 05-adaptedAlbanese.tex 942 2024-10-10 16:13:56Z kebekus $}
\selectlanguage{british}

\section{The Albanese of a cover}
\label{sec:5}
\subversionInfo
\approvals{Erwan & yes \\ Stefan & yes}

Generalizing the Albanese of a logarithmic pair, we construct an Albanese
attached to every cover $\what{X} \twoheadrightarrow X$ of a given $\cC$-pair
$(X,D)$, which need not be logarithmic.  Recalling that the Albanese of a
logarithmic snc pair is a ``universal'' morphism to a semitoric variety that
induces all logarithmic differentials, we define the Albanese of a cover as a
``universal'' morphism from $\what{X}$ to a semitoric variety such that every
pull-back differential is adapted.  We consider the following setting throughout
the present section.

\begin{setting}\label{set:5-1} %
  Let $(X, D)$ be a $\cC$-pair where $X$ is compact and let $γ : \what{X}
  \twoheadrightarrow X$ be a cover of $(X,D)$.  Consider the reduced divisor
  \[
    \what{D} := \bigl(γ^* ⌊D⌋ \bigr)_{\red} ∈ \Div(\what{X})
  \]
  and write $\what{X}° := \what{X} ∖ \supp \what{D}$.
\end{setting}

We underline that Setting~\ref{set:5-1} does \emph{not} assume that $γ$ is
adapted, that $\what{X}$ is smooth, or that $γ^* D$ has nc support.  The
following definition of the Albanese will therefore use adapted \emph{reflexive}
differentials.

\begin{defn}[The Albanese of a cover of a $\cC$-pair]\label{def:5-2} %
  Assume Setting~\ref{set:5-1}.  An \emph{Albanese of $(X,D,γ)$} is a semitoric
  variety $\Alb(X,D,γ)° ⊂ \Alb(X,D,γ)$ together with a quasi-algebraic morphism
  \[
    \alb(X,D,γ)° : \what{X}° → \Alb(X,D,γ)°
  \]
  such that the following holds.
  \begin{enumerate}
    \item\label{il:5-2-1} The pull-back morphism for logarithmic differentials
    of Remark~\ref{rem:3-17},
    \[
      H⁰\Bigl( Ω¹_{\Alb(X,D,γ)}(\log Δ) \Bigr) \xrightarrow{\diff \alb(X,D,γ)} H⁰\Bigl( \what{X},\, Ω^{[1]}_{\what{X}}(\log \what{D}) \Bigr),
    \]
    takes its image in the subspace $H⁰\Bigl( \what{X},\, Ω^{[1]}_{(X,D,γ)}
    \Bigr) ⊆ H⁰\Bigl( \what{X},\, Ω^{[1]}_{\what{X}}(\log \what{D}) \Bigr)$.

    \item\label{il:5-2-2} If $A° ⊂ A$ is any semitoric variety, if $a° :
    \what{X}° → A°$ is any quasi-algebraic morphism such that the pull-back
    morphism
    \[
      \diff a : H⁰\bigl( A,\, Ω¹_A(\log Δ) \bigr) → H⁰\Bigl( \what{X},\,
      Ω^{[1]}_{\what{X}} (\log \what{D}) \Bigr)
    \]
    takes its image in $H⁰\Bigl( \what{X},\, Ω^{[1]}_{(X,D,γ)} \Bigr)$, then $a$
    factors uniquely as
    \[
      \begin{tikzcd}[column sep=2cm]
        \what{X}° \arrow[r, "\alb(X{,}D{,}γ)°"'] \arrow[rr, "a°", bend left=10] & \Alb(X,D,γ)° \arrow[r, "∃!b°"'] & A°,
      \end{tikzcd}
    \]
    where $b°$ is quasi-algebraic.
  \end{enumerate}
\end{defn}

\begin{rem}[Pull-back of $p$-differentials]\label{rem:5-3} %
  Item~\ref{il:5-2-1} of Definition~\ref{def:5-2} can be phrased in terms of
  sheaf morphisms.  Recall from \cite[Cor.~5.4.5]{MR3156076} that the locally
  free sheaf $Ω¹_{\Alb(X,D,γ)}(\log Δ)$ is free and hence globally generated.
  Item~\ref{il:5-2-1} is therefore equivalent to the following, seemingly
  stronger statement: If $p$ is any number, then the composed pull-back morphism
  \[
    (\alb(X,D,γ)°)^* \: Ω^p_{\Alb(X,D,γ)°} → Ω^{[p]}_{\what{X}°}
  \]
  takes its image in the subsheaf $Ω^{[p]}_{(X°, D°, γ)} ⊆ Ω^{[p]}_{\what{X}°}$.
\end{rem}

\subsection{Uniqueness and existence}
\approvals{Erwan & yes \\ Stefan & yes}

As before, the universal property spelled out in Item~\ref{il:5-2-2} implies
that $\Alb(X,D,γ)°$ is unique up to unique isomorphism.  The compactification
$\Alb(X,D,γ)$ is bimeromorphically unique.  As before, we abuse notation and
refer to any Albanese as ``the Albanese'', with associated semitoric
\emph{Albanese variety} $\Alb(X,D,γ)° ⊂ \Alb(X,D,γ)$ and quasi-algebraic
\emph{Albanese morphism} $\alb(X,D,γ)°$.

\begin{prop}[Existence of the Albanese of a cover]\label{prop:5-4} %
  In Setting~\ref{set:5-1}, assume that $X$ is Kähler.  Then, an Albanese of
  $(X,D,γ)$ exists.  Its dimension satisfies the inequality
  \[
    \dim \Alb(X,D,γ)° ≤ q(X,D,γ).
  \]
  If $\:\what{x} ∈ \what{X}°$ is any point and if we use
  \[
    0_{\Alb(X,D,γ)°} := \alb(X,D,γ)° (\what{x}) ∈ \Alb(X,D,γ)°
  \]
  to equip $\Alb(X,D,γ)°$ with the structure of a Lie group, then the image of
  $\alb(X,D,γ)°$ generates $\Alb(X,D,γ)°$ as an Abelian group.
\end{prop}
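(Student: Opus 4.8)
The plan is to realise the Albanese of the cover as a quasi-algebraic quotient of the ordinary logarithmic Albanese of $(\what X, \what D)$, in close analogy with Construction~\ref{cons:4-8}. Since $X$ is compact Kähler and $γ$ is a finite cover, the normal space $\what X$ is again compact Kähler, so Proposition~\ref{prop:4-7} applies to $(\what X, \what D)$ and produces an Albanese morphism $\what a° := \alb(\what X, \what D)° : \what X° → \what A° := \Alb(\what X, \what D)°$, whose pull-back on logarithmic differentials $\diff \what a°$ is injective by Observation~\ref{obs:4-15}. Writing $W := H⁰\bigl(\what X,\, Ω^{[1]}_{(X,D,γ)}\bigr)$ for the space of adapted reflexive one-forms and $V := (\diff \what a°)^{-1}(W)$ for its preimage inside the cotangent space $H⁰\bigl(Ω¹_{\what A}(\log Δ)\bigr)$ of $\what A°$ at the origin, the problem reduces to quotienting $\what A°$ by the directions that carry the non-adapted differentials.

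Following Step~3 of Construction~\ref{cons:4-8}, I would consider the family $\mathcal H°$ of quasi-algebraic subgroups $B° ⊆ \what A°$ whose quotient map pulls differentials back into the adapted subspace, that is, those with $\img \diff q°_{B°} = \operatorname{Ann}\bigl(\operatorname{Lie} B°\bigr) ⊆ V$, equivalently $\operatorname{Lie} B° ⊇ V^{\perp}$. This family is non-empty, as it contains $\what A°$, and the condition $\operatorname{Lie} B° ⊇ V^{\perp}$ is visibly preserved under intersecting Lie algebras, so $\mathcal H°$ is stable under arbitrary intersection. By Fact~\ref{fact:3-26} the infimum $H° := \bigcap_{B° ∈ \mathcal H°} B°$ is therefore a quasi-algebraic subgroup that again lies in $\mathcal H°$ and is its minimal element. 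I then set $\Alb(X,D,γ)° := \what A°/H°$, with the semitoric compactification and quasi-algebraic quotient morphism $q°_{H°}$ supplied by Fact~\ref{fact:3-27}, and define $\alb(X,D,γ)° := q°_{H°} ◦ \what a°$; this composite is quasi-algebraic by Lemma~\ref{lem:2-4}.

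To confirm the defining properties, Item~\ref{il:5-2-1} is immediate, because $\diff \alb(X,D,γ)°$ factors through $\diff q°_{H°}$, whose image lies in $V$ thanks to $H° ∈ \mathcal H°$; hence every pulled-back differential is adapted. For the universal property of Item~\ref{il:5-2-2}, I would take any quasi-algebraic $a° : \what X° → A°$ with adapted pull-back and apply the universal property of $\what A°$ (Definition~\ref{def:4-2}) to obtain a unique quasi-algebraic $\what b° : \what A° → A°$ with $a° = \what b° ◦ \what a°$; after normalising base points, Proposition~\ref{prop:3-18} shows that $\what b°$ is a group morphism. Adaptedness of $a°$ gives $\img \diff \what b° ⊆ V$, hence $\operatorname{Lie} \ker \what b° ⊇ V^{\perp}$; as $\ker \what b°$ is quasi-algebraic by Fact~\ref{fact:3-28}, it belongs to $\mathcal H°$ and so contains the minimal element $H°$. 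Therefore $\what b°$ descends to the desired $b° : \Alb(X,D,γ)° → A°$, which is unique and quasi-algebraic since $q°_{H°}$ is a surjective quasi-algebraic group morphism (Lemma~\ref{lem:2-4}). The dimension bound then reads $\dim \Alb(X,D,γ)° = \dim \operatorname{Ann}(\operatorname{Lie} H°) ≤ \dim V ≤ \dim W = q(X,D,γ)$, and the generation statement follows from Proposition~\ref{prop:4-10} together with the surjectivity of the group morphism $q°_{H°}$.

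The step I expect to be the main obstacle is the reconciliation of the purely linear datum $V$ with the quasi-algebraicity constraint: one cannot simply quotient by a subgroup whose Lie algebra is exactly $V^{\perp}$, as such a subgroup need not be quasi-algebraic, so one is forced to pass to the minimal quasi-algebraic hull $H°$ and then argue that this enlargement does no harm. The decisive point is that the kernel of any admissible factoring morphism is automatically quasi-algebraic by Fact~\ref{fact:3-28} and has Lie algebra containing $V^{\perp}$, so it necessarily contains $H°$; this is precisely what secures the factorisation and shows that passing to the quasi-algebraic hull does not collapse the Albanese too far.
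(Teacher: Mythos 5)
Your proposal is correct and takes essentially the same route as the paper: the paper proves Proposition~\ref{prop:5-4} by specialising its ``Albanese for a subspace of differentials'' (Proposition~\ref{prop:7-5}, Construction~\ref{cons:7-6}) to the subspace $V = H^0\bigl(\what{X},\, \Omega^{[1]}_{(X,D,\gamma)}\bigr)$, i.e.\ it likewise quotients $\Alb(\what{X},\what{D})^\circ$ by a smallest quasi-algebraic subgroup attached to $V^{\perp}$ (Fact~\ref{fact:3-26}), compactifies via Fact~\ref{fact:3-27}, verifies the universal property through Fact~\ref{fact:3-28} and minimality, and obtains the generation statement from Proposition~\ref{prop:4-10} plus surjectivity of the quotient, exactly as you do. The only difference is cosmetic: the paper describes the subgroup as the quasi-algebraic hull of the image $I_V$ of $V^{\perp}$ in the group, which coincides with your minimal quasi-algebraic subgroup whose Lie algebra contains $V^{\perp}$.
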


The proof of Proposition~\ref{prop:5-4} requires some preparation.  We give it
in Section~\ref{sec:7-2}, starting from Page~\pageref{sec:7-2} below.  Assuming
for the moment that the Albanese can be shown to exist, the subsequent
Sections~\ref{sec:5-2}--\ref{sec:5-4} gather its most important properties.

\subsection{Functoriality in sequences of covers}
\approvals{Erwan & yes \\ Stefan & yes}
\label{sec:5-2}

The following immediate consequence of the universal property will be used
later.

\begin{lem}[Functoriality of the Albanese]\label{lem:5-5} %
  Let $(X, D)$ be a $\cC$-pair where $X$ is compact Kähler.  Let
  \[
    \begin{tikzcd}
      \what{X}_1 \ar[r, two heads, "γ_1"] & \what{X}_2 \ar[r, two heads, "γ_2"] & X
    \end{tikzcd}
  \]
  be a sequence of covers.  Consider the reduced divisors
  \[
    \what{D}_2 := \left(γ^*_2 ⌊D⌋ \right)_{\red} \quad\text{and}\quad \what{D}_1
    := \bigl((γ_2◦γ_1)^* ⌊D⌋ \bigr)_{\red}
  \]
  and write $\what{X}°_• := \what{X}_• ∖ \supp \what{D}_•$.  Then, there exists
  a unique surjection $c°$ that renders the following diagram commutative,
  \begin{equation}\label{eq:5-5-1}
    \begin{tikzcd}[column sep=2.4cm, row sep=1cm]
      \what{X}°_1 \ar[d, two heads, "γ_1|_{\what{X}°_1}"'] \ar[r, "\alb(X{,}D{,}γ_2◦γ_1)°"] & \Alb(X,D,γ_2◦γ_1)° \ar[d, two heads, "∃!c°"] \\
      \what{X}°_2 \ar[r, "\alb(X{,}D{,}γ_2)°"'] \ar[d, two heads, "γ_2|_{\what{X}°_2}"'] & \Alb(X,D,γ_2)° \\
      X°.
    \end{tikzcd}
  \end{equation}
  The morphism $c°$ is quasi-algebraic.
\end{lem}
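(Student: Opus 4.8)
The plan is to obtain $c°$ directly from the universal property of the Albanese $\Alb(X,D,γ_2◦γ_1)°$ (Item~\ref{il:5-2-2} of Definition~\ref{def:5-2}), applied to the composed morphism
\[
  a° := \alb(X,D,γ_2)° ◦ \bigl(γ_1|_{\what{X}°_1}\bigr) : \what{X}°_1 → \Alb(X,D,γ_2)°.
\]
Since $\Alb(X,D,γ_2)°$ is a semitoric variety, invoking the universal property amounts to checking its two hypotheses for $a°$: that $a°$ is quasi-algebraic and that $\diff a$ takes its image in the adapted subspace $H⁰\bigl(\what{X}_1,\, Ω^{[1]}_{(X,D,γ_2◦γ_1)}\bigr)$. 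The resulting factorization morphism will then be the desired $c°$.

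First I would record quasi-algebraicity. The cover $γ_1$ is finite, hence extends to $\what{X}_1 → \what{X}_2$ and is quasi-algebraic, while $\alb(X,D,γ_2)°$ is quasi-algebraic by definition; Item~\ref{il:2-4-1} of Lemma~\ref{lem:2-4} then shows that the composition $a°$ is quasi-algebraic.

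Next I would verify the differential condition, which I expect to be the main obstacle. By Item~\ref{il:5-2-1} of Definition~\ref{def:5-2} applied to $\Alb(X,D,γ_2)$, the image of $\diff\alb(X,D,γ_2)$ already lands in $H⁰\bigl(\what{X}_2,\, Ω^{[1]}_{(X,D,γ_2)}\bigr)$, so the claim reduces to the functoriality of adapted differentials in a tower of covers, namely that pulling back along $γ_1$ sends adapted differentials for $γ_2$ to adapted differentials for $γ_2◦γ_1$:
\[
  γ_1^* : H⁰\bigl(\what{X}_2,\, Ω^{[1]}_{(X,D,γ_2)}\bigr) → H⁰\bigl(\what{X}_1,\, Ω^{[1]}_{(X,D,γ_2◦γ_1)}\bigr).
\]
This is the one place where genuine content from the theory of $\cC$-differentials enters; it should follow from the definition of the adapted subsheaf $Ω^{[1]}_{(X,D,γ)}$ in \cite{orbiAlb1} together with the identity $(γ_2◦γ_1)^* = γ_1^* ◦ γ_2^*$ for reflexive differentials, checked on the big open locus where all spaces are smooth and the pull-backs are honestly defined. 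Composing this inclusion with Item~\ref{il:5-2-1} shows that $\diff a$ has image in the adapted subspace, and the universal property then produces a unique quasi-algebraic $c°$ with $c° ◦ \alb(X,D,γ_2◦γ_1)° = a°$, which is exactly the commutativity of Diagram~\eqref{eq:5-5-1}.

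Finally, for surjectivity I would fix compatible base points — choosing $\what{x}_1 ∈ \what{X}°_1$ and using $\alb(X,D,γ_2◦γ_1)°(\what{x}_1)$ and $\alb(X,D,γ_2)°(γ_1\what{x}_1)$ as neutral elements — so that $c°$ sends neutral to neutral and is therefore a group morphism by Proposition~\ref{prop:3-18}. Since $\supp\what{D}_1 = γ_1^{-1}(\supp\what{D}_2)$, the restriction $γ_1|_{\what{X}°_1}$ is surjective, so $\Image(a°) = \Image\bigl(\alb(X,D,γ_2)°\bigr)$, which generates $\Alb(X,D,γ_2)°$ by Proposition~\ref{prop:5-4}. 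As $a°$ factors through $c°$, the image of the group morphism $c°$ is a subgroup containing this generating set, hence all of $\Alb(X,D,γ_2)°$; thus $c°$ is surjective.
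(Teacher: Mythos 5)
Your proposal is correct and follows essentially the same route as the paper: apply the universal property of $\Alb(X,D,γ_2◦γ_1)°$ to the composition $\alb(X,D,γ_2)°◦γ_1$, with the key differential condition supplied by the inclusion $γ_1^{[*]}\,Ω^{[1]}_{(X,D,γ_2)} ⊆ Ω^{[1]}_{(X,D,γ_2◦γ_1)}$ (which the paper cites as \cite[Obs.~4.14]{orbiAlb1}), and surjectivity deduced from the generation statement of Proposition~\ref{prop:5-4}. The only cosmetic difference is that you spell out the surjectivity argument via Proposition~\ref{prop:3-18}, where the paper leaves this implicit.
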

\begin{proof}
  Uniqueness and surjectivity of $c°$ (if it exists) follows from
  Proposition~\ref{prop:5-4}, which asserts that the images of $\alb_•(•)°$
  generate $\Alb_•(•)°$ as groups once suitable structures of Lie groups are
  chosen.

  Existence of $c°$ as a quasi-algebraic morphism follows from the universal
  property of the Albanese.  To be precise, recall from Property~\ref{il:5-2-1}
  that the pull-back morphism
  \[
    \alb(X,D,γ_2)^* : H⁰\bigl( Ω¹_{\Alb(X,D,γ_2)}(\log Δ)\bigr) →
    H⁰\Bigr( \what{X}_2,\, Ω^{[1]}_{\what{X}_2}(\log \what{D}_1) \Bigr)
  \]
  takes its image in $H⁰\bigr( \what{X}_2,\, Ω^{[1]}_{(X,D,γ_2)} \bigr)$.  As a
  consequence, we find that the pull-back morphism
  \[
    \bigl(\alb(X,D,γ_2) ◦ γ_1\bigr)^* : H⁰\bigl( Ω¹_{\Alb(X,D,γ_2)}(\log Δ)\bigr) →
    H⁰\Bigl( \what{X}_1,\, Ω^{[1]}_{\what{X}_1}(\log \what{D}_1) \Bigr)
  \]
  takes its image in
  \begin{equation}\label{eq:5-5-2}
    H⁰\bigr( \what{X}_1,\, γ^{[*]}_1 Ω^{[1]}_{(X,D,γ_2)} \bigr)
    ⊆ H⁰\bigr( \what{X}_1,\, Ω^{[1]}_{(X,D,γ_2◦γ_1)} \bigr),
  \end{equation}
  where the inclusion in \eqref{eq:5-5-2} is \cite[Obs.~4.14]{orbiAlb1}.  As
  pointed out above, the universal property of the Albanese $\Alb(X,D,γ_2◦γ_1)°$
  now gives a unique quasi-algebraic morphism $c°$ that makes
  Diagram~\eqref{eq:5-5-1} commute.
\end{proof}

\subsection{The Albanese of a Galois cover}
\label{sec:5-3}
\approvals{Erwan & yes \\ Stefan & yes}

Lemma~\ref{lem:5-5} applies in particular in case where $\what{X}_1 =
\what{X}_2$ are equal and where $γ_1$ is a Galois automorphism of the cover
$γ_2$.  We find that the Galois group acts on the Albanese and that the Albanese
morphism is equivariant.

\begin{obs}[Galois action on the Albanese of a cover]\label{obs:5-6} %
  In Setting~\ref{set:5-1}, assume that $X$ is Kähler and that the cover $γ$ is
  Galois with group $G$.  Recall from \cite[Obs.~4.19]{orbiAlb1} that
  $Ω^{[1]}_{(X,D,γ)}$ carries a natural $G$-linearisation that is compatible
  with the natural $\Aut(\what{X})$-linearisations of $Ω^{[1]}_{\what{X}}$.  In
  complete analogy to Proposition~\ref{prop:4-11}, it follows from
  Lemma~\ref{lem:5-5} that $G$ acts on $\Alb(X,D,γ)°$ by quasi-algebraic
  automorphisms, in a way that makes the morphism $\alb(X,D,γ)°$ equivariant.
  Corollary~\vref{cor:3-21} allows choosing a compactification
  \[
    \Alb(X,D,γ)° ⊂ \Alb(X,D,γ), %
    \quad\text{written in short as } %
    A° ⊂ A,
  \]
  such that the $G$-action on $A°$ extends to $A$, and such that $A° ⊂ A$ is an
  Albanese for $(X,D,γ)$.  \qed
\end{obs}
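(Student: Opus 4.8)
The plan is to mirror the proof of Proposition~\ref{prop:4-11}, using the functoriality statement of Lemma~\ref{lem:5-5} in place of the universal property of the logarithmic Albanese. To produce the action, I would fix $g ∈ G$, regarded as an automorphism of the cover $γ$, so that $γ \circ g = γ$, and apply Lemma~\ref{lem:5-5} with $\what{X}_1 = \what{X}_2 = \what{X}$, $γ_1 := g$ and $γ_2 := γ$. Since $γ_2 \circ γ_1 = γ \circ g = γ = γ_2$, the reduced divisors $\what{D}_1$ and $\what{D}_2$ coincide and both Albaneses appearing in Diagram~\eqref{eq:5-5-1} equal $\Alb(X,D,γ)°$. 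The lemma therefore supplies a unique quasi-algebraic surjection
\[
  σ(g) : \Alb(X,D,γ)° \twoheadrightarrow \Alb(X,D,γ)°
\]
satisfying $σ(g) \circ \alb(X,D,γ)° = \alb(X,D,γ)° \circ g$, which is exactly the asserted equivariance of $\alb(X,D,γ)°$.

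Next I would verify that $g ↦ σ(g)$ is a group action, using the uniqueness clause of Lemma~\ref{lem:5-5}. For $g, h ∈ G$ the composite $σ(g) \circ σ(h)$ is again a surjection and satisfies
\[
  σ(g) \circ σ(h) \circ \alb(X,D,γ)° = \alb(X,D,γ)° \circ (g \circ h),
\]
so it renders the defining diagram of $σ(g \circ h)$ commutative. Uniqueness of the surjection forces $σ(g \circ h) = σ(g) \circ σ(h)$, and likewise $σ(e) = \Id$; hence each $σ(g)$ is invertible and is a quasi-algebraic automorphism. Compatibility with the $G$-linearisation of $Ω^{[1]}_{(X,D,γ)}$ recorded in \cite[Obs.~4.19]{orbiAlb1} is automatic here, since Lemma~\ref{lem:5-5} is built from the pull-back of adapted differentials and Property~\ref{il:5-2-1} guarantees that $\diff \alb(X,D,γ)$ factors through that subspace.

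Finally I would pass to compactifications. The finite group $G$ now acts on the semitoric variety $\Alb(X,D,γ)°$ by quasi-algebraic automorphisms, so Corollary~\ref{cor:3-21} furnishes a semitoric compactification $A° ⊂ A$, bimeromorphic to the original one, to which the $G$-action extends equivariantly. The one point that genuinely needs checking — and the closest thing to an obstacle — is that $A° ⊂ A$ remains an Albanese for $(X,D,γ)$. This follows because the universal property of Definition~\ref{def:5-2} is phrased purely in terms of quasi-algebraic morphisms, and by Fact~\ref{fact:3-25} quasi-algebraicity depends only on the bimeromorphic equivalence class of the compactification; replacing the original model by the bimeromorphic model $A$ therefore alters neither the Albanese morphism nor its universal property. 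With this the observation follows.
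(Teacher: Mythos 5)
Your proposal is correct and follows essentially the same route the paper intends for this observation: apply Lemma~\ref{lem:5-5} with $\what{X}_1 = \what{X}_2 = \what{X}$, $γ_1 := g$, $γ_2 := γ$, use the uniqueness clause to see that $g ↦ σ(g)$ is a group action by quasi-algebraic automorphisms, and invoke Corollary~\ref{cor:3-21} for the equivariant compactification, checking that the bimeromorphic change of compactification preserves the Albanese property. One small citation slip: the bimeromorphic invariance of quasi-algebraicity for \emph{morphisms} is not Fact~\ref{fact:3-25} (which concerns quasi-algebraic subgroups), but follows from Lemma~\ref{lem:2-4}, applied with the identity of $A°$ viewed as a quasi-algebraic morphism between the two bimeromorphic compactifications.
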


\begin{construction}[Morphism to Galois quotient of the Albanese of a cover]\label{cons:5-7}%
  In Observation~\ref{obs:5-6}, take quotients to find a diagram
  \begin{equation}\label{eq:5-7-1}
    \begin{tikzcd}[column sep=2cm, row sep=1cm]
      \what{X}° \arrow[r, "\alb_{\what{x}}({X,D,γ})°"] \ar[d, two heads, "γ\text{, quotient}"'] & A° \ar[d, two heads, "γ_A\text{, quotient}"]\\
      X° \arrow[r, "a°"'] & \factor{A°}{G}
    \end{tikzcd}
  \end{equation}
  where $a°$ is quasi-algebraic for the compactifications $X° ⊂ X$ and $A°/G ⊂
  A/G$.  Propositions~\ref{prop:5-4} and \ref{prop:3-29} together guarantee that
  the image of $\alb_{\what{x}}(X,D,γ)°$ is not contained in the ramification
  locus of the quotient morphism $γ_A : A° → A°/G$.  The image of $a°$ is
  therefore not contained in the branch locus.
\end{construction}

Diagram~\eqref{eq:5-7-1} is a commutative diagram of holomorphic morphisms
between normal analytic varieties.  We upgrade it to a commutative diagram of
$\cC$-morphisms.

\begin{obs}[$\cC$-Morphism to Galois quotient of the Albanese]\label{obs:5-8}%
  The variety $A°$ of Observation~\ref{obs:5-6} and Construction~\ref{cons:5-7}
  is a semitorus and therefore smooth.  The criterion for $\cC$-morphisms
  spelled out in \cite[Prop.~8.6]{orbiAlb1} therefore applies to show that
  $\alb_{\what{x}}(X,D,γ)°$ induces a morphism of $\cC$-pairs\footnote{In
  contrast, recall from \cite[Ex.~8.7 and 8.8]{orbiAlb1} that a morphism between
  singular spaces $Z_1 → Z_2$ does not always induce a $\cC$-morphism $(Z_1,0) →
  (Z_2,0)$.},
  \[
    \alb(X,D,γ)° : (\what{X}°, 0) → (A°, 0).
  \]
  Taking the categorical quotients of $\cC$-pairs, \cite[Prop.~12.7]{orbiAlb1}
  will thus yield a diagram of $\cC$-morphisms between $\cC$-pairs as follows,
  \begin{equation}\label{eq:5-8-1}
    \begin{tikzcd}[column sep=2cm, row sep=1cm]
      (\what{X}°, 0) \arrow[r, "\alb({X,D,γ})"] \ar[d, two heads, "γ\text{, quotient}"'] & (A°, 0) \ar[d, two heads, "γ_A\text{, quotient}"]\\
      (X°, D') \arrow[r, "a°"'] & (Y°, D_Y),
    \end{tikzcd}
  \end{equation}
  where
  \[
    (X°, D') := \factor{(\what{X}°, 0)}{G} %
    \quad\text{and}\quad %
    (Y°, D_Y) := \factor{(A°, 0)}{G}.  %
  \]
\end{obs}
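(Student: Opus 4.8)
The plan is to build Diagram~\eqref{eq:5-8-1} in two stages: first promote the top horizontal arrow $\alb_{\what{x}}(X,D,γ)°$ to a $\cC$-morphism, and then descend the whole $G$-equivariant picture to the quotients by functoriality of the categorical quotient. For the first stage, recall that $\alb_{\what{x}}(X,D,γ)° : \what{X}° → A°$ is a holomorphic morphism between normal analytic varieties, and we wish to upgrade it to a morphism $(\what{X}°, 0) → (A°, 0)$ of $\cC$-pairs. The decisive input is that the target $A°$, being the open orbit of a semitoric variety, is a semitorus and hence smooth. This is precisely the hypothesis under which \cite[Prop.~8.6]{orbiAlb1} guarantees that the pull-back of reflexive differentials behaves correctly, so that the morphism is a $\cC$-morphism. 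As the footnote's Examples~8.7 and 8.8 illustrate, smoothness of the target is genuinely needed here: for singular targets the analogous conclusion can fail.

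For the second stage, I would first record the $G$-equivariance supplied by Observation~\ref{obs:5-6}, which furnishes a $G$-action on $A°$ by quasi-algebraic automorphisms rendering $\alb(X,D,γ)°$ equivariant. Since $(A°,0)$ carries the trivial boundary on a smooth space, these automorphisms are in particular $\cC$-automorphisms, and the top arrow becomes an equivariant $\cC$-morphism between the $G$-pairs $(\what{X}°,0)$ and $(A°,0)$. I would then invoke \cite[Prop.~12.7]{orbiAlb1} to take categorical quotients in the category of $\cC$-pairs. The quotients are $(X°, D') = \factor{(\what{X}°,0)}{G}$ and $(Y°, D_Y) = \factor{(A°,0)}{G}$, with boundary divisors carrying the Campana multiplicities dictated by the ramification of $γ$ and $γ_A$. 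Functoriality of the quotient then produces the induced $\cC$-morphism along the bottom of the diagram.

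The main obstacle I anticipate lies in the bookkeeping at this final step: one must check that the $\cC$-morphism produced by the quotient functor agrees on open parts with the quasi-algebraic morphism $a°$ already constructed in Construction~\ref{cons:5-7}, and in particular that $a°$ is well-defined as a $\cC$-morphism because its image avoids the branch locus of $γ_A$, where the quotient $\cC$-structure would be ill-behaved. This last point is not formal, but it was arranged in advance: Propositions~\ref{prop:5-4} and \ref{prop:3-29} together ensure that $\img \alb_{\what{x}}(X,D,γ)°$ is not contained in the ramification locus of $γ_A$, so descending to the quotient does not push the image into the branch divisor. With this compatibility in hand, commutativity of Diagram~\eqref{eq:5-8-1} as a square of $\cC$-morphisms follows from the commutativity of Diagram~\eqref{eq:5-7-1} of the underlying varieties.
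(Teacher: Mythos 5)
Your proposal reproduces the paper's argument essentially verbatim: the observation rests on exactly the two inputs you identify, namely smoothness of the semitorus $A^\circ$ making \cite[Prop.~8.6]{orbiAlb1} applicable to upgrade $\alb(X,D,\gamma)^\circ$ to a $\cC$-morphism $(\what{X}^\circ,0) \to (A^\circ,0)$, followed by the $G$-equivariant categorical quotient of \cite[Prop.~12.7]{orbiAlb1} to produce Diagram~\eqref{eq:5-8-1}. Your anticipated obstacle concerning the branch locus is not actually part of this observation (the quotient $\cC$-structure and the induced $\cC$-morphism exist regardless; avoidance of the branch locus is recorded in Construction~\ref{cons:5-7} and only becomes essential in Proposition~\ref{prop:5-10}), but you correctly note that it was arranged in advance via Propositions~\ref{prop:5-4} and \ref{prop:3-29}.
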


\begin{warning}[Boundary divisors in the quotient construction]\label{warn:5-9}%
  The boundary divisor $D'$ in Observation~\ref{obs:5-8} does not need to equal
  $D°$.  In fact, recall from \cite[Obs.~12.9]{orbiAlb1} that there is only an
  inequality $D' ≥ D°$, which might be strict.  As before,
  \cite[Prop.~10.4]{orbiAlb1} allows formulating this inequality by saying that
  the identity on $X°$ induces a morphism of $\cC$-pairs,
  \[
    \Id_{X°} : (X°, D') → (X°, D°).
  \]
  Warning~\ref{warn:5-9} ends here.
\end{warning}

The following proposition, which is central to everything that follows, claims
that in spite of Warning~\ref{warn:5-9}, the morphism $a°$ of
Diagram~\eqref{eq:5-8-1} does induce a morphism of $\cC$-pairs,
\[
  \underline{a}° : (X°, D°) → (Y°, D_Y).
\]
This is expressed in technical terms by saying that the quasi-algebraic
$\cC$-morphism $a°$ of Diagram~\eqref{eq:5-8-1} factorizes via the
$\cC$-morphism $\Id_{X°}$ that we discussed in Warning~\ref{warn:5-9}.

\begin{prop}\label{prop:5-10} %
  In the setting of Observation~\ref{obs:5-6} and Warning~\ref{warn:5-9}, the
  quasi-algebraic $\cC$-morphism $a°$ of Diagram~\eqref{eq:5-8-1} factorizes
  via $\Id_{X°} : (X°, D') → (X°, D°)$.  In other words, we obtain a diagram of
  $\cC$-morphisms,
  \[
    \begin{tikzcd}[column sep=1cm, row sep=1cm]
      (\what{X}°, 0) \arrow[rr, "\alb({X,D,γ})°"] \ar[d, two heads, "γ\text{, quotient}"'] && (A°, 0) \ar[d, two heads, "γ_A\text{, quotient}"] \\
      (X°, D') \arrow[r, "\Id_{X°}"] \ar[rr, bend right=15, "a°"'] & (X°, D°) \ar[r, "\underline{a}°"] & (Y°, D_Y),
    \end{tikzcd}
  \]
  where $\img a° = \img \underline{a}°$ is not contained in the branch locus of
  the quotient morphism $γ_A$.
\end{prop}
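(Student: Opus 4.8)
The plan is to reduce the statement to Campana's multiplicity inequality for the underlying morphism $a^{\circ}$ with respect to the \emph{smaller} boundary $D^{\circ}$, and to extract that inequality from the defining adapted-differential property of the Albanese together with the subgroup structure of fixed loci. Since $\underline a^{\circ}$ and $a^{\circ}$ share the same underlying holomorphic map and $\Id_{X^{\circ}}\colon (X^{\circ},D')\to(X^{\circ},D^{\circ})$ is already a $\cC$-morphism by \cite[Prop.~10.4]{orbiAlb1} (Warning~\ref{warn:5-9}), the only thing to prove is that $a^{\circ}$, regarded as a morphism $(X^{\circ},D^{\circ})\to(Y^{\circ},D_Y)$, satisfies the $\cC$-condition: for every prime divisor $E\subset X^{\circ}$ with $a^{\circ}(E)\subseteq F$ for a component $F$ of $D_Y$, one must have $t\cdot m\ge n_F$, where $t:=\mult_E\bigl((a^{\circ})^{*}F\bigr)$, where $m:=m_{D^{\circ}}(E)$ is the $\cC$-multiplicity of $(X,D)$ along $E$, and where $n_F:=m_{D_Y}(F)$ is the ramification order of $\gamma_A$ over $F$. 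This is a statement at the generic point of $E$, so I would work in local coordinates on the covers $\widehat X$ and $A^{\circ}$ and use the commutativity $\gamma_A\circ\alpha^{\circ}=a^{\circ}\circ\gamma$ of Diagram~\eqref{eq:5-7-1}, where I abbreviate $\alpha^{\circ}:=\alb(X,D,\gamma)^{\circ}$.

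For the set-up, choose a general point of $E$, let $\widehat{D}_E=\{\hat z=0\}$ be a component of its preimage on $\widehat X$, and write $\gamma\colon \hat z\mapsto z=\hat z^{e}$ for the ramification index $e$. Commutativity forces $\alpha^{\circ}$ to map $\widehat{D}_E$ into the ramification divisor $\tilde F:=\gamma_A^{-1}(F)_{\red}$; let $s$ be a local equation of $\tilde F$ and set $r:=\mult_{\widehat{D}_E}\bigl((\alpha^{\circ})^{*}\tilde F\bigr)$, so that $s\circ\alpha^{\circ}$ vanishes to order $r$ along $\widehat{D}_E$. Near the generic point of $\tilde F$ the quotient map is $\gamma_A\colon s\mapsto u=s^{\,n_F}$ with $F=\{u=0\}$, while $a^{\circ}\colon z\mapsto u=(\text{unit})\cdot z^{t}$. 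Comparing the two computations of $u\circ\gamma_A\circ\alpha^{\circ}=u\circ a^{\circ}\circ\gamma$ along $\widehat{D}_E$ gives $r\,n_F=e\,t$, hence $t=r\,n_F/e$. Substituting, the desired inequality $t\cdot m\ge n_F$ becomes equivalent to the single vanishing estimate
\[
  r\;\ge\;\frac{e}{m}.
\]

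The key step is to prove this estimate from property~\ref{il:5-2-1}. Here I would invoke Proposition~\ref{prop:3-29}: the divisor $\tilde F$ is a component of the fixed locus of a non-trivial element of $G$, hence contained in a translate of a proper quasi-algebraic subgroup; a dimension count then forces $\tilde F$ to be exactly a translate of a codimension-one sub-semitorus of $A^{\circ}$. Consequently there is a translation-invariant $1$-form $\omega$ on $A^{\circ}$ that is conormal to $\tilde F$, i.e.\ locally $\omega=ds$. By Proposition~\ref{prop:3-15} this $\omega$ extends to a section of $H^0\bigl(A,\,\Omega^1_A(\log\Delta)\bigr)$, so property~\ref{il:5-2-1} guarantees that its pull-back $(\alpha^{\circ})^{*}\omega=d\bigl(s\circ\alpha^{\circ}\bigr)$ lies in $H^0\bigl(\widehat X,\,\Omega^{[1]}_{(X,D,\gamma)}\bigr)$. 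On the one hand, its $d\hat z$-coefficient is $\partial_{\hat z}\bigl(s\circ\alpha^{\circ}\bigr)$, which vanishes to order exactly $r-1$ along $\widehat{D}_E$. On the other hand, the local description of adapted reflexive differentials from \cite{orbiAlb1} forces the $d\hat z$-coefficient of any section of $\Omega^{[1]}_{(X,D,\gamma)}$ to vanish to order at least $\lceil e/m\rceil-1$. Comparing yields $r-1\ge\lceil e/m\rceil-1$, hence $r\ge\lceil e/m\rceil\ge e/m$, as required.

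I expect the main obstacle to be ensuring that $\omega$ detects the multiplicity $r$ \emph{exactly} rather than merely bounding it: a generic invariant form could have its $ds$-contribution swamped by transverse terms, which would only give an inequality in the wrong direction. This is exactly what the conormality $\omega=ds$ from Proposition~\ref{prop:3-29} buys, since then $(\alpha^{\circ})^{*}\omega=d(s\circ\alpha^{\circ})$ has no transverse contribution and its $d\hat z$-coefficient vanishes to the precise order $r-1$. Two routine points remain: the bookkeeping of which prime divisors $E$ must be checked and with which multiplicity $m=m_{D^{\circ}}(E)$ — the estimate above is uniform and applies verbatim also to components with $m=1$, so no separate discussion of $\supp D^{\circ}$ versus $\supp D'$ is needed — and the observation, from Construction~\ref{cons:5-7}, that $\img a^{\circ}=\img\underline a^{\circ}$ is not contained in the branch locus of $\gamma_A$. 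Assembling these gives the factorization $a^{\circ}=\underline a^{\circ}\circ\Id_{X^{\circ}}$ through the $\cC$-morphism $\underline a^{\circ}\colon(X^{\circ},D^{\circ})\to(Y^{\circ},D_Y)$.
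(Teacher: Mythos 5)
Your local computation is fine as far as it goes — the conormal invariant form, the identity $r\,n_F=e\,t$, and the bound $r\ge\lceil e/m\rceil$ extracted from property~\ref{il:5-2-1} are all correct — but the proof has a genuine gap at its very first step: the reduction of the statement to Campana's divisorial multiplicity inequality. In the formalism of this paper, being a $\cC$-morphism $(X°,D°)\to(Y°,D_Y)$ is \emph{not} defined by (nor, in this generality, equivalent to) the condition $t\cdot m\ge n_F$ along prime divisors mapping into $\supp D_Y$; it is defined through the existence of sheaf-level pull-back of adapted reflexive differentials. The two notions agree for nc pairs on smooth spaces, but here the target $Y°=A°/G$ typically has quotient singularities, and Setting~\ref{set:5-1} allows $X°$ to be singular as well. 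The paper flags exactly this point in the footnote to Observation~\ref{obs:5-8}: by \cite[Ex.~8.7 and 8.8]{orbiAlb1}, a morphism between singular spaces need not induce a $\cC$-morphism $(Z_1,0)\to(Z_2,0)$ even though, with trivial boundaries, every multiplicity condition is vacuously satisfied. So ``multiplicity inequalities hold'' does not imply ``$\cC$-morphism'' in the setting of Proposition~\ref{prop:5-10}, and your argument only establishes the former.

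The mismatch is visible concretely. What must be produced (this is the criterion \cite[Prop.~9.3]{orbiAlb1} that the paper invokes, applied to the strongly adapted cover $γ_A$, whose adapted cotangent sheaf $Ω^{[1]}_{(Y°,D_Y,γ_A)}=Ω¹_{A°}$ is locally free by \cite[Obs.~12.10]{orbiAlb1}) is a sheaf morphism $\bigl(\alb(X,D,γ)°\bigr)^*\,Ω¹_{A°}\to Ω^{[1]}_{(X°,D°,γ)}$ agreeing generically with the standard pull-back. Since the target is reflexive, this amounts to a condition at \emph{every} codimension-one point of $\what{X}°$ where $γ$ ramifies or meets $\supp\what{D}$ — including divisors whose Albanese image is \emph{not} contained in the ramification locus of $γ_A$. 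Such divisors impose no condition whatsoever in your downstairs framework (their image avoids $\supp D_Y$), yet upstairs they force the $d\hat z$-coefficient of every pulled-back form to vanish to order $\lceil e/m\rceil-1$, which is a nontrivial constraint whenever $e>m$. Property~\ref{il:5-2-1} guarantees all of these conditions simultaneously; your argument uses it only to extract the weaker divisorial bound and then discards the rest. Once this is recognized, the detour through multiplicities collapses: the paper's proof simply observes that property~\ref{il:5-2-1}, in its sheaf-theoretic formulation (Remark~\ref{rem:5-3}), \emph{is} the hypothesis of \cite[Prop.~9.3]{orbiAlb1}, and the factorization follows at once. (Your handling of the final assertion, that $\img a°=\img\underline{a}°$ is not contained in the branch locus of $γ_A$, via Construction~\ref{cons:5-7}, does match the paper.)
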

\begin{proof}
  We aim to apply the criterion for $\cC$-morphisms spelled out in
  \cite[Prop.~9.3]{orbiAlb1} and consider the sub-diagram
  \[
    \begin{tikzcd}[column sep=1.8cm, row sep=1cm]
      \what{X}° \arrow[r, "\alb({X,D,γ})°"] \ar[d, two heads, "γ\text{, quotient}"'] & A° \ar[d, two heads, "γ_A\text{, quotient}"] \\
      X° \ar[r, "\underline{a}°"'] & Y°.
    \end{tikzcd}
  \]
  Recall \cite[Obs.~12.10]{orbiAlb1}, which asserts that $γ_A$ is strongly
  adapted for the $\cC$-pair $(Y°, D_Y)$, and that the $\cC$-cotangent sheaf is
  $Ω^{[1]}_{(Y°, D_Y, γ_A)} = Ω¹_{A°}$.  Given that $A°$ is a semitorus, we find
  that $Ω^{[1]}_{(Y°, D_Y, γ_A)}$ is locally free.  The criterion for
  $\cC$-morphisms, \cite[Prop.~9.3]{orbiAlb1} therefore applies to show that
  $\underline{a}°$ is a $\cC$-morphism as soon as we show that there exists a
  sheaf morphism
  \[
    \diff \alb(X,D,γ)° \::\: \Bigl(\alb(X,D,γ)°\Bigr)^* \: Ω^{[1]}_{(Y°, D_Y, γ_A)} → Ω^{[1]}_{(X°, D°, γ)}
  \]
  that agrees with the standard pull-back of Kähler differentials wherever this
  makes sense.  That is however precisely the statement of Remark~\ref{rem:5-3}.
  The fact that $\img a° = \img \underline{a}°$ is not contained in the branch
  locus of the quotient morphism $γ_A$ has already been remarked in
  Construction~\ref{cons:5-7}.
\end{proof}

\subsection{Functoriality in sequences of Galois covers}
\approvals{Erwan & yes \\ Stefan & yes}
\label{sec:5-4}

The following lemma combines and summarizes the results of
Sections~\ref{sec:5-2} and \ref{sec:5-3}.

\begin{lem}[Functoriality of the Albanese]\label{lem:5-11} %
  In the setting of Lemma~\ref{lem:5-5}, assume that the covering morphisms
  $γ_2◦γ_1$ and $γ_2$ are Galois, with groups $G_2$ and $G_1$ respectively.
  Then, there exists a commutative diagram
  \[
    \begin{tikzcd}[column sep=2.2cm, row sep=1cm]
      \what{X}°_1 \ar[d, two heads, "γ_1|_{\what{X}°_1}"'] \ar[r, "\alb(X{,}D{,}γ_2◦γ_1)°"] & \Alb(X,D,γ_2◦γ_1)° \ar[rd, two heads, bend left=10, "c°\text{, quot.\ of semitori}"] \ar[dd, two heads, near start, "\text{quotient}"] \\
      \what{X}°_2 \ar[rr, near end, crossing over, "\alb(X{,}D{,}γ_2)°"'] \ar[d, two heads, "γ_2|_{\what{X}°_2}"'] && \Alb(X,D,γ_2)° \ar[d, two heads, "\text{quotient}"] \\
      X° \ar[r, "\underline{\alb}(X{,}D{,}γ_2◦γ_1)°"] \ar[rr, bend right=10, "\underline{\alb}(X{,}D{,}γ_2)°"'] & \factor{\Alb(X,D,γ_2◦γ_1)°}{G_1} \ar[r, two heads, "\underline{c}°"] & \factor{\Alb(X,D,γ_2)°}{G_2}
    \end{tikzcd}
  \]
  where all morphisms are quasi-algebraic and all morphisms in the bottom row
  are morphisms of $\cC$-pairs, between $(X°,D°)$ and the natural
  $\cC$-structures on the quotient pairs.
\end{lem}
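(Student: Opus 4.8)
The plan is to assemble the diagram from three ingredients already available: the functoriality morphism $c°$ of Lemma~\ref{lem:5-5}, the Galois actions of Observation~\ref{obs:5-6}, and the $\cC$-structure on the quotients described in Observation~\ref{obs:5-8} and Proposition~\ref{prop:5-10}. First I would apply Observation~\ref{obs:5-6} to each of the two Galois covers $γ_2◦γ_1$ and $γ_2$, fixing a base point $\what{x}$ on $\what{X}_1$ and using its image on $\what{X}_2$, so that both Albanese morphisms send the respective base point to the neutral element. This equips the two Albanese varieties with actions of the respective Galois groups by quasi-algebraic automorphisms, together with equivariant semitoric compactifications, and identifies the right-hand vertical maps as the associated quotient morphisms. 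Lemma~\ref{lem:5-5} supplies $c°$ as the unique quasi-algebraic surjection making the top square commute; since the base points are compatible, $c°$ sends the neutral element to the neutral element, so Proposition~\ref{prop:3-18} shows it is a group morphism, and Fact~\ref{fact:3-28} identifies its kernel as a quasi-algebraic subgroup, exhibiting $c°$ as a quotient of semitori.

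The substantive step is to show that $c°$ is equivariant for the natural surjection from the Galois group of $γ_2◦γ_1$ onto that of $γ_2$, so that it descends to the quotients. Because $γ_2$ is Galois, every deck transformation $g$ of $γ_2◦γ_1$ descends along $γ_1$ to a deck transformation $\overline{g}$ of $γ_2$; write $σ(g)$ and $σ(\overline{g})$ for the induced quasi-algebraic automorphisms of the two Albanese varieties furnished by Observation~\ref{obs:5-6}. Precomposing with $\alb(X,D,γ_2◦γ_1)°$ and chasing the defining commutativities of Lemma~\ref{lem:5-5} together with the equivariance of the Albanese morphisms shows that $c°◦σ(g)$ and $σ(\overline{g})◦c°$ agree on the image of $\alb(X,D,γ_2◦γ_1)°$. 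That image generates the source by Proposition~\ref{prop:5-4}, and both composites are affine by Proposition~\ref{prop:3-18}; hence they coincide, which is the asserted equivariance. With equivariance in hand, $c°$ descends to $\underline{c}°$ between the Galois quotients, and this map is quasi-algebraic by Item~\ref{il:2-4-2} of Lemma~\ref{lem:2-4}, applied to the surjective quotient morphism on the source and to the quasi-algebraic composition of $c°$ with the quotient morphism on the target.

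It remains to see that the bottom row consists of $\cC$-morphisms. The two maps $\underline{\alb}(X,D,γ_2◦γ_1)°$ and $\underline{\alb}(X,D,γ_2)°$ out of $(X°,D°)$ are $\cC$-morphisms by Proposition~\ref{prop:5-10}, applied to each cover separately, and the lower triangle commutes because it is the image under the quotient morphisms of the commutative triangle of Lemma~\ref{lem:5-5}. For $\underline{c}°$ I would rerun the criterion used in the proof of Proposition~\ref{prop:5-10}: by \cite[Obs.~12.10]{orbiAlb1} the target quotient carries the locally free $\cC$-cotangent sheaf $Ω¹$ of the relevant semitorus, and since $c°$ is a group morphism of semitori its differential carries invariant, hence logarithmic, forms to invariant forms and descends, by equivariance, to the sheaf morphism required by \cite[Prop.~9.3]{orbiAlb1}. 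I expect the equivariance argument of the second paragraph to be the main obstacle, as it is the only point where the uniqueness built into the universal property must be combined with the affine structure of quasi-algebraic morphisms; the verification that $\underline{c}°$ is a $\cC$-morphism is then a routine rerun of the machinery already deployed for Proposition~\ref{prop:5-10}.
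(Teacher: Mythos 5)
Your proposal is correct and follows essentially the same route as the paper's proof: assemble the diagram from Lemma~5.5, Observation~5.6 and Proposition~5.10, show that $c°$ is equivariant for the surjection of Galois groups (so that it descends), and then verify that the descended map $\underline{c}°$ is a quasi-algebraic $\cC$-morphism. The only differences are in which black boxes are invoked: where you re-derive equivariance by hand (agreement on the image of the Albanese map, generation via Proposition~5.4, and the affine structure from Proposition~3.18) and re-run the criterion of \cite[Prop.~9.3]{orbiAlb1} for $\underline{c}°$, the paper simply cites the uniqueness clause of the universal property and the universal property of $\cC$-pair quotients, \cite[Def.~12.3 and Thm.~12.5]{orbiAlb1} --- both of which encapsulate exactly the arguments you spell out.
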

\begin{proof}
  Except for the morphism $\underline{c}°$, the diagram is a combination of
  Lemma~\ref{lem:5-5} and Proposition~\ref{prop:5-10} above.  In order to
  construct $\underline{c}°$, observe that the group $G_2$ is a quotient $q :
  G_1 \twoheadrightarrow G_2$, and that $G_1$ acts on $\what{X}°_2$ via this
  quotient map, in a manner that makes the morphism $γ_1|_{\what{X}°_1}$
  equivariant.  The universal properties of the two Albanese maps
  $\alb(X,D,γ_2◦γ_1)°$ and $\alb(X,D,γ_2)°$ will then guarantee that $G_1$ acts
  on the Albanese varieties $\Alb(X,D,γ_2◦γ_1)°$ and $\Alb(X,D,γ_2)°$ in a
  manner that makes the quotient morphism $c°$ equivariant.  The map
  $\underline{c}°$ is then the induced $\cC$-morphism between the quotients
  pairs, as given by the universal property of $\cC$-pair quotients,
  \cite[Def.~12.3 and Thm.~12.5]{orbiAlb1}.
\end{proof}

%
%
\svnid{$Id: 06-albaneseIrregularity.tex 947 2024-11-11 08:48:24Z kebekus $}
\selectlanguage{british}

\section{The Albanese irregularity}
\label{sec:6}
\subversionInfo

\approvals{Erwan & yes \\ Stefan & yes}

Given a $\cC$-pair $(X,D)$ and a cover $γ: \what{X} \twoheadrightarrow X$, the
dimension of the Albanese is an important invariant of the triple $(X,D,γ)$.

\begin{defn}[Albanese irregularity, augmented Albanese irregularity]
  Assume Setting~\ref{set:5-1}.  If an Albanese exists, then refer to the number
  \[
    q_{\Alb}(X,D,γ) := \dim \Alb(X,D,γ)°
  \]
  as the \emph{Albanese irregularity of $(X,D,γ)$}.  The number
  \[
    q⁺_{\Alb}(X,D) = \sup \bigl\{ q_{\Alb}(X,D,γ) \,|\, γ \text{ a cover}
    \bigr\} ∈ ℕ ∪ \{ ∞ \}
  \]
  is the \emph{augmented Albanese irregularity of the $\cC$-pair $(X, D)$}.
\end{defn}

We will show in Section~\ref{sec:8} that the augmented Albanese irregularity
$q⁺_{\Alb}(X,D)$ is finite if $X$ is Kähler and if the $\cC$-pair $(X,D)$ is
special.

\subsection{Inequalities between irregularities}
\approvals{Erwan & yes \\ Stefan & yes}

\CounterStep{}If $(X, D)$ is a $\cC$-pair where $X$ is compact Kähler and if $γ
: \what{X} \twoheadrightarrow X$ is a cover of $(X,D)$, we have seen in
Proposition~\ref{prop:5-4} that the Albanese irregularity is bounded by the
irregularity,
\begin{equation}\label{eq:6-2-1}
  q_{\Alb}(X,D,γ) ≤ q(X,D,γ).
\end{equation}
There are settings where Inequality~\eqref{eq:6-2-1} is strict and the natural
morphism
\[
  H⁰\Bigl( Ω¹_{\Alb(X,D,γ)}(\log Δ) \Bigr)
  \xrightarrow{\diff \alb(X,D,γ)}
  H⁰\Bigl( \what{X},\, Ω^{[1]}_{(X,D,γ)} \Bigr)
\]
is not surjective.  Equivalently said: there are settings where it is not true
that every adapted reflexive differential on $\what{X}$ comes from a logarithmic
differential on the Albanese.  A first example has already been discussed in the
previous section.

\begin{example}[Strict inequality between irregularities]\label{ex:6-3}%
  Let $X$ be the cone over the elliptic curve discussed in
  Example~\vref{ex:4-20}.  Let $\Id_X : X → X$ be the trivial covering.  We have
  seen in Example~\ref{ex:4-20} that
  \[
    q(X,0,\Id_X) = h⁰ \bigl( X,\, Ω¹_{(X,0,\Id_X)} \bigr)
    = h⁰ \bigl( X,\, Ω^{[1]}_X \bigr)
  \]
  is positive while $\Alb(X,0,\Id_X) = \Alb(X)$ is a point, so that
  $q_{\Alb}(X,0,\Id_X) = 0$.
\end{example}

Example~\ref{ex:6-3} might seem artificial, given that $X$ has an elliptic
singularity.  With more work, we will show that Inequality~\eqref{eq:6-2-1} can
be strict, even in the simplest case where $X$ is a smooth curve and the
boundary $D$ is empty.

\begin{prop}[Strict inequality between irregularities for curve covers]\label{prop:A-1}%
  There exists a curve $X$ of genus $g(X) = 5$ and a cover $γ : \what{X}
  \twoheadrightarrow X$ such that $q_{\Alb}(X,0,γ) < q(X,0,γ)$.
\end{prop}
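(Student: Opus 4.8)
The plan is to translate both quantities into Hodge theory. Since $D=0$ and $X$ is a smooth curve, the sheaf of adapted reflexive differentials is just $\Omega^1_{(X,0,\gamma)} = \gamma^*\Omega^1_X = \Omega^1_{\widehat X}(-R)$, where $R$ is the ramification divisor, so that $q(X,0,\gamma) = h^0\bigl(\widehat X,\, \gamma^*\Omega^1_X\bigr)$. Because $\widehat X$ is compact with empty boundary $\widehat D$, every morphism $\widehat X \to \mathbb C^*$ is constant, so the semitorus $\Alb(X,0,\gamma)°$ has no $(\mathbb C^*)$-factor and is a compact torus, i.e.\ a quotient abelian variety of the Jacobian $\Alb(\widehat X)$. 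Its differentials are forced to be adapted, whence $q_{\Alb}(X,0,\gamma)$ equals the largest dimension of the $(1,0)$-part of a rational sub-Hodge structure $V ⊆ H^1(\widehat X,\mathbb Q)$ with $V^{1,0} ⊆ W := H^0\bigl(\widehat X,\, \gamma^*\Omega^1_X\bigr)$. Note that for an \emph{étale} cover $R=0$ and $W = H^0(\Omega^1_{\widehat X})$ is itself a Hodge piece, forcing equality; the strict inequality must therefore be produced by ramification.

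First I would build a ramified double cover. Fix a very general curve $X$ of genus $5$ and a very general line bundle $L$ of degree $3$; then $h^0(L^{⊗2}) = 2$, the pencil $|L^{⊗2}|$ is base-point free, and its general member $B$ is a reduced divisor of $6$ points. Let $\gamma : \widehat X → X$ be the double cover branched along $B$, with involution $\iota$ and $\gamma_*\mathcal O_{\widehat X} = \mathcal O_X ⊕ L^{-1}$, so that
\[
  W = H^0(\Omega^1_X) ⊕ H^0\bigl(\Omega^1_X ⊗ L^{-1}\bigr),
\]
the two summands being the $(\pm)$-eigenspaces of $\iota$. Since $\deg(K_X - L) = 5$ and $h^0(L)=0$, Riemann--Roch gives $h^0(\Omega^1_X ⊗ L^{-1}) = h^0(K_X - L) = 1$, hence $q(X,0,\gamma) = 5 + 1 = 6$. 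The choice of genus $5$ is in fact forced: to have simultaneously $h^0(K_X - L)>0$ and a reduced branch divisor $B ∈ |L^{⊗2}|$ for general $L$ of degree $k$ on a curve of genus $h$, one needs $\tfrac{h+1}{2} ≤ k ≤ h-2$, which first admits a solution at $h=5$, $k=3$.

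Next I would analyse the Hodge structure. The invariant summand $W^+ = H^0(\Omega^1_X)$ is the full $(1,0)$-part of the rational sub-Hodge structure $\gamma^* H^1(X,\mathbb Q) = H^1(\widehat X,\mathbb Q)^+$ and accounts for the quotient $\Alb(\widehat X) \twoheadrightarrow \Alb(X)$ of dimension $5$, so $q_{\Alb} ≥ 5$. The anti-invariant summand $W^- = H^0(K_X - L)$ is a single line inside the $7$-dimensional space $H^0(K_X + L)$, which is the $(1,0)$-part of the Prym Hodge structure $H^1(\widehat X,\mathbb Q)^-$. Given any rational sub-Hodge structure $V$ with $V^{1,0} ⊆ W$, the space $V + \iota V$ is again a rational sub-Hodge structure with $(V+\iota V)^{1,0} ⊆ W$ (as $W$ is $\iota$-stable) of dimension at least $\dim V^{1,0}$; so the maximiser may be taken $\iota$-invariant, hence splits as $V^+ ⊕ V^-$. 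The factor $V^-$ is a sub-Hodge structure of the Prym with $(V^-)^{1,0} ⊆ W^-$, necessarily a one-dimensional piece, i.e.\ an elliptic curve inside the Prym variety $P$ whose differential spans $W^-$. Thus $q_{\Alb}(X,0,\gamma) = 5 < 6 = q(X,0,\gamma)$ as soon as $P$ contains no such elliptic subcurve.

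The crux is therefore to verify that the Prym variety $P$ of dimension $7$ is simple (or at least carries no elliptic subcurve with differential $W^-$) for very general $(X,L)$; this I expect to be the main obstacle. I would settle it by a monodromy argument: letting $(X,L,B)$ vary in moduli, the monodromy acting on $H^1(\widehat X,\mathbb Q)^-$ is Zariski-dense in the relevant symplectic group, so the very general Prym has endomorphism ring $\mathbb Z$ and is in particular simple. Alternatively one may degenerate the branch divisor to acquire nodes and check, via the theory of stable limits of Pryms, that no elliptic factor survives in the limit. Either route shows that for suitably general data $P$ carries no elliptic subcurve, which completes the proof.
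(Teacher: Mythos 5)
Your reduction is sound as far as it goes, and it parallels the paper's strategy more closely than you may realize: the Hodge-theoretic translation of $q_{\Alb}(X,0,\gamma)$ (quotient tori of $\operatorname{Jac}(\what{X})$ with adapted cotangent space correspond to rational sub-Hodge structures $V \subseteq H^1(\what{X},\mathbb{Q})$ with $V^{1,0} \subseteq W$), the averaging $V \mapsto V + \iota V$, the eigenspace splitting, and the resulting dichotomy ``either $q_{\Alb}(X,0,\gamma) = q(X)$ or the Prym contains an elliptic curve'' are all correct; the paper runs the dual argument with quotients instead of sub-Hodge structures (its $Q_\gamma := \ker\bigl(\Alb(X,0,\gamma) \to \Alb(X)\bigr)$ is a strictly lower-dimensional quotient of the Prym, hence trivial once the Prym is simple). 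The genuine gap is the step you yourself flag as the crux: simplicity (or absence of elliptic subvarieties) of the very general ramified Prym. You assert that ``the monodromy acting on $H^1(\what{X},\mathbb{Q})^-$ is Zariski-dense in the relevant symplectic group'', and alternatively gesture at a degeneration argument, but you prove neither and cite nothing. This is exactly the hard content of the proposition, not a routine verification, so as it stands the proof is incomplete.

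Moreover, your choice of numerical invariants forecloses the one soft argument that is actually available. The paper takes a double cover branched in exactly \emph{two} points, so that $\dim \mathcal{R}_5(2,2) = 3\cdot 5 - 3 + 2 = 14$ while $\dim \mathcal{A}_{5,D} = 15$; by Lange--Ortega the Prym map $\operatorname{Pr}_5(2,2)$ is generically finite, hence its image is a \emph{divisor} in $\mathcal{A}_{5,D}$, and Pirola's theorem (the general element of any divisor in $\mathcal{A}_{5,D}$ is simple) then produces a cover with simple $5$-dimensional Prym with no further work. With your six branch points one has $\dim \mathcal{R}_5(2,6) = 18$ against $\dim \mathcal{A}_{7,D} = 28$, so the Prym locus has codimension at least $10$ and no dimension count of this kind can apply; and since your family of Pryms does not dominate moduli, you cannot invoke the fact that the locus of $D$-polarized abelian varieties containing elliptic curves is a countable union of proper subvarieties either --- the image of your Prym map could a priori lie inside one of them. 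What you need is a statement about this particular $18$-dimensional family, which is precisely the unproven monodromy claim. Either supply a proof or a reference for big monodromy of the ramified Prym local system, or change the branch degree to $2$ and conclude via Lange--Ortega and Pirola as the paper does; your Hodge-theoretic bookkeeping (suitably redone for $\deg \mathcal{L} = 1$, where $h^0(\omega_X \otimes \mathcal{L}^*) \geq 3$) then goes through verbatim.
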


We will prove Proposition~\ref{prop:A-1} in Section~\ref{sec:A-3} below.  We
thank Frédéric Campana for help and numerous discussions on the subject.

\subsection{Preparation for the proof of Proposition~\ref{prop:A-1}}
\approvals{Erwan & yes \\ Stefan & yes}

The proof of Proposition~\ref{prop:A-1} is motivated by the folklore principle
that Prym varieties of sufficiently general curve covers are simple.  For
brevity, we restrict ourselves to showing that \emph{one} cover with simple Prym
variety and suitable numerical invariants exists.

\begin{prop}[Curve covers with simple Prym variety]\label{prop:A-2}%
  There exists a curve $X$ of genus $g(X) = 5$ and a 2:1-cover $γ : \what{X}
  \twoheadrightarrow X$, branched in exactly two points, whose associated Prym
  variety $P_γ$ is a connected, simple group.
\end{prop}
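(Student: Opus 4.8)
The plan is to realise the desired cover as a very general member of a suitable Hurwitz space and to deduce simplicity of its Prym from a big-monodromy statement. First I would fix the discrete data: a branched double cover $\gamma : \what X \to X$ with $g(X) = 5$ and branch divisor $B = p_1 + p_2$ is the same thing as a line bundle $\eta \in \Pic(X)$ with $\eta^{\otimes 2} \cong \sO_X(B)$, together with the evident algebra structure on $\sO_X \oplus \eta^{-1}$. Riemann--Hurwitz gives $g(\what X) = 10$, so the Prym variety $P_\gamma := \ker(\operatorname{Nm}_\gamma)^{0} = \operatorname{im}(1 - \sigma)$, where $\sigma$ is the covering involution, has dimension $g(\what X) - g(X) = 5$. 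Because the cover is ramified (here in two points), the norm map has connected kernel, so $P_\gamma$ is automatically connected; thus only simplicity remains at stake.

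Next I would organise these covers into a family. Over the irreducible moduli space of tuples $(X, p_1, p_2, \eta)$ I form the Hurwitz space $\sH$. Its irreducibility follows from the transitivity of the mapping-class-group action on the relevant torsor of square roots $\eta$ (the action factors through $\mathrm{Sp}(10, \bF_2)$, which acts transitively on this data). The associated family of Pryms is a polarised weight-one $\bZ$-variation of Hodge structure of rank $2 \cdot 5 = 10$, carrying a symplectic monodromy representation $\rho : \pi_1(\sH) \to \mathrm{Sp}(10, \bZ)$ coming from the restriction of the intersection form to the anti-invariant lattice $H^1(\what X)^{-}$.

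The heart of the argument --- and the step I expect to be the main obstacle --- is to show that the image of $\rho$ is Zariski dense in $\mathrm{Sp}_{10}$. I would prove this by a Picard--Lefschetz analysis: degenerate a cover inside $\sH$ so that $\what X$ acquires a single node compatible with $\sigma$, producing a vanishing cycle in $H^1(\what X)^{-}$ and a corresponding symplectic transvection in the monodromy. The task is then to verify that these vanishing cycles span $H^1(P_\gamma)_{\bQ}$ and form a single $\pi_1(\sH)$-orbit; granting this, the classical classification of transvection-generated subgroups of $\mathrm{Sp}$ forces the monodromy image to fill out the full symplectic group. Carrying out the vanishing-cycle bookkeeping, and in particular checking the single-orbit condition, is the technically demanding part; alternatively one may short-circuit it by quoting an existing big-monodromy theorem for families of Pryms.

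Finally I would conclude. Zariski density of the monodromy in $\mathrm{Sp}_{10}$ implies that for a very general point of $\sH$ the Hodge structure on $H^1(P_\gamma)$ is irreducible and that the only Hodge classes in $\operatorname{End}\bigl(H^1(P_\gamma)\bigr)$ are the scalars, whence $\operatorname{End}(P_\gamma) \otimes \bQ = \bQ$. An abelian variety with rational endomorphism algebra equal to $\bQ$ is simple, and a very general member exists because the non-simple locus is a countable union of proper closed analytic subsets. This produces the required genus-$5$ curve $X$ together with a two-point branched double cover $\gamma : \what X \to X$ whose Prym $P_\gamma$ is connected and simple.
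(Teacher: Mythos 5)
Your overall strategy --- realise the cover as a very general member of the irreducible Hurwitz space $\mathcal R_5(2,2)$ and deduce simplicity of its Prym from a big-monodromy statement --- is viable in principle, but as written it has a genuine gap, and you flag it yourself: the Zariski density of the monodromy representation $\rho$ in $\mathrm{Sp}_{10}$ is never established. The Picard--Lefschetz sketch you give (produce anti-invariant vanishing cycles, check that they span the anti-invariant cohomology and form a single $\pi_1(\sH)$-orbit, then invoke the classification of transvection-generated subgroups) is exactly the hard content of such a big-monodromy theorem; none of it is carried out, and no citable reference is supplied for the ``existing big-monodromy theorem for families of Pryms'' that you propose to quote instead. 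Since everything downstream (irreducibility of the weight-one Hodge structure, $\operatorname{End}(P_γ)\otimes\bQ=\bQ$, simplicity of the very general member) rests on this density, the argument is incomplete at its central step. Your preliminary steps (Riemann--Hurwitz giving $g(\what{X})=10$, $\dim P_γ=5$, connectedness of $\ker N(γ)$ for a ramified cover) are fine and agree with Facts~\ref{fact:A-5}--\ref{fact:A-7}.

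For comparison, the paper's proof avoids monodromy altogether and runs on a dimension count plus two citable inputs: by Lange--Ortega \cite{MR3877472}, the Prym map $\operatorname{Pr}_5(2,2) : \mathcal R_5(2,2) → \mathcal A_{5,D}$ is generically finite, so the closure of its image has dimension $\dim \mathcal R_5(2,2) = 3·5-3+2 = 14$ and is therefore a divisor in the $15$-dimensional moduli space $\mathcal A_{5,D}$; by Pirola \cite[Prop.~3.4.i]{MR967018}, the non-simple locus in $\mathcal A_{5,D}$ has codimension at least two, so the general point of any divisor --- in particular of $\overline{\img \operatorname{Pr}_5(2,2)}$ --- corresponds to a simple abelian variety. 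If you want to salvage your route, you must either prove the big-monodromy statement (the single-orbit bookkeeping for ramified Prym vanishing cycles is a substantial piece of work) or locate a precise reference for it; replacing that step by the generic-finiteness-plus-Pirola argument is considerably shorter and is what the paper does.
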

\begin{proof}
  Consider the Prym map $\operatorname{Pr}_5(2, 2)$, as discussed in
  Fact~\vref{fact:A-6}.  Recall from \cite[Cor.~5.8]{MR3877472} that
  $\operatorname{Pr}_5(2, 2)$ is generically finite.  If
  \[
    P := \overline{\img \operatorname{Pr}_5(2, 2)} ⊂ {\mathcal A}_{5,D}
  \]
  denotes the Zariski closure of the image, this implies that
  \[
    \dim P = \dim \operatorname{Pr}_5(2, 2) \overset{\eqref{eq:A-1-1}}{=} 14
    \quad \text{while} \quad
    \dim {\mathcal A}_{5,D} \overset{\text{Fact~\ref{fact:A-7}}}{=} 15,
  \]
  so $P$ is a divisor in ${\mathcal A}_{5,D}$.  But then, it follows from
  \cite[Prop.~3.4.i]{MR967018} that the general element of $P$ corresponds to a
  simple Abelian variety.  In particular, general elements of the constructible
  set $\img \operatorname{Pr}_5(2, 2)$ correspond to covers with simple Prym
  varieties.
\end{proof}

\subsection{Proof of Proposition~\ref{prop:A-1}}
\approvals{Erwan & yes \\ Stefan & yes}
\label{sec:A-3}

\CounterStep{}Following Proposition~\ref{prop:A-2}, let $X$ be a curve of genus
$g(X) = 5$ and a 2:1-cover $γ : \what{X} \twoheadrightarrow X$, branched in
exactly two points, whose associated Prym variety $P_γ$ is a connected, simple
group.  We will show that $q_{\Alb}(X,0,γ) < q(X,0,γ)$.

\subsubsection*{Step 1: A lower bound for $q(X,0,γ)$}
\approvals{Erwan & yes \\ Stefan & yes}

As in Remark~\ref{rem:A-4} use the action of the Galois group to decompose the
push-forward of $𝒪_{\what{X}}$ into an invariant and an anti-invariant part,
\begin{equation}\label{eq:A-8-1}
  γ_* 𝒪_{\what{X}} ≅ 𝒪_X ⊕ ℒ^*.
\end{equation}
The assumption that $γ$ is branched in exactly two points shows that $ℒ$ is a
line bundle with $\deg ℒ = 1$.  We can then bound $q(X,0,γ)$ as follows,
\begin{align}
  q(X,0,γ) & = h⁰ \bigl( \what{X},\, Ω¹_{(X,0,γ)} \bigr) && \text{Definition} \nonumber \\
  & = h⁰ \bigl( \what{X},\, γ^* ω_X \bigr) = h⁰ \bigl( X,\, γ_* γ^* ω_X \bigr) && \text{\cite[Ex.~3.5(2)]{orbiAlb1}} \nonumber \\
  & = h⁰ \bigl( X,\, ω_X ⊗ γ_* 𝒪_{\what{X}} \bigr) && \text{proj.~formula} \nonumber \\
  & = h⁰ \bigl( X,\, ω_X ⊗ (𝒪_X ⊕ ℒ^*) \bigr) && \text{\eqref{eq:A-8-1}} \nonumber \\
  & = q(X) + h⁰ \bigl( X,\, ω_X ⊗ ℒ^* \bigr).  \nonumber \\
  \intertext{The difference term is estimated by Riemann-Roch,}
  h⁰ \bigl( X,\, ω_X ⊗ ℒ^* \bigr) & ≥ h⁰ \bigl( X,\, ω_X ⊗ ℒ^* \bigr) - h¹ \bigl( X,\, ω_X ⊗ ℒ^* \bigr) \nonumber \\
  & = \deg (ω_X ⊗ ℒ^*) - g(X) + 1 = 3.  && \text{Riemann-Roch} \nonumber
\end{align}
In summary, we find that
\begin{equation}\label{eq:2}
  q(X) < q(X,0,γ).
\end{equation}

\subsubsection*{Step 2: An upper bound for $q(X,0,γ)$}
\approvals{Erwan & yes \\ Stefan & yes}

Recall from \cite[Obs.~3.9]{orbiAlb1} that there exists a natural inclusion
\[
  H⁰ \bigl( \what{X},\, Ω¹_{(X,0,γ)} \bigr) = H⁰ \bigl( \what{X},\, γ^* ω_X \bigr) ⊂ H⁰ \bigl( \what{X},\, ω_{\what{X}} \bigr).
\]
We claim that this inclusion is strict.  For this, it suffices to recall that
$ω_{\what X}$ is basepoint free, while all differentials coming from $X$ will
vanish at each of the two ramification points.  In summary, we find that
\begin{equation}\label{eq:A}
  q(X,0,γ) < q(\what{X}).
\end{equation}

\subsubsection*{Step 3: Summary of Setup}
\approvals{Erwan & yes \\ Stefan & yes}

Choose a point $\what{x} ∈ \what{X}$ and use
\[
  0_{\Alb(\what X)} := \alb(\what{X})(\what{x}),
  \quad
  0_{\Alb(X,0,γ)} := \alb(X,0,γ)(\what{x})
  \quad\text{and}\quad
  0_{\Alb(X)} := \alb(X)\bigl(γ (\what{x})\bigr)
\]
to equip $\Alb(\what{X}, 0)$, $\Alb(X, 0, γ)$ and $\Alb(X,0)$ with Lie group
structures and natural Lie group isomorphisms,
\begin{equation}\label{eq:A-8-4}
  \Alb(\what X,0) ≅ \operatorname{Jac}(\what X)
  \quad\text{and}\quad
  \Alb(X,0) ≅ \operatorname{Jac}(X).
\end{equation}
We construct a diagram as follows,
\[
  \begin{tikzcd}[column sep=1.4cm, row sep=1cm]
    & P_γ \ar[d, hook, "ι_P"] \ar[r, two heads, "q^\top"] & Q_γ \ar[d, hook, "ι_Q"] \\
    \what{X} \ar[r, "\alb(\what{X}{,} 0)"] \ar[d, two heads, "γ"'] & \Alb(\what{X}, 0) \ar[d, two heads, "\alb(γ)"] \ar[r, two heads, "q^\bot"] & \Alb(X, 0, γ) \ar[d, two heads, "a"] \\
    X \ar[r, "\alb(X{,} 0)"'] & \Alb(X,0) \ar[r, phantom, "="] & \Alb(X,0).
  \end{tikzcd}
\]
\begin{itemize}
  \item The morphism $\alb(γ)$ is induced by the universal property of
  $\Alb(\what{X}, 0)$, given that $\alb(X{,} 0) ◦ γ$ is a morphism to a
  compact torus.  Our choice of Lie group structures on $\Alb(\what{X}, 0)$ and
  $\Alb(X,0)$ guarantees this is a Lie group morphism.  It is surjective because
  it contains $\alb(X,0)(X)$, which generates $\Alb(X,0)$ as a group.
  We set $P_γ := \ker \alb(γ)$ and let $ι_P$ be the inclusion.

  \item The morphism $a$ is induced by the universal property of $\Alb(X, 0,
  γ)$, given that $\alb(X{,} 0) ◦ γ$ is a morphism to a compact
  torus whose differential takes its image in $γ^* Ω¹_X =
  Ω¹_{(X,0,γ)}$.  Our choice of Lie group structures on $\Alb(\what{X},
  0)$ and $\Alb(X,0, γ)$ guarantees this is a Lie group morphism.  It is
  surjective because it contains $\alb(X,0)(X)$, which generates $\Alb(X,0)$ as
  a group.  We set $Q_γ := \ker a$ and let $ι_Q$ be the inclusion.

  \item The morphism $q^\bot$ is induced by the universal property of
  $\Alb(\what{X}, 0)$, given that $\alb(X{,} 0, γ)$ is a morphism to a
  compact torus.  Our choice of Lie group structures on $\Alb(\what{X}, 0)$ and
  $\Alb(X,0, γ)$ guarantees this is a Lie group morphism.  It is surjective
  because it contains $\alb(X,0, γ)(X)$, which generates $\Alb(X,0,
  γ)$ as a group.

  \item The morphism $q^\top$ is the induced surjection between the kernels.
\end{itemize}

\subsubsection*{Step 4: Conclusion}
\approvals{Erwan & yes \\ Stefan & yes}

Observe that the isomorphisms \eqref{eq:A-8-4} identify the morphism $\alb(γ)$
with the norm map $N(γ) : \operatorname{Jac}(\what X) → \operatorname{Jac}(X)$
between Jacobians.  Our setup therefore guarantees that $P_γ$ is isomorphic to
the Prym variety of $γ$, hence is a simple Abelian variety of dimension
$5$.

Inequality~\eqref{eq:A} guarantees that $\dim \Alb(X,0,γ) < \dim \Alb(\what{X},
0)$.  In particular, we find that $\dim Q_γ < \dim P_γ$.  But since $P_γ$ is
simple, this is possibly if and only if $Q_γ$ is trivial, so that $a$ is
isomorphic and
\[
  q_{\Alb}(X,0,γ) = \dim \Alb(X,0,γ) = \dim \Alb(X,0) = q(X).
\]
Inequality~\eqref{eq:2} then reads
\[
  q_{\Alb}(X,0,γ) < q(X,0,γ),
\]
which is what we wanted to show.  \qed

%
%
\svnid{$Id: 07-adaptedAlbaneseZuo.tex 940 2024-10-10 07:57:46Z kebekus $}
\selectlanguage{british}

\section{The Albanese for a subspace of differentials}
\subversionInfo
\approvals{Erwan & yes \\ Stefan & yes}

This section proves the existence of an Albanese of a cover as a special case of
the ``Albanese for a subspace of differentials''.  We refer the reader to
\cite[Sect.~4.2]{MR1738433} for a related construction in the smooth, proper
case.  Throughout the present section, we work in following setting.

\begin{setting}\label{set:7-1} %
  Let $(X,D)$ be a log pair where $X$ is compact.  Let $V ⊆ H⁰\Bigl(X,\,
  Ω^{[1]}_X(\log D)\Bigr)$ be a linear subspace.
\end{setting}

\begin{defn}[The Albanese for a subspace of differentials]\label{def:7-2}%
  Assume Setting~\ref{set:7-1}.  An Albanese of $(X,D,V)$ is a semitoric variety
  $\Alb(X,D,V)° ⊂ \Alb(X,D,V)$ together with a quasi-algebraic morphism
  \[
    \alb(X,D,V)° : X° → \Alb(X,D,V)°
  \]
  such that the following holds.
  \begin{enumerate}
  \item\label{il:7-2-1} The pull-back morphism of logarithmic differentials,
    \[
      \diff \alb(X,D,V) : H⁰\Bigl( Ω¹_{\Alb(X,D,V)}(\log Δ) \Bigr) → H⁰\Bigl( X,\, Ω^{[1]}_X(\log D) \Bigr)
    \]
    takes its image in $V$.

  \item\label{il:7-2-2} If $A° ⊂ A$ is any semitoric variety and if $a° : X° →
    A°$ is quasi-algebraic such that
    \[
      \diff a : H⁰\bigl( A,\, Ω¹_A(\log Δ) \bigr) → H⁰\Bigl( X,\, Ω^{[1]}_X(\log D) \Bigr)
    \]
    takes its image in $V$, then $a°$ factors uniquely as
    \[
      \begin{tikzcd}[column sep=2cm]
        X° \arrow[r, "\alb(X{,}D{,}V)°"'] \arrow[rr, "a°", bend left=10] & \Alb(X,D,V)° \arrow[r, "∃!b°"'] & A°,
      \end{tikzcd}
    \]
    where $b°$ is quasi-algebraic.
  \end{enumerate}
\end{defn}

\begin{warning}
  We do not claim or ask in Item~\ref{il:7-2-1} that the space $V$ is equal to
  the image of the morphism $\diff \alb(X,D,V)$.  See Section~\vref{sec:7-3} for
  a sobering example which shows that surjectivity is a delicate property of the
  subspace $V$.
\end{warning}

We will later consider Definition~\ref{def:7-2} in a setting where the space $V$
is of the form $V = H⁰\bigl( X,\, ℱ \bigr)$, for a subsheaf $ℱ ⊆ Ω¹_X(\log D)$.
The following notion will be used.

\begin{defn}[The Albanese for subsheaves of differentials]\label{def:7-4}%
  Assume Setting~\ref{set:7-1}.  If there exists a subsheaf $ℱ ⊆ Ω¹_X(\log D)$
  such that $V = H⁰\bigl( X,\, ℱ \bigr)$, then we denote the Albanese briefly as
  $\alb(X,D,ℱ)° : X° → \Alb(X,D,ℱ)°$.
\end{defn}

\subsection{Uniqueness and existence}
\label{sec:7-1}
\approvals{Erwan & yes \\ Stefan & yes}

As before, the universal property spelled out in Item~\ref{il:7-2-2} implies
that $\Alb(X,D,V)°$ is unique up to unique isomorphism and that $\Alb(X,D,V)$ is
bimeromorphically unique.  As before, we abuse notation and refer to any
Albanese as ``the Albanese'', with associated semitoric \emph{Albanese variety}
$\Alb(X,D,V)° ⊂ \Alb(X,D,V)$ and quasi-algebraic \emph{Albanese morphism}
$\alb(X,D,V)°$.

\begin{prop}\label{prop:7-5}%
  Assume Setting~\ref{set:7-1}.  If $X$ is Kähler, then an Albanese of $(X,D,V)$
  exists.  The dimension is bounded by
  \begin{equation}\label{eq:7-5-1}
    \dim \Alb(X,D,V)° ≤ \dim_ℂ V.
  \end{equation}
  If $x ∈ X°$ is any point and if we use
  \[
    0_{\Alb(X,D,V)°} := \alb(X,D,V)° (x) ∈ \Alb(X,D,V)°
  \]
  to equip $\Alb(X,D,V)°$ with the structure of a Lie group, then the image of
  $\alb(X,D,V)°$ generates $\Alb(X,D,V)°$ as an Abelian group.
\end{prop}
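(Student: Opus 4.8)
The plan is to construct $\Alb(X,D,V)°$ as a quotient of the full logarithmic Albanese $\Alb(X,D)°$, closely following the pattern of Construction~\ref{cons:4-8} and its accompanying proof. Since $X$ is Kähler, Proposition~\ref{prop:4-7} supplies the Albanese $\alb(X,D)° : X° → \Alb(X,D)°$ of the log pair, and Observation~\ref{obs:4-15} realises the pull-back
\[
  \diff \alb(X,D) : H⁰\bigl( Ω¹_{\Alb(X,D)}(\log Δ) \bigr) → H⁰\bigl( X,\, Ω^{[1]}_X(\log D) \bigr)
\]
as an injection onto a subspace $W$. The first thing I would record is that $\img \diff a ⊆ W$ for \emph{every} quasi-algebraic morphism $a°$ to a semitoric variety: by the universal property of $\Alb(X,D)°$ such an $a°$ factors as $b° ◦ \alb(X,D)°$, so $\diff a = \diff\alb(X,D) ◦ \diff b$. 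Hence the condition that $\diff a$ lands in $V$ is equivalent to its landing in $V ∩ W$, and the entire problem can be played out inside $\Alb(X,D)°$.

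Following Step~3 of Construction~\ref{cons:4-8}, I would consider the set
\[
  \mathcal H° := \bigl\{ B° ⊆ \Alb(X,D)° \text{ quasi-algebraic} \::\: \img \diff(q°_{B°} ◦ \alb(X,D)°) ⊆ V \bigr\}.
\]
The key point is that $\mathcal H°$ is closed under arbitrary intersection. Indeed, by Proposition~\ref{prop:3-15} the logarithmic forms pulled back from a quotient $\Alb(X,D)°/B°$ are precisely the invariant forms annihilating the Lie algebra of $B°$; since the Lie algebra of an intersection of closed subgroups is the intersection of the Lie algebras, the forms descending from $\bigcap B°$ are spanned by those descending from the individual $B°$, and membership in $\mathcal H°$ is preserved because $V$ is a linear subspace. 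By Fact~\ref{fact:3-26} the intersection $H° := \bigcap_{B° ∈ \mathcal H°} B°$ is quasi-algebraic, so it is the smallest element of $\mathcal H°$. I would then set $\Alb(X,D,V)° := \Alb(X,D)°/H°$, take the semitoric compactification from Fact~\ref{fact:3-27}, and define $\alb(X,D,V)° := q°_{H°} ◦ \alb(X,D)°$, which is quasi-algebraic by Lemma~\ref{lem:2-4}. Property~\ref{il:7-2-1} then holds by construction, as $H° ∈ \mathcal H°$.

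The verification of the universal property~\ref{il:7-2-2} is the heart of the argument. Given a quasi-algebraic $a° : X° → A°$ with $\diff a$ landing in $V$, I would factor $a° = b° ◦ \alb(X,D)°$ through the full Albanese; after choosing base points so that $b°$ preserves the neutral elements, Proposition~\ref{prop:3-18} makes $b°$ a group morphism, and its kernel $K° := \ker b°$ is quasi-algebraic by Fact~\ref{fact:3-28}. The hypothesis $\img \diff a ⊆ V$ translates exactly into $K° ∈ \mathcal H°$, using that the induced injection $\Alb(X,D)°/K° ↪ A°$ forces $a°$ and $q°_{K°} ◦ \alb(X,D)°$ to have the same image of differentials. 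Minimality of $H°$ then gives $H° ⊆ K°$, so $b°$ descends to a unique group morphism $\overline{b}° : \Alb(X,D,V)° → A°$ with $b° = \overline{b}° ◦ q°_{H°}$; Item~\ref{il:2-4-2} of Lemma~\ref{lem:2-4} shows $\overline{b}°$ is quasi-algebraic. This produces the required factorization $a° = \overline{b}° ◦ \alb(X,D,V)°$, unique because the image of $\alb(X,D,V)°$ generates the target.

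Finally, the generation statement follows since $\img \alb(X,D)°$ generates $\Alb(X,D)°$ (Proposition~\ref{prop:4-10}) and $q°_{H°}$ is a surjective group morphism, while the dimension bound follows from $\dim \Alb(X,D,V)° = \dim \img \diff\alb(X,D,V)$ together with the inclusion $\img \diff\alb(X,D,V) ⊆ V$. The step I expect to demand the most care is the closedness of $\mathcal H°$ under intersection: one must correctly match the logarithmic forms descending from $\Alb(X,D)°/H°$ with the span of those descending from the members of $\mathcal H°$, which relies on the lattice structure of quasi-algebraic subgroups and on the identification, recorded in Proposition~\ref{prop:3-15}, of invariant with logarithmic differentials.
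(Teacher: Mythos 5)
Your proof is correct, and it reaches the same quotient as the paper by a genuinely different route. The paper (Construction~\ref{cons:7-6}) works dually and ``bottom-up'': it pushes the annihilator $V^\perp$ into $\Alb(X,D)^\circ$ via the Lie-group presentation of Observation~\ref{obs:4-15}, takes $B$ to be the smallest quasi-algebraic subgroup containing that image $I_V$ (Fact~\ref{fact:3-26}), sets $\Alb(X,D,V)^\circ := \Alb(X,D)^\circ/B$, and then verifies \emph{both} Properties~\ref{il:7-2-1} and \ref{il:7-2-2} with Fact~\ref{fact:4-19}, which identifies kernels of factorizing morphisms with images of annihilators. You instead work ``top-down'' in the lattice of subgroups, mimicking Step~3 of Construction~\ref{cons:4-8}: you intersect all admissible subgroups and prove an intersection-stability lemma --- admissibility survives arbitrary intersections --- using Proposition~\ref{prop:3-15} together with the linear algebra facts that the Lie algebra of an intersection of closed subgroups is the intersection of the Lie algebras and that the annihilator of an intersection is the sum of the annihilators. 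The two subgroups coincide: by Fact~\ref{fact:4-19}, a quasi-algebraic subgroup lies in your family $\mathcal{H}^\circ$ exactly when it contains $I_V$, so your $H^\circ$ equals the paper's $B$. What differs is the key lemma: your route avoids Fact~\ref{fact:4-19} in the construction and gets Property~\ref{il:7-2-1} from minimality, at the price of the intersection-stability argument (which you rightly single out as the delicate step); the paper gets Property~\ref{il:7-2-1} almost for free from the construction but leans on Fact~\ref{fact:4-19} twice. Two small points you should make explicit. First, your translation of ``$\img \diff a \subseteq V$'' into ``$K^\circ \in \mathcal{H}^\circ$'' silently uses that pull-back along the injective group morphism $\Alb(X,D)^\circ/K^\circ \hookrightarrow A^\circ$ is \emph{surjective} on invariant forms (it is the dual of an injection of Lie algebras); this is correct but is exactly the point that makes the images of differentials agree. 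Second, your uniqueness argument via generation and Proposition~\ref{prop:3-18} establishes uniqueness among quasi-algebraic factorizations; uniqueness among all morphisms of varieties follows more directly by composing a competing factorization with $q^\circ_{H^\circ}$ and invoking the uniqueness clause of Definition~\ref{def:4-2} together with surjectivity of $q^\circ_{H^\circ}$.
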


Example~\vref{ex:7-8} shows that Inequality~\eqref{eq:7-5-1} might be strict.  As
in Section~\ref{sec:4-2}, we give a direct construction of one Albanese.

\begin{construction}[Construction of the Albanese for a subspace of differentials]\label{cons:7-6}%
  In Setting~\ref{set:7-1}, consider the annihilator $V^\perp ⊆ H⁰\bigl( X,\,
  Ω^{[1]}_X(\log D) \bigr)^*$ and recall from Observation~\ref{obs:4-15} that
  the construction of $\Alb(X,D)°$ equips us with a canonical holomorphic Lie
  group morphism
  \begin{equation}\label{eq:7-6-1}
    H⁰\bigl( X,\, Ω^{[1]}_X(\log D) \bigr)^* \twoheadrightarrow \Alb(X,D)°.
  \end{equation}
  The image
  \[
    I_V := \img\bigl(V^\perp → \Alb(X,D)°\bigr)
  \]
  is then a subgroup of $\Alb(X,D)$ that may or may not be closed.  Either way,
  Fact~\vref{fact:3-26} allows taking the smallest quasi-algebraic subgroup $B ⊆
  \Alb(X,D)°$ that contains $I_V$.  We write
  \[
    \Alb(X,D,V)° := \factor{\Alb(X,D)°}{B}
  \]
  and obtain morphisms
  \[
    \begin{tikzcd}[column sep=2cm]
      X° \ar[r, "\alb(X{,}D)°"'] \ar[rr, "\alb(X{,}D{,}V)°", bend left=10] & \Alb(X,D)° \ar[r, two heads, "q°\text{, quotient}"'] &
      \Alb(X,D,V)°.
    \end{tikzcd}
  \]
  Recall from Facts~\ref{fact:3-23} and \ref{fact:3-27} that $B$ and
  $\Alb(X,D,V)$ are isomorphic to semitori, and that there exists a semitoric
  compactification $\Alb(X,D,V)° ⊆ \Alb(X,D,V)$ that renders the quotient
  morphism $q°$ quasi-algebraic.  With this choice of compactification,
  Lemma~\ref{lem:2-4} guarantees that the morphism $\alb(X,D,V)°$ is
  quasi-algebraic, as desired.
\end{construction}

\begin{proof}[Proof of Proposition~\ref*{prop:7-5}]
  We need to verify that Construction~\ref{cons:7-6} satisfies the properties
  spelled out in Proposition~\ref{prop:5-4}.  Once this is done,
  Proposition~\vref{prop:4-10} and surjectivity of the quotient morphism $q$
  guarantees that the image of $\alb(X,D,V)°$ generates $\Alb(X,D,V)°$ as an
  Abelian group, as claimed.
  
  \subsubsection*{Property~\ref*{il:7-2-1}}

  To prove Property~\ref{il:7-2-1}, write $W := \img \diff \alb(X,D,V)$ and
  recall that
  \[
    \img\bigl(V^\perp → \Alb(X,D)° \bigr) \overset{\text{constr.}}{⊆} \ker(q°)
    \overset{\text{Fact~\ref{fact:4-19}}}{=} \img\bigl(W^\perp → \Alb(X,D)°
    \bigr).
  \]
  Given that the Lie group morphism \eqref{eq:7-6-1} has maximal rank, we find
  that $V^\perp ⊆ W^\perp$ and hence that $V ⊇ W$, as desired.

  \subsubsection*{Property~\ref*{il:7-2-2}}
  
  Assume that a morphism $a°: X° → A°$ as in Property~\ref{il:7-2-2} is given.
  The universal property of $\Alb(X,D)$ will then yield a factorization
  \[
    \begin{tikzcd}[column sep=2.5cm]
      X° \arrow[r, "\alb(X{,}D)°"'] \arrow[rr, "a°", bend left=15] &
      \Alb(X,D)° \arrow[r, "β°\text{, quasi-algebraic}"'] & A°.
    \end{tikzcd}
  \]
  We claim that the quasi-algebraic morphism $β°$ factors via $q°$,
  \[
    \begin{tikzcd}[column sep=2.8cm]
      \Alb(X,D)° \arrow[r, two heads, "q°\text{, quasi-algebraic}"'] \arrow[rr, "β°\text{, quasi-algebraic}", bend left=15] & \factor{\Alb(X,D)°}{B} \arrow[r, "∃!b°"'] & A°.
    \end{tikzcd}
  \]
  Equivalently, we claim that $B ⊆ \ker(β°)$.  This follows easily: writing
  \[
    W := \img\Bigl(\diff a : H⁰\bigl( A,\, Ω¹_A(\log Δ) \bigr) → H⁰\bigl(
      X,\, Ω^{[1]}_X (\log D) \bigr)\Bigr),
  \]
  we know by assumption that $W ⊆ V$ or equivalently, that $W^\perp ⊇ V^\perp$.
  By Fact~\vref{fact:4-19}, this is in turn equivalent to $\ker(β) ⊇ I_V$.  The
  desired inclusion $\ker(β°) ⊇ B$ follows as soon as we recall from
  Fact~\vref{fact:3-28} that $\ker(β°)$ is quasi-algebraic.  Lemma~\vref{lem:2-4}
  guarantees that $b°$ is quasi-algebraic, as required.  The statement about the
  dimension is clear from the construction.
\end{proof}

\subsection{Proof of Proposition~\ref*{prop:5-4}}
\approvals{Erwan & yes \\ Stefan & yes}
\label{sec:7-2}

In the setting of Proposition~\ref{prop:5-4}, set
\[
  V := H⁰\bigl( \what{X},\, Ω^{[1]}_{(X,D,γ)} \bigr).
\]
Using the notation introduced in Definition~\ref{def:7-4},
Proposition~\ref{prop:7-5} equips us with a semitoric variety
\[
  \Alb\left(\what{X}, \what{D}, Ω^{[1]}_{(X,D,γ)}\right)° ⊂ \Alb\left(\what{X}, \what{D}, Ω^{[1]}_{(X,D,γ)}\right)
\]
and a quasi-algebraic morphism
\[
  \alb\left(\what{X}, \what{D}, Ω^{[1]}_{(X,D,γ)}\right)° \::\: \what{X}° → \Alb\left(\what{X}, \what{D}, Ω^{[1]}_{(X,D,γ)}\right)°
\]
that we take as the Albanese of the cover $γ$ of the $\cC$-pair $(X,D)$.  A
comparison of the Properties~\ref{il:7-2-1}--\ref{il:7-2-2} guaranteed by
Proposition~\ref{prop:7-5} with the Properties~\ref{il:5-2-1}--\ref{il:5-2-2}
required by Proposition~\ref{prop:5-4} concludes the proof.  \qed

\subsection{Examples}
\label{sec:7-3}
\approvals{Erwan & yes \\ Stefan & yes}

We end the present section with two simple examples.

\begin{example}
  In the setting of Definition~\ref{def:7-2}, if $V = \{0\}$, then $\Alb(X,D,V)$
  is a point.
\end{example}

\begin{example}\label{ex:7-8}%
  Let $E$ be an elliptic curve.  Set $X = E⨯E$ and take $D := 0 ∈ \Div(X)$.
  Pulling back differentials from the two factors gives natural morphisms
  \[
    \diff π_i : H⁰ \bigl( E,\, Ω¹_E \bigr) → H⁰ \bigl( X,\, Ω¹_X \bigr).
  \]
  Choose a number $τ ∈ ℂ$ and set $V := \img\bigl( (\diff π_1) + τ·(\diff π_2)
  \bigr)$, which is a one-dimensional linear subspace of $H⁰\bigl( X,\, Ω¹_X
  \bigr)$.  The following will hold.
  \begin{itemize}
  \item If $τ$ is non-rational, then $I_V$ is dense in $\Alb(X,0)°$ and
    $\Alb(X,0,V)° = \{ 0\}$.
    
  \item Towards the other extreme, if $τ = 0$, then $\Alb(X,0,V)° = \Alb(E)$.
  \end{itemize}
\end{example}

%
%
\svnid{$Id: 08-boundedIrregularity.tex 942 2024-10-10 16:13:56Z kebekus $}
\selectlanguage{british}

\section{Boundedness of the Albanese irregularity for special pairs}
\subversionInfo
\approvals{Erwan & yes \\ Stefan & yes}
\label{sec:8}

Following Ueno's work \cite{Ueno75}, Campana has remarked in
\cite[Sect.~5.2]{Cam04} that the Albanese morphism of a special manifold is
always surjective.  We extend Campana's observation to the Albanese of a cover.
For $\cC$-pairs that are special in the sense of \cite[Def.~6.11]{orbiAlb1}, the
following theorem implies that the dimension of the Albanese is bounded by the
dimension of $X$.  In particular, it cannot go to infinity as we consider higher
and higher covers.  Along these lines, we view the theorem as a boundedness
result.

\begin{thm}[$\cC$-pairs whose Albanese morphism is not dominant]\label{thm:8-1}
  In Setting~\ref{set:5-1}, assume that $X$ is Kähler.  If the Albanese morphism
  $\alb(X,D,γ)°$ is \emph{not} dominant, then there exists a number $1 ≤ p ≤
  \dim X$ and a coherent rank-one subsheaf $ℒ_1 ⊂ Ω^{[p]}_{(X,D,\Id_X)}$ with
  $\cC$-Kodaira-Iitaka dimension $κ_{\cC}(ℒ_1) ≥ p$.
\end{thm}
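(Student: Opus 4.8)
The plan is to transport the classical Ueno--Campana argument to the quasi-algebraic, orbifold setting: a proper subvariety of a semitorus becomes of (log-)general type after dividing out its stabilizer, and the canonical forms of the quotient pull back to a $\cC$-Bogomolov sheaf. I would organize the proof in three movements (reduction to Galois, killing the stabilizer, and building plus descending the sheaf), and expect the semitoric analogue of Ueno's theorem to be the crux.

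First I would reduce to the Galois case. Passing to a Galois closure $\what{X}' \twoheadrightarrow \what{X} \twoheadrightarrow X$ with group $G$, the functoriality of Lemma~\ref{lem:5-5} yields a surjection $\Alb(X,D,γ')^{\circ} \twoheadrightarrow \Alb(X,D,γ)^{\circ}$ through which the Albanese map of the closure factors (after composing with the surjective $\what{X}'^{\circ}\to\what{X}^{\circ}$). Hence non-dominance of $\alb(X,D,γ)^{\circ}$ forces non-dominance of $\alb(X,D,γ')^{\circ}$; since the conclusion only concerns the base pair $(X,D)$, we may assume $γ$ is Galois with group $G$. Write $A^{\circ} := \Alb(X,D,γ)^{\circ}$ and $a^{\circ} := \alb(X,D,γ)^{\circ}$, and let $Z \subset A^{\circ}$ be the irreducible closure of the image. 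By hypothesis $Z \subsetneq A^{\circ}$, by Proposition~\ref{prop:5-4} the set $Z$ generates $A^{\circ}$ as a group, and by Observation~\ref{obs:5-6} it is $G$-invariant.

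Second, I would kill the stabilizer. Let $H^{\circ} := \operatorname{Stab}(Z) = \{g \in A^{\circ} : g+Z=Z\}$, which is a $G$-invariant quasi-algebraic subgroup (Fact~\ref{fact:3-26}); by Fact~\ref{fact:3-27} the quotient $q^{\circ} : A^{\circ} \twoheadrightarrow C^{\circ} := A^{\circ}/H^{\circ}$ is a $G$-equivariant quasi-algebraic morphism of semitori, and $Z' := q^{\circ}(Z)$ has trivial stabilizer and still generates $C^{\circ}$. Setting $p := \dim Z'$, I would check $1 \le p \le \dim X$: clearly $p \le \dim Z \le \dim X$, while if $p=0$ then the irreducible $Z$ is a single coset of $H^{\circ}$, so it generates $\langle \bar{g}_0 \rangle + H^{\circ}$ for one element $\bar{g}_0 \in C^{\circ}$; a connected positive-dimensional Lie group is never generated by a single element, forcing $C^{\circ}=\{0\}$ and $Z=A^{\circ}$, a contradiction. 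The main input enters here: the semitoric analogue of Ueno's theorem (see \cite{MR3156076}), asserting that a subvariety of a semitorus with trivial stabilizer is of log-general type. Applied to $Z' \subset C^{\circ}$, on a semitoric compactification $C^{\circ}\subset C$ compatible with the quasi-algebraic structure and a log resolution of $\overline{Z'}$, this gives a big log-canonical sheaf, so the log-Kodaira dimension equals $p$.

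Third, I would build the Bogomolov sheaf and descend it. The composite $f^{\circ} := q^{\circ}\circ a^{\circ} : \what{X}^{\circ} \to C^{\circ}$ is dominant onto the $p$-dimensional $Z'$, hence generically submersive; since $Ω^1_C(\log Δ)$ is free, $\wedge^{p}(\diff f^{\circ})$ has generic rank one, so its image generates a rank-one subsheaf $\sM \subseteq Ω^{[p]}_{\what{X}}(\log \what{D})$. Property~\ref{il:5-2-1}, equivalently Remark~\ref{rem:5-3}, shows that one-forms pulled back from the semitorus $C$ are adapted, whence so are their wedge products and $\sM \subseteq Ω^{[p]}_{(X,D,γ)}$; bigness of the log-canonical sheaf of $\overline{Z'}$ transfers along the dominant $f^{\circ}$ to give $κ(\sM) \ge p$. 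As every construction was $G$-equivariant, $\sM$ is $G$-invariant, and the descent along the quotient $\cC$-morphism $\underline{a}^{\circ} : (X^{\circ},D^{\circ}) \to (Y^{\circ},D_Y)$ of Proposition~\ref{prop:5-10} and Lemma~\ref{lem:5-11} produces a rank-one subsheaf $\sL_1 \subseteq Ω^{[p]}_{(X,D,\Id_X)}$ with $κ_{\cC}(\sL_1) \ge p$, as required.

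I expect two obstacles. The principal one is the semitoric Ueno theorem in the second movement: controlling that trivial stabilizer forces log-general type in the genuinely non-compact, quasi-algebraic semitorus setting (rather than the abelian case) and arranging the compactifications to be compatible with the fixed quasi-algebraic structure. The second, more bookkeeping, obstacle is the orbifold descent: verifying that the $G$-invariant big subsheaf $\sM$ of adapted $p$-forms descends to an honest subsheaf of $Ω^{[p]}_{(X,D,\Id_X)}$ and that $κ_{\cC}$ is preserved, which rests on the functoriality of the $\cC$-Kodaira--Iitaka dimension under the dominant $\cC$-morphism $\underline{a}^{\circ}$ supplied by \cite{orbiAlb1}.
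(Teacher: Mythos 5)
Your proposal follows essentially the same route as the paper's proof: reduction to the Galois case via Lemma~\ref{lem:5-5}, quotienting the Albanese by the (connected) stabilizer of the image closure, invoking the Ueno--Kawamata theorem for semitori (the paper cites Kawamata's Bloch-conjecture argument via \cite[Cor.~3.8.27]{KobayashiGrundlehren}, which is exactly your ``semitoric Ueno'' input) to produce invariant $p$-forms that are generically finite on the image, pulling these back to a rank-one sheaf of adapted $p$-differentials, and descending by $G$-invariance to obtain $ℒ_1 ⊆ Ω^{[p]}_{(X,D,\Id_X)}$ with $κ_{\cC}(ℒ_1) ≥ p$. The two obstacles you flag are precisely the points the paper labours over (the extension of the sheaf across $\what{X} ∖ \what{X}°$ via a $G$-equivariant log resolution, and the descent via $G$-invariant push-forward $ℒ_i := (γ_* ℒ^{[⊗ i]})^G$ and \cite[Cor.~4.20]{orbiAlb1}), so your outline is sound and matches the paper's argument.
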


We refer the reader to \cite[Sect.~6]{orbiAlb1} for the definition of
``$\cC$-Kodaira-Iitaka dimension'' and for the related notions of ``special
pairs'' and ``Bogomolov sheaves''.

\begin{cor}[The Albanese for covers for special pairs]\label{cor:8-2} %
  In Setting~\ref{set:5-1}, assume that $X$ is Kähler.  If $(X,D)$ is special,
  then the Albanese morphism $\alb(X,D,γ)°$ is dominant.  \qed
\end{cor}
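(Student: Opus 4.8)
The plan is to obtain Corollary~\ref{cor:8-2} as a direct contrapositive of Theorem~\ref{thm:8-1}, the essential geometric work having already been carried out there. The only additional ingredient is to reconcile the conclusion of Theorem~\ref{thm:8-1} with the precise definition of ``special''. First I would recall from \cite[Def.~6.11]{orbiAlb1} that the $\cC$-pair $(X,D)$ is special exactly when it carries no \emph{Bogomolov sheaf}, that is, no coherent rank-one subsheaf $ℒ ⊆ Ω^{[p]}_{(X,D,\Id_X)}$ with $κ_{\cC}(ℒ) = p$, for any $1 ≤ p ≤ \dim X$.

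Arguing by contradiction, I would suppose that the Albanese morphism $\alb(X,D,γ)°$ is \emph{not} dominant. Then Theorem~\ref{thm:8-1} furnishes a number $1 ≤ p ≤ \dim X$ together with a coherent rank-one subsheaf $ℒ_1 ⊂ Ω^{[p]}_{(X,D,\Id_X)}$ satisfying $κ_{\cC}(ℒ_1) ≥ p$. At this point I would invoke the standard upper bound $κ_{\cC}(ℒ_1) ≤ p$, valid for any rank-one subsheaf of the sheaf of $\cC$-$p$-differentials, so that the inequality is forced to be an equality, $κ_{\cC}(ℒ_1) = p$. The sheaf $ℒ_1$ is then a Bogomolov sheaf, contradicting the assumption that $(X,D)$ is special. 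Hence $\alb(X,D,γ)°$ must be dominant, which is the assertion of the corollary.

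Since the heavy lifting resides entirely in Theorem~\ref{thm:8-1}, there is no genuine obstacle in the corollary itself. The one point requiring care is the bookkeeping just described: the conclusion of Theorem~\ref{thm:8-1} is phrased with the inequality $κ_{\cC}(ℒ_1) ≥ p$, whereas the definition of a Bogomolov sheaf demands the equality $κ_{\cC}(ℒ_1) = p$, and one must observe that the automatic bound $κ_{\cC}(ℒ_1) ≤ p$ closes this gap. Everything else is a formal translation between the language of dominance and the language of specialness.
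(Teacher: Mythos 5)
Your proposal is correct and follows essentially the same route as the paper: Corollary~\ref{cor:8-2} is stated there as an immediate consequence of Theorem~\ref{thm:8-1} (hence the terminal QED mark with no separate proof), and the paper's proof of that theorem in Section~\ref{sec:8-2} is explicitly framed as the contrapositive, showing that non-dominance of $\alb(X,D,γ)°$ yields a Bogomolov sheaf, so that $(X,D)$ is not special. Your additional bookkeeping step --- closing the gap between the inequality $\kappa_{\cC}(\sL_1) \geq p$ of Theorem~\ref{thm:8-1} and the equality $\kappa_{\cC}(\sL_1) = p$ required of a Bogomolov sheaf by means of the Bogomolov--Sommese-type bound $\kappa_{\cC}(\sL_1) \leq p$ --- is exactly the reconciliation the paper leaves implicit when it refers to \cite[Sect.~6]{orbiAlb1}.
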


The proof of Theorem~\ref{thm:8-1} is given in Section~\ref{sec:8-2}, starting
from Page~\pageref{sec:8-2} below.

\begin{rem}
  Recall from Definition~\ref{def:5-2} that the Albanese morphism $\alb(X,D,γ)°$
  is quasi-algebraic, so that topological closure of its image,
  \[
    \overline{\img \alb(X,D,γ)°} ⊆ \Alb(X,D,γ)°,
  \]
  is always analytic.  The word ``dominant'' in Theorem~\ref{thm:8-1} and
  Corollary~\ref{cor:8-2} is therefore meaningful.
\end{rem}

\begin{rem}\label{rem:8-4}%
  Assume Setting~\ref{set:5-1}.  If the $\cC$-pair $(X,D)$ is special,
  Corollary~\ref{cor:8-2} implies in particular that $q⁺_{\Alb}(X,D,γ) ≤ \dim
  X$.
\end{rem}

Even for special pairs, one cannot expect that the Albanese morphism
$\alb(X,D,γ)°$ is surjective.  The following simple example shows what can go
wrong.

\begin{example}[Failure of surjectivity]
  Let $T$ be a compact torus and let $t ∈ T$ be any point.  Let $X$ be the
  blow-up of $T$ in $t$ and let $D ∈ \Div(X)$ be the exceptional divisor, with
  multiplicity one.  Then, the logarithmic pair $(X,D)$ is special, the Albanese
  for the identity morphism equals $\Alb(X,D,\Id_X)° = T$ and
  \[
    \img \alb(X,D,γ)° = T ∖ \{ t \}.
  \]
\end{example}

\subsection{Failure of dominance}
\approvals{Erwan & yes \\ Stefan & yes}

To prepare for the proof of Theorem~\ref{thm:8-1}, we analyse the setting where
the Albanese of a cover fails to be dominant.  The construction presented here
will also be used in the forthcoming paper \cite{orbiAlb3}, where we prove a
$\cC$-version of the Bloch-Ochiai theorem.

\begin{setting}[Failure of dominance]\label{set:8-6}%
  In Setting~\ref{set:5-1}, assume that $X$ is Kähler.  Assume that the cover
  $γ$ is Galois with group $G$, and use Corollary~\vref{cor:3-21} to choose an
  Albanese
  \[
    \Alb(X,D,γ)° ⊂ \Alb(X,D,γ), %
    \quad\text{written in short as } %
    \Alb° ⊂ \Alb,
  \]
  such that the $G$-action on $\Alb°$ extends to $\Alb$.  Recall that the
  Albanese morphism $\alb°$ is quasi-algebraic.  The topological closure of the
  image, $א := \overline{\img \alb°}$, is thus an analytic subset of $\Alb$.
  Set $א° := א ∩ \Alb°$ and assume that $א°$ is a proper subset, $א° ⊊ \Alb°$.
  Finally, choose an element $\what{x} ∈ \what{X}°$ and use its image point
  \[
    0_{\Alb°} := \alb°(\what{x}) ∈ \Alb°
  \]
  to equip $\Alb°$ with the structure of a Lie group.
\end{setting}

\begin{rem}[Stabilizer groups]
  In Setting~\ref{set:8-6}, recall from \cite[Prop.~5.3.16]{MR3156076} that the
  stabilizer subgroup
  \[
    \operatorname{St}_{\Alb°}(א°) = \bigl\{ a ∈ \Alb° \,|\, a+ א° = א° \bigr\} ⊂ \Alb°
  \]
  is closed and quasi-algebraic.  Recall from \cite[Prop.~5.3.13]{MR3156076}
  that its maximal connected subgroup $I ⊂ \operatorname{St}_{\Alb°}(א°)$ is
  then a semitorus.
\end{rem}

\begin{obs}[Properness of $I$ as a subgroup of $\Alb°$]\label{obs:8-8}%
  By construction, we have
  \[
    0_{\Alb°} = \alb°(\what{x}) ∈ \img \alb° ⊆ א°.
  \]
  It follows that $\operatorname{St}_{\Alb°}(א°) ⊆ א°$.  This equips us with
  inclusions
  \[
    I ⊆ \operatorname{St}_{\Alb°}(א°) ⊆ א° ⊊ \Alb°
  \]
  and shows that $I ⊊ \Alb°$ is a proper subgroup.  The quotient group $\Alb°/I$
  is not trivial.
\end{obs}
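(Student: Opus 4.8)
The plan is to isolate the one structural fact that is special to this situation—namely that the chosen origin $0_{\Alb°}$ lies inside $א°$—and then deduce everything formally from the definition of the stabilizer as a translation-stabilizer. The statement to establish is that $I ⊊ \Alb°$, equivalently that $\Alb°/I$ is nontrivial, and the natural route is through the chain $I ⊆ \operatorname{St}_{\Alb°}(א°) ⊆ א° ⊊ \Alb°$, where only the middle inclusion requires an argument.

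First I would record that $0_{\Alb°} ∈ א°$. This is immediate from the normalization made in Setting~\ref{set:8-6}: we set $0_{\Alb°} := \alb°(\what{x})$, so that $0_{\Alb°}$ lies in $\img \alb°$. Since $\alb°$ maps into $\Alb°$ and $א = \overline{\img \alb°}$, we have $\img \alb° ⊆ א ∩ \Alb° = א°$, whence $0_{\Alb°} ∈ א°$.

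Next I would prove the inclusion $\operatorname{St}_{\Alb°}(א°) ⊆ א°$. Let $a ∈ \operatorname{St}_{\Alb°}(א°)$ be arbitrary, so that $a + א° = א°$ by definition of the stabilizer. Applying this equality to the particular element $0_{\Alb°} ∈ א°$ yields $a = a + 0_{\Alb°} ∈ א°$. As $a$ was arbitrary, the stabilizer is contained in $א°$. Put differently, the stabilizer is exactly the orbit of $0_{\Alb°}$ under translation by itself, and since $0_{\Alb°} ∈ א°$ and the stabilizer preserves $א°$, this orbit remains inside $א°$. Recalling from the preceding remark that $I$ is defined as the maximal connected subgroup of $\operatorname{St}_{\Alb°}(א°)$, I would then chain
\[
  I ⊆ \operatorname{St}_{\Alb°}(א°) ⊆ א° ⊊ \Alb°,
\]
where the last inclusion is proper by the standing hypothesis $א° ⊊ \Alb°$. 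Hence $I ⊊ \Alb°$ and $\Alb°/I$ is nontrivial.

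I do not expect a genuine obstacle: the argument is a short formal deduction. The only point deserving attention is that it depends essentially on the normalization $0_{\Alb°} ∈ א°$. Had the Lie group structure on $\Alb°$ been fixed by a base point outside the image, the stabilizer—which always contains the neutral element—would not need to lie in $א°$, and the conclusion could fail. The choice in Setting~\ref{set:8-6} is precisely what forces the neutral element, and therefore the entire stabilizer, to stay inside $א°$.
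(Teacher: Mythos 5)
Your proof is correct and follows exactly the route the paper takes: the normalization $0_{\Alb°} = \alb°(\what{x}) ∈ \img\alb° ⊆ א°$, then the one-line deduction $a = a + 0_{\Alb°} ∈ a + א° = א°$ for any stabilizer element, and finally the chain $I ⊆ \operatorname{St}_{\Alb°}(א°) ⊆ א° ⊊ \Alb°$. You have merely made explicit the step the paper compresses into ``It follows that'', and your closing remark about why the choice of base point is essential is exactly the right point to flag.
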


We have seen in Observation~\vref{obs:5-6} that the morphism $\alb°$ is
equivariant with respect to the $G$-action on $\Alb°$.  The action will then
stabilize the subset $א°$.  As the next lemma shows, it will also stabilize
$\operatorname{St}_{\Alb°}(א°)$ and $I$, at least up to translation.

\begin{lem}[Relation between $G$ and $I$]\label{lem:8-9}%
  In Setting~\ref{set:8-6}, if $g ∈ G$ is any element, then $g·I$ is a translate
  of $I$.  In particular, the $G$-action of $\Alb°$ maps $I$-orbits to
  $I$-orbits, for the additive action of $I$ on $\Alb°$.
\end{lem}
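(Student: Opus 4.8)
The plan is to use the semidirect-product description of quasi-algebraic automorphisms from Corollary~\vref{cor:3-20} to split the $G$-action into a group-automorphism part and a translation part, and then to argue that the group-automorphism part preserves the stabilizer $\operatorname{St}_{\Alb°}(א°)$, hence its identity component $I$. Recall from Observation~\vref{obs:5-6} that $G$ acts on $\Alb°$ by quasi-algebraic automorphisms and that $\alb°$ is $G$-equivariant. Since $G$ preserves the open orbit $\Alb°$ and its action extends to $\Alb$, equivariance gives $g·\img\alb° = \img\alb°$, and passing to closures shows $g(א) = א$; intersecting with $\Alb°$ yields $g(א°) = א°$ for every $g ∈ G$. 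As we have fixed $0_{\Alb°} = \alb°(\what{x})$ to equip $\Alb°$ with a group structure, Corollary~\ref{cor:3-20} allows me to write each $g ∈ G$ in the affine form
\[
  g : \Alb° → \Alb°, \quad a ↦ φ_g°(a) + v_g,
\]
where $φ_g°$ is a quasi-algebraic group automorphism (invertible because $g$ is) and $v_g ∈ \Alb°$ is a translation vector.

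The key step, which I expect to be the main obstacle, is to show that $φ_g°$ maps $\operatorname{St}_{\Alb°}(א°)$ into itself. For this I would compute the conjugate of a translation $\tau_s : a ↦ a + s$ by $g$. A direct calculation using the affine form of $g$ and of $g^{-1}$ gives the identity
\[
  g ◦ \tau_s ◦ g^{-1} = \tau_{φ_g°(s)},
\]
that is, conjugation by $g$ turns translation by $s$ into translation by $φ_g°(s)$. Now if $s ∈ \operatorname{St}_{\Alb°}(א°)$, then $\tau_s(א°) = א°$, and since $g(א°) = א°$ the conjugate $\tau_{φ_g°(s)}$ also stabilizes $א°$; hence $φ_g°(s) ∈ \operatorname{St}_{\Alb°}(א°)$. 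Running the same argument for $g^{-1}$ upgrades this inclusion to an equality $φ_g°\bigl(\operatorname{St}_{\Alb°}(א°)\bigr) = \operatorname{St}_{\Alb°}(א°)$. Because $φ_g°$ is a continuous group automorphism, it preserves the identity component, so $φ_g°(I) = I$.

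With this in place the conclusion is immediate. From the affine form, $g·I = φ_g°(I) + v_g = I + v_g$, so $g·I$ is a translate of $I$, as claimed. For the ``in particular'' statement, I would note that for any $a ∈ \Alb°$ the computation $g(a + I) = φ_g°(a) + φ_g°(I) + v_g = \bigl(φ_g°(a) + v_g\bigr) + I = g(a) + I$ shows that the $G$-action sends the $I$-orbit of $a$ to the $I$-orbit of $g(a)$. Everything beyond the conjugation identity and the verification that $φ_g°$ respects the stabilizer is formal, once the affine decomposition supplied by Corollary~\ref{cor:3-20} is available.
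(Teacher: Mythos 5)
Your proposal is correct and takes essentially the same approach as the paper: both arguments rest on the affine decomposition $g(a) = \varphi_g^\circ(a) + v_g$ supplied by Proposition~\ref{prop:3-18} (equivalently Corollary~\ref{cor:3-20}), and both reduce the lemma to showing that the group-automorphism part $\varphi_g^\circ$ preserves $\operatorname{St}_{\Alb^\circ}(\aleph^\circ)$ and hence its identity component $I$. The only difference is the mechanism for that middle step --- you use the conjugation identity $g \circ \tau_s \circ g^{-1} = \tau_{\varphi_g^\circ(s)}$, whereas the paper computes $g\bigl(\operatorname{St}_{\Alb^\circ}(\aleph^\circ)\bigr)$ directly from the relation $\varphi_g^\circ(\aleph^\circ) = \aleph^\circ - g(0_{\Alb^\circ})$ --- a purely cosmetic variation.
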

\begin{proof}
  Since all connected components of the group $\operatorname{St}_{\Alb°}(א°)$
  are translates of the identity component, it suffices to show that
  $g·\operatorname{St}_{\Alb°}(א°)$ is a translate of
  $\operatorname{St}_{\Alb°}(א°)$.  To this end, recall from
  Proposition~\vref{prop:3-18} that we may write $g : \Alb° → \Alb°$ in the form
  $g : a ↦ f°(a) + g(0_{\Alb°})$, where $f°: \Alb° → \Alb°$ is a group morphism.
  In particular, we find that
  \begin{equation}\label{eq:8-9-1}
    א° = g(א°) = f°(א°) + g(0_{\Alb°})
    \quad⇔\quad
    f°(א°) = א° - g(0_{\Alb°}).
  \end{equation}
  This gives
  \begin{align*}
    g\bigl(\operatorname{St}_{\Alb°}(א°)\bigr) & = f°\bigl(\operatorname{St}_{\Alb°}(א°)\bigr) + g(0_{\Alb°}) \\
    & = \operatorname{St}_{\Alb°}\bigl(f°(א°)\bigr) + g(0_{\Alb°}) && f°\text{ a group morphism} \\
    & = \operatorname{St}_{\Alb°}\bigl(א°-g(0_{\Alb°})\bigr) + g(0_{\Alb°}) && \text{\eqref{eq:8-9-1}} \\
    & = \operatorname{St}_{\Alb°}(א°) + g(0_{\Alb°}) && \text{Defn.\ of }\operatorname{St}_{\Alb°}(•) \qedhere
  \end{align*}
\end{proof}

\begin{construction}\label{cons:8-10}%
  Maintaining Setting~\ref{set:8-6}, we construct a non-trivial semitoric
  variety $B° ⊂ B$ with $G$-action and a diagram
  \[
    \begin{tikzcd}[column sep=2cm, row sep=1cm]
      \what{X}° \ar[r, "\alb°"'] \ar[d, two heads, "γ°\text{, quotient by }G"'] \ar[rr, bend left=15, "b°"] & \Alb° \ar[r, two heads, "β°\text{, quotient by }I"'] \ar[d, two heads, "γ_{\Alb°}\text{, quotient by }G"] & B° \ar[d, two heads, "γ_{B°}\text{, quotient by }G"] \\
      X° \ar[r, "δ°"'] & \factor{\Alb°}{G} \ar[r, two heads, "ε°"'] & \factor{B°}{G}
    \end{tikzcd}
  \]
  where (among other things) the following holds.
  \begin{itemize}
    \item All horizontal arrows are quasi-algebraic,

    \item all arrows in the top row are $G$-equivariant, and

    \item all arrows in the bottom row are $\cC$-morphisms for the $\cC$-pairs
      \[
        (X°, D°),\quad \factor{\bigl( \Alb°, 0\bigr)}{G}, \quad\text{and}\quad \factor{\bigl(B°, 0\bigr)}{G}.
      \]
  \end{itemize}
  The left rectangle of the diagram is given by Proposition~\vref{prop:5-10}.  As
  for the right rectangle, take $B°$ as the quotient $\Alb°/I$.  Recall from
  \cite[Thm.~5.3.13]{MR3156076} that $B°$ is a semitorus, and that there exists
  a semitoric compactification $B° ⊆ B$ that renders the quotient morphism $β°$
  quasi-algebraic.  Lemma~\ref{lem:8-9} gives a natural action $G ↺ B°$ that
  makes the morphism $β°$ equivariant, and Corollary~\vref{cor:3-21} allows
  assuming without loss of generality that the $G$ action extends from $B°$ to
  $B$.  The right rectangle of the diagram is now given by the universal
  property of $G$-quotients, \cite[Prop.~12.7]{orbiAlb1}.  Finish the
  construction by recalling from \cite[Prop.~12.7]{orbiAlb1} that $ε°$ is a
  morphism of $\cC$-pairs, from $\bigl(\Alb°, 0\bigr)/G$ to $\bigl(B°,
  0\bigr)/G$, as required.

  To conclude Construction~\ref{cons:8-10}, consider the topological closure $Z
  := \overline{\img β°}$, which is an analytic subset of $B$.  As before, write
  $Z° := Z ∩ B°$ and set $p := \dim Z$.
\end{construction}

The following observations summarize the main properties of the construction.

\begin{obs}\label{obs:8-11}%
  By construction, $Z°$ is not invariant under the action of any proper
  semitorus in $B°$.  In this setting, recall from Kawamata's proof of the Bloch
  conjecture, \cite{Kawa80}, or more specifically from
  \cite[Cor.~3.8.27]{KobayashiGrundlehren} that there exist $B°$-invariant
  differentials $τ°_0$, …, $τ°_p ∈ H⁰ \bigl( B°,\, Ω^p_{B°} \bigr)$ such that
  the restrictions $τ°_•|_{Z°_{\reg}}$ are linearly independent
  top-differentials on $Z°_{\reg}$, and therefore define a $(p+1)$-dimensional
  linear subspace
  \[
    V := \bigl\langle τ°_0|_{Z°_{\reg}}, …, τ°_p|_{Z°_{\reg}} \bigr\rangle ⊆ H⁰ \bigl(
    Z°_{\reg}, Ω^p_{Z°_{\reg}} \bigr) = H⁰ \bigl( Z°_{\reg}, ω_{Z°_{\reg}} \bigr).
  \]
  The associated meromorphic map $φ_V : Z°_{\reg} \dasharrow ℙ^p$ is generically
  finite.  Recall from Item~\ref{il:3-15-2} of Proposition~\vref{prop:3-15} that
  the $B°$-invariant differentials $τ°_• ∈ H⁰ \bigl( B°,\, Ω^p_{B°} \bigr)$
  automatically extend to differentials with logarithmic poles at infinity, say
  $τ_• ∈ H⁰ \bigl(B,\, Ω^p_B(\log Δ) \bigr)$.
\end{obs}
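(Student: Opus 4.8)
The plan is to split the statement into its one substantial ingredient — that the connected stabilizer of $Z^\circ$ in $B^\circ$ is trivial — and the remaining assertions, which are a verbatim appeal to the quoted results of Kawamata and Kobayashi together with Proposition~\ref{prop:3-15}. First I would record the two identifications that make the construction transparent. Since $\beta^\circ$ is the quotient by $I$ and $\aleph^\circ$ is $I$-invariant (because $I \subseteq \operatorname{St}_{\Alb^\circ}(\aleph^\circ)$), the relevant set is $Z^\circ = \beta^\circ(\aleph^\circ)$, and taking preimages gives
\[
  (\beta^\circ)^{-1}(Z^\circ) = \aleph^\circ + \ker \beta^\circ = \aleph^\circ + I = \aleph^\circ,
\]
so that $\aleph^\circ$ is exactly the full preimage of $Z^\circ$. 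In particular $p = \dim Z = \dim \aleph - \dim I$, and $Z^\circ$ is a proper irreducible analytic subset of the semitorus $B^\circ$; here $p \geq 1$, since $\aleph^\circ$ generates $\Alb^\circ$ by Proposition~\ref{prop:5-4} while $I \subsetneq \Alb^\circ$ by Observation~\ref{obs:8-8}, so $Z^\circ$ generates the positive-dimensional group $B^\circ$ and cannot be finite.

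The central step is to show that no nontrivial subsemitorus $S \subseteq B^\circ$ stabilizes $Z^\circ$. I would argue through the subgroup correspondence for the quotient $\beta^\circ : \Alb^\circ \twoheadrightarrow B^\circ = \Alb^\circ/I$. Given such an $S$, set $\widetilde{S} := (\beta^\circ)^{-1}(S) \supseteq I$. Using the identification $\aleph^\circ = (\beta^\circ)^{-1}(Z^\circ)$, the invariance $Z^\circ + S = Z^\circ$ pulls back to $\aleph^\circ + \widetilde{S} = \aleph^\circ$, so $\widetilde{S} \subseteq \operatorname{St}_{\Alb^\circ}(\aleph^\circ)$. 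Its identity component $\widetilde{S}^{\,0}$ is then a connected subgroup of the stabilizer containing the connected group $I$, and maximality of $I$ forces $\widetilde{S}^{\,0} = I$; hence $S = \widetilde{S}^{\,0}/I$ is trivial, a semitorus being connected. This is precisely where the defining property of $I$ as the \emph{maximal} connected subgroup of $\operatorname{St}_{\Alb^\circ}(\aleph^\circ)$ enters, and I expect it to be the main — though short — obstacle, the delicate points being the identification $\aleph^\circ = (\beta^\circ)^{-1}(Z^\circ)$ and the clean passage between $S$ and its preimage.

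With trivial connected stabilizer in hand, the remaining claims are citations. Because $Z^\circ$ is a $p$-dimensional subvariety of the semitorus $B^\circ$ invariant under no positive-dimensional subsemitorus, Kawamata's proof of the Bloch conjecture in the form of \cite[Cor.~3.8.27]{KobayashiGrundlehren} yields translation-invariant forms $\tau^\circ_0, \ldots, \tau^\circ_p \in H^0\bigl(B^\circ,\, \Omega^p_{B^\circ}\bigr)$ whose restrictions to $Z^\circ_{\reg}$ are linearly independent top-forms, hence span a $(p+1)$-dimensional subspace $V$ and define a generically finite meromorphic map $\varphi_V : Z^\circ_{\reg} \dashrightarrow \mathbb{P}^p$. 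Finally, each $\tau^\circ_\bullet$ is $B^\circ$-invariant, so Item~\ref{il:3-15-2} of Proposition~\ref{prop:3-15}, applied to the semitoric variety $B^\circ \subset B$, extends it to a logarithmic form $\tau_\bullet \in H^0\bigl(B,\, \Omega^p_B(\log \Delta)\bigr)$, which completes the observation.
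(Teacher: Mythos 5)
Your proposal is correct and takes essentially the same route the paper intends: the paper dismisses the key point with ``by construction'', and your subgroup-correspondence argument --- pulling a stabilizing subsemitorus $S \subseteq B^\circ$ back to $\widetilde S \subseteq \operatorname{St}_{\Alb^\circ}(\aleph^\circ)$ and invoking maximality of the connected group $I$ to force $\widetilde S^{\,0} = I$, hence $S$ trivial --- is exactly the content hidden in that phrase. Your identification $\aleph^\circ = (\beta^\circ)^{-1}(Z^\circ)$ is likewise what the paper records as the fibre-bundle statement in Observation~\ref{obs:8-12}, and the remaining assertions are, as in the paper, verbatim citations of \cite[Cor.~3.8.27]{KobayashiGrundlehren} and of Item~\ref{il:3-15-2} of Proposition~\ref{prop:3-15}.
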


\begin{obs}\label{obs:8-12}%
  We have observed in \ref{obs:8-8} that $I ⊆ א°$.  There is more that we can
  say.  The assumption $א° ⊊ \Alb°$ and Item~\ref{il:5-2-2} of
  Definition~\ref{def:5-2} imply that $א°$ is not itself a semitorus.  In
  particular, we find that $I ⊊ א°$ is a proper subset and that the variety $Z°$
  is therefore positive-dimensional.  The inclusion $I ⊂ א°$ also implies that
  the morphisms
  \[
    β° : \Alb° \twoheadrightarrow B° %
    \quad\text{and}\quad %
    β°|_{א°} : א° → Z°
  \]
  are $G$-equivariant fibre bundles, both with typical fibre $I$.  The analytic
  variety $Z°$ is therefore a proper subset, $Z° ⊊ B°$.
\end{obs}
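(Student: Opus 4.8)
The plan is to verify the five assertions of the Observation in sequence, the first being the conceptual core.

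\emph{That $א°$ is not a semitorus.} I would argue by contradiction. Suppose $א°$ were a semitorus. Since $\alb°$ is quasi-algebraic, its image closure $א$ is analytic in $\Alb$, so $א° = א ∩ \Alb°$ is a quasi-algebraic subgroup in the sense of Definition~\ref{def:3-22}, and then Fact~\ref{fact:3-23} makes it a sub-semitorus containing the identity $0_{\Alb°} = \alb°(\what{x})$. But $א°$ also contains the whole image $\img \alb°$, and Proposition~\ref{prop:5-4} asserts that $\img \alb°$ generates $\Alb°$ as an abelian group. A subgroup containing a generating set is the whole group, so $א° = \Alb°$, contradicting the standing assumption $א° ⊊ \Alb°$. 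Equivalently, one may feed the factorization $\what{X}° → א° ↪ \Alb°$ into the universal property~\ref{il:5-2-2} and use its uniqueness clause to force the inclusion to be an isomorphism.

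\emph{Properness of $I$ and the bundle structure.} We already know $I ⊆ א°$ from Observation~\ref{obs:8-8}, and $I$ is a semitorus. Were $I = א°$, then $א°$ would be a semitorus, contradicting the previous step; hence $I ⊊ א°$. Next, recall that $I$ is quasi-algebraic — this was used in Construction~\ref{cons:8-10} to form $B° = \Alb°/I$ via Fact~\ref{fact:3-27}. Thus $β° : \Alb° → B°$ is a surjective quasi-algebraic homomorphism of semitori with kernel $I$; as a surjection of complex Lie groups onto the quotient by a closed subgroup it is a principal $I$-bundle, in particular a fibre bundle with typical fibre $I$, and Construction~\ref{cons:8-10} arranged it to be $G$-equivariant. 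Because $I ⊆ \operatorname{St}_{\Alb°}(א°)$, the subset $א°$ is $I$-saturated: it is a union of $β°$-fibres and equals $(β°)^{-1}(Z°)$ for $Z° := β°(א°)$. Restricting a fibre bundle over the sub-base $Z° ⊆ B°$ again yields a fibre bundle, so $β°|_{א°} : א° → Z°$ is a fibre bundle with the same fibre $I$; it is $G$-equivariant since $\alb°$ is equivariant (Observation~\ref{obs:5-6}), whence $G·א° = א°$ and $G·Z° = Z°$.

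\emph{Dimensions.} Here everything is bookkeeping with the two bundles, using that $\what{X}°$, hence $א = \overline{\img \alb°}$ and its dense open part $א°$, is irreducible, as is the semitorus $\Alb°$. From $β°|_{א°}$ I get $\dim א° = \dim Z° + \dim I$. The proper inclusion $I ⊊ א°$ of the irreducible $א°$ forces $\dim I < \dim א°$, hence $\dim Z° > 0$ and $Z°$ is positive-dimensional. From $β°$ itself, $\dim B° = \dim \Alb° - \dim I$; since $א° ⊊ \Alb°$ is a proper closed analytic subset of the irreducible $\Alb°$ we have $\dim א° < \dim \Alb°$, so $\dim Z° = \dim א° - \dim I < \dim \Alb° - \dim I = \dim B°$. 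Therefore $Z° ⊊ B°$, as claimed.

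\emph{Main obstacle.} I expect the genuine content to sit entirely in the first step: packaging the universal property correctly (cleanly through the generation statement of Proposition~\ref{prop:5-4}) and, more delicately, justifying that a subvariety which happens to be a subgroup is automatically quasi-algebraic in this situation, so that Fact~\ref{fact:3-23} legitimately applies. Once $א°$ is known not to be a semitorus, the properness $I ⊊ א°$, the two fibre-bundle descriptions, and the dimension inequalities are routine.
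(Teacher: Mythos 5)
Your proof is correct, and everything after the first step --- properness of $I$, the two $G$-equivariant bundle structures via $I$-saturation of $א°$, and the dimension count giving $\dim Z° > 0$ and $Z° ⊊ B°$ --- is exactly the reasoning the observation intends. Where you genuinely differ is the key step, that $א°$ is not a sub-semitorus. The paper cites Item~\ref{il:5-2-2} of Definition~\ref{def:5-2}: one corestricts $\alb°$ to $א°$, equips $א°$ with a semitoric compactification (Fact~\ref{fact:3-12}), checks quasi-algebraicity and adaptedness of the corestricted morphism, factors it through $\alb°$, and lets the uniqueness clause force $א° = \Alb°$. Your primary route instead invokes the generation statement of Proposition~\ref{prop:5-4}: a subgroup of $\Alb°$ containing $\img \alb°$ must be all of $\Alb°$. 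This buys real simplification: it needs no compactification of $א°$, no adaptedness check, and --- contrary to the worry in your final paragraph --- no quasi-algebraicity either, since an abstract subgroup containing a generating set is already the whole group; your detour through Definition~\ref{def:3-22} and Fact~\ref{fact:3-23} is superfluous (and in any case unproblematic, because $א = \overline{א°}$ is analytic precisely since $\alb°$ is quasi-algebraic). As Proposition~\ref{prop:5-4} is established independently of the present observation, nothing is circular, and you correctly record the universal-property factorization as the equivalent alternative, so both arguments are on the table. One cosmetic point: you define $Z° := β°(א°)$, while the paper takes $Z°$ to be the trace on $B°$ of the closure of $\img(β° ◦ \alb°)$; the two coincide because $א°$ is $I$-saturated and $β°$ is open, so $β°(א°)$ is closed in $B°$ and contains $\img(β° ◦ \alb°)$ as a dense subset --- a one-line check worth adding.
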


\subsection{Proof of Theorem~\ref*{thm:8-1}}
\approvals{Erwan & yes \\ Stefan & yes}
\label{sec:8-2}

We prove Theorem~\ref{thm:8-1} in the remainder of the present
Section~\ref{sec:8} and maintain Setting~\ref{set:5-1} throughout.  For
simplicity of notation, we prove the contrapositive: assuming that the Albanese
morphism $\alb(X,D,γ)°$ is \emph{not} dominant, we show that the $\cC$-pair
$(X,D)$ admits a Bogomolov sheaf and is hence \emph{not} special.

The proof follows classic arguments, with some additional complications because
of our use of adapted differentials and because of the singularities of the
varieties involved.

\subsection*{Step 1: Simplification}
\approvals{Erwan & yes \\ Stefan & yes}

Recall Lemma~\ref{lem:5-5}: Non-dominance of $\alb(X,D,γ)°$ is preserved when we
replace $γ$ by any cover that factors via $γ$.  We can therefore pass to the
Galois closure and assume that we are in Setting~\ref{set:8-6}.  We use the
notation introduced in Construction~\ref{cons:8-10} and
Observations~\ref{obs:8-11}--\ref{obs:8-12} in the remainder of the proof.

\subsection*{Step 2: A rank-one sheaf in $Ω^{[p]}_{(X,D,γ)}\bigl|_{\what{X}°}$}
\approvals{Erwan & yes \\ Stefan & yes}
\CounterStep{}

Consider the composed morphism of $G$-sheaves
\begin{equation}\label{eq:8-13-1}
  \begin{tikzcd}
    (b°)^* \: Ω^p_{B°} \ar[r, "\diff b°"] & Ω^p_{\what{X}°} \ar[r] &
    Ω^{[p]}_{\what{X}°},
  \end{tikzcd}
\end{equation}
and let $ℒ° ⊆ Ω^{[p]}_{\what{X}°}$ denote the image sheaf, which is then a
torsion free $G$-subsheaf of $Ω^{[p]}_{\what{X}}$.  We summarize its main
properties.

\begin{obs}\label{obs:8-14}%
  The sheaf $ℒ°$ is of rank one because $b°$ factors via the $p$-dimensional
  space $Z°$.  \qed~\mbox{(Observation~\ref{obs:8-14})}
\end{obs}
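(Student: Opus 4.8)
The plan is to read off the rank of $ℒ°$ directly from the dimension of the target of the factorization. The statement gives us that $b°$ factors through the $p$-dimensional analytic set $Z°$, so I would begin by recording this as
\[
  b° \::\: \what{X}° \xrightarrow{\ c°\ } Z° \hookrightarrow B°,
\]
where $c°$ is dominant (its image is dense in $Z°$) and maps a dense open subset of $\what{X}°$ into the smooth locus $Z°_{\reg}$. Correspondingly, the pull-back $\diff b°$ of $p$-forms factors through the restriction morphism $Ω^p_{B°}|_{Z°_{\reg}} → Ω^p_{Z°_{\reg}}$ followed by $\diff c°$.

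For the upper bound I would use that $\dim Z° = p$, so that $Ω^p_{Z°_{\reg}} = ω_{Z°_{\reg}}$ is a line bundle. The restriction morphism above therefore has image contained in a rank-one sheaf; pulling back by the dominant morphism $c°$ and composing with the natural map $Ω^p_{\what{X}°} → Ω^{[p]}_{\what{X}°}$ shows that the image sheaf $ℒ°$ has rank at most one. Since ranks are computed at the generic point of $\what{X}°$, where $\what{X}°$ is smooth and $Ω^p_{\what{X}°} → Ω^{[p]}_{\what{X}°}$ is an isomorphism, no difficulty arises from possible singularities of $\what{X}$.

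For the lower bound — that $ℒ°$ is not the zero sheaf — I would invoke Observation~\ref{obs:8-11}: the invariant forms $τ°_0, …, τ°_p$ restrict to linearly independent, hence nonzero, top-forms on $Z°_{\reg}$. Since $c°$ is dominant with $\diff c°$ of generic rank $p$, the pull-back of such a nonzero top-form is a nonzero $p$-form on $\what{X}°$, whence $ℒ° ≠ 0$. Combined with the upper bound this gives $\rank ℒ° = 1$. I do not anticipate a genuine obstacle here; the only point deserving care is the bookkeeping around the non-smoothness of $\what{X}$, which is handled by passing to the generic point as above, and the observation that factoring $b°$ through the $p$-dimensional $Z°$ is exactly what forces top-forms, and hence a rank-one image, to appear.
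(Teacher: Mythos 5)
Your proposal is correct and is essentially the paper's own argument: the paper disposes of this observation with exactly the one-line justification that $b°$ factors through the $p$-dimensional $Z°$, so that pull-back of $p$-forms passes through the rank-one sheaf $ω_{Z°_{\reg}}$, with non-vanishing coming from dominance of $\what{X}° → Z°$ (the forms of Observation~\ref{obs:8-11} make this explicit, as you note). Your write-up merely fills in the routine details — generic smoothness, computing the rank at the generic point — that the paper leaves to the reader.
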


\begin{claim}\label{claim:8-15} %
  The sheaf $ℒ°$ is contained in the subsheaf
  $Ω^{[p]}_{(X,D,γ)}\bigr|_{\what{X}°} ⊆ Ω^{[p]}_{\what{X}°}$.
\end{claim}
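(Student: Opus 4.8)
The plan is to exploit the factorization $b° = β° ◦ \alb°$ furnished by Construction~\ref{cons:8-10}, together with the functoriality of the pull-back of differential forms, so as to reduce the assertion to Remark~\ref{rem:5-3}.

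First I would unwind the definition of $ℒ°$: by \eqref{eq:8-13-1} it is the image of the composite
\[
  (b°)^* Ω^p_{B°} \xrightarrow{\diff b°} Ω^p_{\what{X}°} → Ω^{[p]}_{\what{X}°}.
\]
Because $b° = β° ◦ \alb°$, contravariant functoriality of the pull-back of $p$-forms factors $\diff b°$ as
\[
  (b°)^* Ω^p_{B°} = (\alb°)^* (β°)^* Ω^p_{B°} \xrightarrow{(\alb°)^* \diff β°} (\alb°)^* Ω^p_{\Alb°} \xrightarrow{\diff \alb°} Ω^p_{\what{X}°}.
\]
Composing with the reflexivization $Ω^p_{\what{X}°} → Ω^{[p]}_{\what{X}°}$, I find that the morphism defining $ℒ°$ factors through the pull-back morphism $(\alb°)^* Ω^p_{\Alb°} → Ω^{[p]}_{\what{X}°}$ attached to the Albanese morphism $\alb° = \alb(X,D,γ)°$. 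Since the image of a factored morphism is contained in the image of its final factor, this already shows that $ℒ°$ is contained in the image of $(\alb°)^* Ω^p_{\Alb°} → Ω^{[p]}_{\what{X}°}$.

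The second and decisive step is to invoke Remark~\ref{rem:5-3}. Being a sheaf-theoretic reformulation of Item~\ref{il:5-2-1} of Definition~\ref{def:5-2}, it states precisely that the image of $(\alb°)^* Ω^p_{\Alb°} → Ω^{[p]}_{\what{X}°}$ lands in the adapted subsheaf $Ω^{[p]}_{(X°, D°, γ)}$, which is nothing but $Ω^{[p]}_{(X,D,γ)}\bigr|_{\what{X}°}$. Combining the two steps yields $ℒ° ⊆ Ω^{[p]}_{(X,D,γ)}\bigr|_{\what{X}°}$, as claimed.

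I anticipate no genuine obstacle, since the argument is purely formal once the factorization is in place. The only point deserving a moment's care is that the factorization must be read at the level of sheaf morphisms, not merely on global sections; this is harmless because $\alb°$ and $β°$ are holomorphic over big open subsets, so the two composites agree there and hence everywhere, $Ω^{[p]}_{\what{X}°}$ being torsion free. Alternatively, one may argue on generators: $Ω^p_{B°}$ is free, as $B°$ is a semitorus, so $ℒ°$ is generated by the forms $\diff b°(τ°) = \diff \alb°\bigl(\diff β°(τ°)\bigr)$ with $τ°$ an invariant $p$-form on $B°$, and $\diff β°(τ°) ∈ H⁰\bigl(Ω^p_{\Alb°}\bigr)$ pulls back into $Ω^{[p]}_{(X,D,γ)}\bigr|_{\what{X}°}$ by Item~\ref{il:5-2-1} directly.
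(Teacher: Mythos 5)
Your proposal is correct and follows exactly the paper's own argument: factor $b° = β° ◦ \alb°$, use functoriality of the pull-back of Kähler differentials to see that the image of \eqref{eq:8-13-1} lies in the image of $(\alb°)^* \, Ω^p_{\Alb°} → Ω^{[p]}_{\what{X}°}$, and then conclude by Remark~\ref{rem:5-3}. The additional remarks on working at the sheaf level and the alternative argument via invariant generators are fine but not needed; the paper's proof is precisely your two-step reduction.
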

\begin{proof}[Proof of Claim~\ref{claim:8-15}]
  The morphism $b°$ factors via $\alb°$.  Since pull-back of Kähler
  differentials is functorial, $\diff b°$ factors via $\diff \alb°$ and the
  image of the composed morphism \eqref{eq:8-13-1} is contained in the image of
  the composition
  \[
    \begin{tikzcd}
      (\alb°)^* \: Ω^p_{\Alb°} \ar[r, "\diff \alb°"] & Ω^p_{\what{X}°}
      \ar[r] & Ω^{[p]}_{\what{X}°}.
    \end{tikzcd}
  \]
  But then Remark~\ref{rem:5-3} gives the claim.
  \qedhere~\mbox{(Claim~\ref{claim:8-15})}
\end{proof}

\subsection*{Step 3: A rank-one sheaf in $Ω^{[p]}_{(X,D,γ)}$}
\approvals{Erwan & yes \\ Stefan & yes}

We extend the sheaf $ℒ°$ from $\what{X}°$ to a rank-one, reflexive sheaf that is
defined on all of $\what{X}$.  As in Section~\ref{sec:4}, the reader coming from
algebraic geometry might find the proof surprisingly complicated: in the
analytic setting, it is typically not possible to extend coherent sheaves across
codimension-two subsets.

\begin{claim}\label{claim:8-16}%
  There exists a rank-one, reflexive $G$-subsheaf $ℒ ⊆ Ω^{[p]}_{(X,D,γ)}$ whose
  restriction to $\what{X}°$ contains $ℒ°$.  There are sections $σ_0, …, σ_p ∈
  H⁰ \bigl( \what{X},\, ℒ \bigr)$ whose associated linear system defines a
  dominant meromorphic map $\what{X} \dasharrow ℙ^p$.
\end{claim}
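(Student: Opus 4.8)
The plan is to produce the desired global sections first and then build ℒ as a saturation around them, thereby never extending a sheaf across the boundary directly. I would start from the logarithmic $p$-forms $τ_0, …, τ_p ∈ H⁰\bigl(B,\, Ω^p_B(\log Δ)\bigr)$ furnished by Observation~\ref{obs:8-11}, whose restrictions to $Z°_{\reg}$ span the linear system defining the generically finite map $φ_V$. Since $Ω¹_B(\log Δ)$ is free by Item~\ref{il:3-15-1} of Proposition~\ref{prop:3-15}, each $τ_i$ is a finite sum of wedge products of $B°$-invariant (equivalently, logarithmic) $1$-forms. I would pull these constituent $1$-forms back along the composite meromorphic map $b = β ◦ \alb : \what{X} \dashrightarrow B$. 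The defining property of the Albanese of a cover---Item~\ref{il:5-2-1} of Definition~\ref{def:5-2}, which is a \emph{global} statement over $\what{X}$---guarantees that each pulled-back $1$-form is a global section of $Ω^{[1]}_{(X,D,γ)}$, and not merely a logarithmic differential.

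Next I would set $σ_i := b^*τ_i$, assembled by wedging the pulled-back $1$-forms. The bridge from $1$-forms to $p$-forms is the multiplicativity of the adapted structure established in \cite{orbiAlb1}: a wedge of global adapted $1$-differentials is again a global adapted $p$-differential, so that $σ_0, …, σ_p ∈ H⁰\bigl(\what{X},\, Ω^{[p]}_{(X,D,γ)}\bigr)$. By functoriality of pull-back, $σ_i|_{\what{X}°} = \diff b°(τ°_i)$ is a section of the image sheaf $ℒ°$ from Step~2, and these sections generate $ℒ°$ at the generic point, because $φ_V$ is generically finite and $b$ is dominant onto $Z$.

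I would then define $ℒ$ as the saturation of the coherent subsheaf $\sum_i 𝒪_{\what{X}}·σ_i$ inside the reflexive $G$-sheaf $Ω^{[p]}_{(X,D,γ)}$. A saturated subsheaf of a reflexive sheaf on the normal space $\what{X}$ is reflexive; it has rank one because $ℒ°$ does by Observation~\ref{obs:8-14}; and it is $G$-invariant because $ℒ°$ is a $G$-subsheaf while $Ω^{[p]}_{(X,D,γ)}$ is a $G$-sheaf, so that its restriction to $\what{X}°$ contains $ℒ°$. For the final assertion, the linear system spanned by $σ_0, …, σ_p$ defines the meromorphic map $[σ_0 : ⋯ : σ_p] = φ_V ◦ b : \what{X} \dashrightarrow ℙ^p$. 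Since $\img b° = β°(\img \alb°)$ is dense in $Z°$ by Observation~\ref{obs:8-12}, the map $b$ is dominant onto $Z$, and $φ_V$ is generically finite by Observation~\ref{obs:8-11}; hence the composite is dominant, as required.

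I expect the main obstacle to lie in the second step: the global pole control along $\what{D}$, i.e.\ upgrading ``logarithmic'' to ``adapted'' over \emph{all} of $\what{X}$, across the boundary divisor. Because $\what{X}° = \what{X} ∖ \supp \what{D}$ is only the complement of a divisor, reflexivity alone does not permit extending the inclusion $ℒ° ⊆ Ω^{[p]}_{(X,D,γ)}|_{\what{X}°}$ from $\what{X}°$ to $\what{X}$---precisely the difficulty flagged in the text preceding the statement. The argument sidesteps this by manufacturing honest global sections: it reduces the global $p$-form statement to the global $1$-form statement of Definition~\ref{def:5-2} via the wedge-compatibility of adapted differentials, and only afterwards takes a saturation inside the already globally defined sheaf $Ω^{[p]}_{(X,D,γ)}$.
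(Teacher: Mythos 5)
Your proposal is correct, but it reaches the claim by a genuinely different mechanism than the paper does. The paper works sheaf-first: it resolves the meromorphic map $b$ to a morphism $\wtilde{b}$ on a $G$-equivariant log resolution $ρ : \wtilde{X} → \what{X}$, defines $ℒ' := ρ_* \img \bigl( \diff \wtilde{b} \bigr) ⊆ Ω^{[p]}_{\what{X}}(\log \what{D})$, deduces $ℒ' ⊆ Ω^{[p]}_{(X,D,γ)}$ from Claim~\ref{claim:8-15} together with the construction, saturates to obtain $ℒ$, and only then produces the sections $σ_•$ as the reflexive differentials generically agreeing with the pull-backs of the $τ_•$. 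You work sections-first: decompose the $τ_•$ into wedges of invariant $1$-forms (freeness, Item~\ref{il:3-15-1} of Proposition~\ref{prop:3-15}), push those $1$-forms through the \emph{global} Item~\ref{il:5-2-1} of Definition~\ref{def:5-2}, wedge, and saturate the subsheaf generated by the resulting global sections. What your route buys is a self-contained treatment of exactly the point the paper handles tersely: the sentence ``Together with Claim~\ref{claim:8-15}, the construction shows that $ℒ'$ is contained in $Ω^{[p]}_{(X,D,γ)}$'' needs an argument along the divisors inside $\what{D}$, where Claim~\ref{claim:8-15} is silent; your reduction to the $1$-form statement of Definition~\ref{def:5-2} supplies that argument and never asks a sheaf inclusion to extend across a divisor. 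What the paper's route buys is independence from the multiplicative structure of adapted differentials: your proof stands or falls with the fact that wedge products of sections of $Ω^{[1]}_{(X,D,γ)}$ land in $Ω^{[p]}_{(X,D,γ)}$ --- this is true, and in the spirit of \cite[Obs.~4.12]{orbiAlb1}, which the paper itself invokes only later (Step~4) for symmetric powers, but you cite it only loosely and should pin down a precise reference or give the local computation for adapted covers.

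Two further points to tighten. First, Item~\ref{il:5-2-1} concerns logarithmic $1$-forms on the compactified Albanese, not on $B$; before applying it you need that invariant $1$-forms on $B°$ pull back under the group quotient $β° : \Alb° → B°$ to invariant, hence logarithmic (Item~\ref{il:3-15-2} of Proposition~\ref{prop:3-15}), $1$-forms on $\Alb$. Second, your $G$-invariance argument does not parse as stated: the individual $τ_•$ need not be $G$-invariant, so $g^* σ_i$ is in general not an $𝒪$-combination of the $σ_•$, and the sheaf they generate need not be $G$-stable --- only its saturation is. The correct statement is that $g^* ℒ$ and $ℒ$ are saturated rank-one subsheaves of the $G$-sheaf $Ω^{[p]}_{(X,D,γ)}$ which agree over $\what{X}°$ (both equal the saturation of the $G$-stable sheaf $ℒ°$ there, since each contains a nonzero subsheaf of the rank-one sheaf $ℒ°$), and two saturated subsheaves of a torsion-free sheaf that agree on a dense open set coincide.
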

\begin{proof}[Proof of Claim~\ref{claim:8-16}]
  The morphism $b° : \what{X}° → B°$ is quasi-algebraic and therefore extends to
  a $G$-equivariant meromorphic map $b : \what{X} \dasharrow B$.  Choose a
  $G$-equivariant log-resolution $(\wtilde{X}, \wtilde{D})$ of $(\what{X},
  \what{D})$ and the meromorphic map $b$ as follows:
  \[
    \begin{tikzcd}[column sep=large, row sep=1cm]
      \wtilde{X} \ar[d, two heads, "ρ\text{, resolution}"'] \ar[rd, bend left=10, "\wtilde{b}"] \\
      \what{X} \ar[r, dashed,"b"'] & B.
    \end{tikzcd}
  \]
  We can then consider $G$-subsheaves
  \[
    \img \Bigl( \diff \wtilde{b} : Ω^p_B(\log Δ) → Ω^p_{\wtilde{X}}(\log \wtilde{D}) \Bigr) ⊆ Ω^p_{\wtilde{X}}(\log \wtilde{D})
  \]
  and
  \[
    ℒ' := ρ_* \img \bigl( \diff \wtilde{b} \bigr) ⊆ ρ_* Ω^p_{\wtilde{X}}(\log \wtilde{D}) ⊆ Ω^{[p]}_{\what{X}}(\log \what{D})
  \]
  The construction guarantees that the sheaves $ℒ'$ and $ℒ°$ agree over the open
  set $\what{X}°$; in particular, we find that $ℒ'$ is of rank one.  Together
  with Claim~\ref{claim:8-15}, the construction shows that $ℒ'$ is contained in
  $Ω^{[p]}_{(X,D,γ)}$.  Finally, let $ℒ$ be the saturation of $ℒ'$ in
  $Ω^{[p]}_{(X,D,γ)}$.  The sheaf $ℒ$ is then automatically reflexive.  In
  summary, we obtain inclusions of $G$-sheaves as follows,
  \[
    ℒ' ⊆ ℒ ⊆ Ω^{[p]}_{(X,D,γ)} ⊆ Ω^{[p]}_{\what{X}}(\log \what{D}).
  \]
  In order to construct the sections $σ_•$, recall from
  Observation~\ref{obs:8-11} that the differentials $τ_•$ have logarithmic poles
  at infinity, and then so do their pull-backs.  To be more precise, consider
  the reflexive differentials
  \[
    σ_• ∈ H⁰ \bigl( \what{X},\, Ω^{[p]}_{\what{X}}(\log \what{D})\bigr)
  \]
  that generically agree with the pull-back of $τ_•$, and therefore restrict to
  sections
  \[
    σ_•|_{\what{X}°} ∈ H⁰\bigl( \what{X}°,\, ℒ° \bigr) ⊂ H⁰\bigl( \what{X}°,\,
    Ω^{[p]}_{(X,D,γ)} \bigr).
  \]
  But that already implies that the $σ_•$ are sections of $ℒ$.
  \qedhere~\mbox{(Claim~\ref{claim:8-16})}
\end{proof}

\subsection*{Step 4: Conclusion}
\approvals{Erwan & yes \\ Stefan & yes}

Given any number $i ∈ ℕ$, recall from \cite[Obs.~4.12]{orbiAlb1} that reflexive
symmetric multiplication of adapted reflexive tensors yields inclusions
\[
  ℒ^{[⊗i]} ⊆ \Sym^{[i]}_{\cC} Ω^{[p]}_{(X,D,γ)}.
\]
We consider the $G$-invariant push-forward sheaves,
\[
  ℒ_i := \left(γ_* ℒ^{[⊗ i]} \right)^G
  ⊆ \left(γ_* \Sym^{[i]}_{\cC} Ω^{[p]}_{(X,D,γ)} \right)^G
  \overset{\text{\cite[Cor.~4.20]{orbiAlb1}}}{=} \Sym^{[i]}_{\cC} Ω^{[p]}_{(X,D,\Id_X)}.
\]
Recall from \cite[Lem.~A.4]{GKKP11} that the sheaves $ℒ_i$ are reflexive.  By
construction, their rank is one.  We will show in this step that $κ_{\cC}(ℒ_1) ≥
p$.

\begin{obs}
  If $i ∈ ℕ$ is any number, then $ℒ_i$ equals the $i^{\text{th}}$ $\cC$-product
  sheaf
  \[
    ℒ_i = \Sym^{[i]}_{\cC} ℒ_1,
  \]
  as introduced in \cite[Def.~6.5]{orbiAlb1} \qed
\end{obs}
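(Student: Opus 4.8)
The plan is to prove the asserted equality of the two rank-one reflexive sheaves $ℒ_i$ and $\Sym^{[i]}_{\cC} ℒ_1$ by reducing it to an equality over a big open subset and then invoking reflexivity. Both sheaves are rank-one reflexive subsheaves of the reflexive sheaf $\Sym^{[i]}_{\cC} Ω^{[p]}_{(X,D,\Id_X)}$: for $ℒ_i$ this is exactly the inclusion displayed above, while $\Sym^{[i]}_{\cC} ℒ_1$ is, by \cite[Def.~6.5]{orbiAlb1}, the saturated rank-one subsheaf generated by the $i$-fold $\cC$-products of local sections of $ℒ_1 ⊆ Ω^{[p]}_{(X,D,\Id_X)}$. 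Since two reflexive subsheaves of a fixed torsion-free sheaf on a normal space coincide as soon as they agree over a big open subset (both are the pushforward of their common restriction), it suffices to establish the identity away from an analytic set of codimension at least two.

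First I would choose a $G$-invariant big open set $U ⊆ X$ with $\codim_X(X ∖ U) ≥ 2$ over which $X$ is smooth, the components of $\supp D$ are smooth and disjoint, and the restriction of $γ$ is an adapted cover in the sense of \cite{orbiAlb1}. Over $γ^{-1}(U)$ the sheaf $Ω^{[p]}_{(X,D,γ)}$ then admits its standard local description as a subsheaf of $Ω^p_{\what{X}}(\log \what{D})$, the sheaf $ℒ$ is a genuine line bundle, and the functors $(γ_*(-))^G$ and $γ^{[*]}$ become mutually inverse equivalences between $\cC$-sheaves on $(U, D|_U)$ and the corresponding $G$-sheaves on $γ^{-1}(U)$. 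In particular $γ^{[*]}ℒ_1$ and $ℒ$ agree over $γ^{-1}(U)$. The crux is the compatibility of the $\cC$-symmetric product with these cover operations, which for the ambient sheaf is recorded as the identity $\bigl(γ_* \Sym^{[i]}_{\cC} Ω^{[p]}_{(X,D,γ)}\bigr)^G = \Sym^{[i]}_{\cC} Ω^{[p]}_{(X,D,\Id_X)}$ of \cite[Cor.~4.20]{orbiAlb1}: under the equivalence above, $\Sym^{[i]}_{\cC}(-)$ on the base corresponds to the ordinary reflexive symmetric power on the cover, which for the rank-one sheaf $ℒ$ is simply the reflexive tensor power $ℒ^{[⊗ i]}$. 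Restricting this correspondence to the rank-one subsheaf gives, over $U$,
\[
  \Sym^{[i]}_{\cC} ℒ_1 = \bigl( γ_* (γ^{[*]} ℒ_1)^{[⊗ i]} \bigr)^G = \bigl( γ_* ℒ^{[⊗ i]} \bigr)^G = ℒ_i,
\]
and since both sides are reflexive this equality of restrictions propagates to all of $X$.

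The step I expect to be the main obstacle is the precise verification of this last compatibility, namely that the abstractly defined $\cC$-product sheaf of \cite[Def.~6.5]{orbiAlb1} is computed by invariant pushforward of reflexive tensor powers along our \emph{particular} cover $γ$. Two points require care. First, $γ$ is not assumed adapted globally (Setting~\ref{set:5-1}), so the identification of functors is only valid over the big open set $U$ where $γ$ is adapted; this is exactly why the reduction to $U$ is essential rather than cosmetic. Second, one must check that the saturation built into \cite[Def.~6.5]{orbiAlb1} does not change the sheaf over $U$, where $ℒ$ is locally free and the $\cC$-multiplication map already surjects onto a saturated rank-one subsheaf. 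Both difficulties are absorbed by the reduction to $U$ together with the reflexivity of $ℒ_i$ guaranteed by \cite[Lem.~A.4]{GKKP11}, after which the claimed equality follows as above.
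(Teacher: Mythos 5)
Your argument breaks down at the very step you flag as essential: the choice of a big open subset $U \subseteq X$ over which ``the restriction of $\gamma$ is an adapted cover''. No such $U$ exists in general. Adaptedness is a relation between the ramification orders of $\gamma$ along the components of $\supp D$ and the $\cC$-multiplicities of those components; it is a condition in codimension one, along the boundary, and Setting~\ref{set:5-1} --- which governs the Observation through Setting~\ref{set:8-6} --- explicitly does \emph{not} assume it. The cover $\gamma$ may ramify to the wrong order along a boundary component, or not ramify at all: the identity $\gamma = \Id_X$ is a perfectly legitimate Galois cover here, and Step~1 of the proof of Theorem~\ref{thm:8-1} (passage to a Galois closure) does not change it. Since deleting codimension-two subsets can never alter ramification orders along divisors, the dictionary you build over $U$ --- under which $\Sym^{[i]}_{\cC}$ on the base becomes the ordinary reflexive power on the cover, and under which the saturation of \cite[Def.~6.5]{orbiAlb1} becomes invisible --- is simply not available, and your displayed chain of equalities cannot be run.

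The gap is substantive, not bookkeeping, because along a non-adapted boundary component the two claims you propose to ``absorb by the reduction to $U$'' are false there. Take $X$ a curve of genus $2$, $D = \tfrac{1}{2}\cdot\{0\}$ and $\gamma = \Id_X$; then $\alb(X,D,\Id_X)^{\circ} : X \to \operatorname{Jac}(X)$ is not dominant, the construction of Steps~2 and 3 yields $\sL = \sL_1 = \omega_X$ and $\sL_i = \omega_X^{\otimes i}$, and --- $X$ being a curve --- the only big open subset is $U = X$ itself, so there is nothing to shrink away. Along $\{0\}$ the rounding discrepancy $i \cdot \lceil r/m \rceil > \lceil i r/m \rceil$ (here $r=1$, $m=2$, since $\gamma$ is unramified while the multiplicity is $2$) makes the $\cC$-symmetric product strictly larger than the ordinary one: $\Sym^{[2]}_{\cC}\Omega^{[1]}_{(X,D,\Id_X)} = \omega_X^{\otimes 2}\otimes\sO_X(\{0\})$, inside which $\sL_2 = \omega_X^{\otimes 2}$ is \emph{not} saturated. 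So on your own reading of \cite[Def.~6.5]{orbiAlb1} --- that a saturation is built in --- the saturation does change the sheaf on $U$, contrary to your second ``point requiring care''; indeed in this example it forces $\Sym^{[2]}_{\cC}\sL_1 = \omega_X^{\otimes 2}\otimes\sO_X(\{0\}) \supsetneq \sL_2$, so no argument along your lines can close the gap. What your generic-agreement considerations do prove, and what the remainder of the paper's Step~4 actually uses, is only the inclusion $\sL_i \subseteq \Sym^{[i]}_{\cC}\sL_1$: the two sheaves agree over the locus where $\gamma$ is étale and $D$ vanishes, and the right-hand side is the saturated one, so it swallows the left-hand side. For comparison, the paper records the Observation without any proof at all, as if immediate from the definitions; the equality, as opposed to the inclusion, is precisely the point that a proof would have to address along the non-adapted boundary components --- the locus your reduction is designed to ignore.
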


Recalling the definition of the $\cC$-Kodaira-Iitaka dimension from
\cite[Sect.~6.2]{orbiAlb1}, it remains to find one sheaf $ℒ_i$ with non-empty
linear system whose associated meromorphic map has an image of dimension $≥ p$.
For this, consider the linear systems
\[
  W_i := H⁰ \left( \what{X}, ℒ^{[⊗i]} \right)^G ⊆ H⁰ \left( \what{X}, ℒ^{[⊗i]} \right).
\]
If $i$ is sufficiently large and divisible, then $W_i$ is positive-dimensional
and the associated meromorphic map $φ_W : \what{X} \dasharrow ℙ^•$ has an image
of dimension
\[
  \dim \img φ_W ≥ \dim \img (φ_V◦b°) ≥ p.
\]
By construction, the meromorphic map $φ_W$ is constant on $G$-orbits and the
induced meromorphic map $φ : X \dasharrow ℙ^•$ equals the meromorphic map
associated with the reflexive sheaf $ℒ_i$.  We have seen above that this
finishes the proof of Theorem~\ref{thm:8-1}.  \qed


\phantomsection\addcontentsline{toc}{part}{Applications}

%
%
\svnid{$Id: 09-C-semitoric.tex 940 2024-10-10 07:57:46Z kebekus $}
\selectlanguage{british}

\section{\texorpdfstring{$\cC$}{𝒞}-semitoric pairs}
\label{sec:9}
\subversionInfo
\approvals{Erwan & yes \\ Stefan & yes}

We argue that quotients of semitoric varieties should be seen as
$\cC$-analogoues of the tori and semitoric varieties that appear in the classic
Albanese construction.  Before stating our main result on the existence of an
Albanese for a $\cC$-pair, we define and discuss the relevant notion precisely.

\begin{defn}[$\cC$-semitoric pairs] %
  A \emph{$\cC$-semitoric pair} is a $\cC$-pair $(X,D)$ such that there exists a
  semitoric variety $A° ⊂ A$, a finite group $G$ acting on $(A, Δ_A)$, and a
  $\cC$-isomorphism of the form
  \begin{equation}\label{eq:9-1-1}
    (X, D) ≅ \factor{\bigl( A, Δ_A \bigr)}{G}
  \end{equation}
  An isomorphism as in \eqref{eq:9-1-1} is called a \emph{presentation} of the
  $\cC$-semitoric pair.
\end{defn}

The choice of a presentation is not part of the data that defines a
$\cC$-semitoric pair.

\begin{rem}
  If a $\cC$-pair $(X,D)$ is $\cC$-semitoric, then it is $(X,D)$ is locally
  uniformizable, \cite[Def.~2.32]{orbiAlb1} and $X$ is compact Kähler,
  cf.~\cite[Prop.~5.3.5]{MR3156076}.
\end{rem}

\begin{example}
  The $\cC$-pair
  \[
    \textstyle\Bigl( ℙ¹, \frac{1}{2}·\{0\}+\frac{1}{2}·\{1\}+\frac{1}{2}·\{2\}+\frac{1}{2}·\{∞\} \Bigr)
  \]
  is $\cC$-semitoric.
\end{example}

It is perhaps not obvious from the outset that ``$\cC$-semitoric pair'' is a
meaningful notion.  In particular, it is probably not clear that morphisms
between the open parts of $\cC$-semitoric pairs have anything to do with the
groups that define the semitoric structures of domain and target.  Here, we
would like to make the point that \emph{quasi-algebraic} $\cC$-morphisms of
$\cC$-semitoric pairs do indeed come from group morphisms, and therefore respect
the structure in a meaningful way.  We see this as a strong indication that
$\cC$-semitoric pairs are relevant objects to consider.

\begin{thm}[Morphisms between $\cC$-semitoric pairs]\label{thm:9-4} %
  Let $(X_1, D_{X_1})$ and $(X_2, D_{X_2})$ be two $\cC$-semitoric pairs with
  presentations
  \[
    (X_1, D_{X_1}) ≅ \factor{\bigl( A_1, Δ_{A_1} \bigr)}{G_1}
    \quad\text{and}\quad
    (X_2, D_{X_2}) ≅ \factor{\bigl( A_2, Δ_{A_2} \bigr)}{G_2.}
  \]
  Given any quasi-algebraic $\cC$-morphism $φ° : (X°_1, D°_{X_1}) → (X°_2,
  D°_{X_2})$, there exists a semitoric variety $B° ⊂ B$ and a commutative
  diagram of the following form,
  \begin{equation}\label{eq:9-4-1}%
    \begin{tikzcd}[column sep=2.8cm]
      B° \ar[r, two heads, "ψ°\text{, quasi-algebraic}", "\text{étale cover}"'] \ar[d, "Φ°\text{, quasi-algebraic}"'] & A°_1 \ar[r, two heads, "q°_1\text{, quotient}"] & X°_1 \ar[d, "φ°"] \\
      A°_2 \ar[r, equal] & A°_2 \ar[r, two heads, "q°_2\text{, quotient}"'] & X°_2.
    \end{tikzcd}
  \end{equation}
\end{thm}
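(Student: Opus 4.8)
The plan is to reduce the statement to a single lifting problem and then solve it by a fibre-product construction, letting the defining multiplicity condition of a $\cC$-morphism do the essential work of making the resulting cover étale. First I would compose $φ°$ with the quotient morphism $q°_1$. The map $q°_1 : (A°_1, 0) → (X°_1, D°_{X_1})$ is the canonical, quasi-algebraic $\cC$-morphism attached to the presentation: it is strongly adapted, with $\cC$-cotangent sheaf $Ω¹_{A°_1}$, in the sense of \cite[Obs.~12.10]{orbiAlb1}. Hence $g° := φ° ◦ q°_1$ is a quasi-algebraic $\cC$-morphism from the trivial-boundary pair $(A°_1, 0)$ to $(X°_2, D°_{X_2})$, and its image is not contained in $\supp D°_{X_2}$, hence not in the branch locus of $q°_2$, by the very definition of a $\cC$-morphism (cf.\ the avoidance of ramification loci in Construction~\ref{cons:5-7}). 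Diagram~\eqref{eq:9-4-1} now asks exactly for a quasi-algebraic morphism $Φ°$ lifting $g°$ through the quotient $q°_2$, after replacing $A°_1$ by a suitable étale cover $ψ° : B° ↠ A°_1$.

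To build the cover, I would take $B°$ to be a connected component of the fibre product $A°_1 ⨯_{X°_2} A°_2$, with its two projections $ψ° : B° → A°_1$ and $Φ° : B° → A°_2$; commutativity of~\eqref{eq:9-4-1}, namely $q°_2 ◦ Φ° = g° ◦ ψ° = φ° ◦ q°_1 ◦ ψ°$, is then built into the construction. The hard part — and the conceptual heart of the proof — will be to show that $ψ°$ is étale. This is precisely where the $\cC$-morphism hypothesis enters. The boundary $D°_{X_2}$ records the ramification of $q°_2$, so along a component of $⌊D°_{X_2}⌋$ of multiplicity $1-\tfrac{1}{m}$ the morphism $q°_2$ is locally $z ↦ z^m$. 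The divisibility condition defining the $\cC$-morphism $g°$ forces $(g°)^*$ of that divisor to have multiplicity a multiple of $m$ along each of its components, so that an $m$-th root can be extracted after an unramified base change: the local model $\{(a,z) : (g°)^* w(a) = z^m\}$ of the fibre product is smooth and unramified over $A°_1$. Thus $ψ°$ is a finite étale cover and $B°$ is in particular smooth and, being a component, connected.

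It remains to equip $B°$ with a semitoric structure and to verify quasi-algebraicity. Being a connected finite étale cover of the semitorus $A°_1$, the variety $B°$ is again a semitorus: after lifting the identity it becomes a connected commutative complex Lie group, presented as a $(\mathbb{C}^*)^{⨯•}$-bundle over the corresponding finite cover of $T_{A_1}$. Facts~\ref{fact:3-12} and~\ref{fact:3-13} then provide a semitoric compactification $B° ⊂ B$ for which the covering group morphism $ψ°$ is quasi-algebraic. The composite $q°_2 ◦ Φ° = g° ◦ ψ°$ is quasi-algebraic by Lemma~\ref{lem:2-4}, and since the quotient $q°_2$ is finite, the graph of $Φ°$ is a component of the preimage of the (analytic) graph of $q°_2 ◦ Φ°$ under the finite map $\Id ⨯ q°_2 : B ⨯ A_2 → B ⨯ X_2$; by the proper mapping theorem it closes up to an analytic subset of $B ⨯ A_2$ that is bimeromorphic to $B$, exhibiting $Φ°$ as quasi-algebraic. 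This produces diagram~\eqref{eq:9-4-1} and completes the argument.

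The one point I expect to need care beyond the above is the exact form of the multiplicity convention used for $\cC$-morphisms in \cite{orbiAlb1}: the étaleness of $ψ°$ rests on the \emph{divisible} (classical) condition, which is guaranteed here because $q°_2$ is strongly adapted and $g°$ is an honest $\cC$-morphism into the quotient pair. By Proposition~\ref{prop:3-18} one could additionally arrange $Φ°$ and $ψ°$ to be group morphisms after a suitable choice of base points, although this refinement is not needed for the statement.
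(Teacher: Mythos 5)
Your overall frame --- normalise a component of the fibre product $A°_1 ⨯_{X°_2} A°_2$ and chase the resulting diagram --- matches the paper's starting point, but the step you yourself call the conceptual heart, étaleness of $ψ°$, contains a genuine gap, and it is exactly this gap that forces the paper into a different argument. The $\cC$-morphism notion of \cite{orbiAlb1} is defined through pull-back of adapted differentials, and in local models this yields only Campana's \emph{non-classical} inequality condition on multiplicities, not the classical divisibility condition your argument needs. Concretely, $g : \bC_z → \bC_x$, $z ↦ z^3$, is a $\cC$-morphism from $(\bC, 0)$ to $\bigl(\bC, \tfrac{1}{2}\{0\}\bigr)$: on the normalised fibre product $t ↦ (t^2, t^3)$ with the strongly adapted cover $x = w^2$, the pull-back of the adapted differential $dw$ is $3t^2\,dt ∈ 𝒪·t\,dt = γ_1^* Ω¹_{\bC_z}$, so the criterion of \cite[Prop.~9.3]{orbiAlb1} is satisfied --- yet $\{z^3 = w^2\}$ normalises to a \emph{double cover of $\bC_z$ branched at the origin}. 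So ``strongly adapted target cover plus honest $\cC$-morphism'' does not give divisibility, and your $B°$ need be neither étale over $A°_1$ nor a semitorus. Worse, for quasi-algebraic $\cC$-morphisms between $\cC$-semitoric pairs the divisibility/étaleness you assert is essentially equivalent to the theorem itself (it follows a posteriori from Diagram~\eqref{eq:9-4-1}), so assuming it at this stage is circular: it is a global consequence of compactness and quasi-algebraicity, not a local consequence of the definition.

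The paper circumvents precisely this point. It keeps the normalised fibre-product component $C°$ and compactifies it via Zariski's main theorem to a finite cover $β : C → A_1$, but it does \emph{not} take $B° = C°$. Instead it sets $B° := \Alb(A_1, Δ_{A_1}, β)°$, the Albanese of the cover $β$ of the \emph{logarithmic} pair $(A_1, Δ_{A_1})$. The universal property in Definition~\ref{def:5-2} factors $β°$ as $ψ° ◦ \alb(A_1, Δ_{A_1}, β)°$, and the same universal property --- this is where the hypothesis that $φ°$ is a $\cC$-morphism enters --- factors $η° : C° → A°_2$ as $Φ° ◦ \alb(A_1, Δ_{A_1}, β)°$. Étaleness of $ψ°$ is then not a local multiplicity computation at all: it follows from the estimate $\dim B° ≤ h⁰\bigl(C,\, β^* Ω¹_{A_1}(\log Δ_{A_1})\bigr) = \dim A°_1$ (Proposition~\ref{prop:5-4} plus freeness of the log cotangent sheaf of a semitoric variety, Proposition~\ref{prop:3-15}), combined with surjectivity and Corollary~\ref{cor:3-19}. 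In other words, the ramification that your construction cannot exclude on $C°$ is absorbed by passing to the adapted Albanese. If you want to salvage your write-up, replace your $B°$ by this Albanese and rerun the diagram chase; your quasi-algebraicity arguments then go through along the lines of the paper's Steps 1--3.
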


\begin{rem}[Quasi-algebraic maps between semitori]\label{rem:9-5}%
  Recall Proposition~\ref{prop:3-18}: if we choose points $0_{B°} ∈ B°$ and
  $0_{A°_2} ∈ A°_2$ to equip $B°$ and $A°_2$ with Lie group structures, then
  $Φ°$ can be written as a Lie group morphism composed with a translation.
\end{rem}

As an immediate corollary to Remark~\ref{rem:9-5}, we note that quasi-algebraic
morphisms of $\cC$-semitoric pairs enjoy many of the special properties known
for Lie group morphisms.  The following corollary lists a few of them.

\begin{cor}[Description of morphisms between $\cC$-semitoric pairs]\label{cor:9-6}%
  The following holds in the setting of Theorem~\ref{thm:9-4}.
  \begin{enumerate}
    \item The fibres of $φ°$ are of pure dimension.
    \item Any two non-empty fibres of $φ°$ are of the same dimension.
    \item If $φ°$ is quasi-finite, then it is finite.  \qed
  \end{enumerate}
\end{cor}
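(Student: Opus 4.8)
The plan is to deduce all three assertions from the structural description of $\varphi^\circ$ provided by Theorem~\ref{thm:9-4}, reducing each statement to the corresponding property of the semitorus morphism $\Phi^\circ$, which is already recorded in Corollary~\ref{cor:3-19}. Concretely, I would invoke Theorem~\ref{thm:9-4} to obtain the commutative square~\eqref{eq:9-4-1} and set $r^\circ := q_1^\circ \circ \psi^\circ \colon B^\circ \twoheadrightarrow X_1^\circ$. Since $\psi^\circ$ is an étale (hence finite) cover and $q_1^\circ$ is the quotient by the finite group $G_1$, the map $r^\circ$ is finite and surjective; likewise $q_2^\circ$ is finite. Commutativity of~\eqref{eq:9-4-1} then reads
\[
  \varphi^\circ \circ r^\circ = q_2^\circ \circ \Phi^\circ.
\]
Because $\Phi^\circ$ is a quasi-algebraic morphism between open parts of semitoric varieties (Remark~\ref{rem:9-5}), Corollary~\ref{cor:3-19} applies to it: its non-empty fibres are all pure of one fixed dimension $d$.

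Next I would analyse the fibres of $\varphi^\circ$ by transporting them through the finite maps. Fixing $x_2 \in X_2^\circ$ with non-empty fibre $S := (\varphi^\circ)^{-1}(x_2)$, the displayed identity gives
\[
  (r^\circ)^{-1}(S) = (q_2^\circ \circ \Phi^\circ)^{-1}(x_2) = (\Phi^\circ)^{-1}\bigl( (q_2^\circ)^{-1}(x_2) \bigr).
\]
As $q_2^\circ$ is finite, $(q_2^\circ)^{-1}(x_2)$ is a finite set, so $(r^\circ)^{-1}(S)$ is a finite union of fibres of $\Phi^\circ$ and hence, by Corollary~\ref{cor:3-19}, pure of dimension $d$ (it is non-empty because $r^\circ$ is surjective). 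Since $r^\circ$ restricts to a finite surjection $(r^\circ)^{-1}(S) \to S$, and a finite surjection preserves dimension while dominating every irreducible component of the target, $S$ is itself pure of dimension $d$. This is exactly assertions~(1) and~(2): each fibre is pure-dimensional, and the dimension $d$ does not depend on $x_2$.

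For assertion~(3), quasi-finiteness of $\varphi^\circ$ forces $d = 0$, so $\Phi^\circ$ is quasi-finite and therefore finite by Corollary~\ref{cor:3-19}. Then $q_2^\circ \circ \Phi^\circ$ is a composition of finite maps, hence proper. The step I expect to be the main obstacle is upgrading this to properness of $\varphi^\circ$ itself, since the diagram only controls the composite $\varphi^\circ \circ r^\circ$ and not $\varphi^\circ$ directly. Here I would exploit that $r^\circ$ is proper and surjective: for any compact $K \subseteq X_2^\circ$ one has $(\varphi^\circ)^{-1}(K) = r^\circ\bigl( (\varphi^\circ \circ r^\circ)^{-1}(K) \bigr)$, the continuous image of a compact set, hence compact. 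Thus $\varphi^\circ$ is proper, and being quasi-finite it is finite, as claimed.
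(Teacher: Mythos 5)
Your proof is correct and follows the route the paper intends: the paper states Corollary~\ref{cor:9-6} without proof, as an immediate consequence of Theorem~\ref{thm:9-4} together with Remark~\ref{rem:9-5}/Corollary~\ref{cor:3-19}, which is exactly your reduction of the fibre properties of $φ°$ to those of the semitorus morphism $Φ°$. Your write-up merely makes explicit the transport of these properties through the finite surjections $r° = q°_1 ◦ ψ°$ and $q°_2$, including the properness argument for assertion~(3), which is the content the paper leaves to the reader.
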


\subsection{Proof of Theorem~\ref*{thm:9-4}}
\approvals{Erwan & yes \\ Stefan & yes}

\CounterStep{}We maintain notation and assumptions of Theorem~\ref{thm:9-4} in
the present section.  To begin, choose a component
\[
  C° ⊆ \text{normalisation of } A°_1 ⨯_{X°_2} A°_2.
\]
The natural morphism $β° : C° \twoheadrightarrow A°_1$ is finite.  By the
analytic version of ``Zariski's main theorem in the form of Grothendieck'',
\cite[Thm.~3.4]{DethloffGrauert}, there exists a unique normal compactification
$C° ⊂ C$ where $β°$ extends to a finite morphism $β : C → A_1$.  An elementary
computation shows that the natural morphism $η° : C° → A°_2$ is quasi-algebraic
for this compactification, so that we obtain the following diagram,
\begin{equation}\label{eq:9-7-1}
  \begin{tikzcd}[column sep=2cm, row sep=1cm]
    C \ar[d, two heads, "β\text{, finite}"'] \ar[rrr, bend left=10, dashed, "η"] & C° \ar[r, "η°"'] \ar[d, two heads, "β°\text{, finite}"'] \ar[l, hook] & A°_2 \ar[d, equal] \ar[r, hook] & A_2 \ar[d, equal] \\
    A_1 \ar[d, two heads, "\text{quotient}"'] & A°_1 \ar[d, two heads, "q°_1\text{, quotient}"'] \ar[l, hook] & A°_2 \ar[d, two heads, "q°_2\text{, quotient}"] \ar[r, hook] & A_2 \ar[d, two heads, "\text{quotient}"] \\
    X_1 \ar[rrr, bend right=10, dashed, "φ"'] & X°_1 \ar[r, "φ°"] \ar[l, hook] & X°_2 \ar[r, hook] & X_2.
  \end{tikzcd}
\end{equation}

\subsection*{Step 1: Analysis of $β$}
\approvals{Erwan & yes \\ Stefan & yes}

The morphism $β$ is a cover for the logarithmic $\cC$-pair $(A_1, Δ_{A_1})$.
Recall from \cite[Obs.~3.16]{orbiAlb1} that the associated $\cC$-cotangent sheaf
equals
\begin{equation}\label{eq:9-7-2}
  Ω^{[1]}_{(A_1, Δ_{A_1}, β)} = β^* Ω¹_{A_1}\bigl(\log Δ_{A_1}\bigr).
\end{equation}
In particular, we find that the composed pull-back morphism
\[
  \diff β : H⁰\bigl( A_1,\, Ω¹_{A_1}(\log Δ_{A_1}) \bigr) → H⁰\bigl( C,\, Ω^{[1]}_C (\log Δ_C) \bigr)
\]
takes its image in $H⁰\bigl( C,\, Ω^{[1]}_{(A_1, Δ_{A_1}, β)} \bigr)$.  The
universal property of the adapted Albanese for the adapted cover $β$, as
specified in Item~\ref{il:5-2-2} of Definition~\ref{def:5-2}, will therefore
apply to give a factorization
\[
  \begin{tikzcd}[column sep=2.5cm]
    C° \ar[rr, two heads, bend left=10, "β°"] \ar[r, "\alb(A_1{,} Δ_{A_1}{,} β)°"'] & \Alb(A_1, Δ_{A_1}, β)° \ar[r, two heads, "ψ°"'] & A°_1,
  \end{tikzcd}
\]
where the morphisms $β°$ and $\alb(…)°$ are quasi-algebraic.  By
Lemma~\ref{lem:2-4}, then so is the morphism $ψ°$.  We claim that the surjection
$ψ°$ is also finite, and hence by Corollary~\ref{cor:3-19} an étale cover.
Equivalently, we claim that $\Alb(A_1, Δ_{A_1}, β)° ≤ \dim A°_1$.  But
\begin{align*}
  \dim \Alb(A_1, Δ_{A_1}, β)° & ≤ h⁰\bigl( C,\, Ω^{[1]}_{(A_1, Δ_{A_1}, β)} \bigr) & & \text{Proposition~\ref{prop:5-4}} \\
  & = h⁰\bigl( C,\, β^* Ω¹_{A_1}(\log Δ_{A_1}) \bigr) & & \text{\eqref{eq:9-7-2}} \\
  & = h⁰\bigl( C,\, 𝒪_C^{⊕ \dim A°_1} \bigr) = \dim A°_1 & & \text{Proposition~\ref{prop:3-15}}.
\end{align*}

\subsection*{Step 2: Analysis of $η$}
\approvals{Erwan & yes \\ Stefan & yes}

Recall \cite[Obs.~12.10]{orbiAlb1}, which implies that the morphisms $q°_•$ of
Diagram~\eqref{eq:9-7-1} are adapted for $(X°_•, D°_•)$ and that the
$\cC$-cotangent sheaves equal
\begin{equation}\label{eq:9-7-3}
  Ω^{[1]}_{(X°_•, D°_•, q°_•)} = Ω¹_{A°_•}.
\end{equation}
Along similar lines, \cite[Obs.~4.15]{orbiAlb1} implies that the morphism
$q°_1◦β°$ is adapted for the pair $(X°_1, D°_1)$, and that
\begin{equation}\label{eq:9-7-4}
  Ω^{[1]}_{(X°_1, D°_1, q°_1◦β°)}
  = (β°)^{[*]} Ω^{[1]}_{(X°_1, D°_1, q°_1)}
  \overset{\eqref{eq:9-7-3}}{=} (β°)^* Ω¹_{A°_1}.
\end{equation}
The assumption that $φ°$ is a $\cC$-morphism implies $η°$ admits pull-back of
adapted reflexive differentials,
\[
  d η° : (η°)^* Ω^{[1]}_{(X°_2, D°_2, q°_2)} → Ω^{[1]}_{(X°_1, D°_1, q°_1◦β°)},
\]
where
\[
  Ω^{[1]}_{(X°_2, D°_2, q°_2)} \overset{\eqref{eq:9-7-3}}{=} Ω¹_{A°_2} %
  \quad \text{and} \quad %
  Ω^{[1]}_{(X°_1, D°_1, q°_1◦β°)} \overset{\eqref{eq:9-7-4}}{=} (β°)^* Ω¹_{A°_1}.
\]
In particular, we find that the composed pull-back morphism
\[
  \diff η : H⁰\bigl( A_2,\, Ω¹_{A_2}(\log Δ_{A_2}) \bigr) → H⁰\bigl( C,\,
  Ω^{[1]}_C (\log Δ_C) \bigr)
\]
takes its image in
\[
  H⁰\bigl( C,\, Ω^{[1]}_C(\log Δ_C) \bigr) = H⁰\bigl( C,\, Ω^{[1]}_{(A_1, Δ_{A_1}, β)} \bigr).
\]
As above, the universal property of the adapted Albanese will therefore apply to
give a factorization
\[
  \begin{tikzcd}[column sep=2.2cm]
    C° \ar[rr, bend left=10, "η°"] \ar[r, "\alb(A_1{,} Δ_{A_1}{,} β)°"'] & \Alb(A_1, Δ_{A_1}, β)° \ar[r, "Φ°"'] & A°_2,
  \end{tikzcd}
\]
where $Φ°$ is quasi-algebraic.

\subsection*{Step 3: Summary}
\approvals{Erwan & yes \\ Stefan & yes}

We have seen in Steps~1 and 2 that $β°$ and $η°$ both factor via $\alb(A_1,
Δ_{A_1}, β)°$.  The following diagram summarizes the situation,
\[
  \begin{tikzcd}[column sep=2.5cm]
    C° \ar[r, "\alb(A_1{,} Δ_{A_1}{,} β)°"] & \Alb(A_1, Δ_{A_1}, β)° \ar[r, two heads, "ψ°\text{, étale}"] \ar[d, "Φ°\text{, quasi-algebraic}"'] & A°_1 \ar[r, two heads, "q°_1\text{, quotient}"] & X°_1 \ar[d, "φ°"] \\
     & A°_2 \ar[r, equals] & A°_2 \ar[r, two heads, "q°_2\text{, quotient}"] & X°_2.
  \end{tikzcd}
\]
The proof of Theorem~\ref{thm:9-4} is then finished once we set $B° := \Alb(A_1,
Δ_{A_1}, β)°$.  \qed

%
%
\svnid{$Id: 10-orbiAlbanese.tex 941 2024-10-10 16:10:48Z kebekus $}
\selectlanguage{british}

\section{The Albanese of a \texorpdfstring{$\cC$}{𝒞}-pair with bounded irregularity}
\subversionInfo

\subsection{Existence of the Albanese in case of bounded irregularity}
\approvals{Erwan & yes \\ Stefan & yes}

With all the necessary preparation at hand, the main result on the existence of
an Albanese of a $\cC$-pair is now formulated as follows.

\begin{defn}[The Albanese of a $\cC$-pair]\label{def:10-1}%
  Let $(X, D)$ be a $\cC$-pair where $X$ is compact.  An Albanese of $(X,D)$ is
  a $\cC$-semitoric pair $\bigl(\Alb(X,D), Δ_{\Alb(X,D)}\bigr)$ and a
  quasi-algebraic $\cC$-morphism
  \[
    \alb(X,D)° : (X°,D°) → \bigl(\Alb°(X,D), Δ°_{\Alb(X,D)}\bigr)
  \]
  such that the following holds: If $(S, Δ_S)$, $s ∈ S°$ is any other
  $\cC$-semitoric pair and if $s° : (X°,D°) → (S°, Δ°_S)$ is any quasi-algebraic
  $\cC$-morphism, then $s°$ factors uniquely as
  \[
    \begin{tikzcd}[column sep=2.4cm]
      (X°, D°) \ar[r, "\alb(X{,}D)°"'] \ar[rr, "s°", bend left=10] & \bigl(\Alb°_x(X,D), Δ°_{\Alb(X,D)}\bigr) \ar[r, "∃!c°\text{, quasi-algebraic}"'] & (S°, Δ°_S).
    \end{tikzcd}
  \]
\end{defn}

\begin{thm}[The Albanese of a $\cC$-pair with bounded irregularity]\label{thm:10-2} %
  Let $(X, D)$ be a $\cC$-pair where $X$ is compact Kähler.  If $q⁺_{\Alb}(X,D)
  < ∞$, then an Albanese of $(X,D)$ exists.
\end{thm}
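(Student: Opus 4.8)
The plan is to realise the Albanese of $(X,D)$ as the $\cC$-semitoric quotient of the Albanese of a single, sufficiently large Galois cover. Since $q^+_{\Alb}(X,D) < \infty$, the supremum defining it is attained: first I would choose a cover with $q_{\Alb}(X,D,\gamma) = q^+_{\Alb}(X,D)$ and pass to its Galois closure, so that without loss of generality $\gamma : \widehat{X} \twoheadrightarrow X$ is Galois with group $G$ and $\dim \Alb(X,D,\gamma)^\circ = q^+_{\Alb}(X,D)$. Observation~\ref{obs:5-6} equips $\Alb(X,D,\gamma)^\circ$ with a $G$-action making $\alb(X,D,\gamma)^\circ$ equivariant, and Proposition~\ref{prop:5-10} together with Observation~\ref{obs:5-8} produces a quasi-algebraic $\cC$-morphism from $(X^\circ, D^\circ)$ to the quotient $\cC$-semitoric pair $\Alb(X,D,\gamma)^\circ/G$. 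I would take this quotient pair as the candidate $\Alb(X,D)$ and this morphism as the candidate $\alb(X,D)^\circ$.

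To verify the universal property, let $(S,\Delta_S) \cong (A_S, \Delta_{A_S})/H$ be any $\cC$-semitoric pair and $s^\circ : (X^\circ, D^\circ) \to (S^\circ, \Delta_S^\circ)$ a quasi-algebraic $\cC$-morphism. The quotient $q_S : A_S \twoheadrightarrow S$ is a cover of $(S,\Delta_S)$, and I would pull it back along $s^\circ$: normalising a component of $X \times_S A_S$ yields a cover $\gamma_S : \widehat{X}_S \twoheadrightarrow X$ of $(X,D)$ together with a lift $\widetilde{s}^\circ : \widehat{X}_S^\circ \to A_S^\circ$. Because $s^\circ$ is a $\cC$-morphism and $q_S$ is (strongly) adapted, the pulled-back differentials $(\widetilde{s}^\circ)^* \Omega^1_{A_S^\circ}$ are adapted, i.e.\ land in $\Omega^{[1]}_{(X,D,\gamma_S)}$. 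The universal property of the Albanese of the cover $\gamma_S$, namely Item~\ref{il:5-2-2} of Definition~\ref{def:5-2}, then factors $\widetilde{s}^\circ$ through $\Alb(X,D,\gamma_S)^\circ$.

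It remains to compare $\Alb(X,D,\gamma_S)$ with the candidate built from $\gamma$. Passing to a common Galois refinement $\gamma'$ dominating both $\gamma$ and $\gamma_S$, Lemma~\ref{lem:5-5} gives surjections $\Alb(X,D,\gamma')^\circ \twoheadrightarrow \Alb(X,D,\gamma)^\circ$ and $\Alb(X,D,\gamma')^\circ \twoheadrightarrow \Alb(X,D,\gamma_S)^\circ$, where by maximality of $q^+_{\Alb}(X,D)$ the first is a surjection of semitori of equal dimension. I would then assemble the factorisation $G'$-equivariantly and descend to the quotient $\cC$-semitoric pairs via the functoriality of Lemma~\ref{lem:5-11}, obtaining the desired quasi-algebraic $\cC$-morphism $c^\circ : \Alb(X,D)^\circ \to S^\circ$ with $s^\circ = c^\circ \circ \alb(X,D)^\circ$. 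Uniqueness of $c^\circ$ follows because the image of $\alb(X,D)^\circ$ generates the Albanese (Proposition~\ref{prop:5-4}), and because any two candidate factorisations are quasi-algebraic $\cC$-morphisms between $\cC$-semitoric pairs, hence are controlled by group morphisms via Theorem~\ref{thm:9-4} and therefore agree once they agree on a generating set.

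The main obstacle is the comparison map $\Alb(X,D,\gamma')^\circ \twoheadrightarrow \Alb(X,D,\gamma)^\circ$: a priori it is only an isogeny, that is, surjective with finite kernel and hence étale by Corollary~\ref{cor:3-19}, not an isomorphism, so it is not immediate that the candidate built from $\gamma$ is independent of the chosen maximal cover, nor that the descended morphism lands on $\Alb(X,D)^\circ$ rather than on a finite cover of it. I expect this to be the crux, and I would handle it by exploiting the minimality built into the construction of the Albanese of a cover in Construction~\ref{cons:7-6} and Proposition~\ref{prop:7-5} to rule out a nontrivial kernel, and by checking that such finite étale modifications do not change the $\cC$-isomorphism class of the quotient $\cC$-semitoric pair, so that all sufficiently large covers yield the same candidate $\Alb(X,D)$.
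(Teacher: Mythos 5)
Your overall architecture matches the paper's: choose a Galois cover $\gamma$ attaining $q^+_{\Alb}(X,D)$, take the quotient $\cC$-semitoric pair $\Alb(X,D,\gamma)^\circ/G$ as candidate via Proposition~\ref{prop:5-10}, prove existence of factorizations by normalizing a component of $X \times_S A_S$ and invoking Item~\ref{il:5-2-2} of Definition~\ref{def:5-2} for the resulting cover, and prove uniqueness via generation (Proposition~\ref{prop:5-4}) together with Theorem~\ref{thm:9-4}. You have also correctly located the crux: the comparison map between the Albaneses of two maximal covers is a priori only an isogeny. But this is where the proposal has a genuine gap, because neither of your proposed fixes works. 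Fix (a) is impossible: the minimality in Construction~\ref{cons:7-6} cannot rule out a nontrivial kernel, since the comparison map upstairs genuinely can be a nontrivial isogeny (already for $X$ an abelian variety with $D=0$ and $\delta$ an isogeny cover, one gets $A_\delta = \what{X} \to X = A_{\Id}$ equal to $\delta$ itself). Fix (b) — ``finite étale modifications do not change the $\cC$-isomorphism class of the quotient pair'' — is not a routine check; it is precisely the hard point, and for an \emph{arbitrary} cover of maximal Albanese dimension it is not true that the quotient pairs stabilize: what saves the day in the example above is that the Galois action absorbs the isogeny kernel, and there is no reason for this absorption to happen for the particular $\gamma$ you chose.

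The paper's missing ingredient is an extra minimality condition in the choice of $\gamma$ (Choice~\ref{choice:10-10}): among all covers $\delta$ with $\dim B_\delta = q^+_{\Alb}(X,D)$, one chooses $\gamma$ minimizing the number $n_\delta$ of connected components of the general fibre of the \emph{quotient-level} morphism $a^\circ_\delta : X^\circ \to B^\circ_\delta$. This numerical minimality is exactly what forces the induced map $q^\circ_{\delta\gamma} : B^\circ_\delta \to B^\circ_\gamma$ to have connected fibres for every refinement $\delta$ of $\gamma$; combined with finiteness and étaleness over the image (Claims~\ref{claim:10-11} and \ref{claim:10-12}) this yields that $q^\circ_{\delta\gamma}$ is an isomorphism of $\cC$-pairs (Claim~\ref{claim:10-13}) — i.e., the isogeny ambiguity dies on the quotients, even though it persists on the semitori upstairs. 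Note that this stabilization is needed not only for well-definedness but already inside the existence argument (Assumption~\ref{asswlog:10-20} replaces $\gamma$ by a refinement factoring through the fibre-product cover $\rho$, which is legitimate only because of Claim~\ref{claim:10-13}). Without some such device your candidate can actually fail the universal property: if $\gamma$ is not a minimizer, the true Albanese $B_{\gamma_0}$ admits a finite map of degree $>1$ onto your $B_\gamma$, and the quasi-algebraic $\cC$-morphism $a^\circ_{\gamma_0} : (X^\circ, D^\circ) \to (B^\circ_{\gamma_0}, \Delta^\circ_{B_{\gamma_0}})$ then cannot factor through $a^\circ_\gamma$.
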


Theorem~\ref{thm:10-2} will be shown in Section~\ref{sec:10-3} below.

\begin{rem}[Special pairs]
  Recall from Remark~\vref{rem:8-4} that the assumption $q⁺_{\Alb}(X,D) < ∞$ is
  always satisfied if the $\cC$-pair $(X,D)$ is special.
\end{rem}

\begin{rem}[Uniqueness]
  The universal property implies that $\Alb(X,D)°$ is unique up to unique
  isomorphism and that $\Alb(X,D)$ is bimeromorphically unique.  The universal
  property also implies that $\dim \Alb(X,D) = q⁺_{\Alb}(X,D)$.
\end{rem}

As before, we abuse notation and refer to any Albanese of $(X,D)$ as ``the
Albanese''.

\subsection{Non-existence of the Albanese in case of unbounded irregularity}
\approvals{Erwan & yes \\ Stefan & yes}

Before proving of Theorem~\ref{thm:10-2}, we remark that the assumption
$q⁺_{\Alb}(X,D) < ∞$ is necessary in the strongest possible sense.

\begin{prop}[Non-existence of the Albanese in case of unbounded irregularity]\label{prop:10-5}%
  Let $(X, D)$ be a $\cC$-pair where $X$ is compact Kähler.  If $q⁺_{\Alb}(X,D)
  = ∞$, then an Albanese of $(X,D)$ cannot possibly exist.
\end{prop}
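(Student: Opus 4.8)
The plan is to establish the dimension estimate that underlies Theorem~\ref{thm:10-2} in its contrapositive form. Suppose, for a contradiction, that an Albanese $\bigl(\Alb(X,D),Δ_{\Alb(X,D)}\bigr)$ exists. Being a $\cC$-semitoric pair, it is a finite quotient of a semitoric variety, so $N := \dim \Alb(X,D)$ is finite. I would then prove that $q_{\Alb}(X,D,γ) ≤ N$ for every cover $γ$, whence $q⁺_{\Alb}(X,D) ≤ N < ∞$, contradicting the hypothesis. By the functoriality of Lemma~\ref{lem:5-5} the Albanese irregularity of a cover only grows under refinement, so it suffices to bound $q_{\Alb}(X,D,γ)$ for Galois covers $γ$, say with group $G$, along a cofinal family.

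So fix such a $γ$. Observation~\ref{obs:5-8} and Proposition~\ref{prop:5-10} attach to it the $\cC$-semitoric pair $(Y°,D_Y) = \bigl(\Alb(X,D,γ)°,0\bigr)/G$, whose dimension is exactly $q_{\Alb}(X,D,γ)$, together with a quasi-algebraic $\cC$-morphism $\underline a° : (X°,D°) → (Y°,D_Y)$. The universal property of Definition~\ref{def:10-1} factors this as $\underline a° = \underline c° ◦ \alb(X,D)°$ for a quasi-algebraic $\cC$-morphism $\underline c°$ between $\cC$-semitoric pairs. Feeding $\underline c°$ into Theorem~\ref{thm:9-4} produces a semitoric variety $B°$, an étale cover $ψ° : B° \twoheadrightarrow A_1°$ onto the presentation $A_1°$ of $\Alb(X,D)$, and a quasi-algebraic morphism $Φ° : B° → \Alb(X,D,γ)°$ making the resulting square commute. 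Since $ψ°$ is étale, $\dim B° = \dim A_1° = N$.

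The heart of the matter, and the step I expect to be the main obstacle, is to turn this into a bound on the \emph{target} dimension by showing that $Φ°$ is surjective; then $q_{\Alb}(X,D,γ) = \dim \Alb(X,D,γ)° ≤ \dim B° = N$. A naive chase through the commutative square only bounds $\dim \overline{\img \alb(X,D,γ)°}$, the dimension of the \emph{image}, which may be strictly smaller than $\dim \Alb(X,D,γ)°$ precisely when the Albanese of the cover is not dominant — the phenomenon analysed in Theorem~\ref{thm:8-1}. To bypass this I would invoke Proposition~\ref{prop:5-4}, by which $\img \alb(X,D,γ)°$ generates $\Alb(X,D,γ)°$ as a group. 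After arranging (via the fibre-product cover $\what{X}° ⨯_{\Alb°(X,D)} B°$, harmless by the monotonicity of Lemma~\ref{lem:5-5}) that $\alb(X,D)° ◦ γ$ lifts to a morphism $\ell : \what{X}° → B°$ along the étale cover $q_1° ◦ ψ°$, the map $\alb(X,D,γ)°$ factors through $Φ°$, so $\img \alb(X,D,γ)° ⊆ \img Φ°$. By Remark~\ref{rem:9-5}, with base points chosen so that $Φ°$ is an honest group morphism, $\img Φ°$ is a sub-semitorus; a subgroup containing a generating set must be the whole group, forcing $Φ°$ to be surjective.

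The delicate points, where I expect the real work to lie, are twofold. First, I must check that the lift $\ell$ of $\alb(X,D)° ◦ γ$ to the semitorus $B°$ genuinely has adapted differentials, so that the universal property of the cover-Albanese (Definition~\ref{def:5-2}) applies and produces the factorisation $\alb(X,D,γ)° = Φ° ◦ \ell$ only up to a Galois translation $g_0 ∈ G$; this $g_0$ must be absorbed into a quasi-algebraic automorphism of $\Alb(X,D,γ)°$ before the generation argument is run, since otherwise the image could a priori be spread over a finite union of proper sub-semitori whose $G$-translates still generate everything. Second, the cover one refines along depends on $B°$, hence on $γ$, so the reduction to a lift must be organised within the directed system of covers, using that the relevant dimensions are monotone and bounded by $N$ and therefore stabilise. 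Controlling this Galois twist and making the lift unconditional is the crux of the proof.
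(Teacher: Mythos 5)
Your proposal is correct and takes essentially the same route as the paper's own proof: both factor the cover-induced quasi-algebraic $\cC$-morphism $(X°,D°) → \factor{\bigl(\Alb(X,D,γ)°,0\bigr)}{G}$ through the hypothetical Albanese via the universal property, feed the resulting morphism of $\cC$-semitoric pairs into Theorem~\ref{thm:9-4} to get a quasi-algebraic $Φ°$ from a semitoric variety of dimension $\dim\Alb(X,D)°$ to $\Alb(X,D,γ)°$, and then contradict the generation statement of Proposition~\ref{prop:5-4}, using that the image of a quasi-algebraic morphism of semitori is a translate of a proper sub-semitorus. The paper merely packages this as a contradiction for one cover with $q_{\Alb}(X,D,γ) > \dim\Alb(X,D)°$ rather than as your uniform bound, and it absorbs the Galois twist $g_0$ you rightly flag implicitly into its commutative diagram (via connectedness of the auxiliary fibre-product cover and finiteness of $G$), so your extra care there is a refinement, not a divergence.
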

\begin{proof}
  We argue by contradiction and assume that there exists a $\cC$-semitoric pair
  $\bigl(A, Δ_A\bigr)$ and a quasi-algebraic $\cC$-morphism
  \[
    a° : (X°,D°) → \bigl( A°,Δ°_A)
  \]
  that satisfies the universal property of Theorem~\ref{thm:10-2}.  By
  assumption, there exists a cover $γ : \what{X} \twoheadrightarrow X$ such that
  $q_{\Alb}(X,D,γ) > \dim A°$.  Lemma~\vref{lem:5-5} allows assuming without
  loss of generality that $γ$ is Galois with group $G$.
  Proposition~\vref{prop:5-10} yields a diagram
  \begin{equation}\label{eq:10-5-1}
    \begin{tikzcd}[column sep=3cm, row sep=1cm]
      \what{X}° \ar[r, "\alb°(X{,}D{,}γ)"] \ar[d, two heads, "γ"'] & \Alb°(X,D,γ) \ar[d, two heads, "\text{quotient}"] \\
      X° \ar[r, "s°"'] & \factor{\Alb°(X,D,γ)}{G}
    \end{tikzcd}
  \end{equation}
  where $s°$ is a quasi-algebraic morphism of $\cC$-pairs,
  \[
    s° : (X°,D°) → \factor{\bigl(\Alb°(X,D,γ), 0 \bigr)}{G}.
  \]
  By assumption, the $\cC$-morphism $s°$ factors via $a°$, and equips us with a
  quasi-algebraic morphism of $\cC$-pairs,
  \[
    c° : (A°,Δ°_A) → \factor{\bigl(\Alb°(X,D,γ), 0 \bigr)}{G}.
  \]
  Observing that domains and target of the $\cC$-morphism $c°$ are
  $\cC$-semitoric pairs, Theorem~\ref{thm:9-4} yields a semitoric variety
  $(\wcheck{A}, Δ_{\wcheck{A}})$ and an extension of Diagram~\eqref{eq:10-5-1} as
  follows,
  \[
    \begin{tikzcd}[column sep=2.5cm, row sep=1cm]
      \wcheck{X}° \ar[r] \ar[d, two heads, "\text{finite}"'] & \wcheck{A}° \ar[r, "Φ°\text{, quasi-algebraic}"] \ar[dd, two heads, near end, "\text{quotient$\:◦\:$étale}"] & \Alb°(X,D,γ) \ar[d, equal] \\
      \what{X}° \ar[rr, "\alb°(X{,}D{,}γ)", near end, crossing over] \ar[d, two heads, "γ"'] & & \Alb°(X,D,γ) \ar[d, two heads, "\text{quotient}"] \\
      X° \ar[r, "a°"] \ar[rr, bend right=15, "s°"'] & A° \ar[r, "c°"] & \factor{\Alb°(X,D,γ)}{G}
    \end{tikzcd}
  \]
  where $Φ°$ is quasi-algebraic.  Since
  \[
    \dim \wcheck{A}° = \dim A° < \dim \Alb(X,D,γ)
  \]
  by construction, it is clear that $Φ°$ cannot be surjective.  It follows that
  the image of the Albanese morphism $\alb°(X,D,γ)$ is contained in $\img Φ° ⊊
  \Alb°(X,D,γ)$, contradicting the assertion of Proposition~\ref{prop:7-5} that
  the image generates $\Alb(X,D,γ)°$ as an Abelian group, once appropriate Lie
  group structures are chosen.
\end{proof}

\subsection{Proof of Theorem~\ref*{thm:10-2}}
\approvals{Erwan & yes \\ Stefan & yes}
\label{sec:10-3}

We maintain notation and assumptions of Theorem~\ref{thm:10-2}.  The proof is
somewhat long, as it involves the discussion of a fair number of diagrams and
references to almost all results obtained so far.  For the reader's convenience,
we present the argument in four relatively independent steps.

\subsection*{Step 1: Choices and constructions}
\approvals{Erwan & yes \\ Stefan & yes}

We consider the set of Galois covers,
\[
  M := \bigl\{ δ : \what{X}_δ \twoheadrightarrow X \text{ a Galois cover of } (X, D) \bigr\}.
\]
For every $δ ∈ M$, write $G_δ$ for the associated Galois group, write
$\what{X}°_δ := δ^{-1}(X°) ⊆ \what{X}_δ$ and denote the restriction of $δ$ by
$δ° : \what{X}°_δ → X°$.

\begin{choice}[Comparison morphism]
  For every pair of two covers $δ_1, δ_2 ∈ M$ where $δ_1$ factors via $δ_2$,
  \begin{equation}\label{eq:10-6-1}
    \begin{tikzcd}[column sep=2cm]
      \what{X}_{δ_1} \ar[r, two heads, "∃\:δ_{12}"'] \ar[rr, two heads, bend left=15, "δ_1"] & \what{X}_{δ_2} \ar[r, two heads, "δ_2"'] & X,
    \end{tikzcd}
  \end{equation}
  choose one morphism $δ_{12} : \what{X}_{δ_1} \twoheadrightarrow
  \what{X}_{δ_2}$ that makes \eqref{eq:10-6-1} commute.
\end{choice}
  
\begin{choice}[Albanese varieties for the covers]\label{choice:10-7}%
  For every $δ ∈ M$, use Proposition~\ref{prop:5-4} and Corollary~\ref{cor:3-21}
  to choose an Albanese $\bigl( A_δ, Δ_{A_δ}\bigr)$ of the cover $δ$, where the
  $G_δ$-action extends from $A°_δ$ to $A_δ$.  Denote the associated
  quasi-algebraic Albanese morphism by
  \[
    \what{a}°_δ : \what{X}°_δ → A°_δ.
  \]
\end{choice}

\begin{notation}[$\cC$-semitoric pairs]
  With the choices above, consider the $\cC$-semitoric pairs
  \[
    (B_δ, Δ_{B_δ}) := \factor{(A_δ, Δ_{A_δ})}{G_δ}.
  \]
  Let $a°_δ : (X°, D°) → (B°_δ, Δ°_{B_δ})$ be the quasi-algebraic
  $\cC$-morphisms introduced in Proposition~\ref{prop:5-10}.
\end{notation}

In the sequel, we will need to compare the $\cC$-semitoric pairs induced by two
covers that factor one another.  The following reminder summarizes what we
already know.

\begin{reminder}[Comparing covers]\label{remi:10-9}%
  Given two covers $δ_1, δ_2 ∈ M$ where $δ_1$ factors via $δ_2$,
  Lemma~\ref{lem:5-11} equips us with a commutative diagram
  \begin{equation}\label{eq:10-9-1}
    \begin{tikzcd}[column sep=2.2cm, row sep=1cm]
      \what{X}°_{δ_1} \ar[d, two heads, "δ_{12}"] \ar[dd, two heads, bend right=25, "δ°_1"'] \ar[r, "\what{a}°_{δ_1}"] & A°_{δ_1} \ar[rd, two heads, bend left=10, "\what{q}^{\:◦}_{δ_1δ_2}\text{, quasi-algebraic}"] \ar[dd, two heads, near end, "q_{δ_1}\text{, finite quotient}"] \\
      \what{X}°_{δ_2} \ar[rr, near end, crossing over, "\what{a}°_{δ_2}"] \ar[d, two heads, "δ°_2"] && A°_{δ_2} \ar[d, two heads, "q_{δ_2}\text{, finite quotient}"] \\
      X° \ar[r, "a°_{δ_1}"] \ar[rr, bend right=15, "a°_{δ_2}"'] & B°_{δ_1} \ar[r, two heads, "q°_{δ_1δ_2}"] & B°_{δ_2},
    \end{tikzcd}
  \end{equation}
  where all morphisms are quasi-algebraic and all morphisms in the bottom row
  are morphisms of $\cC$-pairs, between $(X°,D°)$, $(B°_{δ_1}, Δ°_{B_{δ_1}})$
  and $(B°_{δ_2}, Δ°_{B_{δ_2}})$.
\end{reminder}

\begin{choice}[Albanese of $(X,D)$]\label{choice:10-10}%
  Consider the numbers
  \begin{align*}
    n_δ & := \# \text{components in the typical non-empty fibre of } a°_δ : X° → B°_δ, \\
    n_{\min} & := \min\bigl\{ n_δ \::\: δ ∈ M \text{ and } \dim B_δ = q⁺_{\Alb}(X,D) \bigr\}
  \end{align*}
  and choose one particular cover $γ ∈ M$ such that $\dim B_γ = q⁺_{\Alb}(X,D)$
  and $n_γ = n_{\min}$.  Once the choice is made, consider the associated
  $\cC$-semitoric pair
  \[
    \bigl(\Alb(X,D), Δ_{\Alb(X,D)}\bigr) := (B_γ, Δ_{B_γ})
  \]
  with the associated morphism $\alb(X,D)° := a°_γ$.  We will show that this is
  an Albanese of $(X,D)$.
\end{choice}

\subsection*{Step 2: First properties of the construction}
\approvals{Erwan & yes \\ Stefan & yes}

We need to show that our choice of an Albanese does indeed satisfy the universal
properties required by Definition~\ref{def:10-1}.  To prepare for the proof, we
study covers $δ ∈ M$ that factor via $γ$.  The following claims show that the
$\cC$-morphism $q°_{δγ} : B°_δ → B°_γ$ of Reminder~\ref{remi:10-9} is an
isomorphism of $\cC$-pairs.  The proof makes extensive use of the notation
introduced in Reminder~\ref{remi:10-9}.  The reader might wish to write down
Diagram~\eqref{eq:10-9-1} in our particular situation, where $δ_1$ is replaced by
$δ$ and $δ_2$ is replaced by $γ$.

\begin{claim}\label{claim:10-11}%
  Assume that a cover $δ ∈ M$ factors via $γ$.  Then, the morphism $q°_{δγ}$ of
  Reminder~\ref{remi:10-9} is finite as a morphism of analytic varieties.
\end{claim}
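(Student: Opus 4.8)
The plan is to deduce finiteness from the structure theory of morphisms between $\cC$-semitoric pairs established in Corollary~\ref{cor:9-6}. First I would record that, since $\delta$ factors via $\gamma$, Reminder~\ref{remi:10-9} (applied with $\delta_1 = \delta$ and $\delta_2 = \gamma$) supplies the quasi-algebraic $\cC$-morphism $q^\circ_{\delta\gamma} : B^\circ_\delta \to B^\circ_\gamma$ between the open parts of the two $\cC$-semitoric pairs $(B_\delta, \Delta_{B_\delta})$ and $(B_\gamma, \Delta_{B_\gamma})$. This $q^\circ_{\delta\gamma}$ is surjective: it sits in the commutative relation $q^\circ_{\delta\gamma} \circ q_\delta = q_\gamma \circ \what{q}^{\:\circ}_{\delta\gamma}$ read off from Diagram~\eqref{eq:10-9-1}, in which the vertical quotient maps $q_\delta$, $q_\gamma$ and the top comparison map $\what{q}^{\:\circ}_{\delta\gamma}$ (a surjective quotient of semitori by Lemma~\ref{lem:5-11}) are all surjective, so a diagram chase gives surjectivity of $q^\circ_{\delta\gamma}$.

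Next I would pin down the dimensions. On the one hand $\dim B_\delta = \dim A_\delta = q_{\Alb}(X,D,\delta) \le q^+_{\Alb}(X,D) = \dim B_\gamma$, where the first equality holds because $G_\delta$ is finite and the final equality is the defining property of $\gamma$ from Choice~\ref{choice:10-10}. On the other hand, surjectivity of $q^\circ_{\delta\gamma}$ forces $\dim B_\delta \ge \dim B_\gamma$. Hence $\dim B_\delta = \dim B_\gamma = q^+_{\Alb}(X,D)$, so that $q^\circ_{\delta\gamma}$ is a surjective morphism between varieties of equal dimension and its generic fibre is therefore zero-dimensional.

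With this in place the finiteness becomes a formal consequence of Corollary~\ref{cor:9-6}. By parts (1) and (2) of that corollary every non-empty fibre of $q^\circ_{\delta\gamma}$ is of pure dimension and all of them share the dimension of the generic fibre, which we have just seen to be $0$; thus $q^\circ_{\delta\gamma}$ is quasi-finite. Part (3) of the same corollary then upgrades quasi-finiteness to finiteness, which is exactly the assertion of Claim~\ref{claim:10-11}.

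The one point that needs care, and which I would expect to be the main obstacle, is the passage from generic finiteness to global quasi-finiteness: in the analytic category a surjective morphism between equidimensional varieties may a priori have positive-dimensional special fibres, so equality of dimensions alone does not give quasi-finiteness. This is precisely why I route the argument through the equidimensionality statement of Corollary~\ref{cor:9-6}(2) for $\cC$-morphisms of $\cC$-semitoric pairs, which rules out such jumps; that corollary in turn rests on Theorem~\ref{thm:9-4}, namely on the fact that such a morphism ultimately comes from a group morphism between semitori, where fibre dimensions are automatically constant.
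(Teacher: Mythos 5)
Your proof is correct, but it runs one level below the paper's own argument and leans on heavier machinery. The paper works \emph{upstairs}, on the covers: by Choice~\ref{choice:10-10} and Lemma~\ref{lem:5-11}, the quasi-algebraic morphism $\what{q}^{\:\circ}_{\delta\gamma} : A^\circ_\delta \twoheadrightarrow A^\circ_\gamma$ is a surjection between semitori of the same dimension, so Corollary~\ref{cor:3-19} (equidimensional fibres, quasi-finite $\Rightarrow$ finite) makes it finite; finiteness then descends to $q^\circ_{\delta\gamma}$ because it is the induced map between the finite Galois quotients $A^\circ_\bullet/G_\bullet$. You instead work \emph{downstairs}, directly with $q^\circ_{\delta\gamma} : B^\circ_\delta \to B^\circ_\gamma$, establishing surjectivity by a diagram chase and then invoking Corollary~\ref{cor:9-6} --- which rests on Theorem~\ref{thm:9-4}, the main structural result of Section~\ref{sec:9} --- to carry out the same chain (pure-dimensional fibres, all fibres of equal dimension, hence quasi-finite, hence finite). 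The logical mechanism is identical in both proofs, and the ``main obstacle'' you correctly flag (special fibres jumping in dimension in the analytic category) is exactly what the paper's Corollary~\ref{cor:3-19}(1)--(2) rules out upstairs. What your route buys is the absence of any descent step; what it costs is the use of Theorem~\ref{thm:9-4} where the paper gets by with the elementary Corollary~\ref{cor:3-19} plus the standard fact that a morphism between finite quotients is finite once its lift is. Your dimension bookkeeping ($\dim B_\delta = \dim A_\delta = q_{\Alb}(X,D,\delta) \le q^+_{\Alb}(X,D) = \dim B_\gamma$, reversed by surjectivity) is sound and matches the paper's implicit count.
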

\begin{proof}[Proof of Claim~\ref{claim:10-11}]
  The choices made in Step~2 guarantee that the quasi-algebraic morphism
  $\what{q}^{\:◦}_{δγ}$ is surjective between semitori of the same dimension.
  It follows from Corollary~\ref{cor:3-19} that $\what{q}^{\:◦}_{δγ}$ is finite.
  As the induced morphism between (finite) Galois quotients, $q°_{δγ}$ is then
  likewise finite.  \qedhere~\mbox{(Claim~\ref{claim:10-11})}
\end{proof}

\begin{claim}\label{claim:10-12}%
  Assume that a cover $δ ∈ M$ factors via $γ$.  Then, the morphism $q°_{δγ}$ of
  Reminder~\ref{remi:10-9} is biholomorphic as a morphism of analytic varieties.
\end{claim}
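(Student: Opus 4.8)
The plan is to upgrade the finiteness established in Claim~\ref{claim:10-11} to biholomorphy by showing that the finite surjection $q°_{δγ} : B°_δ → B°_γ$ has degree one. Since both $B°_δ = A°_δ/G_δ$ and $B°_γ = A°_γ/G_γ$ are finite quotients of semitori, they are normal; a finite, surjective morphism of degree one between normal analytic varieties is the normalisation of its (already normal) target and is therefore biholomorphic. So the entire task reduces to computing $\deg q°_{δγ}$.

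First I would record that the cover $δ$ is eligible for the minimisation that defines $n_{\min}$ in Choice~\ref{choice:10-10}. The comparison morphism $\what{q}^{\:◦}_{δγ} : A°_δ → A°_γ$ of Reminder~\ref{remi:10-9} is surjective, so $\dim A°_δ ≥ \dim A°_γ = q⁺_{\Alb}(X,D)$, while the very definition of the augmented irregularity gives $\dim A°_δ = q_{\Alb}(X,D,δ) ≤ q⁺_{\Alb}(X,D)$. Hence $\dim B_δ = \dim A°_δ = q⁺_{\Alb}(X,D)$, so $δ$ qualifies for the minimum and the minimality $n_γ = n_{\min}$ yields the inequality $n_δ ≥ n_γ$.

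The heart of the argument is a count of the number of irreducible components of a typical non-empty fibre. Writing $d := \deg q°_{δγ}$ and using the factorisation $a°_γ = q°_{δγ} ◦ a°_δ$ from Reminder~\ref{remi:10-9}, a general non-empty fibre of $a°_γ$ over a point $y$ is the disjoint union of the fibres of $a°_δ$ over the $d$ preimages of $y$ under $q°_{δγ}$. I would argue that all $d$ preimages actually lie in the image of $a°_δ$, so that this fibre has exactly $d·n_δ$ components, i.e. $n_γ = d·n_δ$. Together with the inequality $n_δ ≥ n_γ$ of the previous paragraph this forces $d·n_δ ≤ n_δ$, hence $d = 1$, and biholomorphy of $q°_{δγ}$ follows as explained above.

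The main obstacle is precisely the assertion that each of the $d$ sheets of $q°_{δγ}$ meets the image of $a°_δ$; without it the component count would only show that $q°_{δγ}$ is birational onto the image, not that it has degree one on all of $B°_δ$. To settle this I would exploit the group-theoretic structure behind the construction: by Reminder~\ref{remi:10-9} the map $\what{q}^{\:◦}_{δγ}$ is a surjective, dimension-preserving morphism of semitori, hence an isogeny with finite kernel $K$; the $G_δ$- and $G_γ$-actions are compatible through the quotient $G_δ \twoheadrightarrow G_γ$ and act by affine maps (Proposition~\ref{prop:3-18}); and by Proposition~\ref{prop:5-4} the image of the Albanese map generates $A°_δ$ as a group. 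The plan is to use these facts to show that the preimage under $\what{q}^{\:◦}_{δγ}$ of the $G_γ$-saturated image of $\what{a}°_δ$ coincides with its $G_δ$-saturated image, which upon descending to the quotients says exactly that every fibre of $q°_{δγ}$ over an image point of $a°_γ$ consists entirely of image points of $a°_δ$. This saturation step is where essentially all the work lies; the remaining implications are formal.
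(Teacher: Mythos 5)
You follow the same overall strategy as the paper — upgrade the finiteness of Claim~\ref{claim:10-11} to degree one by playing a fibre-component count against the minimality built into Choice~\ref{choice:10-10}, then conclude biholomorphy from normality — but the proposal has two genuine gaps, only one of which you notice. The one you do flag, namely that every preimage of a general point $z ∈ \img a°_γ$ lies in $\img a°_δ$, is the crux of the entire proof, and your proposal contains no argument for it: you list plausible ingredients (the isogeny $\what{q}^{\:◦}_{δγ}$ with finite kernel, compatibility of the $G_δ$- and $G_γ$-actions, generation of $A°_δ$ by the Albanese image) and then state that ``essentially all the work'' lies in the saturation step. A proposal that defers essentially all the work at its central point is not a proof. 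For comparison, the paper phrases this step as connectedness of the set-theoretic fibre $\bigl(q°_{δγ}\bigr)^{-1}(z)$ and derives it from Choice~\ref{choice:10-10}.

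The unflagged gap is that your count silently identifies the number of preimages of a general point $z ∈ \img a°_γ$ with the global degree $d = \deg q°_{δγ}$. That identification is exactly what fails for finite maps along subvarieties contained in the branch locus, and $\img a°_γ$ is in general a \emph{proper} subvariety of $B°_γ$ (Albanese images need not be dense), so a priori it could lie inside the branch locus of $q°_{δγ}$. Model example: $(x,y) ↦ (x,y²)$ has degree $2$ but one-point fibres over the line $\{y=0\}$; in such a situation your count would only give $n_γ = n_δ$ and say nothing about $d$, so even a completed saturation step would control $q°_{δγ}$ along the image only and could not rule out $d ≥ 2$ off the image. The paper spends the entire first half of its proof removing precisely this obstruction: by Proposition~\ref{prop:5-10} the image of $a°_γ$ avoids the branch locus of the quotient $q_γ$, and by Corollary~\ref{cor:3-19} the morphism $\what{q}^{\:◦}_{δγ}$ is étale everywhere, whence $q°_{δγ} ◦ q_δ = q_γ ◦ \what{q}^{\:◦}_{δγ}$, and therefore $q°_{δγ}$, is étale over general $z ∈ \img a°_γ$; only then does the fibre over $z$ really consist of $d$ points, so that the component count computes the global degree. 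This ingredient is absent from your proposal and cannot be dispensed with.
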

\begin{proof}[Proof of Claim~\ref{claim:10-12}]
  If $z ∈ \img a°_γ ⊆ B°_γ$ is general, observe that the following two
  conditions hold.
  \begin{description}
    \item[The morphism $q°_{δγ}$ is étale over $z$] We have seen in
    Proposition~\ref{prop:5-10} that $z$ is not contained in the branch locus of
    the finite quotient map $q_γ$.  In other words, $q_γ$ is étale over $z$.
    Corollary~\ref{cor:3-19} guarantees that $\what{q}^{\:◦}_{δγ}$ is étale
    everywhere, so that
    \[
      q_γ ◦ \what{q}^{\:◦}_{δγ} = q°_{δγ} ◦ q_δ
    \]
    is étale over $z$.  But then $q°_{δγ}$ is étale over $z$.

    \item[The set-theoretic fibre $\bigl(q°_{δγ}\bigr)^{-1}(z) ⊂ B°_δ$ is
    connected] This is a direct consequence of Choice~\ref{choice:10-10}.
  \end{description}
  Given that the number of fibre components is constant in finite, étale
  morphisms, we find that the finite morphism $q°_{δγ}$ has connected fibres.
  It is hence a one-sheeted analytic covering in the sense of
  \cite[Sect.~14.2]{MR1326618}.  Together with normality,
  \cite[Prop.~14.7]{MR1326618} applies to show that it is indeed biholomorphic.
  \qedhere~\mbox{(Claim~\ref{claim:10-12})}
\end{proof}

\begin{claim}\label{claim:10-13}%
  Assume that a cover $δ ∈ M$ factors via $γ$.  Then, the morphism $q°_{δγ}$ of
  Reminder~\ref{remi:10-9} is isomorphic as a morphism of $\cC$-pairs.
\end{claim}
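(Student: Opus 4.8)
The plan is to promote the biholomorphism of Claim~\ref{claim:10-12} to a $\cC$-isomorphism by checking that it preserves the boundary divisors together with their multiplicities. Recall from Observation~\ref{obs:5-8} that $\bigl(B°_δ, Δ°_{B_δ}\bigr)$ and $\bigl(B°_γ, Δ°_{B_γ}\bigr)$ are the categorical quotient $\cC$-pairs $(A°_δ, 0)/G_δ$ and $(A°_γ, 0)/G_γ$. Consequently, along each prime component of the branch locus of the finite quotient morphism $q_δ$ (resp.\ $q_γ$) the boundary coefficient equals $1 - 1/m_δ$ (resp.\ $1 - 1/m_γ$), where $m_δ$ (resp.\ $m_γ$) is the order of the generic isotropy group of the $G_δ$-action (resp.\ $G_γ$-action) along that component — equivalently, the ramification index of $q_δ$ (resp.\ $q_γ$). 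Since the underlying map of $q°_{δγ}$ is biholomorphic (and $q°_{δγ}$ is already known to be a $\cC$-morphism by Reminder~\ref{remi:10-9}), it suffices to prove that these two families of multiplicities agree under $q°_{δγ}$; this will make the inverse of $q°_{δγ}$ a $\cC$-morphism as well.

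The key step is a ramification computation inside the commutative square
\[
  q_γ ◦ \what{q}^{\:◦}_{δγ} = q°_{δγ} ◦ q_δ
\]
extracted from Diagram~\eqref{eq:10-9-1}, in which $\what{q}^{\:◦}_{δγ}$ is étale by Claim~\ref{claim:10-11} together with Corollary~\ref{cor:3-19}, while $q°_{δγ}$ is biholomorphic by Claim~\ref{claim:10-12}. First I would observe that, because $\what{q}^{\:◦}_{δγ}$ is unramified and surjective and $q°_{δγ}$ carries prime branch divisors to prime branch divisors bijectively, the branch loci of $q_δ$ and $q_γ$ correspond under $q°_{δγ}$; let $E_δ ⊂ B°_δ$ and $E_γ ⊂ B°_γ$ be a pair of corresponding prime branch components. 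Fixing a general point of $E_γ$ and a point $a ∈ A°_δ$ lying over it, I would then compute the ramification index of the composite morphism $A°_δ → B°_γ$ at $a$ in two ways: along $q_γ ◦ \what{q}^{\:◦}_{δγ}$ it equals $1 · m_γ = m_γ$, since $\what{q}^{\:◦}_{δγ}$ is étale; along $q°_{δγ} ◦ q_δ$ it equals $m_δ · 1 = m_δ$, since $q°_{δγ}$ is biholomorphic. Hence $m_δ = m_γ$ for every pair of corresponding branch components.

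It follows that the biholomorphic underlying map identifies $Δ°_{B_δ}$ with $Δ°_{B_γ}$, so that both $q°_{δγ}$ and its inverse are $\cC$-morphisms; that is, $q°_{δγ}$ is an isomorphism of $\cC$-pairs. The part I expect to require the most care is bookkeeping rather than conceptual: one must confirm that the quotient $\cC$-structure is governed generically by the isotropy order along each branch component (so that the coefficient $1 - 1/m$ is literally the ramification index), that this isotropy order is constant at a general point of each prime component, and that a general point of $E_γ$ pulls back, through the biholomorphism $q°_{δγ}$ and the étale cover $\what{q}^{\:◦}_{δγ}$, to a point $a$ over a general point of $E_δ$ — so that the generic ramification indices $m_δ$ and $m_γ$ are the ones actually compared. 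Granting these routine verifications, the equality of multiplicities completes the proof.
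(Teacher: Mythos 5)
Your proposal is correct and takes essentially the same route as the paper's proof: both reduce the claim to an equality of quotient boundary multiplicities, established through the commutative square $q_γ ◦ \what{q}^{\:◦}_{δγ} = q°_{δγ} ◦ q_δ$ using that $\what{q}^{\:◦}_{δγ}$ is étale and $q°_{δγ}$ biholomorphic. The paper phrases the computation as an equality of multiplicities of pulled-back prime divisors (via the categorical quotient construction \cite[Cons.~12.4]{orbiAlb1}) rather than of pointwise ramification indices at general points, but this is only a difference in bookkeeping.
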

\begin{proof}
  Using the biholomorphic map $q°_{δγ}$ to identify the analytic varieties
  $B°_δ$ and $B°_γ$, we need to show that the boundary divisors induced by the
  quotient morphism $q_δ$ and $q_γ$ agree.  The construction of categorical
  $\cC$-pair quotients, \cite[Cons.~12.4]{orbiAlb1}, tells us what the
  boundaries are: if $H_δ$ is any prime divisor in $B°_δ$ and if we choose prime
  divisors in the preimages spaces,
  \begin{align*}
    H_γ & = \bigl(\bigl(q°_{δγ}\bigr)^{-1}\bigr)^* H_δ && \text{prime divisor in } B°_γ \\
    \what{H}_γ & ≤ \bigl(q_γ\bigr)^* H_γ && \text{prime divisor in } A°_γ \\
    \what{H}_δ & ≤ \bigl(\what{q}°_{δγ}\bigr)^* H_γ && \text{prime divisor in } A°_δ,
  \end{align*}
  then
  \[
    \mult_{\cC, H_δ} Δ°_{B_δ} = \mult_{\what{H}_δ} \bigl(q_δ\bigr)^* H_δ
    \quad\text{and}\quad
    \mult_{\cC, H_γ} Δ°_{B_γ} = \mult_{\what{H}_γ} \bigl(q_δ\bigr)^* H_γ.
  \]
  But these two numbers agree, given that $q°_{δγ}$ and $\what{q}°_{δγ}$ are
  étale.  \qedhere~\mbox{(Claim~\ref{claim:10-13})}
\end{proof}

\subsection*{Step 4: Universal property}
\approvals{Erwan & yes \\ Stefan & yes}

We will now show that the constructions of the previous steps satisfy the
universal property spelled out in Definition~\ref{def:10-1}.  We fix the setting
for the remainder of the present proof.

\begin{setting}[Universal property]\label{set:10-14}%
  Let $(B, Δ_B)$ be a $\cC$-semitoric pair and assume that a quasi-algebraic
  $\cC$-morphism $a° : (X°, D°) → (B°, Δ°_B)$ is given.  Let
  \[
    (B, Δ_B) ≅ \factor{(A, Δ_A)}{G}
  \]
  be a presentation of the $\cC$-semitoric pair, with quotient morphism $q : A
  \twoheadrightarrow B$.
\end{setting}

\CounterStep{}We need to show that the $\cC$-morphism $a°$ factors via $a°_γ$
uniquely.  In other words, we need to find a quasi-algebraic $\cC$-morphism $c°$
fitting into a commutative diagram
\begin{equation}\label{eq:10-15-1}
  \begin{tikzcd}[column sep=2.4cm]
    (X°, D°) \ar[r, "a°_γ"'] \ar[rr, "a°", bend left=10] & \bigl(B°_γ, Δ°_{B_γ}\bigr) \ar[r, "∃!c°"'] & (B°, Δ°_B)
  \end{tikzcd}
\end{equation}
and prove that $c°$ is unique with this property.

\subsection*{Step 4a: Existence of a factorization}
\approvals{Erwan & yes \\ Stefan & yes}

Maintaining Setting~\ref{set:10-14}, we show that there exists one
quasi-algebraic $\cC$-morphism $c° : \bigl(B°_γ, Δ°_{B_γ}\bigr) → (B°, Δ°_B)$
that makes Diagram~\eqref{eq:10-15-1} commute.

\begin{construction}[Fibre product]
  Choose a component of the normalized fibre product
  \[
    \what{X}_ρ ⊆ \text{normalization of } A ⨯_B X.
  \]
  Denote the projection morphisms and their restrictions as follows,
  \begin{equation}\label{eq:10-16-1}
    \begin{tikzcd}[row sep=1cm]
      \what{X}_ρ \ar[d, two heads, "ρ"'] \ar[r, phantom, "⊇"] & \what{X}°_ρ \ar[d, two heads, "ρ°"'] \ar[r, "\what{a}^{\:◦}"] & A° \ar[d, two heads, "q°"] \ar[r, phantom, "⊆"] & A \ar[d, two heads, "q\text{, quotient}"] \\
      X \ar[r, phantom, "⊇"] & X° \ar[r, "a°"'] & B° \ar[r, phantom, "⊆"] & B.
    \end{tikzcd}
  \end{equation}
\end{construction}

\begin{obs}[Group actions in \eqref{eq:10-16-1}]
  The group $G$ acts on the fibre product $A ⨯_B X$ and on its normalization.
  The stabilizer of the component $\what{X}_ρ$,
  \[
    H := \operatorname{Stab}\bigl(\what{X}_ρ\bigr) ⊆ G,
  \]
  acts on $\what{X}_ρ$, and $ρ : \what{X}_ρ \twoheadrightarrow X$ is the
  quotient map of this action.  The projection map $ρ$ is therefore Galois.  In
  other words, $ρ ∈ M$.  The Galois group is the quotient of $H$ by the
  ineffectivity,
  \[
    G_ρ = \factor{H}{\ker \bigl( H → \Aut \what{X}_ρ \bigr)}.
  \]
  The projection map $\what{a}^{\:◦}$ is equivariant with respect to the action
  of $H$ on $\what{X}°_ρ$ and on $A°$.
\end{obs}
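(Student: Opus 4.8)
The plan is to read every assertion off the single fact that $q : A \twoheadrightarrow B$ presents $B$ as the quotient $A/G$, together with the compatibility of this quotient with the base change to $X$ along $a°$. Since $G$ acts on $A$ over $B$, one has $q ◦ g = q$ for every $g ∈ G$, so the rule $g·(α,x) := (g·α,\, x)$ preserves the fibre product $A ⨯_B X$; by functoriality of normalisation it lifts to the normalisation and hence permutes its finitely many irreducible components. Restricting to the stabiliser $H ⊆ G$ of the chosen component $\what{X}_\rho$ produces an $H$-action on $\what{X}_\rho$. The projection $\what{a}^{\:\circ}$ is the restriction of the first-factor projection $(α,x) ↦ α$, which by construction intertwines the $G$-action on $A ⨯_B X$ with the given action on $A$; its $H$-equivariance is therefore immediate.

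The crux is to identify $\rho$ with the quotient by $H$. First I would record the formal computation that the $G$-quotient commutes with this base change: a $G$-orbit $\{(g·α,\, x) : g ∈ G\}$ is exactly the datum of the pair $(q(α),x) ∈ B ⨯_B X = X$, whence $(A ⨯_B X)/G ≅ X$. As quotients of normal spaces by finite groups are normal and $X$ is normal, the finite morphism from the $G$-quotient of the normalisation down to $X$ is birational between normal spaces, hence an isomorphism; so the $G$-quotient of the normalisation is again $X$. Restricting this quotient map to one $G$-orbit of components, $G·\what{X}_\rho = \bigsqcup_{gH} g·\what{X}_\rho$, the image of the $G$-invariant set $G·\what{X}_\rho$ is its quotient, which the orbit–stabiliser correspondence identifies with $\what{X}_\rho/H$. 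Since $A ⨯_B X → X$ is surjective (a base change of the surjection $q$) and $X$ is irreducible, some component dominates; choosing $\what{X}_\rho$ among these, $\rho : \what{X}_\rho \twoheadrightarrow X$ is surjective and is precisely the quotient by $H$.

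A quotient by a finite group is automatically Galois, which puts $\rho$ in the set $M$ of Galois covers; its Galois group is the subgroup of $\Aut(\what{X}_\rho)$ that acts effectively, i.e.\ the image of $H$, so $G_\rho = H/\ker\bigl(H → \Aut \what{X}_\rho\bigr)$. I expect the main obstacle to be the middle paragraph: one must check that forming the $G$-quotient commutes both with the base change and with normalisation — the latter genuinely using normality of $X$ — and then run the orbit–stabiliser reduction cleanly, including the verification that $\what{X}_\rho$ dominates $X$ so that $\rho$ is truly surjective. A secondary point, easily overlooked, is that $a°$ is only quasi-algebraic, so the fibre product must be formed over the open part $B°$, where $a°$ is an honest morphism, before one passes to the chosen normal compactification $\what{X}_\rho$ of the component $\what{X}°_\rho$.
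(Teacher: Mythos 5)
The paper states this Observation without proof, and your argument supplies exactly the standard facts it rests on: the diagonal action $g\cdot(\alpha,x)=(g\cdot\alpha,x)$ preserves $A\times_B X$ because $q\circ g=q$; it lifts to the normalization by functoriality; the fibres of the projection to $X$ are single $G$-orbits (fibres of $q$ are exactly $G$-orbits), so the $G$-quotient of the normalization maps finitely and bijectively, hence---both spaces being normal---isomorphically onto $X$; and the orbit--stabilizer step identifies $\rho$ with the $H$-quotient, from which Galois-ness, the formula for $G_\rho$, and $H$-equivariance of $\what{a}^{\:\circ}$ follow. Note that your disjoint-union decomposition $G\cdot\what{X}_\rho=\bigsqcup_{gH}g\cdot\what{X}_\rho$ is exactly where normalization is needed: components of the normalization are disjoint, so $g\cdot\what{X}_\rho\cap\what{X}_\rho\neq\emptyset$ forces $g\in H$, whereas on $A\times_B X$ itself components may meet and the identification $(G\cdot\what{X}_\rho)/G\cong\what{X}_\rho/H$ could fail. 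Your closing remark, that the fibre product must really be formed over $B°$ and the resulting finite cover of $X°$ then extended to a normal compactification finite over $X$ (e.g.\ by the Dethloff--Grauert theorem the paper cites elsewhere), is a correct reading of what the paper glosses over.

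The one blemish is your re-choice of $\what{X}_\rho$ among the components \emph{dominating} $X$: the paper fixes an arbitrary component, and the Observation is asserted for that one. The restriction is unnecessary. If $Z_0$ is one dominating component (which exists, as you say) and $z=(\alpha,x)$ is a general point of any component $Z$, then the fibre over $x$ meets $Z_0$ (finite morphisms are proper, so a dominating component surjects) and that fibre is a single $G$-orbit; hence $g\cdot z\in Z_0$ for some $g\in G$, and genericity of $z$ gives $Z=g^{-1}\cdot Z_0$. So $G$ permutes the components transitively and every component dominates $X$, which makes your argument apply verbatim to the paper's arbitrary choice.
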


\begin{obs}[Factorization via the Albanese of the cover]\label{obs:10-18}%
  We have seen in \cite[Obs.~12.10]{orbiAlb1} that the quotient morphism $q°$ is
  an adapted cover for the pair $(B°, Δ°_B)$ and that the adapted differentials
  are described as
  \[
    Ω^{[1]}_{(B°, Δ°_B, q°)} = Ω¹_{(B°, Δ°_B, q°)} = Ω¹_{A°}.
  \]
  The assumption that $b°$ is a $\cC$-morphism guarantees by definition that
  Diagram~\eqref{eq:10-16-1} admits pull-back of adapted reflexive differentials.
  In other words: The composed pull-back morphism
  \[
    (\what{a}°)^* Ω¹_{A°} = (\what{a}°)^* Ω¹_{(B°, B°_S, q°)} \xrightarrow{\diff \what{a}°} Ω¹_{\what{X}°_ρ} → Ω^{[1]}_{\what{X}°_ρ}
  \]
  takes its image in the subsheaf $Ω^{[1]}_{(X°, D°, ρ)} ⊆
  Ω^{[1]}_{\what{X}°_ρ}$.  The universal property of the Albanese for the cover
  $ρ$, Item~\ref{il:5-2-2} of Definition~\ref{def:5-2}, will then guarantee that
  $\what{a}°$ factors as
  \begin{equation}\label{eq:10-18-1}
    \begin{tikzcd}[column sep=2cm, row sep=1cm]
      \what{X}°_ρ \ar[rr, bend left=15, "\what{a}^{\:◦}"] \ar[r, "\what{a}^{\:◦}_ρ"'] & A°_ρ \ar[r, "\what{s}^{\:◦}"'] & A°,
    \end{tikzcd}
  \end{equation}
  where $\what{s}°$ is a quasi-algebraic morphism of semitori.  There is more
  that we can say: $G_ρ$ is the Galois group of the covering morphism $ρ$ and
  therefore acts on $A_ρ$, the Albanese of the cover $ρ$.  The group $G_ρ$ is a
  quotient of $H$, and the universal property of the Albanese $A_ρ$ guarantees
  that the map $\what{a}^{\:◦}$ is equivariant with respect to the $H$-action.
\end{obs}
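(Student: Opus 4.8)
The plan is to read off the factorization~\eqref{eq:10-18-1} directly from the defining universal property of the Albanese of the cover $ρ$, namely Item~\ref{il:5-2-2} of Definition~\ref{def:5-2}, applied to the quasi-algebraic morphism $\what{a}° : \what{X}°_ρ → A°$. The only nontrivial thing that must be checked before that property can be invoked is that the one-forms pulled back from $A°$ are \emph{adapted}. First I would recall the description of the adapted cotangent sheaf of the quotient cover: by \cite[Obs.~12.10]{orbiAlb1} the morphism $q°$ is adapted for $(B°, Δ°_B)$ and $Ω^{[1]}_{(B°, Δ°_B, q°)} = Ω¹_{A°}$, which is locally free and, by Proposition~\ref{prop:3-15}, globally generated, e.g.\ by its $A°$-invariant one-forms.

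The heart of the argument is the verification that the composed pull-back
\[
  (\what{a}°)^* Ω¹_{A°} \xrightarrow{\diff \what{a}°} Ω¹_{\what{X}°_ρ} → Ω^{[1]}_{\what{X}°_ρ}
\]
takes its image in the subsheaf $Ω^{[1]}_{(X°, D°, ρ)}$. This is exactly the statement that the square~\eqref{eq:10-16-1} admits pull-back of adapted reflexive differentials, and I would obtain it from the hypothesis that $a°$ is a $\cC$-morphism: after identifying $Ω^{[1]}_{(B°,Δ°_B,q°)}$ with $Ω¹_{A°}$ as above, the defining property of a $\cC$-morphism furnishes the pull-back of adapted reflexive differentials along the bottom edge, and this is inherited by $\what{a}°$ over the cover $ρ$ precisely as in the sheaf-level reformulation of Remark~\ref{rem:5-3}. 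With the adaptedness in hand, Item~\ref{il:5-2-2} of Definition~\ref{def:5-2} delivers the unique factorization $\what{a}° = \what{s}° ∘ \what{a}°_ρ$ through $A°_ρ$ with $\what{s}°$ quasi-algebraic; since both $A°_ρ$ and $A°$ are semitori, Proposition~\ref{prop:3-18} shows that, after a choice of base points, $\what{s}°$ is a Lie group morphism composed with a translation.

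For the equivariance statement I would appeal to the functoriality of the construction under Galois automorphisms, as established in Observation~\ref{obs:5-6} and Lemma~\ref{lem:5-11}. The cover $ρ$ is Galois with group $G_ρ = H/\ker(H → \Aut \what{X}_ρ)$, so $G_ρ$ acts on the Albanese $A_ρ$ of the cover and $\what{a}°_ρ$ is $G_ρ$-equivariant. The morphism $\what{a}°$ is $H$-equivariant by the very construction of the normalized fibre product, the $H$-action on the target $A°$ being the one through which $G$ acts. Because the factorization through $A°_ρ$ is \emph{unique}, this forces $\what{s}°$ to intertwine the two actions, so that $\what{a}° = \what{s}° ∘ \what{a}°_ρ$ remains $H$-equivariant compatibly with the $G_ρ$-linearisation on $A°_ρ$.

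The step I expect to be the main obstacle is the middle one — certifying that the $\cC$-morphism $a°$ genuinely induces an \emph{adapted} pull-back on $\what{X}°_ρ$. Everything downstream is a formal application of universal properties, but this input requires carefully tracking the adapted reflexive differentials through the fibre-product square~\eqref{eq:10-16-1}, using both the identification $Ω^{[1]}_{(B°,Δ°_B,q°)} = Ω¹_{A°}$ and the compatibility of adapted differentials with the cover $ρ$. Once this sheaf-theoretic bookkeeping is in place, the factorization and its $H$-equivariance both follow at once.
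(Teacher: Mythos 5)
Your proposal is correct and follows essentially the same route as the paper: the identification $Ω^{[1]}_{(B°,Δ°_B,q°)} = Ω¹_{A°}$ via \cite[Obs.~12.10]{orbiAlb1}, the adaptedness of the pulled-back forms coming directly from the definition of a $\cC$-morphism applied to the square~\eqref{eq:10-16-1}, the factorization from Item~\ref{il:5-2-2} of Definition~\ref{def:5-2}, and the $H$-equivariance from the Galois action on the Albanese of the cover (Observation~\ref{obs:5-6}) together with uniqueness of the factorization. The only cosmetic difference is that you route the adaptedness through a ``bottom edge then inherit'' reading and a citation of Remark~\ref{rem:5-3}, where the paper simply invokes the $\cC$-morphism property of $a°$ for the whole diagram at once; the substance is identical.
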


\begin{claim}[Extension of \eqref{eq:10-16-1} and \eqref{eq:10-18-1}]\label{claim:10-19}%
  There exists a commutative diagram of morphisms between analytic varieties,
  \begin{equation}\label{eq:10-19-1}
    \begin{tikzcd}[row sep=1cm, column sep=2cm]
      \what{X}°_ρ \ar[d, two heads, "ρ°"'] \ar[r, "\what{a}^{\:◦}_ρ"'] \ar[rrr, bend left=15, "\what{a}^{\:◦}"] & A°_ρ \ar[r, "\what{s}^{\:◦}"'] \ar[d, two heads, "q_ρ\text{, quot.~by }G_ρ"] & A° \ar[r, equals] \ar[d, two heads, "q'_ρ\text{, quot.~by }G_ρ"] & A° \ar[d, two heads, "q°\text{, quot.~by }G"]\\
      X°\vphantom{\factor{A°}{G_ρ}} \ar[r, "a°_ρ"] \ar[rrr, bend right=15, "a°"'] & B°_ρ\vphantom{\factor{A°}{G_ρ}} \ar[r,"s°"] & \factor{A°}{H} \ar[r, "β°"] & \factor{A°}{G},
    \end{tikzcd}
  \end{equation}
  where all morphisms in the bottom row are quasi-algebraic $\cC$-morphisms
  between $(X°, D°)$ and the $\cC$-pairs
  \[
    (B°_ρ, Δ°_{B_ρ}) = \factor{(A°_ρ, 0)}{G_ρ} = \factor{(A°_ρ, 0)}H,\quad \factor{(A°, 0)}H \quad\text{and}\quad (B°, Δ°_B) = \factor{(A°, 0)}G.
  \]
\end{claim}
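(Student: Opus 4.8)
The plan is to obtain Diagram~\eqref{eq:10-19-1} by forming categorical $\cC$-pair quotients, column by column, of the $H$-equivariant top row produced in Observation~\ref{obs:10-18}. Recall the three relevant $H$-actions: on $\what{X}^\circ_\rho$ the action whose quotient map is $\rho^\circ$, on $A^\circ_\rho = \Alb(X,D,\rho)^\circ$ the action factoring through $G_\rho$, and on $A^\circ$ the action restricted from the $G$-action used in the presentation $(B,\Delta_B) \cong (A,\Delta_A)/G$. Observation~\ref{obs:10-18} asserts that both $\what{a}^{\:\circ}_\rho$ and $\what{s}^{\:\circ}$ are $H$-equivariant, so quotienting each column is meaningful and the induced bottom row automatically commutes.

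For the left square I would simply reinterpret it. The morphism $\what{a}^{\:\circ}_\rho$ is exactly the Albanese morphism $\alb(X,D,\rho)^\circ$ of the Galois cover $\rho \in M$, and $(A^\circ_\rho,0)/G_\rho$ is precisely the $\cC$-semitoric pair appearing in Proposition~\ref{prop:5-10}. Invoking that proposition for $\rho$ yields the quasi-algebraic $\cC$-morphism $a^\circ_\rho : (X^\circ,D^\circ) \to (B^\circ_\rho, \Delta^\circ_{B_\rho})$ making the left square commute, with $\rho^\circ$ the quotient map. Here one uses that $(A^\circ_\rho,0)/H = (A^\circ_\rho,0)/G_\rho$, the two quotients coinciding because, by Observation~\ref{obs:10-18}, the $H$-action on $A^\circ_\rho$ factors through $G_\rho$; this is why the two descriptions in the statement agree.

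For the middle and right squares I would appeal to the universal property of categorical $\cC$-pair quotients, \cite[Prop.~12.7]{orbiAlb1}. Applied to the $H$-equivariant morphism of semitori $\what{s}^{\:\circ}$, whose source quotient is $B^\circ_\rho = (A^\circ_\rho,0)/G_\rho$ and whose target quotient is $(A^\circ,0)/H$, it produces the induced $\cC$-morphism $s^\circ$. Applied to the identity $A^\circ = A^\circ$ together with the inclusion $H \subseteq G$, it produces the natural further-quotient morphism $\beta^\circ : (A^\circ,0)/H \to (A^\circ,0)/G = (B^\circ,\Delta^\circ_B)$, again a $\cC$-morphism. Quasi-algebraicity of $s^\circ$, $\beta^\circ$, and of the bottom composite then follows formally from Lemma~\ref{lem:2-4}, after fixing $G$- and $H$-equivariant semitoric compactifications via Corollary~\ref{cor:3-21}; commutativity of each square is inherited directly from the commuting equivariant data upstairs. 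Since the outer rectangle reproduces Diagram~\eqref{eq:10-16-1}, the bottom composite $\beta^\circ \circ s^\circ \circ a^\circ_\rho$ equals the given $a^\circ$.

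The main obstacle is not the construction of the morphisms but the verification that the quotient $\cC$-structures match as asserted, so that $s^\circ$ and $\beta^\circ$ are genuine $\cC$-morphisms rather than merely morphisms of the underlying analytic varieties. This requires tracking the boundary multiplicities produced by the quotient construction \cite[Cons.~12.4]{orbiAlb1} through the two groups $H$ and $G_\rho$, and confirming that the étaleness of the comparison maps forces these multiplicities to agree. The necessary bookkeeping is identical in spirit to that carried out in the proof of Claim~\ref{claim:10-13}, which I would adapt essentially verbatim.
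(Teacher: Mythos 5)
Your construction is essentially the paper's own proof: the left square is Proposition~\ref{prop:5-10} applied to the Galois cover $\rho$; the morphism $s^\circ$ is obtained from \cite[Prop.~12.7]{orbiAlb1} applied to the $H$-equivariant morphism $\what{s}^{\,\circ}$, which is a $\cC$-morphism between the trivial pairs $(A^\circ_\rho,0)$ and $(A^\circ,0)$ because $A^\circ$ is smooth (\cite[Ex.~8.6]{orbiAlb1}); the morphism $\beta^\circ$ comes from the universal property of the categorical quotient, \cite[Def.~12.3]{orbiAlb1}, since $q^\circ$ is constant on the fibres of $q'_\rho$; and the identity $a^\circ = \beta^\circ \circ s^\circ \circ a^\circ_\rho$ follows from $q^\circ \circ \what{a}^{\,\circ} = a^\circ \circ \rho^\circ$ together with surjectivity of $\rho^\circ$, exactly as you indicate.

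The one place you diverge is your closing paragraph, and there you misdiagnose the situation. No boundary-multiplicity bookkeeping in the style of Claim~\ref{claim:10-13} is needed: the middle and right objects in the bottom row carry the quotient $\cC$-structures \emph{by definition}, and the categorical-quotient machinery you already invoked produces $s^\circ$ and $\beta^\circ$ directly as $\cC$-morphisms, so there is nothing left to verify. Moreover, the adaptation you propose could not work ``essentially verbatim'': the argument of Claim~\ref{claim:10-13} rests on the comparison maps being \'etale (indeed biholomorphic), whereas $s^\circ$ is induced by an arbitrary quasi-algebraic morphism of semitori and $\beta^\circ$ is a finite quotient map that may well be branched, so neither is \'etale in general. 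Since this extra step is superfluous, its failure does not damage your argument --- the first three paragraphs of your proposal already constitute a complete proof --- but you should delete the final paragraph rather than attempt to carry it out.
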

\begin{proof}[Proof of Claim~\ref{claim:10-19}]
  Given that $A°$ is a semitorus hence smooth, recall from
  \cite[Ex.~8.6]{orbiAlb1} that the $H$-equivariant morphism $\what{s}^{\:◦}$ is
  a $\cC$-morphism between the trivial pairs $(A°_ρ, 0)$ and $(A°, 0)$.  Let
  $s°$ be induced by the $\cC$-morphism between the quotient pairs
  \[
    (B°_ρ, Δ°_{B_ρ}) = \factor{(A°_ρ, 0)}H \quad\text{and}\quad \factor{(A°, 0)}{H},
  \]
  as discussed in \cite[Prop.~12.7]{orbiAlb1}.  This morphism makes the middle
  square in \eqref{eq:10-19-1} commute.

  Next, we need to define the morphism $β°$.  For that, review the definition of
  categorical quotients of $\cC$-pairs, \cite[Def.~12.3]{orbiAlb1}.  The
  definition guarantees on the one hand that $q'_ρ$ and $q°$ are $\cC$-morphisms
  between the $\cC$-pairs
  \[
    (A°, 0), \quad \factor{(A°, 0)}{H} \quad\text{and}\quad \factor{(A°, 0)}G.
  \]
  Given that $q°$ is constant on the fibres of $q'_ρ$, the definition also says
  that $q°$ factors via $q'_ρ$, as required.  The induced $\cC$-morphism $β°$
  makes the right square in \eqref{eq:10-19-1} commute.

  It remains to show that $a° = β° ◦ s° ◦ a°_ρ$.  That, however, follows from
  the equality $q° ◦ \what{a}^{\:◦} = a° ◦ ρ°$ given by \eqref{eq:10-16-1}, using
  that $ρ°$ is surjective.  \qedhere~\mbox{(Claim~\ref{claim:10-19})}
\end{proof}

\begin{asswlog}[Factorization of $γ$]\label{asswlog:10-20}%
  The $\cC$-pair $\bigl(B°_γ, Δ°_{B_γ})$ and the morphism $a°_γ$ have been
  defined above using the cover $γ$, but we have seen in Claim~\ref{claim:10-13}
  that they can equally be defined by any cover that factors via $γ$.  Replacing
  $\what{X}_γ$ by the Galois closure of a suitable normalized fibre product, we
  may therefore assume without loss of generality that $γ$ factors via $ρ$,
  \[
    \begin{tikzcd}[column sep=2cm]
      \what{X}_γ \ar[r, two heads] \ar[rr, two heads, bend left=15, "γ"] & \what{X}_ρ \ar[r, two heads, "ρ"'] & X.
    \end{tikzcd}
  \]
\end{asswlog}

With Assumption~\ref{asswlog:10-20} in place, the existence of a factorization is
now immediate.  Reminder~\ref{remi:10-9} decomposes the left square in
\eqref{eq:10-19-1} as follows,
\[
  \begin{tikzcd}[column sep=3cm, row sep=1cm]
    \what{X}°_γ \ar[d, two heads, "γ°"'] \ar[rr, bend left=10, "\what{a}^{\:◦}_ρ"] \ar[r, "\what{a}^{\:◦}_γ"'] & A°_γ \ar[d, two heads, "q°_γ"] \ar[r, two heads, "\what{q}^{\:◦}_{γρ}"'] & A°_ρ \ar[d, two heads, "q°_ρ"]\\
    X° \ar[r, "a°_γ"] \ar[rr, bend right=12, "a°_ρ"'] & B°_γ \ar[r, two heads, "q°_{γδ}"] & B°_ρ
  \end{tikzcd}
\]
where all morphisms are quasi-algebraic and all morphisms in the bottom row are
morphisms of $\cC$-pairs, between $(X°,D°)$, $(B°_γ, Δ°_{B_γ})$ and $(B°_ρ,
Δ°_{B_ρ})$.  We can then set
\[
  c° := β° ◦ s° ◦ q°_{γδ}.
\]
A factorization is thus found.

\subsection*{Step 4b: Uniqueness of the factorization}
\approvals{Erwan & yes \\ Stefan & yes}

Maintain Setting~\ref{set:10-14} and assume that there are two quasi-algebraic
$\cC$-morphisms that makes Diagram~\eqref{eq:10-15-1} commute,
\[
  \begin{tikzcd}[column sep=2.4cm]
    (X°, D°) \ar[r, "a°_γ"'] \ar[rr, "a°", bend left=10] & \bigl(B°_γ, Δ°_{B_γ}\bigr) \ar[r, "∃\:c°_1{,} \: c°_2"'] & (B°, Δ°_B).
  \end{tikzcd}
\]
We need to show that the two morphisms are equal, $c°_1 = c°_2$.

\begin{construction}[Lifting $c°_•$ to Lie group morphisms]\label{cons:10-21}%
  Theorem~\ref{thm:9-4} equips us with semitoric varieties $\what{A}°_{γ,•} ⊂
  \what{A}_{γ,•}$, quasi-algebraic isogenies $i°_• : \what{A}°_{γ,•}
  \twoheadrightarrow A°_γ$ and quasi-algebraic Lie group morphisms $Φ°_• :
  \what{A}°_{γ,•} → A°$ forming commutative diagrams as in \eqref{eq:9-4-1},
  \[
    \begin{tikzcd}[column sep=2.8cm]
      \what{A}°_{γ,•} \ar[d, two heads, "i°_•\text{, quasi-algebraic and étale}"'] \ar[r, "Φ°_•\text{, quasi-algebraic}"] & A° \ar[d, equals]\\
      A°_γ \ar[d, two heads, "q°_γ\text{, quotient}"'] & A° \ar[d, two heads, "q°\text{, quotient}"] \\
      B°_γ \ar[r, "c°_•"'] & B°.
    \end{tikzcd}
  \]
  Blowing up in a left-invariant manner, we may assume without loss of
  generality that the $i°_•$ extend to morphisms $i_• : \what{A}_{γ,•}
  \twoheadrightarrow A_γ$.

  Define a semitoric variety $\what{A}°_γ ⊂ \what{A}_γ$ by choosing a suitable
  strong resolution of a component of the fibre product $\what{A}_{γ,1} ⨯_{A_γ}
  \what{A}_{γ,2}$.  The natural maps
  \[
    \what{A}°_γ \twoheadrightarrow \what{A}°_{γ,•} \twoheadrightarrow A°_γ
  \]
  are then quasi-algebraic and étale.  Compose the projection maps $\what{A}°_γ
  → \what{A}°_{γ,•}$ with $Φ°_•$ to obtain two quasi-algebraic morphisms between
  semitori, $\what{φ}°_• : \what{A}°_γ → A°$, each making the following diagram
  commute,
  \begin{equation}\label{eq:10-21-1}
    \begin{tikzcd}[column sep=2.6cm, row sep=1cm]
      \what{A}°_γ \ar[d, two heads, "\txt{$i°$\scriptsize, quasi-algebraic\\\scriptsize{}étale}"'] \ar[r, "\what{φ}°_•\text{, quasi-algebraic}"] & A° \ar[d, equals]\\
      A°_γ \ar[d, two heads, "q°_γ\text{, quotient}"'] & A° \ar[d, two heads, "q°\text{, quotient}"] \\
      B°_γ \ar[r, "c°_•"'] & B°.
    \end{tikzcd}
  \end{equation}
\end{construction}

\begin{construction}[Dominating $γ$]
  Continuing Construction~\ref{cons:10-21}, choose a component of the normalized
  fibre product $\what{A}_γ ⨯_{A_γ} \what{X}_γ$ and let $\what{X}_δ$ be the
  Galois closure of that component over $X$.  We obtain a Galois cover $δ :
  \what{X}_δ \twoheadrightarrow X$ and a commutative diagram of quasi-algebraic
  morphisms as follows,
  \begin{equation}\label{eq:10-22-1}
    \begin{tikzcd}[column sep=2.8cm, row sep=1cm]
      \what{X}°_δ \ar[dd, two heads, bend right=25, "δ°\text{, Galois cover}"'] \ar[d, two heads] \ar[r, "μ°"] & \what{A}°_γ \ar[d, two heads, "\txt{$i°$\scriptsize, quasi-algebraic\\\scriptsize{}étale}"] \\
      \what{X}°_γ \ar[d, two heads, "γ°"] \ar[r, "\what{a}°_γ"] & A°_γ \ar[d, two heads, "q°_γ\text{, quotient}"] \\
      X° \ar[r, "a°_γ"'] & B°_γ.
    \end{tikzcd}
  \end{equation}
\end{construction}

\begin{obs}[Factorization via the Albanese of the cover]
  In analogy to Observation~\ref{obs:10-18}, recall from
  \cite[Obs.~12.10]{orbiAlb1} that the quotient morphism $q°_γ$ is an adapted
  cover for the pair $(B°_γ, Δ°_{B_γ})$.  Since $i°$ is étale, the adapted
  differentials are described as
  \[
    Ω^{[1]}_{(B°_γ, Δ°_{B_γ}, q°_γ◦i°)} = Ω¹_{\what{A}°_γ}.
  \]
  Since $a°_γ$ is a $\cC$-morphism, Diagram~\eqref{eq:10-22-1} admits pull-back
  of adapted reflexive differentials.  The composed pull-back morphism
  \[
    (μ°)^* Ω¹_{\what{A}°_γ} = (μ°)^* Ω¹_{(B°_γ, Δ°_{B_γ}, q°_γ◦i°)} \xrightarrow{\diff μ°} Ω¹_{\what{X}°_δ} → Ω^{[1]}_{\what{X}°_δ}
  \]
  therefore takes its image in the subsheaf $Ω^{[1]}_{(X°, D°, δ°)} ⊆
  Ω^{[1]}_{\what{X}°_δ}$.  The universal property of the Albanese for the cover
  $δ°$, Item~\ref{il:5-2-2} of Definition~\ref{def:5-2}, will then guarantee
  that $\what{a}°$ factors as follows,
  \begin{equation}\label{eq:10-23-1}
    \begin{tikzcd}[column sep=2.6cm, row sep=1cm]
      \what{X}°_ρ \ar[rr, bend left=15, "μ°"] \ar[r, "\what{a}°_δ"'] & A°_δ \ar[r, "ν°\text{, quasi-algebraic}"'] & \what{A}°_γ.
    \end{tikzcd}
  \end{equation}
\end{obs}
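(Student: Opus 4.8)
The plan is to establish the factorization \eqref{eq:10-23-1} by verifying the hypotheses of the universal property of the Albanese of the cover $δ°$, that is, Item~\ref{il:5-2-2} of Definition~\ref{def:5-2}, applied to the quasi-algebraic morphism $μ° : \what{X}°_δ → \what{A}°_γ$ and the semitoric variety $\what{A}°_γ ⊂ \what{A}_γ$. The only thing to check is that the pull-back of logarithmic one-forms along $μ$ lands in the adapted subspace $H⁰\bigl(\what{X}_δ,\, Ω^{[1]}_{(X,D,δ°)}\bigr)$.

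First I would identify the $\cC$-cotangent sheaf along the composite cover $q°_γ◦i°$. By \cite[Obs.~12.10]{orbiAlb1} the quotient $q°_γ$ is adapted for $(B°_γ, Δ°_{B_γ})$ with $Ω^{[1]}_{(B°_γ, Δ°_{B_γ}, q°_γ)} = Ω¹_{A°_γ}$; since $i°$ is étale, \cite[Obs.~4.15]{orbiAlb1} shows that $q°_γ◦i°$ is again adapted and that $Ω^{[1]}_{(B°_γ, Δ°_{B_γ}, q°_γ◦i°)} = (i°)^{[*]}Ω¹_{A°_γ} = Ω¹_{\what{A}°_γ}$, the last equality because the semitorus $\what{A}°_γ$ is smooth.

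Next I would exploit that $a°_γ$ is a $\cC$-morphism. Reading Diagram~\eqref{eq:10-22-1} gives the identity $(q°_γ◦i°)◦μ° = a°_γ◦δ°$, so $μ°$ lies over the $\cC$-morphism $a°_γ$ with source cover $δ°$ and target cover $q°_γ◦i°$. By definition, such a diagram admits pull-back of adapted reflexive differentials, i.e.\ there is a sheaf morphism $(μ°)^*Ω^{[1]}_{(B°_γ, Δ°_{B_γ}, q°_γ◦i°)} → Ω^{[1]}_{(X°, D°, δ°)}$ agreeing with $\diff μ°$ where both make sense. Via the identification of the previous step this is exactly a morphism $(μ°)^*Ω¹_{\what{A}°_γ} → Ω^{[1]}_{(X°, D°, δ°)}$, so the ordinary pull-back of one-forms from $\what{A}°_γ$ already factors through $Ω^{[1]}_{(X°, D°, δ°)} ⊆ Ω^{[1]}_{\what{X}°_δ}$. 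Passing to global logarithmic sections over the compactifications $\what{A}°_γ ⊂ \what{A}_γ$ and $\what{X}°_δ ⊂ \what{X}_δ$ yields precisely the inclusion required by Item~\ref{il:5-2-2}.

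With the hypothesis in hand, the universal property of $A°_δ$ (the Albanese of the cover $δ°$) produces a unique quasi-algebraic morphism $ν° : A°_δ → \what{A}°_γ$ of semitori with $μ° = ν°◦\what{a}°_δ$, which is the factorization \eqref{eq:10-23-1}. I expect the main obstacle to be the bookkeeping in the third paragraph: one must be careful that the étale map $i°$ genuinely preserves adaptedness and yields the locally free cotangent sheaf $Ω¹_{\what{A}°_γ}$, and that the $\cC$-morphism property of $a°_γ$ transports along $μ°$ to exactly the pull-back statement needed---both points resting on the precise formulation of \cite[Obs.~4.15 and 12.10]{orbiAlb1} for composed covers.
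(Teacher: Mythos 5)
Your proposal is correct and follows essentially the same route as the paper: identify $Ω^{[1]}_{(B°_γ, Δ°_{B_γ}, q°_γ◦i°)}$ with $Ω¹_{\what{A}°_γ}$ using \cite[Obs.~12.10]{orbiAlb1} and étaleness of $i°$, use the $\cC$-morphism property of $a°_γ$ together with the commutativity $(q°_γ◦i°)◦μ° = a°_γ◦δ°$ to see that the pull-back along $μ°$ lands in $Ω^{[1]}_{(X°, D°, δ°)}$, and then invoke the universal property of Item~\ref{il:5-2-2} of Definition~\ref{def:5-2} to obtain $ν°$. Your extra bookkeeping (the explicit appeal to \cite[Obs.~4.15]{orbiAlb1} for the composed cover and the passage from the sheaf-level statement to global logarithmic sections, which the paper handles via Remark~\ref{rem:5-3}) merely fills in details the paper leaves implicit.
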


\begin{leansum}
  Combining \eqref{eq:10-21-1}, \eqref{eq:10-22-1} and \eqref{eq:10-23-1}, the
  following two diagrams summarize the constructions obtained so far,
  \[
    \begin{tikzcd}[column sep=2.8cm, row sep=1cm]
      \what{X}°_δ \ar[dd, two heads, bend right=25, "δ°"'] \ar[d, two heads] \ar[r, "\what{a}^{\:◦}_δ"'] \ar[rr, bend left=15, "μ°"] & A°_δ \ar[d, two heads, "q°_{δγ}"] \ar[r, "ν°\text{, quasi-algebraic}"'] & \what{A}°_γ \ar[d, two heads, "\txt{$i°$\scriptsize, quasi-algebraic\\\scriptsize{}étale}"] \ar[r, "\what{φ}^{\:◦}_•\text{, quasi-algebraic}"] & A° \ar[d, equals]\\
      \what{X}°_γ \ar[d, two heads, "γ°"] \ar[r, "\what{a}^{\:◦}_γ"] & A°_γ \ar[d, two heads, "q°_γ"] \ar[r, equal] & A°_γ \ar[d, two heads, "q°_γ"] & A° \ar[d, two heads, "q°"]\\
      X° \ar[r, "a°_γ"] \ar[rrr, bend right=15, "a°"'] & B°_γ \ar[r, equal] & B°_γ \ar[r, "c°_•"] & B°.
    \end{tikzcd}
  \]
  Setting $\what{c}^{\:◦}_• := \what{φ}^{\:◦}_• ◦ ν°$, we are interested in the
  subdiagrams
  \begin{equation}\label{eq:10-24-1}
    \begin{tikzcd}[column sep=3cm, row sep=1cm]
      \what{X}°_δ \ar[d, two heads, "δ°"'] \ar[r, "\what{a}^{\:◦}_δ"] & A°_δ \ar[d, two heads, "q°_γ◦ q°_{δγ}"] \ar[r, "\what{c}^{\:◦}_•\text{, quasi-algebraic}"] & A° \ar[d, two heads, "q°"]\\
      X° \ar[r, "a°_γ"] \ar[rr, bend right=12, "a°"'] & B°_γ \ar[r, "c°_•"] & B°.
    \end{tikzcd}
  \end{equation}
\end{leansum}

\begin{claim}[Aligning the $\what{c}^{\:◦}_•$]\label{claim:10-25}%
  Recalling that $G$ is the Galois group of the morphism $q$, there exists an
  element $g ∈ G$ such that $\what{c}^{\:◦}_1 ◦ \what{a}^{\:◦}_δ = g ◦
  \what{c}^{\:◦}_2 ◦ \what{a}^{\:◦}_δ$.
\end{claim}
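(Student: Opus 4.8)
The plan is to reduce the claim to the statement that the two morphisms $\what{c}^{\:◦}_1 ◦ \what{a}^{\:◦}_δ$ and $\what{c}^{\:◦}_2 ◦ \what{a}^{\:◦}_δ$ from $\what{X}°_δ$ to the semitorus $A°$ become equal after composition with the $G$-quotient $q° : A° → B°$, and then to promote this to a uniform statement using irreducibility of $\what{X}°_δ$ together with finiteness of $G$.

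First I would read off the relevant commutativity from Diagram~\eqref{eq:10-24-1}. For each index $• ∈ \{1,2\}$, the right square gives $q° ◦ \what{c}^{\:◦}_• = c°_• ◦ (q°_γ ◦ q°_{δγ})$, while the left part gives $q°_γ ◦ q°_{δγ} ◦ \what{a}^{\:◦}_δ = a°_γ ◦ δ°$. Since both $c°_1$ and $c°_2$ make Diagram~\eqref{eq:10-15-1} commute, we have $c°_• ◦ a°_γ = a°$, and hence
\[
  q° ◦ \what{c}^{\:◦}_• ◦ \what{a}^{\:◦}_δ = a° ◦ δ° \qquad\text{for both } • = 1, 2.
\]
In particular the two morphisms $\what{c}^{\:◦}_1 ◦ \what{a}^{\:◦}_δ$ and $\what{c}^{\:◦}_2 ◦ \what{a}^{\:◦}_δ$ agree after applying $q°$; since $q°$ is the quotient map for the $G$-action on $A°$ fixed in Setting~\ref{set:10-14}, their values at any point $\what{x} ∈ \what{X}°_δ$ lie in a common $G$-orbit.

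Next, for each $g ∈ G$ — which acts on $A°$ as a quasi-algebraic automorphism — I would introduce the locus
\[
  S_g := \bigl\{ \what{x} ∈ \what{X}°_δ \::\: \what{c}^{\:◦}_1(\what{a}^{\:◦}_δ(\what{x})) = g·\what{c}^{\:◦}_2(\what{a}^{\:◦}_δ(\what{x})) \bigr\}.
\]
Each $S_g$ is a closed analytic subset of $\what{X}°_δ$, being the preimage of the diagonal of $A° ⨯ A°$ under the morphism $\bigl(\what{c}^{\:◦}_1 ◦ \what{a}^{\:◦}_δ,\; (g·) ◦ \what{c}^{\:◦}_2 ◦ \what{a}^{\:◦}_δ\bigr)$. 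By the previous paragraph every point of $\what{X}°_δ$ lies in at least one $S_g$, so $\what{X}°_δ = \bigcup_{g ∈ G} S_g$.

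The hard part — really the only substantive point — is to upgrade this pointwise equality to a single, uniform $g$; here I would invoke that $\what{X}°_δ$ is irreducible (being a dense open subset of the irreducible cover $\what{X}_δ$) and that $G$ is finite. An irreducible analytic variety cannot be written as a finite union of proper closed analytic subsets, so some $S_g$ must equal all of $\what{X}°_δ$. For that value of $g$ we obtain $\what{c}^{\:◦}_1 ◦ \what{a}^{\:◦}_δ = g ◦ \what{c}^{\:◦}_2 ◦ \what{a}^{\:◦}_δ$ identically, which is exactly the assertion of the claim. Note that this route never needs the $G$-action to be free nor the images to avoid the branch locus, so no further geometric input beyond irreducibility is required.
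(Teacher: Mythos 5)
Your proposal is correct and takes essentially the same route as the paper: commutativity of Diagram~\eqref{eq:10-24-1} (together with the fact that both $c^\circ_1$ and $c^\circ_2$ factor $a^\circ$) shows that the two compositions agree after applying the quotient $q^\circ$, so at each point they differ by some $g_x \in G$, and finiteness of $G$ then yields a single $g$ working everywhere. The paper compresses this last step into the phrase ``since $G$ is finite,'' and your decomposition of $\what{X}^\circ_\delta$ into the closed analytic loci $S_g$, combined with irreducibility, is exactly the standard justification left implicit there.
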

\begin{proof}[Proof of Claim~\ref{claim:10-25}]
  For every point of $x ∈ \what{X}°_δ$, commutativity of \eqref{eq:10-24-1}
  guarantees that the image points $(\what{c}^{\:◦}_• ◦ \what{a}^{\:◦}_δ) (x)$
  are contained in the same fibre of the Galois morphism $q°$.  Accordingly,
  there exists an element $g_x ∈ G$ such that
  \begin{equation}\label{eq:10-25-1}
    \what{c}^{\:◦}_1 ◦ \what{a}^{\:◦}_δ (x) = g_x ◦ \what{c}^{\:◦}_2 ◦ \what{a}^{\:◦}_δ (x).
  \end{equation}
  But since $G$ is finite, there exists one $g ∈ G$ such that \eqref{eq:10-25-1}
  holds for every $x ∈ \what{X}°_δ$.  \qedhere~\mbox{(Claim~\ref{claim:10-25})}
\end{proof}

To conclude, choose one Galois element $g ∈ G$ as in Claim~\ref{claim:10-25}.
Choose a point $x ∈ \what{X}°_δ$ and use
\[
  0_{A°_δ} := \what{a}^{\:◦}_δ(x)
  \quad\text{and}\quad
  0_{A°} := \what{c}^{\:◦}_1(0_{A°_δ})
\]
to equip $A°_δ$ and $A°$ with Lie group structures.  With these structures,
Proposition~\ref{prop:3-18} guarantees that
\[
  \what{c}^{\:◦}_1 : A°_δ → A°, \quad
  g ◦ \what{c}^{\:◦}_2 : A°_δ → A°, \quad\text{and}\quad
  g : A°_δ → A°_δ
\]
are Lie group morphisms, so that
\[
  \img \what{a}^{\:◦}_δ ⊆ \ker \bigl(\what{c}_1 - g ◦ \what{c}_2\bigr) ⊆ A°_δ.
\]
Recalling from Proposition~\ref{prop:5-4} that $\img \what{a}^{\:◦}_δ$ generates
$A°_δ$ as a group, we find that $\what{c}_1 = g ◦ \what{c}_2$.  Commutativity of
\eqref{eq:10-21-1} and surjectivity of $q°_δ$ then show that $c°_1 = c°_2$, as
required to finish the proof of Theorem~\ref{thm:10-2}.  \qed

%
%
\svnid{$Id: 11-problems.tex 939 2024-10-10 07:50:05Z kebekus $}
\selectlanguage{british}

\section{Problems and open questions}
\subversionInfo

\subsection{A weak Albanese for arbitrary \texorpdfstring{$\cC$}{𝒞}-pairs}
\approvals{Erwan & yes \\ Stefan & yes}
\label{sec:11-1}

If $(X, D)$ is a $\cC$-pair $q⁺_{\Alb}(X,D) = ∞$, then we have seen in
Proposition~\ref{prop:10-5} that an Albanese in the sense of
Definition~\ref{def:10-1} cannot possibly exist.  While it might be possible to
define a meaningful Albanese as an ind-variety or as a (yet to be defined)
ind-pair, we are convinced that a weak version of the Albanese does exist within
the world of classical $\cC$-pairs.  For many practical purposes, the following
definition might be just as useful as Definition~\ref{def:10-1} above.

\begin{defn}[The weak Albanese of a $\cC$-pair]\label{def:11-1}%
  Let $(X, D)$ be a $\cC$-pair where $X$ is compact.  A weak Albanese of $(X,D)$
  is a normal analytic variety $Z°$ and a morphism
  \[
    \walb(X,D)° : X° → Z°
  \]
  such that the following universal property holds.  If $(S, Δ_S)$ is any
  $\cC$-semitoric pair and if $s° : (X°,D°) → (S°, Δ°_S)$ is any quasi-algebraic
  $\cC$-morphism, then there exists a $\cC$-semitoric pair $(A, Δ_A)$, $a ∈ A°$
  and a commutative diagram of morphisms between analytic varieties
  \[
    \begin{tikzcd}[column sep=2cm]
      X° \ar[r, "\walb(X{,}D)°"'] \ar[rrr, "s°", bend left=10] & Z° \ar[r, "ι"'] & A° \ar[r, "c"'] & S°,
    \end{tikzcd}
  \]
  such that the following holds.
  \begin{enumerate}
    \item The morphism $ι$ is a generically injective.

    \item The composed morphism $ι◦\walb(X,D)°$ is a quasi-algebraic
    $\cC$-morphism between the pairs $(X°, D°)$ and $(A°, Δ°_A)$.

    \item The morphism $c$ is a quasi-algebraic $\cC$-morphism between the pairs
    $(A°, Δ°_A)$ and $(S°, Δ°_S)$.
  \end{enumerate}
\end{defn}

Comparing Definitions~\ref{def:10-1} and \ref{def:11-1}, one sees that
Definition~\ref{def:10-1} does not define the Albanese as a $\cC$-semitoric pair.
Instead, it defines $Z°$ as the normalized image of a hypothetical Albanese
variety and leaves the precise embedding of $Z°$ into a $\cC$-semitoric pair
unspecified.  In this sense, Definition~\ref{def:10-1} does not define the
Albanese map.  It defines its image.

\begin{conj}[The weak Albanese of a $\cC$-pair]\label{conj:11-2} %
  Let $(X, D)$ be a $\cC$-pair where $X$ is compact Kähler.  Then, a weak
  Albanese of $(X,D)$ exists.  The variety $Z°$ and the morphism $\walb(X,D)°$
  are unique up to unique isomorphism.  The group $\Aut_𝒪(X)$ acts on $Z°$ in a
  way that makes the morphism $\walb(X,D)°$ equivariant.
\end{conj}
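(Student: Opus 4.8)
The plan is to carry out the construction and universal-property argument of Theorem~\ref{thm:10-2}, but tracking the \emph{images} of the Albanese morphisms of covers in place of the Albanese varieties themselves; an a priori bound on these images will take over the role played by the hypothesis $q⁺_{\Alb}(X,D) < ∞$ in that proof. For every Galois cover $δ$ of $(X,D)$, let $a°_δ : (X°,D°) → (B°_δ, Δ°_{B_δ})$ be the quasi-algebraic $\cC$-morphism of Proposition~\ref{prop:5-10}, let $Z°_δ$ be the normalisation of the analytic image of $a°_δ$ in $B°_δ$, and factor $a°_δ$ as $X° \xrightarrow{w_δ} Z°_δ \xrightarrow{ι_δ} B°_δ$, where $w_δ$ is surjective and $ι_δ$ is generically injective. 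The decisive feature, replacing finiteness of the irregularity, is the crude bound $\dim Z°_δ ≤ \dim X$, which holds because $Z°_δ$ is dominated by $X°$.

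First I would attempt to stabilise this system. The comparison morphisms $q°_{δγ}$ of Reminder~\ref{remi:10-9} restrict to surjections $Z°_δ → Z°_γ$ whenever $δ$ factors via $γ$, compatibly with the maps $w_•$. Setting $d := \max_δ \dim Z°_δ ≤ \dim X$ and imitating Choice~\ref{choice:10-10}, I would fix a cover $γ$ with $\dim Z°_γ = d$ and with the generic number $n_γ$ of fibre components of $w_γ$ minimal among all such covers. For every finer $δ$ the induced map $Z°_δ → Z°_γ$ is a dominant, hence generically finite, morphism of $d$-dimensional normal varieties; counting components of a general fibre of $w_γ = (Z°_δ → Z°_γ)◦w_δ$ gives $n_γ = e·n_δ$, where $e$ is the generic degree, so minimality of $n_γ$ forces $e = 1$ and the map is birational. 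Granting that it is moreover an isomorphism, I would set $Z° := Z°_γ$ and $\walb(X,D)° := w_γ$.

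The universal property would then follow the pattern of Step~4 of Theorem~\ref{thm:10-2}. Given a $\cC$-semitoric pair $(S,Δ_S) ≅ (A_S,Δ_{A_S})/G_S$ and a quasi-algebraic $\cC$-morphism $s°$, form the Galois closure $ρ$ of a component of the normalised fibre product $A_S ⨯_S X$, arranged so that $ρ$ also factors via $γ$. As in Observation~\ref{obs:10-18} and Claim~\ref{claim:10-19}, the morphism $s°$ then factors through the Albanese of the cover $ρ$ and descends to a quasi-algebraic $\cC$-morphism $c : B°_ρ → S°$ with $c◦a°_ρ = s°$. Taking $(A,Δ_A) := (B_ρ,Δ_{B_ρ})$, $ι := ι_ρ$, and using the identification $Z°_ρ ≅ Z°$ from the stabilisation step together with $ι_ρ◦w_ρ = a°_ρ$, which is a $\cC$-morphism by Proposition~\ref{prop:5-10}, yields the required factorisation $X° → Z° \xrightarrow{ι} A° \xrightarrow{c} S°$ with $ι$ generically injective. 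Uniqueness of $(Z°,\walb(X,D)°)$ up to unique isomorphism is then formal: two weak Albaneses dominate one another through factorisations of the maps $a°_δ$, and generic injectivity upgrades the mutual domination to an isomorphism. Equivariance under $\Aut_𝒪(X)$ is inherited from the functoriality of the construction (Observation~\ref{obs:5-6}, Lemma~\ref{lem:5-11}): the group permutes the covers and acts on the inverse system $\{Z°_δ\}$, hence on its stabilised value $Z°$.

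The hard part — and, I expect, the reason the statement is only conjectural — is upgrading the birational surjection $Z°_δ → Z°_γ$ to a genuine isomorphism. In Theorem~\ref{thm:10-2} the analogous finiteness came for free from Corollary~\ref{cor:3-19}, because the relevant semitori had equal dimension; here the ambient pairs $B°_δ$ have strictly growing dimension, the fibres of $q°_{δγ}$ are positive-dimensional, and Corollary~\ref{cor:3-19} no longer controls the restriction to $Z°_δ$. One must therefore rule out, by a separate argument, positive-dimensional fibres of $Z°_δ → Z°_γ$ over the exceptional locus. A secondary difficulty is that $Z°$ carries no intrinsic $\cC$-semitoric structure, so every $\cC$-morphism assertion has to be routed through the auxiliary pairs $(B_ρ,Δ_{B_ρ})$ via \cite[Prop.~9.3]{orbiAlb1} and Remark~\ref{rem:5-3}; in particular one must check that the boundary structures on $Z°$ induced by different covers agree, keeping the multiplicity bookkeeping of Warning~\ref{warn:5-9} consistent across the whole system.
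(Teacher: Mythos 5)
Your proposal follows the same route as the paper's own treatment of this statement, which is explicitly only a sketch (the result is stated as a conjecture, with details deferred elsewhere): run the proof of Theorem~\ref{thm:10-2} with the images $\overline{\img a^\circ_\delta}$ in place of the quotient pairs $B^\circ_\delta$, stabilise the system by extremising the image dimension and then minimising the number of fibre components, and rerun Step~4 for the universal property. There is one substantive divergence, and it is in your favour. The paper's sketch selects $\gamma$ with $d_\gamma = d_{\min} = \min\{d_\delta : \delta \in M\}$, whereas you maximise, using the bound $\dim Z^\circ_\delta \leq \dim X$. Since $a^\circ_\gamma = q^\circ_{\delta\gamma}\circ a^\circ_\delta$ whenever $\delta$ factors via $\gamma$ (Reminder~\ref{remi:10-9}), image dimensions can only grow under refinement of covers, so stabilisation at $\gamma$ forces $d_\gamma$ to be the \emph{maximum}; a cover realising the minimum would in general be destroyed by passing to finer covers. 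Your choice is also the faithful analogue of Choice~\ref{choice:10-10}, where $\dim B_\delta$ is taken maximal, equal to $q^+_{\Alb}(X,D)$; the ``$d_{\min}$'' in the paper's sketch reads as a slip. Your remark that the a priori bound $d_\delta \leq \dim X$ takes over the role of the hypothesis $q^+_{\Alb}(X,D) < \infty$ is exactly the point of the conjecture.

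The gap you flag is genuine, and it is precisely what keeps the statement conjectural. In Theorem~\ref{thm:10-2}, finiteness of the comparison map (Claim~\ref{claim:10-11}) comes from Corollary~\ref{cor:3-19} applied to a surjection of \emph{equal-dimensional} semitori, and only then do the connected-fibre count and \cite[Prop.~14.7]{MR1326618} upgrade it to a biholomorphism in Claim~\ref{claim:10-12}. In the weak setting the ambient map $q^\circ_{\delta\gamma}$ has positive-dimensional fibres, and nothing in Section~\ref{sec:3} controls its restriction to $\overline{\img a^\circ_\delta}$: properness of $Z^\circ_\delta \to Z^\circ_\gamma$ must be established by a separate argument before your birationality conclusion can be promoted to an isomorphism, and the same issue propagates into the identification $Z^\circ_\rho \cong Z^\circ$ used in your Step-4 factorisation. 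The paper's sketch does not close this either (nor does it address the uniqueness and equivariance assertions, which you at least outline). So your proposal is on par with, and in the dimension-extremisation step more careful than, the paper's own sketch; but like the paper's sketch it is a proof strategy rather than a proof.
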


With the tools available, we believe that Conjecture~\ref{conj:11-2} can be
shown without much trouble, but we fear that a full proof would either add
another ten pages to this already long paper or needs to be integrated into the
proof of Theorem~\ref{thm:10-2}, extending that proof further and rendering it
potentially unfit for human consumption.  We will therefore restrict ourselves
to the short sketch below and publish details elsewhere.

\begin{proof}[Sketch of proof]
  The proof of Theorem~\ref{thm:10-2} carries over in large parts.  The main
  difference is Claim~\ref{claim:10-13}, which asserts that the quotient
  varieties $B°_δ$ stabilize as soon as we pass to sufficiently fine covers,
  that is, to covers that factorize via $γ$.  In absence of the hypothesis
  $q⁺_{\Alb}(X,D) < ∞$ this cannot possibly hold true.  Instead, we claim that
  it is not the $B°_δ$ that stabilize, but it is the image sets $\img a°_δ ⊆
  B°_δ$ that does.
  
  In order to make this precise, one needs to modify Choice~\ref{choice:10-10}.
  At the end of Step~1, consider the numbers
  \begin{align*}
    d_δ & := \# \text{dimension of } \overline{\img a°_δ} ⊆ B_δ, \\
    d_{\min} & := \min\bigl\{ d_δ \::\: δ ∈ M \bigr\} \\
    n_δ & := \# \text{components in the typical non-empty fibre of } a°_δ : X° → B°_δ, \\
    n_{\min} & := \min\bigl\{ n_δ \::\: δ ∈ M \text{ and } d_δ = d_{\min} \bigr\}
  \end{align*}
  and choose one particular cover $γ ∈ M$ such that $d_γ = d_{\min}$ and $n_γ =
  n_{\min}$.  Once the choice is made, take
  \[
    Z° := \text{normalization of } \overline{\img a°_γ}
  \]
  and let $\walb(X,D)° : X° → Z°$ be the morphism obtained by applying the
  universal property of the normalization map to the restricted morphism $a°_δ :
  X° → Z°$.  With the appropriate modifications, the required universal
  properties of $\walb(X,D)°$ will follow by arguments analogous to those in
  Step~4 of the proof of Theorem~\ref{thm:10-2}.
\end{proof}

\subsection{Inequalities between augmented irregularities}
\approvals{Erwan & yes \\ Stefan & yes}
\label{sec:11-2}

\CounterStep{}If $(X, D)$ is a $\cC$-pair where $X$ is compact Kähler, then
\eqref{eq:6-2-1} immediately gives an inequality between the augmented
irregularities
\begin{equation}\label{eq:11-3-1}
  q⁺_{\Alb}(X,D) ≤ q⁺(X,D).
\end{equation}
We do not understand the meaning of Inequality~\eqref{eq:11-3-1}.  We do not
know if \eqref{eq:11-3-1} can ever be strict.  If it is strict, this means that
adapted differentials come in two types.
\begin{itemize}
  \item A subset of the adapted differentials comes from the Albanese morphism,
  at least after passing to suitable high covers.

  \item The general adapted differential is not induced by any morphism to a
  $\cC$-semitoric pair.
\end{itemize}
We do not understand this distinction and wonder if there is a geometric
explanation, perhaps in Hodge-theoretic terms.


\appendix
\addcontentsline{toc}{part}{Appendix}

%
%
\svnid{$Id: 0A-moduli.tex 949 2024-11-11 12:46:09Z kebekus $}
\selectlanguage{british}

\section{Moduli spaces of covers and the Prym map}
\subversionInfo
\approvals{Erwan & yes \\ Stefan & yes}

For the reader's convenience, this appendix recalls several facts and
constructions from the moduli theory of branched curve covers.  The results are
used in Section~\ref{sec:6}, where we compare the irregularity of a curve cover
with its Albanese irregularity.  We refer the reader to the first sections of
Mumford's classic paper \cite{MR379510} and to the introductory chapters of
\cite{MR2956039, MR3877472} for details and references.

\begin{fact}[Coarse moduli space of 2:1-covers]
  Given numbers $g ∈ ℕ$ and $r ∈ 2·ℕ$, there exists a scheme $\mathcal R_g(2,r)$
  of dimension
  \begin{equation}\label{eq:A-1-1}
    \dim \mathcal R_g(2,r) = \dim \mathcal M_g + r = 3·g - 3 + r
  \end{equation}
  parametrizing isomorphism classes of triples $(X, B, ℒ)$, where
  \begin{itemize}
    \item $X$ is a smooth curve of genus $g$,
    \item $B ∈ \Div X$ is a reduced divisor of degree $r$, and
    \item $ℒ ∈ \Pic(X)$ is a line bundle where $ℒ^{⊗2} ≅ 𝒪_X(B)$.
  \end{itemize}
  We refer to $\mathcal R_g(2,r)$ as the \emph{coarse moduli space of 2:1-covers
  over curves of genus $g$, branched in exactly $r$ points}.
\end{fact}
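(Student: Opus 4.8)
The plan is to assemble $\mathcal{R}_g(2,r)$ in three layers, peeling off the three pieces of data $X$, $B$ and $\mathcal{L}$ in turn, and to read the dimension off layer by layer. I would begin by recalling the classical dictionary underlying the name ``$2{:}1$-cover'': from a triple $(X,B,\mathcal{L})$ with $\mathcal{L}^{\otimes 2} \cong \mathcal{O}_X(B)$ one reconstructs the cover as $\widehat{X} = \Spec_X\bigl(\mathcal{O}_X \oplus \mathcal{L}^{-1}\bigr)$, with algebra structure supplied by the section of $\mathcal{L}^{\otimes 2} \cong \mathcal{O}_X(B)$ that cuts out $B$; conversely, every double cover with reduced branch divisor arises this way. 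The objects to be parametrized are therefore exactly the triples, and it suffices to construct a coarse moduli space for these.

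For the construction, I would start (in the stable range $g \geq 2$) with the moduli stack of smooth genus-$g$ curves, a smooth Deligne--Mumford stack $\mathcal{M}_g$ of dimension $3g-3$ carrying a universal curve $\pi\colon \mathcal{C}_g \to \mathcal{M}_g$. The first layer is the relative symmetric product $\Sym^r(\mathcal{C}_g/\mathcal{M}_g)$, whose fibre over a curve $X$ is $\Sym^r(X)$, the smooth space of effective degree-$r$ divisors, of dimension $r$; passing to the dense open locus $\mathcal{U}$ where the divisor $B$ is reduced gives a parameter space for pairs $(X,B)$ of dimension $3g-3+r$. The second layer adds the square root: twisting the universal class $\mathcal{O}_X(B)$ into degree $0$ and forming the fibre product of $\mathcal{U}$ with the relative multiplication-by-two map $[2]\colon \Pic^0(\mathcal{C}_g/\mathcal{M}_g) \to \Pic^0(\mathcal{C}_g/\mathcal{M}_g)$ produces a morphism $\mathcal{R}_g(2,r) \to \mathcal{U}$ whose fibre over $(X,B)$ is the set of $\mathcal{L}$ with $\mathcal{L}^{\otimes 2} \cong \mathcal{O}_X(B)$, a torsor under $\Pic(X)[2] \cong (\bZ/2\bZ)^{2g}$. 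Since $r$ is even this fibre is non-empty, and $[2]$ is finite étale of degree $2^{2g}$, so $\mathcal{R}_g(2,r) \to \mathcal{U}$ is finite étale; taking coarse moduli spaces throughout yields the scheme $\mathcal{R}_g(2,r)$. Because the last layer is finite étale it contributes nothing to the dimension, so $\dim \mathcal{R}_g(2,r) = \dim \mathcal{U} = 3g-3+r = \dim \mathcal{M}_g + r$, as asserted.

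The only genuine difficulty is foundational, not geometric. A curve may carry automorphisms, so the coarse moduli space of curves supports no universal family, and the fibrewise operations $\Sym^r$ and $\Pic^0$ above cannot be performed over it directly. I would resolve this by carrying out the entire construction on the Deligne--Mumford stack $\mathcal{M}_g$ — where $\mathcal{C}_g$, the relative symmetric product, and the relative Jacobian all exist as honest relative objects — and passing to the associated coarse spaces only at the very end; equivalently, one rigidifies by imposing a level-$n$ structure with $n \geq 3$ to obtain a fine moduli scheme, runs the argument there, and descends along the finite group $\mathrm{Sp}(2g,\bZ/n\bZ)$. Neither device affects the dimension count, which is the content of the asserted formula.
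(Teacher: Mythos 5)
Your proposal is correct. Note that the paper does not actually prove this statement: it is labelled a \emph{Fact} and disposed of by citation to the first sections of Mumford's Prym paper and the introductory chapters of the Lange--Ortega and Marcucci--Pirola references; the paper itself only spells out the triple-versus-cover dictionary (its Remark on $\mathcal R_g(2,r)$ as a moduli space of covers), which is also your opening step. Your layered construction --- universal curve over the stack $\mathcal M_g$, relative symmetric product with the reduced-divisor locus removed giving the $(X,B)$-layer of dimension $3g-3+r$, then the square-root layer as a finite \'etale torsor under $\Pic^0[2]$ of degree $2^{2g}$ --- is precisely the standard argument that those references contain, so you have supplied the proof the paper delegates to the literature. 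Two small points of polish: first, the phrase ``twisting the universal class into degree $0$'' presupposes a global reference square root, which need not exist over the moduli; it is cleaner to form the fibre product of the relative squaring map $\Pic^{r/2}(\mathcal C_g/\mathcal U) \to \Pic^{r}(\mathcal C_g/\mathcal U)$ (finite \'etale of degree $2^{2g}$) with the section of $\Pic^{r}$ defined by $\mathcal O(B)$, which needs no such choice. Second, your restriction to the stable range $g \geq 2$ is exactly right and worth making explicit, since the formula $\dim \mathcal M_g = 3g-3$ fails for $g \leq 1$; the paper's statement quietly has the same caveat, and its application only uses $g=5$.
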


The name ``moduli space of covers'' is justified by the following remark, which
relates triples $(X, B, ℒ)$ to actual covers of $X$.

\begin{rem}[The space $\mathcal R_g(2,r)$ as a moduli space of covers]\label{rem:A-4}
  Recall the classic fact that to give an element of $\mathcal R_{g,r}$, it is
  equivalent to give a genus-$g$ curve together with a 2:1-cover branched in
  exactly $r$ points.
  \begin{itemize}
    \item Given a triple $(X, B, ℒ)$, follow \cite[Sect.~1]{MR379510} and use
    $B$ to equip the coherent $𝒪_X$-module $𝒪_X ⊕ ℒ^*$ with an $𝒪_X$-algebra
    structure.  The natural morphism
    \[
      γ : \operatorname{Spec} \left( 𝒪_X ⊕ℒ^* \right) \twoheadrightarrow X
    \]
    is then a 2:1-cover between smooth curves, branched exactly over the support
    of $R$.

    \item Given a genus-$g$ curve $X$ and a 2:1-cover $γ : \what{X} → X$ with a
    branch divisor $B ∈ \Div X$ of degree $r$, use the action of the Galois
    group to decompose the push-forward of $𝒪_{\what{X}}$ into an invariant and
    an anti-invariant part,
    \[
      γ_* 𝒪_{\what{X}} ≅ 𝒪_X ⊕ ℒ^*.
    \]
    Observe that $ℒ$ is a line bundle with $ℒ^{⊗2} ≅ 𝒪_X(B)$.
  \end{itemize}
\end{rem}

\begin{fact}[The norm map and Prym variety of a branched cover]\label{fact:A-5}%
  Let $γ : \what{X} \twoheadrightarrow X$ be a 2:1-cover of curves and let $N(γ)
  : \operatorname{Jac}(\what X) → \operatorname{Jac}(X)$ be the associated norm
  map, which is induced by push-forward of divisors.  If $γ$ is branched, then
  $P_γ := \ker N(γ)$ is a connected Abelian variety of dimension
  \[
    \textstyle \dim P_γ = g(X) - 1 + \frac{1}{2}·\deg \Branch γ,
  \]
  and the theta-divisor on $\operatorname{Jac}(\what{X})$ restricts to a
  polarization of type $D := (1, …, 1, 2, …, 2)$, where '2' is repeated $g(X)$
  times.  \qed
\end{fact}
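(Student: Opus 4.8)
The plan is to treat the three assertions—the dimension of $P_γ$, its connectedness, and the type of the induced polarization—separately, following the classical Prym theory of Mumford \cite{MR379510} in its ramified incarnation. Throughout I would write $σ : \what X → \what X$ for the involution generating the Galois group of $γ$ and $σ^*$ for the induced automorphism of $\operatorname{Jac}(\what X)$, and use the two push–pull identities $N(γ) ◦ γ^* = [2]$ and $γ^* ◦ N(γ) = 1 + σ^*$.

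First I would compute the dimension. The identity $N(γ) ◦ γ^* = [2]$ shows that $N(γ)$ is surjective, so $\dim P_γ = g(\what X) - g(X)$. Riemann–Hurwitz for the double cover reads $2 g(\what X) - 2 = 2\bigl(2 g(X) - 2\bigr) + \deg \Branch γ$, whence $g(\what X) = 2 g(X) - 1 + \tfrac12 \deg \Branch γ$, and the asserted formula for $\dim P_γ$ follows at once.

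Next, connectedness. Since $N(γ)\bigl(1 - σ^*\bigr) = N(γ) - N(γ) = 0$, the connected abelian subvariety $\operatorname{Im}(1 - σ^*)$ lies in $\ker N(γ)$, and as it already has the full dimension $g(\what X) - g(X)$ it is precisely the identity component of $\ker N(γ)$. The whole content is therefore the vanishing of $π_0\bigl(\ker N(γ)\bigr)$, and this is exactly where the branch hypothesis enters. I would show that $γ^*$ is injective on $\operatorname{Jac}(X)$—valid precisely because $γ$ is ramified, the étale case being distinguished by the order-two class generating $\ker γ^*$—and then invoke Mumford's analysis of the fibres $N(γ)^{-1}(𝒪_X)$ to identify $π_0\bigl(\ker N(γ)\bigr) ≅ \ker γ^* = 0$. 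Thus $P_γ = \operatorname{Im}(1 - σ^*)$ is connected, in contrast to the two components occurring over an étale cover.

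Finally, the polarization type. The involution $σ$ preserves the theta polarization $Θ$, so $σ^*$ is an isometry and the subvarieties $\operatorname{Im}(1 + σ^*) = γ^* \operatorname{Jac}(X)$ and $P_γ = \operatorname{Im}(1 - σ^*)$ are complementary abelian subvarieties of the principally polarized $\operatorname{Jac}(\what X)$. By the theory of complementary subvarieties, the kernel of the restricted polarization satisfies $K\bigl(Θ|_{P_γ}\bigr) = P_γ ∩ γ^* \operatorname{Jac}(X)$; an element of this intersection is $γ^* M$ with $N(γ) γ^* M = 2 M = 0$, so injectivity of $γ^*$ identifies it with $\operatorname{Jac}(X)[2] ≅ (ℤ/2)^{2 g(X)}$. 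Since a polarization of type $(d_1, …, d_m)$ has kernel $\prod_i (ℤ/d_i)^2$, exactly $g(X)$ of the invariants $d_i$ must equal $2$ and the remaining $\dim P_γ - g(X) = \tfrac12 \deg \Branch γ - 1$ equal $1$, giving the stated type $D$. I expect the main obstacle to be the connectedness step: the clean formula $π_0\bigl(\ker N(γ)\bigr) ≅ \ker γ^*$ requires either Mumford's explicit description of the norm fibres or a careful bookkeeping of $2$-torsion in the isogeny $\operatorname{Jac}(X) ⨯ P_γ → \operatorname{Jac}(\what X)$, and it is here that the branching assumption must be used in an essential way.
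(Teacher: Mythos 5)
Your proposal is correct, and it is essentially the argument the paper's stated sources give: the paper records this as a Fact without proof, deferring to Mumford \cite{MR379510} and the introductory sections of \cite{MR2956039, MR3877472}. Your reconstruction — the identities $N(γ)◦γ^* = [2]$ and $γ^*◦N(γ) = 1+σ^*$, the Riemann--Hurwitz dimension count, connectedness via injectivity of $γ^*$ (equivalently triviality of $π_0(\ker N(γ))$ by duality, which is exactly where ramification enters), and the identification $K\bigl(Θ|_{P_γ}\bigr) = P_γ ∩ γ^*\operatorname{Jac}(X) ≅ \operatorname{Jac}(X)[2]$ for complementary abelian subvarieties of a principally polarized Jacobian — is precisely the classical proof found there.
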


\begin{fact}[The Prym map]\label{fact:A-6}%
  Given numbers $g ∈ ℕ$ and $r ∈ 2·ℕ ∖ \{0\}$, Fact~\ref{fact:A-5} yields a
  morphism of algebraic varieties,
  \[
    \operatorname{Pr}_g(2, r) : {\mathcal R}_g (2,r) → {\mathcal A}_{p,D},
  \]
  where $p := g - 1 + \frac{1}{2}·r$ and ${\mathcal A}_{p,D}$ is the moduli
  space of $p$-dimensional polarized Abelian varieties with polarization of type
  $D := (1, …, 1, 2, …, 2)$.  \qed
\end{fact}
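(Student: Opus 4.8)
The plan is to upgrade the fibrewise construction of Fact~\ref{fact:A-5} to a construction in families, and then to invoke the universal property of the coarse moduli space ${\mathcal A}_{p,D}$. Concretely, I would show that assigning to a family of triples $(X,B,\mathcal L)$ over a base $S$ the corresponding family of polarized Prym varieties defines a natural transformation of moduli functors $\underline{\mathcal R}_g(2,r) → \underline{\mathcal A}_{p,D}$. Since a natural transformation of moduli functors induces a morphism of the associated coarse moduli spaces, this transformation descends to the desired morphism of schemes $\operatorname{Pr}_g(2,r)$.

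The construction in families proceeds in the following steps. Let $(X_S, B_S, \mathcal L_S)$ be a family of triples over $S$, so that $X_S → S$ is a smooth family of genus-$g$ curves, $B_S$ a relative reduced divisor of degree $r$, and $\mathcal L_S^{⊗2} ≅ \mathcal O_{X_S}(B_S)$. First, following Remark~\ref{rem:A-4}, the section of $\mathcal L_S^{⊗2}$ cutting out $B_S$ equips $\mathcal O_{X_S} ⊕ \mathcal L_S^*$ with an $\mathcal O_{X_S}$-algebra structure, and I would set $\what{X}_S := \Spec_{\mathcal O_{X_S}}(\mathcal O_{X_S} ⊕ \mathcal L_S^*)$, obtaining a relative $2{:}1$-cover $γ_S : \what{X}_S → X_S$ between smooth families of curves. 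Next, I would form the relative Jacobians $\operatorname{Jac}(\what{X}_S/S)$ and $\operatorname{Jac}(X_S/S)$, which are abelian schemes over $S$, together with the relative norm homomorphism $N(γ_S)$ between them. Writing $ι$ for the relative covering involution, I would define the Prym family as the image $P_{γ_S} := \img(1-ι) ⊆ \operatorname{Jac}(\what{X}_S/S)$, an abelian subscheme whose fibres are the Prym varieties of Fact~\ref{fact:A-5}. Finally, the principal theta polarization on $\operatorname{Jac}(\what{X}_S/S)$ restricts to a relative polarization on $P_{γ_S}$, which Fact~\ref{fact:A-5} identifies fibrewise as a polarization of type $D = (1,…,1,2,…,2)$.

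The step I expect to be the main obstacle is verifying that $P_{γ_S}$ is genuinely a polarized abelian scheme with \emph{constant} numerical invariants over $S$, so that it defines an $S$-point of ${\mathcal A}_{p,D}$. Fibrewise, Fact~\ref{fact:A-5} guarantees that $\ker N(γ)$ is connected for a branched cover and that the restricted polarization has the stated type; the delicate point is globalisation. Defining the Prym as $\img(1-ι)$ rather than as $\ker N(γ_S)$ sidesteps the usual difficulty that kernels of isogenies of abelian schemes need not be abelian schemes, since the image of an endomorphism of an abelian scheme is again an abelian scheme, automatically flat and proper over $S$ with abelian-variety fibres. One then checks that the type of the restricted polarization, being a discrete invariant, is locally constant and hence constant on connected components of the base; by the uniform fibrewise computation of Fact~\ref{fact:A-5} it equals $D$ throughout. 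Once this family over (an atlas of) $\mathcal R_g(2,r)$ is in hand, compatibility with base change makes the assignment functorial, and the coarse moduli property of ${\mathcal A}_{p,D}$ yields the morphism $\operatorname{Pr}_g(2,r)$, completing the argument.
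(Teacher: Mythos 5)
The paper never proves this statement: it is labelled a \emph{Fact}, closed with \verb|\qed|, and the appendix preamble explicitly defers to the literature (Mumford's paper on Prym varieties and the introductory chapters of Marcucci--Pirola and Lange--Ortega), so there is no internal proof to compare yours against. Your sketch reconstructs what those references do, and its architecture is the standard and correct one: build the polarized Prym family over an arbitrary base, check that the discrete invariants $(p, D)$ are constant, and descend through the universal properties of the coarse moduli spaces. Your first paragraph also gets the formal descent argument right: the composite $\underline{\mathcal{R}}_g(2,r) → \underline{\mathcal{A}}_{p,D} → h_{\mathcal{A}_{p,D}}$ factors uniquely through $h_{\mathcal{R}_g(2,r)}$ by the universal property of the \emph{source} coarse space; note that this functorial formulation is what lets you avoid the nonexistent universal family over $\mathcal{R}_g(2,r)$, so your later hedge about working ``over an atlas'' is unnecessary, and your closing attribution of the descent to ``the coarse moduli property of $\mathcal{A}_{p,D}$'' alone is slightly misstated.

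The one step you should not wave through is the assertion that ``the image of an endomorphism of an abelian scheme is again an abelian scheme, automatically flat and proper.'' This is not a formal categorical fact, and it is precisely the point where families are harder than fibres: just as kernels of homomorphisms of abelian schemes can fail to be flat, the image needs an argument, and the essential input is that the relevant rank does not jump. In the complex setting of this paper the argument is short: the endomorphism $1-ι$ of $\operatorname{Jac}(\what{X}_S/S)$ induces a morphism of the weight-one local systems with fibres $H_1\bigl(\operatorname{Jac}(\what{X}_s),\mathbb{Z}\bigr)$, and a morphism of local systems over a connected base has locally constant rank; consequently the fibrewise images $\img(1-ι)_s$ are subtori of locally constant dimension, cut out by a subbundle of $\operatorname{Lie}$ together with a sub-local-system, so their union is a closed subfamily that is smooth and proper over $S$ and carries the restricted polarization. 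With that (or a citation for it) supplied, your construction is complete; without it, the key claim of the ``main obstacle'' paragraph is asserted rather than proved.
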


\begin{fact}[The moduli space ${\mathcal A}_{p,D}$]\label{fact:A-7}%
  The moduli space ${\mathcal A}_{p,D}$ of $p$-dimensional polarized Abelian
  varieties with polarization of type $D$ is of dimension $\frac{1}{2}·p·(p+1)$.
  \qed
\end{fact}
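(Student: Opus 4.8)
The plan is to realise $\mathcal{A}_{p,D}$ as an arithmetic quotient of Siegel's upper half-space and to read off the dimension from that description. This is entirely classical, and a convenient reference is the introductory material in \cite{MR2956039, MR3877472}.

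First I would recall the standard period-matrix parametrisation. Every $p$-dimensional polarised Abelian variety whose polarisation has type $D = (d_1, \ldots, d_p)$ can be written as $\mathbb{C}^p/\Lambda_\tau$, where $\Lambda_\tau$ is the lattice spanned by the columns of the $p \times 2p$ matrix $\bigl( \Delta \mid \tau \bigr)$. Here $\Delta = \operatorname{diag}(d_1, \ldots, d_p)$ is the fixed diagonal block determined by the polarisation type, while $\tau$ ranges over the Siegel upper half-space
\[
  \mathfrak{H}_p := \bigl\{ \tau \in \operatorname{Mat}_{p \times p}(\mathbb{C}) \::\: \tau \text{ symmetric and } \operatorname{Im} \tau > 0 \bigr\}.
\]
Next I would compute the dimension of $\mathfrak{H}_p$. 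Since $\mathfrak{H}_p$ is an open subset of the complex vector space of symmetric $p \times p$ matrices, and a symmetric matrix has exactly $p + \binom{p}{2} = \tfrac{1}{2} \cdot p \cdot (p+1)$ independent entries, we obtain $\dim_{\mathbb{C}} \mathfrak{H}_p = \tfrac{1}{2} \cdot p \cdot (p+1)$.

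Finally I would recall that two points $\tau, \tau' \in \mathfrak{H}_p$ yield isomorphic polarised Abelian varieties precisely when they lie in one orbit of the relevant paramodular group $\Gamma_D \subset \operatorname{Sp}_{2p}(\mathbb{Q})$, which acts properly discontinuously on $\mathfrak{H}_p$. Thus $\mathcal{A}_{p,D} = \mathfrak{H}_p/\Gamma_D$, and since $\Gamma_D$ is discrete the quotient has the same dimension as $\mathfrak{H}_p$, giving $\dim \mathcal{A}_{p,D} = \tfrac{1}{2} \cdot p \cdot (p+1)$. There is no genuine obstacle here; the only point deserving a word is that the dimension is independent of the polarisation type, which enters only through the fixed block $\Delta$ and the arithmetic group $\Gamma_D$ — neither of which affects the count of independent entries of $\tau$.
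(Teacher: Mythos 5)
Your argument is correct and is the standard one: the paper states this as a bare Fact with no proof of its own, deferring to the literature (Mumford's paper \cite{MR379510} and the introductory chapters of \cite{MR2956039, MR3877472}), and your description of $\mathcal{A}_{p,D}$ as the quotient of the Siegel upper half-space $\mathfrak{H}_p$ by a paramodular group acting properly discontinuously is precisely the classical proof those sources rely on. Since the polarisation type enters only through the fixed block $\Delta$ and the arithmetic group, not through the parameter space $\mathfrak{H}_p$ itself, the count $\dim \mathcal{A}_{p,D} = \frac{1}{2}\cdot p\cdot(p+1)$ follows exactly as you say.
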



\end{document}